\theoremstyle{definition} \newtheorem{definition}{Definition}[section]
\theoremstyle{definition} \newtheorem{remark}[definition]{Remark}
\theoremstyle{plain} \newtheorem{lemma}[definition]{Lemma}
\theoremstyle{plain} \newtheorem{proposition}[definition]{Proposition}
\theoremstyle{plain} \newtheorem{theorem}[definition]{Theorem}
\theoremstyle{plain} \newtheorem{corollary}[definition]{Corollary}
\theoremstyle{definition} 
\theoremstyle{plain} 
\DeclareMathOperator*{\conv}{conv}
\DeclareMathOperator*{\conc}{conc}
\DeclareMathOperator{\card}{card}
\DeclareMathOperator{\sign}{sign}
\newcommand{\R}{\mathbb{R}}
\newcommand{\N}{\mathbb{N}}
\newcommand{\Z}{\mathbb{Z}}
\newcommand{\TV}{\text{\rm Tot.Var.}}
\DeclareMathOperator{\BV}{BV}
\newcommand{\I}{\textbf{I}}
\newcommand{\W}{\mathcal{W}}
\newcommand{\E}{\mathcal{E}}
\newcommand{\e}{\varepsilon}
\newcommand{\const}{\mathcal{O}(1)}
\newcommand{\fQ}{\mathfrak{Q}}
\newcommand{\Lip}{{\rm Lip}}
\newcommand{\feff}{\mathtt f^{\mathrm{eff}}}
\newcommand{\tcr}{\mathtt t^{\mathrm{cr}}}
\newcommand{\tcanc}{\mathtt t^{\mathrm{canc}}}
\newcommand{\tsp}{\mathtt t^{\mathrm{split}}}
\newcommand{\xsp}{\mathtt x^{\mathrm{split}}}
\newcommand{\xint}{\mathtt x^{\mathrm{int}}}
\newcommand{\tint}{\mathtt t^{\mathrm{int}}}
\newcommand{\sigmarh}{\sigma^{\mathrm{rh}}}
\newcommand{\sigmaent}{\sigma^{\mathrm{ent}}}
\newcommand{\Qtrans}{Q^\mathrm{trans}}
\newcommand{\Qcubic}{Q^\mathrm{cubic}}
\newcommand{\Qknown}{Q^{\mathrm{known}}}
\newcommand{\Atrans}{\mathtt {A}^{\rm{trans}}}
\newcommand{\Acubic}{\mathtt {A}^{\rm{cubic}}}
\newcommand{\Aquadr}{\mathtt {A}^{\rm{quadr}}}
\newcommand{\Acanc}{\mathtt {A}^{\rm{canc}}}
\newcommand{\Acr}{\mathtt {A}^{\rm{cr}}}
\numberwithin{equation}{section} 
\def\grd@save@target#1{%
  \def\grd@target{#1}}
\def\grd@save@start#1{%
  \def\grd@start{#1}}
\tikzset{
  grid with coordinates/.style={
    to path={%
      \pgfextra{%
        \edef\grd@@target{(\tikztotarget)}%
        \tikz@scan@one@point\grd@save@target\grd@@target\relax
        \edef\grd@@start{(\tikztostart)}%
        \tikz@scan@one@point\grd@save@start\grd@@start\relax
        \draw[minor help lines] (\tikztostart) grid (\tikztotarget);
        \draw[major help lines] (\tikztostart) grid (\tikztotarget);
        \grd@start
        \pgfmathsetmacro{\grd@xa}{\the\pgf@x/1cm}
        \pgfmathsetmacro{\grd@ya}{\the\pgf@y/1cm}
        \grd@target
        \pgfmathsetmacro{\grd@xb}{\the\pgf@x/1cm}
        \pgfmathsetmacro{\grd@yb}{\the\pgf@y/1cm}
        \pgfmathsetmacro{\grd@xc}{\grd@xa + \pgfkeysvalueof{/tikz/grid with coordinates/major step}}
        \pgfmathsetmacro{\grd@yc}{\grd@ya + \pgfkeysvalueof{/tikz/grid with coordinates/major step}}
        \foreach \x in {\grd@xa,\grd@xc,...,\grd@xb}
        \node[anchor=north] at (\x,\grd@ya) {\pgfmathprintnumber{\x}};
        \foreach \y in {\grd@ya,\grd@yc,...,\grd@yb}
        \node[anchor=east] at (\grd@xa,\y) {\pgfmathprintnumber{\y}};
      }
    }
  },
  minor help lines/.style={
    help lines,
    step=\pgfkeysvalueof{/tikz/grid with coordinates/minor step}
  },
  major help lines/.style={
    help lines,
    line width=\pgfkeysvalueof{/tikz/grid with coordinates/major line width},
    step=\pgfkeysvalueof{/tikz/grid with coordinates/major step}
  },
  grid with coordinates/.cd,
  minor step/.initial=.2,           
  major step/.initial=1,            
  major line width/.initial=1pt,    
}
\title{Quadratic interaction functional for general systems of conservation laws}
\author{STEFANO BIANCHINI}
\address{Sissa, Via Bonomea, 265\\
Trieste, 34136, Italy}
\email{bianchin@sissa.it}
\author{STEFANO MODENA}
\address{Sissa, Via Bonomea, 265\\
Trieste, 34136, Italy} 
\email{smodena@sissa.it}
\thanks{This work is partially supported by the project PRIN-2012 ``Nonlinear Hyperbolic Partial Differential Equations, Dispersive and Transport Equations: theoretical and applicative aspects''.}
\thanks{Part of this paper has been written while the first author was visiting the Department of Mathematics, Penn State University at State College.}
\date{\today}
\begin{document}

\begin{abstract}
For the Glimm scheme approximation $u_\e$ to the solution of the system of conservation laws in one space dimension
\begin{equation*}
u_t + f(u)_x = 0, \qquad u(0,x) = u_0(x) \in \R^n,
\end{equation*}
with initial data $u_0$ with small total variation, we prove a quadratic (w.r.t. $\TV(u_0)$) interaction estimate, which has been used in the literature for stability and convergence results. No assumptions on the structure of the flux $f$ are made (apart smoothness), and this estimate is the natural extension of the Glimm type interaction estimate for genuinely nonlinear systems.

More precisely we obtain the following results:
\begin{itemize}
\item a new analysis of the interaction estimates of simple waves;
\item a Lagrangian representation of the derivative of the solution, i.e. a map $\mathtt x(t,w)$ which follows the trajectory of each wave $w$ from its creation to its cancellation;
\item the introduction of the characteristic interval and partition for couples of waves, representing the common history of the two waves;
\item a new functional $\mathfrak Q$ controlling the variation in speed of the waves w.r.t. time.
\end{itemize}
This last functional is the natural extension of the Glimm functional for genuinely nonlinear systems.

The main result is that the distribution $D_{tt} \mathtt x(t,w)$ is a measure with total mass $\leq \const \TV(u_0)^2$.
\end{abstract}

\keywords{Conservation laws, Interaction functionals}

\subjclass[2010]{35L65}

\maketitle

\centerline{Preprint SISSA 51/2014/MATE}

\tableofcontents

\section{Introduction}
\label{S_intro}

Consider a hyperbolic system of conservation laws
\begin{equation}
\label{cauchy}
\left\{
\begin{array}{l}
u_t + f(u)_x = 0, \\ [.5em]
u(0,x) = u_0(x),
\end{array}
\right.
\end{equation}
where $u_0 \in BV(\R,\R^n)$, $f: \R^n \to \R^n$ smooth (by \emph{smooth} we mean at least of class $C^3(\R^n,\R^n)$) and strictly hyperbolic. We are interested in the proof of an interaction estimate, quadratic w.r.t. the total variation of the initial data $u_0$, which has been considered in the literature and used to prove sharp convergence and stability results \cite{anc_mar_11_CMP, hua_jia_yan_10, hua_yang_10, anc_mar_11_DCDS}.

The quadratic estimate we are concerned with, can be easily explained in the case of a wavefront solution to \eqref{cauchy} \cite{bre_00}. Let $\{t_j\}_{j=1, \dots, J}$ be the times at which two wavefronts $w,w'$ meet; each $t_j$ can be an \emph{interaction} time if the wavefronts $w,w'$ belong to the same family and have the same sign; a \emph{cancellation} time, if $w,w'$ belong to the same family and have opposite sign; a \emph{transversal interaction} time if $w,w'$ belong to different families; a \emph{non-physical interaction} time if at least one among $w,w'$ is a non-physical wavefront (for a precise definition see Definition 3.2 in \cite{bia_mod_14}).

In a series of papers \cite{anc_mar_11_CMP, hua_jia_yan_10, hua_yang_10, anc_mar_11_DCDS} the following estimate has been discussed:
\begin{equation}
\label{E_quadrati_est}
\sum_{t_j \text{ interaction}} \frac{|\sigma(w) - \sigma(w')| |w| |w'|}{|w| + |w'|} \leq \mathcal O(1) \TV(u_0)^2.
\end{equation}
In the above formula $w,w'$ are the wavefronts which interact at time $t_j$, $\sigma(w)$ (resp. $\sigma(w')$) is the speed of the wavefront $w$ (resp. $w'$) and $|w|$ (resp. $|w'|$) is its strength. By $\const$ we denote a constant which depends only on the flux function $f$.

As it is shown in \cite{anc_mar_11_DCDS, bia_mod_13}, the proofs presented in the above papers contain glitches/missteps, which justified the publication of a new and different proof in \cite{bia_mod_13} and in \cite{bia_mod_14}.

In particular, the paper \cite{bia_mod_13} considers the simplest case at the level of \eqref{cauchy}, namely the scalar case $u \in \R$, and shows that even in this situation the analysis is already quite complicated: in fact, one has to follow the evolution of every elementary component of a wavefront, which we call \emph{wave} (see \cite[Sections 3.1 and 4.1]{bia_mod_13}, \cite[Section 2 and Definition 3.3]{bia_mod_14} or Section \ref{Ss_lagrang_def} below), an idea present also in \cite{anc_mar_11_CMP}. One of the conclusions of the analysis in \cite{bia_mod_13} is that the functional used to obtain the bound \eqref{E_quadrati_est} is non-local in time, a situation very different from the standard Glimm interaction analysis of hyperbolic systems of conservation laws.

In the second paper \cite{bia_mod_14} the authors study how the same estimate can be proved in the presence of waves of different families. For this aim, the most simple situation is considered, namely the Temple-class triangular system (see \cite{temple_83} for the definition of Temple class systems)
\begin{equation*}
\left\{ \begin{array}{l}
        u_t + \tilde f(u,v)_x = 0, \\ [.5em]
        v_t - v_x = 0,
        \end{array} \right.
\end{equation*}
with $\frac{\partial \tilde f}{\partial u} > - 1$, so that local uniform hyperbolicity is satisfied. The main novelties introduced in the proof of this case are:
\begin{enumerate}
\item the definition of an \emph{effective flux function}, which contains all the information about the ``convexity/concavity'' of each characteristic family, \cite[Section 3.4]{bia_mod_14};
\item a new choice of the weights for couple of waves, which take into account the presence of the wavefronts of the other family, \cite[Section 4.2]{bia_mod_14};
\item the construction of a \emph{tree} at each interaction: this tree describes the past history of all the waves involved in the interaction from the time of their last splitting, \cite[Section 4.3]{bia_mod_14}.
\end{enumerate}
The assumption on the triangular structure of the system allows to reduce the analysis to a non autonomous scalar PDE
\begin{equation*}
w_t + \bigg( \frac{\partial f}{\partial w}(w,v) \bigg) w_x = 0,
\end{equation*}
for some smooth ($C^3$-)function $f$ such that $\frac{\partial f}{\partial w} > -1$, thus granting several simplifications.

Aim of this paper is to complete the proof of \eqref{E_quadrati_est} in the general case. Even if a similar analysis can be done for wavefront tracking approximations, we choose to prove the quadratic estimate \eqref{E_quadrati_est} in  the case of the Glimm approximation scheme for two reasons.

First it is mathematically simpler. \\
On one hand, in fact, it requires the introduction of the \emph{quadratic amount of interaction} for two merging Riemann problems (Definition \ref{D_aquadr}), a new quantity not appeared before in the literature: we need this quantity because the Glimm restarting procedure merges Riemann problems at each time step, while wavefront tracking considers only binary interactions. The quadratic amount of interaction is interesting by itself. \\
On the other hand, once the analysis of two merging Riemann problems has been studied, the scheme proceeds flawlessly, because one does not need to study the various Riemann solver (accurate, approximate and brute, \cite{anc_mar_07_arma}).

Secondly, it is the original situation studied in \cite{anc_mar_11_CMP}, with the  aim of obtaining an explicit rate of convergence for the Glimm scheme. With the result proved here, Theorem 1 of \cite{anc_mar_11_CMP} is now correct.

After this brief introduction, we now present the main result of the paper.

\subsection{Main result}
\label{Ss_main_result}

Consider a general system of conservation laws
\begin{equation}
\label{E_gen_syst}
u_t + f(u)_x = 0,
\end{equation}
where $f: \Omega \subseteq \R^n \to \R^n$ is a smooth function (at least $C^3$), defined on a neighborhood $\Omega$ of $\check u \in \R^d$, satisfying the \emph{strict hyperbolicity assumption}, i.e. for any $u \in \Omega$, the Jacobian matrix $A(u) :=Df(u)$ has $n$ real distinct eigenvalues $\lambda_1(u) < \dots < \lambda_n(u)$. Together with \eqref{E_gen_syst} we consider the initial datum
\begin{equation}
\label{E_initial_datum}
u(0,x) := u_0(x) \in \BV(\R).
\end{equation}
As usual, denote by $\{r_k\}_k$ ($\{l_k\}_k$) the basis of right (left) eigenvalues, normalized by 
\begin{equation*}
|r_k(u)| = 1, \qquad \langle l_k(u), r_h(u) \rangle = 
\begin{cases}
1, & k= h, \\
0, & k \neq h.
\end{cases}
\end{equation*}
W.l.o.g. we assume $\check u = 0$ and $\{\lambda_k(u)\} \subseteq [0,1]$. Since we will consider only solutions with small total variation, taking values in a relatively compact neighborhood of the origin, we can also assume that all the derivatives of $f$ are bounded on $\Omega$ and that there exist constant $\hat \lambda_0, \dots, \hat \lambda_n$ such that 
\begin{equation}
\label{E_uniform_hyp}
0 < \hat \lambda_{k-1} < \lambda_k(u) < \hat \lambda_k < 1, \qquad \text{for any } u \in \Omega, \ k =1,\dots, n.
\end{equation}
When the initial datum has the particular form
\begin{equation*}
u(0,x) = 
\begin{cases}
u^L & x \geq 0, \\
u^R & x < 0,
\end{cases}
\end{equation*}
the Cauchy problem \eqref{E_gen_syst}-\eqref{E_initial_datum} is called the \emph{Riemann problem (RP) $(u^L, u^R)$}.


Let $(u^L, u^M)$, $(u^M, u^R)$ be two Riemann problems with a common state $u^M$, and consider the Riemann problem $(u^L, u^R)$. It is well known (see \cite{bre_00,daf_book_05}, or Section \ref{Ss_Rp}) that if $|u^M - u^L|, |u^R- u^M| \ll 1$, then one can solve the three Riemann problems as follows:
\begin{equation*}
u^M = T^{n}_{s'_n} \circ \dots \circ T^{1}_{s'_1} u^{L}, \qquad  
u^{R} = T^{n}_{s''_n} \circ \dots \circ T^{1}_{s''_1} u^{M}, \qquad 
u^{R} = T^{n}_{s_n} \circ \dots \circ T^{1}_{s_1} u^{L},
\end{equation*}
where for each $k = 1, \dots, n$, $s_k', s_k'',s_k \in \R$ and $(s,u) \mapsto T^k_s u$ is the map which at each left state $u$ associates the right state $T^k_s u$ such that the Riemann problem $(u, T^k_s u)$ has an entropy admissible solution made only by wavefronts with total strength $|s|$ belonging to the $k$-th family. This smooth curve is called \emph{$k$-th admissible curve} and it is parameterized such that $\frac{d}{ds} T^k_s u = r_k(u)$. We will also use the notation $T^k_s(u)$.

We are interested in studying how much the speed of the  wavefronts of the two incoming Riemann problems can change after the collision. More precisely, for each family $k$, writing for brevity (formula \eqref{E_I_def})
\begin{equation*}
\I(s) = \big[ \min\{s,0\},\max\{s,0\} \big] \setminus \{0\},
\end{equation*}
let us denote by
\begin{equation*}
\begin{array}{ll}
\sigma_k': \I(s_k) \to (\hat \lambda_{k-1},\hat \lambda_k) & \begin{array}{l} \text{the speed function of the wavefronts of the $k$-th family for} \\ \text{the Riemann problem $(u^L, u^M)$} , \end{array} \\ [1em]
\sigma_k'': s_k' + \I(s_k'') \to (\hat \lambda_{k-1},\hat \lambda_k) & \begin{array}{l} \text{the speed function of the wavefronts of the $k$-th family for} \\ \text{the Riemann problem $(u^M, u^R)$} , \end{array} \\ [1em]
\sigma_k: \I(s_k) \to (\hat \lambda_{k-1},\hat \lambda_k) & \begin{array}{l} \text{the speed function of the wavefronts of the $k$-th family for} \\ \text{the Riemann problem $(u^L, u^R)$.} \end{array}
\end{array}
\end{equation*}
and consider the $L^1$-norm of the speed difference between the waves of the Riemann problems $(u_L,u_M)$, $(u_M,u_R)$ and the outgoing waves of $(u_L,u_R)$:
\begin{equation*}
\Delta \sigma_k(u^L, u^M, u^R) := \left\{
\begin{array}{ll}
\big\| \big(\sigma_k' \cup \sigma_k''\big) - \sigma_k \big\|_{L^1(\I(s_k'+s_k'') \cap \I(s_k))} & \text{if } s_k' s_k'' \geq 0, \\ [1em]
\big\| \big(\sigma_k' \vartriangle  \sigma_k''\big) - \sigma_k \big\|_{L^1(\I(s_k'+s_k'') \cap \I(s_k))} & \text{if } s_k' s_k'' < 0,
\end{array} \right.
\end{equation*}
{where $\sigma_k' \cup \sigma_k''$ is the function obtained by piecing together $\sigma_k'$, $\sigma_k''$, while $\sigma_k' \vartriangle  \sigma_k''$ is the restriction of $\sigma_k'$ to $\I(s_k' + s_k'')$ if $|s_k'| \geq |s_k''|$ or $\sigma_k'' \llcorner_{\I(s_k' + s_k'')}$ in the other case, see formulas \eqref{E_f_cup_g}, \eqref{E_f_vartr_g}.}

Now consider a right continuous $\e$-approximate solution constructed by the Glimm scheme (see Section \ref{Ss_glimm_sol}); by simplicity, for any grid point $(i\e,m\e)$ denote by 
\begin{equation*}
\Delta \sigma_k(i\e,m\e) := \Delta \sigma_k (u^{i, m-1}, u^{i-1,m-1}, u^{i,m})
\end{equation*}
the change in speed of the $k$-th wavefronts at the grid point $(i\e,m\e)$ arriving from points $(i\e,(m-1)\e)$, $((i-1)\e,(m-1)\e)$, where $u^{j,r} := u(j\e,r\e)$. The main result of this paper is that the sum over all grid points of the change in speed is bounded by a quantity which depends only on the flux $f$ and the total variation of the initial datum and does not depend on $\e$. More precisely, the theorem we prove is the following.

\begin{theorem}
\label{T_main}
It holds
\begin{equation*}
\sum_{i = 1}^{+\infty} \sum_{m \in \Z} \Delta \sigma_k (i\e, m\e) \leq \const \TV(u_0; \R)^2,
\end{equation*}
where $\const$ is a quantity which depends only on the flux $f$.
\end{theorem}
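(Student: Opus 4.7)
The plan is to bound the left-hand side by the total decrease of a single Glimm-type functional $\fQ$, tailored to the lack of genuine nonlinearity, whose initial value is $\const\TV(u_0)^2$. The construction starts from the Lagrangian representation $\mathtt{x}(t,w)$ announced in the abstract, which follows every infinitesimal wave from its creation to its cancellation and lets one speak meaningfully of pairs of waves $(w,w')$ throughout the evolution. At every grid point $(i\e,m\e)$ I split $\Delta\sigma_k$ according to the types of wave involved: contributions from transversal interactions, from cancellations between opposite-sign waves of the same family, and from wave creation can all be absorbed by classical Glimm-type quantities $\QGlimm,\Qtrans,\Qbb$, whose total decrease is already $\const\TV(u_0)^2$, leaving only the genuine same-family, same-sign contributions.

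To treat these, I use the effective flux function $\feff_k$, defined as a suitable convex/concave envelope of $\lambda_k$ along the $k$-th admissible curve: the speed $\sigma_k$ of a wave with coordinate $s$ is $\partial_s \feff_k(s)$, so $\Delta\sigma_k$ is controlled by the $L^1$ distance between these derivatives for the incoming and outgoing Riemann problems. For each pair of same-family waves $(w,w')$ the characteristic interval and partition, extending those of \cite{bia_mod_14}, encode the portion of the common past during which the two waves have not yet been separated by a shock, a rarefaction or a cancellation; this information provides the weight appearing in the definition of $\fQ$. Combined with bounds on the second variation of $\feff_k$, it yields at each merging Riemann problem of the Glimm step the local estimate
\begin{equation*}
\Delta \sigma_k(u^L,u^M,u^R) \leq \Aquadr_k(u^L,u^M,u^R) + \const \bigl( \Atrans + \Acanc + \Acubic + \Acr \bigr),
\end{equation*}
where $\Aquadr_k$ is the new quadratic amount of interaction for merging Riemann problems (Definition \ref{D_aquadr}) and the last four terms are classical quantities whose sum along the evolution is $\const\TV(u_0)^2$. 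Showing that $\Aquadr_k(i\e,m\e)$ is dominated by the decrement of $\fQ$ at that grid point and then telescoping over the grid completes the proof.

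The main obstacle is proving the monotonicity of $\fQ$ along a Glimm step. Unlike wavefront tracking, the Glimm restarting procedure merges several Riemann problems simultaneously, so the contribution of the incoming waves to the outgoing speeds is genuinely non-local in time: the weight assigned to a pair $(w,w')$ must be built from the entire Lagrangian history of both waves in such a way that it continues to reflect the correct characteristic partition after many previous shocks, rarefactions and splittings, and in such a way that the perturbations produced by wave creation, by cancellation and by the passage through wavefronts of other families can be reabsorbed by the cubic, transversal, cancellation and creation terms above. Verifying that the resulting $\fQ$ is monotone non-increasing, and hence that $\Aquadr \leq -\Delta \fQ + \const(\text{lower order})$ at each grid time, is the technical heart of the argument.
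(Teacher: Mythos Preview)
Your proposal is correct and follows essentially the same approach as the paper: the local estimate $\Delta\sigma_k \leq \const\,\mathtt A$ (the paper's Theorem~\ref{T_general}), the Lagrangian representation with characteristic intervals and partitions, and the functional $\fQ_k$ whose approximate monotonicity \eqref{E_bound_on_fQ} absorbs $\Aquadr_k$ up to lower-order terms controlled by $\Qknown$. Two minor remarks: in the paper the local estimate carries a constant in front of $\Aquadr_k$ as well (not just the other terms), and $\Acr$ is not listed separately since it is already bounded by $\Atrans+\sum_h\Acubic_h$ (Corollary~\ref{C_aocr}); also, $\feff_k$ is defined via its second derivative along the Lagrangian parametrization rather than directly as an envelope of $\lambda_k$, though its convex/concave envelope does govern $\sigma_k$ on each Riemann problem.
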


The proof of Theorem \ref{T_main} follows a classical approach used in hyperbolic system of conservation laws in one space dimension.

We first prove a \emph{local} estimate. For the couple of Riemann problems $(u^L, u^M)$, $(u^M, u^R)$, we define the quantity
\begin{equation}
\label{E_amount_intro}
\begin{split}
\mathtt A(u^L, u^M, u^R) &:= \Atrans(u^L, u^M, u^R)  \\
&~ \quad + \sum_{h=1}^n \Big(\Aquadr_h(u^L,u^M, u^R) + \Acanc_h(u^L, u^M, u^R) + \Acubic_h(u^L, u^M, u^R) \Big),
\end{split}
\end{equation}
which we will call the \emph{global amount of interaction} of the two merging RPs $(u^L, u^M)$, $(u^M,u^R)$. Three of the terms in the r.h.s. of \eqref{E_amount_intro} have already been introduced in the literature, namely
\begin{align*}
& \Atrans(u^L, u^M, u^R) && \text{is the \emph{transversal amount of interaction} (see \cite{gli_65} and Definition \ref{D_atrans})}; \\
& \Acanc_h(u^L,u^M, u^R) && \text{is the \emph{amount of cancellation} of the $h$-th family (see Definition \ref{D_amount_canc})}; \\
& \Acubic_h(u^L, u^M, u^R) && \text{is the \emph{cubic amount of interaction} of the $h$-th family (see \cite{bia_03} and Definition \ref{D_acubic})}.
\end{align*}
The term $\Aquadr_h(u^L, u^M, u^R)$, which we will call the \emph{quadratic amount of interaction} of the $h$-th family (see Definition \ref{D_aquadr}), is introduced for the first time here. \\
The local estimate we will prove is the following: for all $k = 1, \dots, n$,
\begin{equation}
\label{E_delta_sigma_less_A}
\Delta \sigma_k(u^L, u^M, u^R) \leq \const \mathtt A(u^L, u^M, u^R).
\end{equation}
This is done in Section \ref{S_local}, Theorem \ref{T_general}.

Next we show a \emph{global} estimate, based on a new interaction potential. For any grid point $(i\e,m\e)$ define
\begin{equation*}
\mathtt A(i\e, m\e) : = \mathtt A(u^{i, m-1}, u^{i-1,m-1}, u^{i,m})
\end{equation*}
as the amount of interaction at the grid point $(i\e,m\e)$, and similarly let $\Atrans(i\e,m\e)$, $\Acanc_h(i\e,m\e)$, $\Acubic_h(i\e,m\e)$, $\Aquadr_h(i\e,m\e)$ be the transversal amount of interaction, amount of cancellation, cubic amount of interaction at the grid point $(i\e,m\e)$, respectively. \\
We will introduce a new interaction potential $\Upsilon$ with the following properties:
\begin{enumerate}
\item \label{Point_intro_1} it is uniformly bounded at time $t = 0$: in fact,
\begin{equation*}
\Upsilon(0) \leq \const \TV(u_0;\R)^2;
\end{equation*}
\item \label{Point_intro_2} it is constant on time intervals $[(i-1)\e, i\e)$;
\item \label{Point_intro_3} at any time $i\e$, it decreases at least of $ \frac{1}{2} \sum_{m \in \Z} \mathtt A(i\e,m\e)$.
\end{enumerate}
It is fairly easy to see that Points \eqref{Point_intro_1}, \eqref{Point_intro_2}, \eqref{Point_intro_3} above, together with inequality \eqref{E_delta_sigma_less_A}, imply Theorem \ref{T_main}.

The potential $\Upsilon$ is constructed as follows. We define a functional $t \mapsto \fQ(t)$, constant in the time intervals $[(i-1)\e, i\e)$ and bounded by $\const \TV(u_0;\R)^2$ at $t=0$, which satisfies the following inequality (see Theorem \ref{T_variation_fQ}):
\begin{equation}
\label{E_bound_on_fQ}
\begin{split}
\fQ(i\e) - \fQ(&(i-1)\e) \leq - \sum_{m \in \Z} \sum_{h=1}^n \Aquadr(i\e,m\e) + \const \TV(u_0; \R) \sum_{m \in \Z} \mathtt A(i\e, m\e) \\
&= - \big( 1 - \const \TV(u_0; \R) \big) \sum_{m \in \Z} \sum_{h=1}^n \Aquadr(i\e,m\e) \\
&~ \quad + \const \TV(u_0; \R) \sum_{m \in \Z} \Atrans(i\e,m\e) \\
&~ \quad + \const \TV(u_0; \R) \sum_{m \in \Z} \sum_{h=1}^n \bigg( \Acanc_h(i\e,m\e) + \Acubic_h(i\e,m\e) \bigg). 
\end{split}
\end{equation}
It is well known (see \cite{gli_65}, \cite{bia_03} and Section \ref{Ss_known_fcn}) that there exists a uniformly bounded, decreasing potential $\Qknown(t)$ such that at each time $i\e$ 
\begin{equation}
\label{E_bound_on_Qknown}
\sum_{m \in \Z}\bigg[\Atrans(i\e, m\e) + \sum_{h=1}^n \Big(\Acanc_h(i\e, m\e) + \Acubic_h(i\e, m\e)\Big)\bigg]  
\leq
\Qknown((i-1)\e) - \Qknown \big(i\e\big).
\end{equation}
Hence, it is straightforward to see from \eqref{E_bound_on_fQ}, \eqref{E_bound_on_Qknown} that we can find a constant $C$ big enough, such that the potential 
\begin{equation*}
\Upsilon(t) := \fQ(t) + C \Qknown(t)
\end{equation*}
satisfies Properties (1)-(3) above, provided that $\TV(u_0; \R) \ll 1$.

\subsection{Structure of the paper}
\label{Ss_structure_paper}

The paper is organized as follows.

In Section \ref{S_preliminaries} we recall some preliminary results, already present in the literature, which we will use thoroughly in the paper. \\
In Section \ref{Ss_Rp} we show how an entropic self-similar solution to the Riemann problem $(u^L, u^R)$ is constructed, focusing our attention especially on the proof of the existence of the elementary curves of a fixed family. Even if the main ideas are similar to the standard proof found in the literature (see for instance \cite{bia_bre_05}), we need to use a slightly different distance among elementary curves, see \eqref{E_distance_gamma}: this because we need sharper estimate on the variation of speed. \\
In Section \ref{Ss_def_aoi} we recall the definitions of some quantities which in some sense measure how strong is the interaction between two contiguous Riemann problems which are merging and we present some related results: these quantities are the \emph{transversal amount of interaction} (Definition \ref{D_atrans}), the \emph{cubic amount of interaction} (Definition \ref{D_acubic}), the \emph{amount of cancellation} (Definition \ref{D_amount_canc}) and the \emph{amount of creation} (Definition \ref{D_amount_canc}). \\
In Section \ref{Ss_glimm_sol} we review how a family of approximate solutions $\{u_\e(t,x)\}_{\e>0}$, to the Cauchy problem \eqref{E_gen_syst}-\eqref{E_initial_datum} is constructed by means of the Glimm scheme. \\
Finally in Section \ref{Ss_known_fcn} we recall the definitions of some already known functionals, which provide a uniform-in-time bound on the spatial total variation of the approximate solution $u_\e$.

Section \ref{S_local} is devoted to prove the \emph{local} part of the proof of Theorem \ref{T_main}, as explained in Section \ref{Ss_main_result}. In particular we will consider two contiguous Riemann problems $(u^L, u^M)$, $(u^M, u^R)$ which are merging, producing the Riemann problem $(u^L, u^R)$ and we will introduce a \emph{global amount of interaction $\mathtt A$}, which bounds
\begin{enumerate}
\item the $L^\infty$-distance between the $u$-component of the elementary curves before and after the interaction;
\item the $L^1$-distance between the speed of the wavefronts before and after the interaction, i.e. the $\sigma$-component of the elementary curves;
\item the $L^1$-distance between the second derivatives of the reduced fluxes, before and after the interaction.
\end{enumerate}
This is done in Theorem \ref{T_general}. \\
In Section \ref{Ss_def_quadr_amoun_int} we introduce the first novelty of the paper, i.e. the notion of \emph{quadratic amount of interaction $\Aquadr_k$} for waves belonging to the $k$-th characteristic family. Adding this new functional to the classical transversal amount of interaction, cubic amount of interaction and amount of cancellation we obtain the total amount of interaction, Definition \ref{D_quadr_amoun_inter}. \\
The important fact about the total amount of interaction is that it bounds the variation of the elementary curves when two Riemann problems are merged, Theorem \ref{T_general} of Section \ref{Ss_distance_curves}. The proof of Theorem \ref{T_general} is given in the next three subsections. In Section \ref{Sss_basic_est} we prove some basic estimates related to translations of the starting point of the curve which solves a Riemann problem and to changes of the length of such a curve; in Section \ref{Sss_elemnt_inter} we consider the situation in which each of the two incoming Riemann problems is solved by a wavefront of a single family $k$; in Section \ref{Sss_conclu_th_gen} we conclude the proof of Theorem \ref{T_general}, piecing together the analysis of the previous two cases.

In Section \ref{S_lagr_repr} we define the notion of \emph{Lagrangian representation} of an approximate solution $u_\e$ (Section \ref{Ss_lagrang_def}) obtained by the Glimm scheme to the Cauchy problem \eqref{E_gen_syst}-\eqref{E_initial_datum}, and we explicitly construct a Lagrangian representation satisfying some useful additional properties (Section \ref{Ss_explic_lagr}). In the final Section \ref{Ss_further_def} we introduce some notions related to the Lagrangian representation; in particular, the notion of \emph{effective flux $\feff_k(t)$ of the $k$-th family at time $t$} will have a major role in the next sections.

Starting with Section \ref{S_analysis_wave_coll} we enter in the heart of our construction. \\
In Section \ref{Ss_wave_pack} we define an equivalence relation on the waves of the $k$-th characteristic family, which allows to work only with finitely many equivalence classes instead of a continuum of waves. \\
In Section \ref{W_waves_collision} we introduce the notion of characteristic interval of waves $\mathcal I(\bar t,w,w')$ for any couple of waves $w,w'$: roughly speaking, the idea is that the waves outside this interval are not essential in computing the strength of the interaction of $w,w'$. In order to define this interval, we introduce the notion of \emph{pair of waves $(w,w')$ which have already interacted} and \emph{pair of waves $(w,w')$ which have never interacted} at time $\bar t$. \\
In Section \ref{Ss_partition} we give a partition $\mathcal P(\bar t,w,w')$ of this interval: the elements of this partition are waves with the same past history, from the moment in which one of them is created.

Now we have all the tools we need to define the functional $\fQ_k$ for $k =1,\dots,n$ and to prove that it satisfies the inequality \eqref{E_bound_on_fQ}, thus obtaining the \emph{global} part of the proof of Theorem \ref{T_main}. This is done in the final Section \ref{S_fQ}. \\
In Section \ref{Ss_def_fQ} we give the definition of $\fQ_k$, using the intervals $\mathcal I(\bar t,w,w')$ and their partitions $\mathcal P(\bar t,w,w')$. The idea is to adopt the form of the analogous functional $\fQ$ introduced first in \cite{bia_mod_13} and then further developed in \cite{bia_mod_14}: it is an integral among all couples of waves $w,w'$ of a weight $\mathfrak q_k(t,w,w')$ obtained (roughly speaking) by
\begin{equation*}
\mathfrak q_k(t,w,w') \approx \frac{\text{difference in speed of $w,w'$ for the Riemann problem in $\mathcal I(t,w,w')$ with the flux $\feff_k$}}{\text{length of the interval $\mathcal I(t,w,w')$}}.
\end{equation*}
The precise form is slight more complicated, in order to minimize the oscillations of $\mathfrak q_k$ in time. \\
In Section \ref{Ss_statement_main_thm} we state the main theorem of this last part of the paper, i.e. inequality \eqref{E_bound_on_fQ} and we give a sketch of its proof, which will be written down in all its details in the next subsections. \\
In Sections \ref{Ss_one_created}, we first study the increase of $\fQ$ for couples of waves one of which has been created at the time of interaction. The main result (Proposition \ref{P_prop_crea_fin}) is that this increase is controlled by the total amount of interaction times the total variation of the solution. \\
In Section \ref{Ss_both_conserved}, we study how the weight varies for couple of waves which are not interacting at the given time, and again in Theorem \ref{T_decreasing} we prove that it is controlled by the total amount of interaction times the total variation of the solution. \\
Finally in Section \ref{Ss_interacting}, Theorem \ref{T_decreasing} computes the decaying part of the functional, due to couple of waves which are involved in the same Riemann problem: it turns out that the decay is exactly $\Aquadr_k$ (plus the total amount of interaction times the total variation of the solution), and this concludes the proof of Theorem \ref{T_main}.

We conclude the paper with Appendix \ref{S_appendix}, where we collect some elementary results on convex envelopes and secant lines used thoroughly in the paper.

\subsection{Notations}
\label{Ss_notation}
\begin{itemize}
\item For any $s \in \R$, define
\begin{equation}
\label{E_I_def}
\I(s) := 
\begin{cases}
(0,s] & \text{if } s\geq 0, \\
[s, 0) & \text{if } s < 0.
\end{cases}
\end{equation}
\item Let $X$ be any set and let $f: \I(s') \to X$, $g: s'+\I(s'') \to X$;
    \begin{itemize}
    \item if $s's'' \geq 0$ and $f(s') = g(s')$, define
    \begin{equation}
    \label{E_f_cup_g}
    f \cup g: \I(s'+s'') \to X, \qquad
    \big(f \cup g \big) (x) :=
    \begin{cases}
    f(x) & \text{if } x \in \I(s'), \\
    g(x) & \text{if } x \in s'+\I(s'');
    \end{cases}
    \end{equation}
    \item if $s's'' <0$, define
    \begin{equation}
    \label{E_f_vartr_g}
    f \vartriangle g: \I(s'+s'') \to X, \qquad
    (f \vartriangle g)(x) := 
    \begin{cases}
    f(x) & \text{if } |s'| \geq |s''|,\ x \in \I(s'+s''), \\
    g(x-s') & \text{if } |s'| < |s''|,\ x \in \I(s'+s'').
    \end{cases}
    \end{equation}
    \end{itemize}
\item For a continuous real valued function $f$, we denote its convex envelope in the interval $[a,b]$ as $\displaystyle \conv_{[a,b]} f$.
\item Given a totally ordered set $(A, \preceq)$, we define a partial pre-ordering on $2^A$ setting, for any $I, J \subseteq A$,
\[
I \prec J  \text{ if and only if for any $a \in I, b \in J$ it holds } a \prec b.
\]
We will also write $I \preceq J$ if either $I \prec J$ or $I = J$, i.e. we add the diagonal to the relation, making it a partial ordering.
\item The $L^\infty$  norm of a map $g: [a,b] \to \R^n$ will be denoted either by $\|g\|_\infty$ or by $\|g\|_{L^\infty([a,b])}$, if we want to stress the domain of $g$; similar notation for the $L^1$-norm.
\item Given a $C^1$ map $g: \R \to \R$ and an interval $I \subseteq \R$, possibly made by a single point, let us define
\begin{equation*}
\sigmarh(g, I) := 
\begin{cases}
\dfrac{g(\sup I) - g(\inf I)}{\sup I - \inf I}, & \text{if $I$ is not a singleton}, \\
\dfrac{dg}{du}(I), & \text{if $I$ is a singleton}.
\end{cases}
\end{equation*}
\end{itemize}

\section{Preliminary results}
\label{S_preliminaries}

In this section we recall some preliminary results, already present in the literature, which we will use in the next sections.

In Section \ref{Ss_Rp} we show how an entropic self-similar solution to the Riemann problem $(u^L, u^R)$ is constructed, focusing our attention especially on the proof of the existence of the elementary curves of a fixed family. Even if the main ideas are similar to the standard proof found in the literature (see for instance \cite{bia_bre_05}), we need to use a slightly different distance among elementary curves, see \eqref{E_distance_gamma}: this because we need sharper estimate on the variation of speed. 

In Section \ref{Ss_def_aoi} we recall the definitions of some quantities which in some sense measure how strong is the interaction between two contiguous Riemann problems which are joining and we present some related results: these quantities are the \emph{transversal amount of interaction} (Definition \ref{D_atrans}), the \emph{cubic amount of interaction} (Definition \ref{D_acubic}), the \emph{amount of cancellation} (Definition \ref{D_amount_canc}) and the \emph{amount of creation} (Definition \ref{D_amount_canc}).

In Section \ref{Ss_glimm_sol} we review how a family of approximate solutions $\{u_\e(t,x)\}_{\e>0}$, to the Cauchy problem \eqref{E_gen_syst}-\eqref{E_initial_datum} is constructed by means of the Glimm scheme.

Finally in Section \ref{Ss_known_fcn} we recall the definitions of some already known functionals, which provide a uniform-in-time bound on the spatial total variation of the approximate solution $u_\e$.

\subsection{Entropic self-similar solution to the Riemann problem}
\label{Ss_Rp}

We describe here the method developed in \cite{bia_bre_05}, with some minimal variations, to construct a solution to the Riemann problem $(u^L, u^R)$, i.e. the system \eqref{E_gen_syst} together with the initial datum
\begin{equation}
\label{E_rp}
u(0,x) = 
\begin{cases}
u^L, & x\geq 0, \\
u^R, & x < 0,
\end{cases}
\end{equation}
provided that $|u^R - u^L|$ is small enough.
First we present the algorithm used to build the solution to the Riemann problem $(u^L, u^R)$ and then we focus our attention on the construction of the elementary curves of a fixed family.

\subsubsection{Algorithm for solving the Riemann problem}
\label{Sss_algor_RP}

The following proposition holds.

\begin{proposition}
\label{P_RIP_constr}
For all $\delta_2 > 0$ there exists $0 < \delta_1 < \delta_2$ such that for any $u^L, u^R \in B(0, \delta_1)$ the Riemann problem \eqref{E_gen_syst}, \eqref{E_rp} admits a unique, self-similar, right continuous, vanishing viscosity solution, taking values in $B(0, \delta_2)$.  
\end{proposition}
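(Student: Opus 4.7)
The plan is to follow the Bianchini--Bressan construction of \cite{bia_bre_05} adapted to the slightly stronger distance announced at \eqref{E_distance_gamma}. The proof naturally splits into two steps: first, construct for each family $k \in \{1,\dots,n\}$ the elementary $k$-admissible curve $s \mapsto T^k_s u^L$; second, compose the $n$ curves and invert the resulting map to find $u^R$.

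For the first step I would realize the elementary curve as a fixed point of a contractive operator on a suitable space of couples $(\gamma,\sigma)$, where $\gamma: \I(\bar s) \to \Omega$ is a Lipschitz curve with $\gamma(0)=u^L$ and $\sigma:\I(\bar s) \to \R$ is a monotone speed function. The operator is defined as follows: given $(\gamma,\sigma)$, construct a reduced scalar flux $f_k(\tau)$ along $\gamma$ using the generalized eigenvector of $A(u)$ in the direction of $r_k$ corrected by $\sigma$; take its convex envelope on $\I(\bar s)$ if $\bar s>0$ (resp.\ concave envelope if $\bar s<0$); let $\tilde\sigma(\tau)$ be the derivative of this envelope; finally integrate $d\tilde\gamma/d\tau = \tilde r_k(\tilde\gamma,\tilde\sigma)$ from $u^L$ to obtain a new curve. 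One searches the fixed point in the closed set of curves with tangent close to $r_k(u^L)$ and with speed lying in $(\hat\lambda_{k-1},\hat\lambda_k)$, endowed with the modified distance \eqref{E_distance_gamma} that combines the $L^\infty$ norm on $\gamma$ with an $L^1$ norm on $\sigma$. For $\delta_1$ small, the smoothness of $f$ together with standard stability bounds on convex envelopes (collected in the appendix of the paper) yield that the operator is a strict contraction, which furnishes existence, uniqueness and smooth dependence on $u^L$ of the pair $(T^k_\cdot u^L, \sigma_k)$, together with the tangency relation $\tfrac{d}{ds}T^k_s u^L|_{s=0}=r_k(u^L)$.

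For the second step, define $\Phi(s_1,\dots,s_n) := T^n_{s_n}\circ\cdots\circ T^1_{s_1}\,u^L$. By construction $\Phi(0)=u^L$ and $\partial_{s_k}\Phi(0)=r_k(u^L)$, so strict hyperbolicity implies that $D\Phi(0)=[r_1(u^L)\mid\cdots\mid r_n(u^L)]$ is invertible. An application of the inverse function theorem yields, for $\delta_1$ sufficiently small, a unique $n$-tuple $(s_1,\dots,s_n)$ with $\Phi(s_1,\dots,s_n)=u^R$, smoothly depending on $(u^L,u^R)$. The self-similar solution is obtained by concatenating in order of family index the $n$ elementary fans: the fan of the $k$-th family is the map $(t,x)\mapsto T^k_{\sigma_k^{-1}(x/t)}\bigl(T^{k-1}_{s_{k-1}}\circ\cdots\circ T^1_{s_1}\,u^L\bigr)$ where $\sigma_k$ is read as a monotone function, extended by a constant on the rarefaction-free part. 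The uniform separation $\hat\lambda_{k-1}<\sigma_k<\hat\lambda_k$ of \eqref{E_uniform_hyp} guarantees that consecutive fans do not overlap, while right continuity is built in by choosing the right-continuous representative at each jump. Values remain in $B(0,\delta_2)$ because $|T^k_s u - u| = O(|s|)$ and $\sum_k|s_k| = O(|u^R-u^L|) = O(\delta_1) \ll \delta_2$. Vanishing viscosity admissibility coincides with the Lax/Liu admissibility encoded in the convex/concave envelope construction of $\sigma_k$, as proved in \cite{bia_bre_05}.

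The main obstacle is the contraction argument of the first step with the modified distance \eqref{E_distance_gamma}: the $L^1$ control on $\sigma$ is strictly stronger than what is needed in \cite{bia_bre_05}, and obtaining a contraction factor depending only on $\delta_1$ requires a refined analysis of how the convex envelope of a perturbed flux $f_k + \eta$ differs from that of $f_k$, in the $L^1$ sense of its derivative. This is where the appendix on convex envelopes and secant lines plays its essential role; the rest of the argument is then a routine adaptation of the scheme in \cite{bia_bre_05}.
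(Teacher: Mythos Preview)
Your outline follows the same architecture as the paper --- fixed-point construction of elementary curves, then composition and inversion --- but two structural points are missing or misstated.

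First, you set up the fixed point on pairs $(\gamma,\sigma)$, while the paper works with triples $(u,v_k,\sigma_k)$. The extra scalar component $v_k$ is not a detail: it arises from a preliminary center-manifold reduction (Step~1 of the paper's proof) for the traveling-wave ODE $(A(u)-\sigma\mathbb I)u_x=u_{xx}$ near the equilibrium $(0,0,\lambda_k(0))$. That reduction produces the generalized eigenvector $\tilde r_k(u,v_k,\sigma_k)$ and eigenvalue $\tilde\lambda_k(u,v_k,\sigma_k)$ together with the key bounds $|\partial_{\sigma_k}\tilde r_k|,\ |\partial_{\sigma_k}\tilde\lambda_k|\le \const|v_k|$ (formulas \eqref{E_generalized_eigenvector}, \eqref{E_delambdasudev}). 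These bounds are precisely what make the contraction close in the distance \eqref{E_distance_gamma}: in the proof of Lemma~\ref{L_contract_dagger} the factor $|v_k|\le\delta$ in front of $|\sigma_k-\sigma_k'|$ is what renders the $\sigma$-contribution small. Your phrase ``generalized eigenvector of $A(u)$ \dots\ corrected by $\sigma$'' gestures at this but, written as a function of two variables only, the operator you describe has no mechanism to damp the $\sigma$-dependence, and the contraction argument would not go through as stated.

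Second, the curves $s\mapsto T^k_s u^L$ are only Lipschitz, not $C^1$ (see Lemma~2.3, which asserts an $\mathrm{ess\,lim}$ for the tangent). Hence in Step~5 the paper invokes an implicit function theorem for Lipschitz maps, not the classical inverse function theorem; your claim of ``smooth dependence on $u^L$'' should be downgraded accordingly. A minor related point: the $L^1$ norm on $\sigma$ in \eqref{E_distance_gamma} is not a \emph{stronger} topology than in \cite{bia_bre_05}; the paper's reason for the modified distance is not that contraction becomes harder, but that this norm directly yields the $L^1$-in-speed estimates used downstream in Theorem~\ref{T_general}.
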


\begin{proof}[Sketch of the proof]
\smallskip
\noindent \textit{Step 1.} For any index $k \in \{1,\dots,n\}$, through a Center Manifold technique, one can find a neighborhood of the point $(0, 0, \lambda_k(0))$ of the form
\begin{equation*}
\mathcal{D}_k := \big\{(u,v_k,\sigma_k) \in \R^n \times \R \times \R \ \big| \ |u| \leq \rho, |v_k| \leq \rho, |\sigma_k - \lambda_k(0)| \leq \rho \big\}
\end{equation*}
for some $\rho>0$ (depending only on $f$) and a smooth vector field 
\[
\tilde r_k: \mathcal{D}_k \to \R^n, \qquad \tilde r_k = \tilde r_k(u,v_k,\sigma_k),
\]
satisfying 
\begin{equation}
\label{E_generalized_eigenvector}
\tilde r_k(u,0,\sigma_k) = \tilde r_k(u), \qquad
\bigg|\frac{\partial \tilde r_k}{\partial \sigma_k}(u,v_k,\sigma_k)\bigg| \leq \const \big|v_k\big|.
\end{equation}
We will call $\tilde r_k$ the \emph{$k$-generalized eigenvector}.
The characterization of $\tilde r_k$ is that
\begin{equation*}
\mathcal D_k \ni (u,v_k,\sigma_k) \mapsto \big(u,v_k \tilde r_k,\sigma_k \big) \in \R^n \times \R^n \times \R
\end{equation*}
is a parameterization of a center manifold near the equilibrium $(0,0,\lambda_k(0)) \in \mathcal D_k$ for the ODE of traveling waves
\begin{equation*}
\big( A(u) - \sigma \mathbb I \big) u_x = u_{xx} \qquad \Longleftrightarrow \qquad \left\{ \begin{aligned}
u_x &= v \\
v_x &= (A(u) - \sigma \mathbb I) v \\
\sigma_x &= 0
\end{aligned} \right.
\end{equation*}
where $A(u) = Df(u)$, the Jacobian matrix of the flux $f$, and $\mathbb I$ is the identity $n \times n$ matrix.

Associated to the generalized eigenvectors, we can define smooth functions $\tilde \lambda_k: \mathcal{D}_k \to \R$ by
\[
\tilde \lambda_k(u,v_k,\sigma_k) := \big \langle l_k(u), A(u) \tilde r_k(u,v_k,\sigma_k) \big \rangle.
\]
We will call $\tilde \lambda_k$ the \emph{$k$-generalized eigenvalue}. By \eqref{E_generalized_eigenvector} and the definition of $\tilde \lambda_k$, we can get
\begin{equation}
\label{E_delambdasudev}
\tilde \lambda_k(u,0,\sigma_k) = \lambda_k(u), \qquad
\bigg|\frac{\partial \tilde \lambda_k}{\partial \sigma_k}(u,v_k,\sigma_k)\bigg| \leq \const |v_k|.
\end{equation}
For the construction of the generalized eigenvectors and eigenvalues and the proof of \eqref{E_generalized_eigenvector}, \eqref{E_delambdasudev}, see Section 4 of \cite{bia_bre_05}.

\smallskip
\noindent \textit{Step 2.} By a fixed point technique one can now prove that there exist $\delta, \eta >0$ (depending only on $f$), such that for any
\begin{equation*}
k \in \{1,\dots,n\}, \qquad u^L \in B(0, \rho/2), \qquad 0 \leq s < \eta,
\end{equation*}
there is a curve
\begin{equation*}
\begin{array}{ccccc}
\gamma &:& [0,s] &\to& \mathcal{D}_k \\ [.5em]
&& \tau &\mapsto& \gamma(\tau) = (u(\tau), v_k(\tau),\sigma_k(\tau))
\end{array}
\end{equation*}
such that $u,v_k \in C^{1,1}([0,s])$, $\sigma_k \in C^{0,1}([0,s])$, it takes values in $B(u^L, \delta) \times B(0, \delta) \times B(\lambda_k(u^L), \delta)$
and it is the unique solution to the system
\begin{equation}
\label{E_fixed_pt}
\left\{
\begin{aligned}
u(\tau) &= u^L + \int_0^\tau \tilde r_k(\gamma(\varsigma)) d\varsigma \\
v_k(\tau) &= f_k (\gamma; \tau) - \conv_{[0,s]} f_k (\gamma; \tau) \\
\sigma_k(\tau) &= \frac{d}{d\tau} \conv_{[0,s]} f_k (\gamma; \tau)
\end{aligned}
\right.
\end{equation}
where
\begin{equation}
\label{E_reduced_flux}
f_k(\gamma;\tau) := \int_0^\tau \tilde \lambda_k(\gamma(\varsigma)) d\varsigma.
\end{equation}
In the case $s<0$ a completely similar result holds, replacing the convex envelope with the concave one.

\noindent If we want to stress the dependence of the curve $\gamma$ on $u^L$ and $s$ we will use the notation
\begin{equation*}
\gamma(u^L, s)(\tau) = \Big( u(u^L, s)(\tau), v_k(u^L, s)(\tau), \sigma_k(u^L, s)(\tau) \Big).
\end{equation*}

Even if the existence and uniqueness of such a curve is known, we give a proof in Section \ref{Ss_proof_step_2}, since we need to use a definition of distance among curves slightly different from the one in \cite{bia_bre_05}.
\smallskip

\noindent \textit{Step 3.} Once the curve $\gamma$ solving \eqref{E_fixed_pt} is found, one can prove the following lemma. 

\begin{lemma}
\label{P_rp_k_family}
Let $\gamma: [0,s] \to \mathcal{D}_k$, $\gamma(u^L,s)(\tau) = (u(\tau),v_k(\tau),\sigma_k(\tau))$, be the Lipschitz curve solving the system \eqref{E_fixed_pt} and define the right state $u^R := u(s)$. Then the unique, right continuous, vanishing viscosity solution of the Riemann problem $(u^L,u^R)$ is the function
\begin{equation*}
\omega(t,x) := 
\begin{cases}
u^L & \text{if } x/t \leq \sigma_k(0), \\
u(\tau) & \text{if } x/t = \max\{\xi \in [0,s] \ | \ x/t = \sigma_k(\xi)\}, \\
u^R & \text{if } x/t \geq \sigma_k(s).
\end{cases}
\end{equation*}
\end{lemma}

For the proof see Lemma 14.1 in \cite{bia_bre_05}. The case $s<0$ is completely similar.

\smallskip

\noindent \textit{Step 4.} By previous step, for any $k \in \{1,\dots,n\}$, $u^L \in B(0, \rho/2)$, there is a curve
\begin{equation*}
(-\eta,\eta) \ni s \quad \mapsto \quad T^k_s(u^L) := u(u^L,s)(s) \in B(u^L,\delta) \subseteq \R^n
\end{equation*}
such that the Riemann problem $(u^L, T^k_s(u^L))$ admits a self similar solution consisting only of $k$-waves. 
\begin{lemma}
The curve $s \mapsto T^k_s(u^L)$ is Lipschitz continuous and 
\begin{equation}
\label{E_T_lipschitz}
\underset{s \to 0}{\mathrm{ess\,lim}} \frac{d T^k_s(u^L)}{ds} = r_k(u^L).
\end{equation}
\end{lemma}
For the proof see Lemma 14.3 in \cite{bia_bre_05}.

\smallskip
\noindent \textit{Step 5}. Thanks to \eqref{E_T_lipschitz}, the solution to the general Riemann problem \eqref{E_gen_syst}, \eqref{E_rp} can be now constructed following a standard procedure (see for example \cite[Chapter 9]{daf_book_05}). One considers the composite map
\begin{equation*}
\begin{array}{ccccc}
T(u^L) &:& (-\eta, \eta)^n &\to& \R^n \\ [.5em]
&& (s_1, \dots, s_n) &\mapsto& T(u^L)(s_1,\dots,s_n) := T^n_{s_n} \circ \dots \circ  T^1_{s_1} (u^L)
\end{array}
\end{equation*}
By \eqref{E_T_lipschitz} and a version of the Implicit Function Theorem valid for Lipschitz continuous maps, $T(u^L)$ is a one-to-one mapping from a neighborhood of the origin in $\R^n$ onto a neighborhood of $u^L$. Hence, for all $u^R$ sufficiently close to $u^L$ (uniformly w.r.t. $u^L \in B(0, \rho/2)$), one can find unique values $s_1, \dots, s_n$ such that $T(u^L)(s_1,\dots,s_n) = u^R$. \\
In turn, this yields intermediate states $u_0 = u^L, u_1, \dots, u_n = u^R$ such that each Riemann problem with data $(u_{k-1}, u_k)$ admits a vanishing viscosity solution $\omega_k = \omega_k(t,x)$ consisting only of $k$-waves. By the assumption \eqref{E_uniform_hyp} we can define the solution to the general Riemann problem $(u^L, u^R)$ by 
\begin{equation*}
\omega(t,x) = \omega_k(t,x) \qquad \text{for } \hat \lambda_{k-1} < \frac{x}{t} < \hat \lambda_k.
\end{equation*}
Therefore we can choose $\delta_1, \delta_2 \ll 1$ such that if $u^L, u^R \in B(0, \delta_1)$, the Riemann problem $(u^L, u^R)$ can be solved as above and the solution takes values in $B(0, \delta_2)$, thus concluding the proof of the proposition.
\end{proof}

\subsubsection{Proof of Step 2}
\label{Ss_proof_step_2}

We now explicitly prove that the system \eqref{E_fixed_pt} admits a $C^{1,1} \times C^{1,1} \times C^{0,1}$-solution, i.e. we prove Step 2 of the previous algorithm, using the Contraction Mapping Principle. As we said, we need a proof slightly different from the one in \cite{bia_bre_05}: in fact, even if the general approach is the same, the distance used among curves is suited for the type of estimates we are interested in.

Fix an index $k = 1, \dots, n$ and consider the space
\[
X := C^0\big([0,s];\R^n\big) \times C^0\big([0,s]\big) \times L^1\big([0,s]\big)
\]
A generic element of $X$ will be denoted by $\gamma = (u, v_k, \sigma_k)$. The index $k$ is just to remember that we are solving a RP with wavefronts of the $k$-th family. Endow $X$ with the norm
\begin{equation}
\label{E_distance_gamma}
\|\gamma\|_\dagger = \big\| (u,v_k,\sigma_k) \big\|_{\dagger} := \|u\|_\infty + \|v_k\|_\infty + \|\sigma_k\|_1
\end{equation}
and consider the subset
\[
\begin{split}
\Gamma_k(u^L,s) := \bigg\{\gamma = (u,v_k,\sigma_k) \in X : &~ u,v_k \text{ are Lipschitz and } \Lip(u) + \Lip(v_k) \leq L,   \\ 
&~ u(0) = u^L, v_k(0) = 0,\\
&~ |u(\tau) - u^L| \leq \delta, |v_k(\tau)| \leq \delta \text{ for any } \tau \in [0,s], \\
&~ |\sigma_k(\tau) - \lambda_k(u^L)| \leq \delta \text{ for $\mathcal L^1$}-a.e.  \tau \in [0,s] \bigg\}
\end{split}
\]
for $u^L \in B(0, \rho)$ and $L, s,\delta >0$ which will be chosen later. Clearly $\Gamma_k(u^L,s)$ is a closed subset of the Banach space $X$
and thus it is a complete metric space. Denote by $D$ the distance induced by the norm $\| \cdot \|_\dagger$ on $X$.

Consider now the transformation
\begin{equation*}
\begin{array}{ccccc}
\mathcal T &:& \Gamma_k(u^L,s) &\to& L^\infty([0,s],\R^{n+2})  \\ [.5em]
&& \gamma &\mapsto& \hat \gamma := \mathcal T \gamma
\end{array}
\end{equation*}
defined by the formula
\begin{equation*}
\left\{
\begin{aligned}
\hat u(\tau) &:= u^L + \int_0^\tau \tilde r_k(\gamma(\varsigma)) d\varsigma, \\
\hat v_k(\tau) &:= f_k (\gamma; \tau) - \conv_{[0,s]} f_k (\gamma; \tau), \\
\hat \sigma_k(\tau) &:= \frac{d}{d\tau} \conv_{[0,s]} f_k (\gamma; \tau),
\end{aligned}
\right.
\end{equation*}
where $f_k$ has been defined in \eqref{E_reduced_flux}. Observe that, since $\tilde \lambda_k$ is uniformly bounded near $(u^L, 0, \lambda_k(u^L))$, it turns out that $f_k(\gamma)$ is a Lipschitz function for any $\gamma \in \Gamma_k(u^L, s)$, and thus by Theorem \ref{convex_fundamental_thm}, Point \eqref{convex_fundamental_thm_1}, ${\displaystyle \conv_{[0,s]}} f_k(\gamma) : [0,s] \to \R$ is Lipschitz and its derivative is in $L^\infty([0,s],\R)$.

\begin{lemma}
\label{L_contract_dagger}
There exist $L, \eta, \delta >0$ depending only on $f$ such that for all fixed $u^L \in B(0, \rho/2)$ it holds:
\begin{enumerate}
\item for any $|s| < \eta$, $\mathcal{T}$ is a contraction from $\Gamma_k(u^L, s)$ into itself, more precisely
\[
\big\|\mathcal{T}(\gamma) - \mathcal{T}(\gamma')\big\|_\dagger \leq \frac{1}{2} \big\| \gamma - \gamma' \big\|_\dagger;
\]
\item \label{E_point_w_contr_dag} if $\bar \gamma = (\bar u, \bar v_k, \bar \sigma_k)$ is the fixed point of $\mathcal T$, then $\bar u, \bar v_k \in C^{1,1}$ and $\bar \sigma_k \in C^{0,1}$.
\end{enumerate}
\end{lemma}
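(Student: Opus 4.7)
The plan is to apply the Banach Contraction Mapping Principle on $(\Gamma_k(u^L,s), D)$: I check first that $\mathcal T$ maps $\Gamma_k(u^L,s)$ into itself, then estimate $\|\mathcal T\gamma-\mathcal T\gamma'\|_\dagger\leq\tfrac12\|\gamma-\gamma'\|_\dagger$, and finally extract the higher regularity of the fixed point directly from the fixed-point equations.

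For the invariance, $\hat u(0)=u^L$ and $\hat v_k(0)=0$ are immediate from the formulas, while both $f_k(\gamma;\cdot)$ and its convex envelope are Lipschitz with constant bounded by $\|\tilde\lambda_k\|_\infty$, giving $\Lip(\hat u)\leq\|\tilde r_k\|_\infty$ and $\Lip(\hat v_k)\leq 2\|\tilde\lambda_k\|_\infty$; fixing $L$ above their sum, the bounds $|\hat u(\tau)-u^L|,|\hat v_k(\tau)|\leq Ls\leq\delta$ hold whenever $s\leq\delta/L$. For the constraint on $\hat\sigma_k$, I use that the derivative of the convex envelope at $\tau$ is the slope of some secant of $f_k(\gamma;\cdot)$ over a subinterval $[\alpha,\beta]\subseteq[0,s]$, hence equals $(\beta-\alpha)^{-1}\int_\alpha^\beta\tilde\lambda_k(\gamma(\varsigma))\,d\varsigma$; since $\tilde\lambda_k(u,0,\sigma_k)=\lambda_k(u)$, $|\partial_{\sigma_k}\tilde\lambda_k|\leq C|v_k|$ by \eqref{E_delambdasudev}, and $|u-u^L|,|v_k|\leq\delta$, one gets $\tilde\lambda_k(\gamma)=\lambda_k(u^L)+O(\delta)$, so $|\hat\sigma_k-\lambda_k(u^L)|\leq\delta$ for $\delta$ small.

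For the contraction estimate I split
\[
\tilde r_k(\gamma)-\tilde r_k(\gamma') = \bigl[\tilde r_k(u,v_k,\sigma_k)-\tilde r_k(u',v'_k,\sigma_k)\bigr] + \bigl[\tilde r_k(u',v'_k,\sigma_k)-\tilde r_k(u',v'_k,\sigma'_k)\bigr],
\]
and exploit $|\partial_{\sigma_k}\tilde r_k|\leq C|v_k|$ from \eqref{E_generalized_eigenvector} to obtain $|\tilde r_k(\gamma)-\tilde r_k(\gamma')|\leq C(|u-u'|+|v_k-v'_k|)+C\delta|\sigma_k-\sigma'_k|$, with the analogous estimate for $\tilde\lambda_k$. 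Integration in $\tau$ yields
\[
\|\hat u-\hat u'\|_\infty,\;\|f_k(\gamma;\cdot)-f_k(\gamma';\cdot)\|_\infty \leq Cs\bigl(\|u-u'\|_\infty+\|v_k-v'_k\|_\infty\bigr)+C\delta\|\sigma_k-\sigma'_k\|_1.
\]
The first bound then controls $\|\hat v_k-\hat v'_k\|_\infty$ via the $1$-Lipschitz property of $\conv_{[0,s]}$ on $L^\infty$. For $\|\hat\sigma_k-\hat\sigma'_k\|_{L^1}$ the crucial input is the $L^1$-stability of the derivative of the convex envelope, $\bigl\|(\conv f)'-(\conv g)'\bigr\|_{L^1([0,s])}\leq C\|f-g\|_{L^\infty([0,s])}$ for uniformly Lipschitz $f,g$, collected in Appendix \ref{S_appendix}; this is precisely the reason for placing an $L^1$-norm in the $\sigma_k$-slot of \eqref{E_distance_gamma}. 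Summing the three contributions gives $\|\mathcal T\gamma-\mathcal T\gamma'\|_\dagger\leq C'(s+\delta)\|\gamma-\gamma'\|_\dagger$, and choosing first $\delta$ and then $s$ small enough forces the contraction constant below $1/2$.

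For the regularity of the fixed point $\bar\gamma=(\bar u,\bar v_k,\bar\sigma_k)$, I observe that $\bar u,\bar v_k$ Lipschitz together with $\bar v_k(0)=0$ give $|\bar v_k(\tau)|\leq L\tau$, so the decomposition above shows that both $\tilde\lambda_k(\bar\gamma(\cdot))$ and $\tilde r_k(\bar\gamma(\cdot))$ are Lipschitz functions of $\tau$ (the potentially dangerous $\sigma_k$-term is harmless because of the $|v_k|$ weight). Hence $\bar u\in C^{1,1}$ and $f_k(\bar\gamma;\cdot)\in C^{1,1}$; using that the convex envelope of a $C^{1,1}$ function is itself $C^{1,1}$ (Appendix \ref{S_appendix}), I conclude $\bar v_k\in C^{1,1}$ and $\bar\sigma_k\in C^{0,1}$. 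The main obstacle is precisely the $L^1$-stability of the convex-envelope derivative: without it one would be forced into a stronger norm on $\sigma_k$ and lose the sharp control on speed variation driving the rest of the paper, and together with the bounds $|\partial_{\sigma_k}\tilde r_k|,|\partial_{\sigma_k}\tilde\lambda_k|\leq C|v_k|$ it is what makes the norm $\|\cdot\|_\dagger$ a natural choice.
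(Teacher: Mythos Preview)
Your invariance and contraction arguments follow the same lines as the paper's proof, but the inequality you invoke for the $\sigma_k$-component,
\[
\bigl\|(\conv f)' - (\conv g)'\bigr\|_{L^1([0,s])} \leq C\,\|f - g\|_{L^\infty([0,s])},
\]
is \emph{not} in Appendix~\ref{S_appendix} and is in fact false for uniformly Lipschitz $f,g$: take $f(x)=x^2$ on $[0,1]$ and $g$ its piecewise-linear interpolant on a uniform mesh of width $1/n$; then $\|f-g\|_\infty = 1/(4n^2)$ while $\|(\conv f)'-(\conv g)'\|_1 = \|f'-g'\|_1 = 1/(2n)$, so no universal $C$ works. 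What Proposition~\ref{P_estim_diff_conv} actually gives is $\|(\conv f)' - (\conv g)'\|_1 \leq \|f' - g'\|_1$. Since $f_k'(\gamma;\tau)=\tilde\lambda_k(\gamma(\tau))$, the fix is immediate: bound $\|\hat\sigma_k-\hat\sigma_k'\|_1$ directly by $\int_0^s|\tilde\lambda_k(\gamma)-\tilde\lambda_k(\gamma')|\,d\varsigma$ and then apply your pointwise estimate on $\tilde\lambda_k$.

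For part~(2) your approach differs from the paper's and, as written, is circular. To conclude from your decomposition that $\tilde\lambda_k(\bar\gamma(\cdot))$ is Lipschitz you would need $|\bar v_k|\cdot|\bar\sigma_k(\tau_2)-\bar\sigma_k(\tau_1)|\leq C|\tau_2-\tau_1|$, and the bound $|\bar v_k(\tau)|\leq L\tau$ (or $\leq\delta$) alone does not give this without already knowing $\bar\sigma_k$ is Lipschitz. The paper avoids the circularity by an invariant-set argument: it shows that the closed subset $A(M)=\{\gamma\in\Gamma_k:\Lip(\sigma_k)\leq M\}$ satisfies $\mathcal T(A(M))\subseteq A(M)$ for $M$ large and $\delta$ small, precisely because the $\sigma_k$-contribution to $\Lip(\hat\sigma_k)$ carries the factor $\delta$, yielding $\Lip(\hat\sigma_k)\leq\const(L+\delta M)\leq M$. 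Your idea can be salvaged by the structural observation that at the fixed point $\bar\sigma_k$ is locally constant on $\{\bar v_k>0\}$ (the convex envelope is affine there) while on $\{\bar v_k=0\}$ one has $\tilde\lambda_k(\bar u,0,\bar\sigma_k)=\lambda_k(\bar u)$ by \eqref{E_delambdasudev}; combining these two facts does give a uniform Lipschitz bound on $\tilde\lambda_k\circ\bar\gamma$, but this requires an explicit case analysis that you did not supply.
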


Clearly Point \eqref{E_point_w_contr_dag} above yields Step 2 of the proof of Proposition \ref{P_RIP_constr}. 

\begin{proof}
\textit{Step 1}. We first prove that if $\gamma \in \Gamma_k (u^L,s)$, then $\hat \gamma = (\hat u,\hat v_k, \hat \sigma_k) := \mathcal{T}(\gamma)\in \Gamma(u^L,s)$, provided $L \gg 1$, $\eta \ll 1$, while $\delta$ will be fixed in the next step.

Clearly $\hat u(0) = u^L$ and $\hat v_k(0) = 0$. Moreover $\hat u, \hat v_k$ are Lipschitz continuous and $\hat \sigma_k$ is in $L^\infty([0,s])$.

Let us prove the uniform estimate on the Lipschitz constants. First we have
\begin{equation*}
\big|\hat u(\tau_2) - \hat u(\tau_1) \big|\leq \int_{\tau_1}^{\tau_2} \big|\tilde r_k(\gamma(\varsigma))\big| d\varsigma \leq \|\tilde r_k\|_\infty |\tau_2 - \tau_1| \leq \frac{L}{2} \big|\tau_2 - \tau_1 \big|.
\end{equation*}
if the constant $L$ is big enough. For $v_k$ it holds
\begin{equation*}
\begin{split}
\big| \hat v_k(\tau_2) - \hat v_k(\tau_1)\big| &\leq \big|f_k(\gamma;\tau_2) - f_k(\gamma;\tau_1) \big| + \Big| \conv_{[0,s]}f_k(\gamma;\tau_2) - \conv_{[0,s]}f_k(\gamma;\tau_1) \Big| \\
\text{(by Theorem \ref{convex_fundamental_thm}, Point \eqref{convex_fundamental_thm_1})} &\leq 2 \ \Lip\big(f_k(\gamma)\big) \big| \tau_2 - \tau_1\big| 
\leq 2 \ \big\| \tilde \lambda_k \big\|_\infty \big| \tau_2 - \tau_1 \big| \\
&\leq \frac{L}{2} \big| \tau_2 - \tau_1 \big|,
\end{split}
\end{equation*}
if $L$ is big enough. 

Finally let us prove tat the curve $\gamma$ remains uniformly close to the point $(u^L, 0, \lambda_k(u^L))$. First we have
\begin{equation*}
\big| \hat u(\tau) - u^L| \leq \int_0^\tau \big| \tilde r_k(\gamma(\varsigma)) \big| d\varsigma \leq \|\tilde r_k\|_\infty |\tau| \leq \|\tilde r_k\|_\infty |\eta| \leq \delta,
\end{equation*}
if $\eta \ll 1$. Next it holds
\begin{equation*}
\begin{split}
\big| \hat v_k(\tau)\big| &\leq \big| \tilde f_k(\gamma; \tau)\big| + \big| \conv_{[0,s]} \tilde f_k(\gamma; \tau) \big| \\
&\leq \int_0^\tau \bigg| \frac{d \tilde f_k(\gamma; \varsigma)}{d\varsigma} \bigg| d\varsigma \ + \ \int_0^\tau \bigg| \frac{d}{d\varsigma}\conv_{[0,s]} \tilde f_k(\gamma; \varsigma)\bigg| d\varsigma\\ 
\text{(by Proposition \ref{P_estim_diff_conv})} &\leq 2 \int_0^\tau \big| (\tilde \lambda_k \circ \gamma )(\varsigma) \big| d \varsigma \\
&\leq \|\tilde \lambda_k\|_\infty \big|\eta\big| \leq \delta,
\end{split}
\end{equation*}
if $\eta \ll 1$. Finally, before making the computation for $\sigma_k$, let us observe that
\begin{equation*}
\begin{split}
\bigg| \frac{d \tilde f_k(\gamma;\tau)}{d\tau} - \lambda_k(\tau) \bigg| &\leq \Big| \tilde \lambda_k\big(u(\tau), v_k(\tau), \sigma_k(\tau)\big) - \tilde \lambda_k\big(u^L, 0, \sigma_k(\tau)\big)\Big| \\
&\leq \const \Big( \big| u(\tau) - u^L\big| + \big|v_k(\tau)\big| \Big) \\
&\leq \const \Big( \Lip(u) + \Lip(v_k) \Big) \eta \\
&\leq \const L \eta \leq \delta,
\end{split}
\end{equation*}
if $\eta \ll 1$.

We have thus proved that we can choose $L \gg 1$, $\eta \ll 1$ such that $\hat \gamma := \mathcal{T}(\gamma) \in \Gamma(u^L, s)$. Notice that the choice of $L,\eta$ depends only on $f$ and $\delta$ and not on $u^L \in B(0,\delta/2)$.

\smallskip

\noindent \textit{Step 2.} 
We now prove that the map $\mathcal{T}: \Gamma(u^L, s) \to \Gamma(u^L, s)$ is a contraction. Let $\gamma = (u,v_k,\sigma_k), \gamma'= (u',v_k',\sigma_k') \in \Gamma(u^L,s)$ and set
\begin{equation*}
\hat \gamma = (\hat u,\hat v_k, \hat \sigma_k) := \mathcal{T}(\gamma), \qquad \hat \gamma'= (\hat u', \hat v_k', \hat \sigma_k') := \mathcal{T}(\gamma').
\end{equation*}

It holds for the component $u$
\begin{equation}
\label{E_contraction_u}
\begin{split}
\big| \hat u(\tau) - \hat u'(\tau) \big| &\leq
\int_0^\tau \big| \tilde r_k(\gamma(\varsigma)) - \tilde r_k(\gamma'(\varsigma))\big| d\varsigma \\
&\leq
\int_0^\tau \Bigg( \bigg\|\frac{\partial \tilde r_k}{\partial u} \bigg\|_\infty \big| u(\varsigma) - u'(\varsigma) \big| + \bigg\| \frac{\partial \tilde r_k}{\partial v_k} \bigg\|_\infty \big| v_k(\varsigma) - v'_k(\varsigma) \big| + \bigg\| \frac{\partial \tilde r_k}{\partial \sigma_k} \bigg\|_\infty \big| \sigma_k(\varsigma) - \sigma_k'(\varsigma) \big| \Bigg) d\varsigma
\\
\text{(by \eqref{E_generalized_eigenvector})} &\leq \const \int_0^\tau \Big(\big| u(\varsigma) - u'(\varsigma)\big| + \big|v_k(\varsigma) - v'_k(\varsigma) \big| + \delta \big| \sigma_k(\varsigma) - \sigma_k'(\varsigma) \big| \Big) d\varsigma \\
&\leq \const D(\gamma,\gamma') ( \eta\ + \delta ) \\
&\leq \frac{1}{2} D(\gamma,\gamma'),
\end{split}
\end{equation}
if $\eta, \delta \ll 1$.

For the component $v_k$ we have
\begin{equation}
\label{E_contraction_v}
\begin{split}
\big| \hat v_k(\tau) - \hat v_k'(\tau) \big| &\leq \big| \tilde f_k(\gamma; \tau) - \tilde f_k(\gamma'; \tau) \big| + \big| \conv_{[0,s]} \tilde f_k(\gamma; \tau) - \conv_{[0,s]} \tilde f_k(\gamma'; \tau) \big| \\
\text{(by Proposition \ref{P_estim_diff_conv})} &\leq 2 \big\| \tilde f_k(\gamma) - \tilde f_k(\gamma') \big\|_\infty \\
&\leq 2 \bigg \| \frac{d \tilde f_k(\gamma)}{d\tau} - \frac{d \tilde f_k(\gamma')}{d\tau} \bigg\|_1 \\
&= 2 \int_0^s \Big| \tilde \lambda_k\big(\gamma(\tau)\big) - \tilde \lambda_k\big(\gamma'(\tau)\big) \Big| d\tau \\
\text{(using \eqref{E_delambdasudev} as in \eqref{E_contraction_u})} &\leq \frac{1}{2} D(\gamma, \gamma'),
\end{split}
\end{equation}
if $\eta, \delta \ll 1$.

Finally
\begin{equation*}
\big\| \hat \sigma_k - \hat \sigma'_k\big\|_1 \leq \int_0^s \Big| \tilde \lambda_k\big(\gamma(\varsigma)\big) - \tilde \lambda_k\big( \gamma'(\tau)\big) \Big| \leq \frac{1}{2} D(\gamma, \gamma'), 
\end{equation*}
if $\eta, \delta \ll 1$ using \eqref{E_delambdasudev} as in \eqref{E_contraction_v}.

Hence $\mathcal T$ is a contraction from $\Gamma(u^L, s)$ into itself, with contractive constant equal to $1/2$, provided $\eta, \delta \ll 1$. 

\smallskip

\noindent \textit{Step 3.} Let us now prove the second part of the lemma, concerning the regularity of the fixed point $\bar \gamma = (\bar u, \bar v_k, \bar \sigma_k)$.

Fix a big constant $M >0$ and let $A(M) \subseteq \Gamma(u^L,s)$ be the subset which contains all the curves $\gamma = (u,v_k,\sigma_k)$ such that $\sigma_k$ is Lipschitz with $\Lip(\sigma_k) \leq M$. Clearly $A(M)$ is non empty and closed in $X$. We claim that $\mathcal T(A(M)) \subseteq A(M)$ if $M$  is big enough and $\delta, \eta$ small enough. This will conclude the proof of the lemma.

Let $\gamma \in A(M)$ and, as before, $\hat \gamma = (\hat u, \hat v_k, \hat \sigma_k) := \mathcal T(\gamma)$. Let us first compute the Lipschitz constant of $\frac{d \tilde f_k(\gamma)}{d\tau}$:
\begin{equation*}
\begin{split}
\bigg| \frac{d \tilde f_k(\gamma; \tau_2)}{d \tau} - \frac{d \tilde f_k(\gamma; \tau_1)}{d \tau} \bigg| &= \Big| \tilde \lambda_k(\gamma(\tau_2)) - \tilde \lambda_k (\gamma (\tau_1))\Big| \\
&\leq \const \Big| \Big( \Lip(u) + \Lip(v_k) + \delta\Lip(\sigma_k) \Big) \Big| |\tau_2 - \tau_1| \\
&\leq \const (2L + \delta M) \big| \tau_2 - \tau_1\big| \\
\leq M \big| \tau_2 - \tau_1 \big|,
\end{split}
\end{equation*}
if $0 < \delta \ll 1$ and $M \gg 1$. Now observe that 
\begin{equation*}
\begin{split}
\big| \hat \sigma_k (\tau_2) - \hat \sigma_k(\tau_1) \big| &\leq \bigg| \frac{d}{d\tau} \conv_{[0,s]}\tilde f_k(\gamma; \tau_2) - \frac{d}{d\tau} \conv_{[0,s]} \tilde f_k(\gamma; \tau_1)\bigg| \\
&\leq \Lip\bigg(\frac{d}{d\tau}  \conv_{[0,s]} \tilde f_k(\gamma) \bigg) \big| \tau_2 - \tau_1\big| \\
\text{(by Theorem \ref{convex_fundamental_thm}, Point \eqref{convex_fundamental_thm_3})} &\leq \Lip\bigg(\frac{d\tilde f_k(\gamma)}{d\tau}\bigg) \big| \tau_2 - \tau_1\big| \\
&\leq M \big|\tau_2 - \tau_1 \big|.
\end{split}
\end{equation*}
Hence $\hat \sigma$ is Lipschitz and $\Lip(\hat \sigma) \leq M$, i.e. $\hat \gamma \in A(M)$, if $\delta \ll 1$ and $M \gg 1$.
\end{proof}

\subsection{Definition of amounts of interaction, cancellation and creation}
\label{Ss_def_aoi}

In this section we introduce some quantities, namely the \emph{transversal amount of interaction (Definition \ref{D_atrans}), the \emph{cubic amount of interaction} (Definition \ref{D_acubic}), the \emph{amount of cancellation} (Definition \ref{D_amount_canc}) and the \emph{amount of creation} (Definition \ref{D_amount_canc}), which} measure how strong is the interaction between two contiguous Riemann problems and we present some related results. All these quantities are already present in the literature. In Section \ref{S_local}, we will introduce one more quantity, which will be called the \emph{quadratic amount of interaction}, and (as far as we know) has never been defined before. 

Consider two contiguous Riemann problem, whose resolution in elementary waves is
\begin{equation}
\label{E_two_riemann}
u^M = T^{n}_{s'_n} \circ \dots \circ T^{1}_{s'_1} u^{L}, \qquad  
u^{R} = T^{n}_{s''_n} \circ \dots \circ T^{1}_{s''_1} u^{M},
\end{equation}
and the Riemann problem obtained by joining them, resolved by
\begin{equation*}
u^{R} = T^{n}_{s_n} \circ \dots \circ T^{1}_{s_1} u^{L}.
\end{equation*}

\noindent Let $f_k'$, $f_k''$ be the two reduced fluxes of the $k$-th waves $s_k'$, $s_k''$ 
for the Riemann problems $(u^L,u^M)$, $(u^M,u^R)$, respectively: more precisely, $\tilde f_k'$ ($\tilde f_k''$) is computed by  \eqref{E_reduced_flux} where, for $k=1,\dots,n$, $\gamma_k'$ ($\gamma_k''$) is the solutions of \eqref{E_fixed_pt} with length $s_k'$ ($s_k''$) and initial point 
\begin{equation*}
\begin{array}{rl}
u^L \ \text{for} \ k=1, \quad & T^{k-1}_{s'_{k-1}} \circ \dots \circ T^{1}_{s'_1} u^{L}, \ \text{for} \ k \geq 2, \\ [1em]
\Bigg( u^M \ \text{for} \ k=1, \quad & T^{k-1}_{s''_{k-1}} \circ \dots \circ T^{1}_{s''_1} u^{M}, \ \text{for} \ k \geq 2. \Bigg)
\end{array}
\end{equation*}
Since \eqref{E_fixed_pt} is invariant when we add a constant to $f_k$, having in mind to perform the merging operations \eqref{E_f_cup_g}, \eqref{E_f_vartr_g}, we can assume that $f''_k$ is defined in $s_k' + \I(s_k'')$ and satisfies $f_k''(s_k') = f_k'(s_k')$.

\begin{definition}
\label{D_atrans}
The quantity
\begin{equation*}
\mathtt {A}^{\textrm{trans}}(u^L, u^M, u^R) := \sum_{1 \leq h < k \leq n} |s_k'||s_h''|
\end{equation*}
is called the \emph{transversal amount of interaction} associated to the two Riemann problems \eqref{E_two_riemann}.
\end{definition}

\begin{definition}
\label{D_acubic}
For $s'_k > 0$, we define \emph{cubic amount of interaction of the $k$-th family} for the two Riemann problems $(u^L, u^M)$, $(u^M, u^R)$ as follows: 
\begin{enumerate}
\item if $s_k''\geq 0$, 
\begin{equation*}
\begin{split}
\Acubic_k(u^L, u^M, u^R) := \int_0^{s_k'} &\Big[\conv_{[0,s_k']} f_k'(\tau) - \conv_{[0,s_k'+s_k'']} \big(f_k' \cup f_k''\big) (\tau) \Big]
d\tau \\
&~ + \int_{s_k'}^{s_k'+s_k''} 
\Big[ \conv_{[s_k', s_k'']} f_k''(\tau) - \conv_{[0, s_k' + s_k'']} \big(f_k' \cup f_k''\big) (\tau)
 \Big]
d\tau;
\end{split}
\end{equation*}
\item if $-s_k' \leq s_k'' < 0$
\begin{equation*}
\begin{split}
\Acubic_k(u^L, u^M, u^R) := \int_0^{s_k'+s_k''} &\Big[\conv_{[0, s_k'+s_k'']} f_k'(\tau) - \conv_{[0,s_k']} f_k'(\tau) \Big] d\tau \\
&~ + \int_{s_k' + s_k''}^{s_k'} \Big[ \conc_{[s_k' + s_k'', s_k']} f_k'(\tau) - \conv_{[0,s_k']} f_k'(\tau) \Big] d\tau;
\end{split}
\end{equation*}
\item if $s_k'' < -s_k'$, 
\begin{equation*}
\begin{split}
\Acubic_k(u^L, u^M, u^R) := \int_{s_k' + s_k''}^{0}& \Big[ \conc_{[s_k'+s_k'', s_k']}f_k''(\tau) - \conc_{[s_k'+s_k'', 0]} f_k''(\tau) \Big] d\tau \\
&~ + \int_{0}^{s_k'} \Big[\conc_{[s_k'+s_k'', s_k']} f_k''(\tau) - \conv_{[0, s_k']} f_k''(\tau) \Big] d\tau.
\end{split}
\end{equation*}
\end{enumerate}
Similar definitions can be given if $s_k'<0$, replacing convex envelopes with concave.
\end{definition}

\begin{remark}
\label{R_def_cub_posit}
The previous definition is exactly Definition 3.5 in \cite{bia_03}, where it is also shown that the terms appearing in the above definition are non negative.
\end{remark}

The following definition is standard.

\begin{definition}
\label{D_amount_canc}
The \emph{amount of cancellation of the $k$-th family} is defined by 
\begin{equation*}
\Acanc_k(u^L, u^M, u^R) := 
\begin{cases}
0 & \text{if } s_k' s_k'' \geq 0, \\
\min\{|s_k'|, |s_k''|\} & \text{if } s_k' s_k'' < 0,
\end{cases}
\end{equation*}
while the \emph{amount of creation of the $k$-th family} is defined by
\begin{equation*}
\Acr_k(u^L, u^M, u^R) := \Big[|s_k| - |s_k' + s_k''| \Big]^+.
\end{equation*}
\end{definition}

The following theorem is proved in \cite{bia_03}.

\begin{theorem}
\label{T_bianchini}
It holds
\begin{equation*}
\sum_{k=1}^n \big| s_k - (s_k' + s_k'') \big| \leq 
\const \bigg[ \Atrans(u^L, u^M, u^R) + \sum_{k=1}^n \Acubic_k(u^L, u^M, u^R) \bigg].
\end{equation*}
\end{theorem}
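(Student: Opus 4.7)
The plan is to reduce the general inequality to two canonical special cases and then combine them by a telescoping/reordering argument.

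\textbf{Step 1: the purely transversal case.} Suppose all incoming waves belong to pairwise distinct families, i.e. for each $k$ at most one of $s_k', s_k''$ is nonzero (or the two nonzero strengths belong to different families). Here $\Atrans(u^L,u^M,u^R) = \sum_{h<k}|s_k'||s_h''|$ bounds exactly the type of second-order error produced when one swaps adjacent waves of different families. I would prove $\sum_k|s_k-(s_k'+s_k'')| \le \const \Atrans$ directly, using Lipschitz dependence of the elementary maps $T^k_s$ on the base point and on the parameter (from Lemma \ref{P_rp_k_family} and \eqref{E_T_lipschitz}), together with the fact that $DT^k_0(u)=r_k(u)$ and that the $r_k$'s form a basis with uniformly bounded dual basis $\{l_k\}$. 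This is the classical Glimm transversal estimate and presents no difficulty.

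\textbf{Step 2: the same-family case.} Suppose now only waves of one family $k$ are present, so $s_h'=s_h''=0$ for $h\ne k$. Then $\Atrans=0$ and I need $|s_k-(s_k'+s_k'')|\le\const\Acubic_k$. The outgoing wave pattern is described by a single curve $\gamma_k:[0,s_k]\to\mathcal D_k$ solving \eqref{E_fixed_pt} with reduced flux $f_k$, and the two incoming patterns by curves $\gamma_k',\gamma_k''$ with reduced fluxes $f_k',f_k''$. Integrating the first component of \eqref{E_fixed_pt} gives
\begin{equation*}
u^R-u^L = \int_0^{s_k'}\tilde r_k(\gamma_k'(\tau))d\tau + \int_0^{s_k''}\tilde r_k(\gamma_k''(\tau))d\tau = \int_0^{s_k}\tilde r_k(\gamma_k(\tau))d\tau .
\end{equation*}
Projecting on $l_k(u^L)$ and using $\langle l_k,\tilde r_k\rangle = 1 + O(|u-u^L|+|v_k|+|\sigma_k-\lambda_k|)$ together with the bound $|\partial_{\sigma_k}\tilde r_k|\le \const|v_k|$ from \eqref{E_generalized_eigenvector}, the discrepancy $s_k-(s_k'+s_k'')$ is estimated by an integral of $|v_k|+|v_k'|+|v_k''|$. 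Now $v_k = f_k-\conv f_k$ (or the concave analogue) on each piece, and the three definitions of $\Acubic_k$ in Definition \ref{D_acubic} are exactly the $L^1$-norms of the gaps between $\conv_{[0,s_k]}(f_k'\cup f_k'')$ and the individual $\conv$ of $f_k',f_k''$. A direct comparison, case by case according to the signs of $s_k',s_k''$, then yields the desired bound.

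\textbf{Step 3: combining the two cases.} For the general situation I would perform a reordering argument. Starting from the concatenation of elementary curves realizing $(u^L,u^M)$ and $(u^M,u^R)$, I permute neighboring waves of different families one pair at a time to obtain a curve sorted by family, and finally bundle together the consecutive same-family blocks. Each transversal swap is estimated by Step 1 and contributes a quantity bounded by $\const|s_k'||s_h''|$; each same-family bundling at the end is estimated by Step 2 and contributes $\const\Acubic_k$. Summing all contributions and using the uniform Lipschitz constants of the $T^k_s$ maps to propagate errors through the composition produces the claimed inequality.

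\textbf{Main obstacle.} The delicate part is Step 2: identifying the analytic quantity $\int |v_k|\,d\tau$, which emerges from comparing the three fixed-point equations, with the geometric quantity $\Acubic_k$ defined via gaps of convex envelopes. One must handle separately the three sign configurations in Definition \ref{D_acubic} (same sign, partial cancellation, and sign change with $|s_k''|>|s_k'|$), and in each case show that the convex envelope of $f_k'\cup f_k''$ on $[0,s_k'+s_k'']$ is the reduced flux of the outgoing curve. This uses the uniqueness part of Lemma \ref{L_contract_dagger} together with Proposition \ref{P_estim_diff_conv} from the appendix, and is where the bulk of the work in \cite{bia_03} is concentrated.
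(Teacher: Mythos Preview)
The paper does not prove Theorem~\ref{T_bianchini}; it simply quotes the result from \cite{bia_03}. So there is no in-paper proof to compare against, only the original argument in \cite{bia_03}. Your three-step architecture (transversal swaps, same-family interaction, telescoping recombination) is indeed the skeleton of that argument.

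There is, however, a real gap in your Step~2. Projecting the identity
\[
\int_0^{s_k'+s_k''}\tilde r_k(\gamma_k'\cup\gamma_k'')\,d\tau=\int_0^{s_k}\tilde r_k(\gamma_k)\,d\tau
\]
onto $l_k(u^L)$ and using $\langle l_k(u^L),\tilde r_k(u,v,\sigma)\rangle=1+O(|u-u^L|+|v|+|\sigma-\lambda_k|)$ produces an unavoidable error of order $\int_0^{|s|}O(|u(\tau)-u^L|)\,d\tau=O(|s|^2)$, because $|u-u^L|$ is of order $|s|$ along the curve and is \emph{not} controlled by $|v|$. Since $\Acubic_k$ is a genuinely cubic quantity (it is an $L^1$ integral of convex-envelope gaps, hence $O(|s|^3)$), your projection estimate is one order too weak and cannot be closed as written.

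The route taken in \cite{bia_03} avoids this by never projecting. One introduces the auxiliary curve $\tilde\gamma:=\gamma_k(u^L,s_k'+s_k'')$ of length $s_k'+s_k''$ and bounds $|u^R-\tilde\gamma(s_k'+s_k'')|$ via the contraction principle: $D(\gamma_k'\cup\gamma_k'',\tilde\gamma)\le 2\,D(\gamma_k'\cup\gamma_k'',\mathcal T(\gamma_k'\cup\gamma_k''))$. The crucial computation is that the $u$-component of $\mathcal T(\gamma_k'\cup\gamma_k'')$ equals $u_k'\cup u_k''$ exactly, so the whole discrepancy sits in the $v$- and $\sigma$-components, where it becomes precisely the gap between the separate and joint convex envelopes of $f_k'\cup f_k''$. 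With the norm used in \cite{bia_03} (the $v$-component measured in $L^1$ rather than $L^\infty$), this gap \emph{is} $\Acubic_k$ by definition. Then $|s_k-(s_k'+s_k'')|+\sum_{h\ne k}|s_h|\le\const|u^R-\tilde\gamma(s_k'+s_k'')|$ follows from the Lipschitz invertibility of $T(u^L)$.

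Finally, your closing claim that ``the convex envelope of $f_k'\cup f_k''$ on $[0,s_k'+s_k'']$ is the reduced flux of the outgoing curve'' is not correct as stated: the outgoing curve has length $s_k$, not $s_k'+s_k''$, and its reduced flux $f_k$ differs from $f_k'\cup f_k''$. What one actually shows is that the fixed-point curve of length $s_k'+s_k''$ is close to $\gamma_k'\cup\gamma_k''$; the passage from length $s_k'+s_k''$ to $s_k$ is then handled by the inverse function theorem, not by identifying the fluxes.
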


\noindent As an immediate consequence, we obtain the following corollary.

\begin{corollary}
\label{C_aocr}
It holds
\begin{equation*}
\Acr_k(u^L, u^M, u^R) \leq \Atrans(u^L, u^M, u^R) + \sum_{h=1}^n \Acubic_h(u^L, u^M, u^R).
\end{equation*}
\end{corollary}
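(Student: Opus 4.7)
The plan is to deduce the corollary directly from Theorem \ref{T_bianchini} by means of the reverse triangle inequality applied componentwise. The key point is that the amount of creation $\Acr_k$ is defined as the positive part of $|s_k| - |s_k' + s_k''|$, and this is always controlled by the single difference $|s_k - (s_k' + s_k'')|$, which is one of the terms appearing on the left-hand side of Theorem \ref{T_bianchini}.

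In detail, I would first note the elementary pointwise estimate
\begin{equation*}
\Acr_k(u^L, u^M, u^R) = \bigl[|s_k| - |s_k' + s_k''|\bigr]^+ \leq \bigl||s_k| - |s_k' + s_k''|\bigr| \leq \bigl|s_k - (s_k' + s_k'')\bigr|,
\end{equation*}
where the last step is the standard reverse triangle inequality. Next, I would bound this single term by the full sum over families, obtaining
\begin{equation*}
\Acr_k(u^L, u^M, u^R) \leq \sum_{h=1}^n \bigl|s_h - (s_h' + s_h'')\bigr|.
\end{equation*}
Finally, I would invoke Theorem \ref{T_bianchini} to conclude that this last expression is bounded (up to a constant depending only on $f$) by $\Atrans(u^L, u^M, u^R) + \sum_{h=1}^n \Acubic_h(u^L, u^M, u^R)$.

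There is essentially no obstacle here: the corollary is a one-line consequence of Theorem \ref{T_bianchini} combined with the reverse triangle inequality, and it does not require any additional analysis on the envelopes or on the Riemann solver. The only mild point worth mentioning is that $\Acr_k$ focuses on a single family $k$ but the right-hand side aggregates over all families $h$, which is consistent because Theorem \ref{T_bianchini} itself is formulated as a sum.
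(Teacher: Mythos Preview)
Your proposal is correct and matches the paper's approach: the paper simply states that the corollary is ``an immediate consequence'' of Theorem \ref{T_bianchini}, and your reverse triangle inequality argument is exactly how one makes that consequence explicit. You are also right to flag the implicit $\const$ from Theorem \ref{T_bianchini}, which the paper's statement of the corollary silently absorbs.
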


%
%
%
%

\subsection{Glimm scheme solution to a general system of conservation laws}
\label{Ss_glimm_sol}

Let us now briefly recall how an approximate solution to \eqref{E_gen_syst}is constructed by the Glimm scheme. Fix $\e >0$.

To construct an approximate solution $u_\e = u_\e(t,x)$ to the Cauchy problem \eqref{E_gen_syst}-\eqref{E_initial_datum}, we consider a grid in the $(t,x)$ plane having step size $\Delta t = \Delta x = \e$, with nodes in the points
\[
P_{i,m} = (t_i,x_m) := (i \e, m \e), \qquad i \in \N, \ m \in \Z. 
\]
Moreover we shall need a sequence of real numbers $\vartheta_1, \vartheta_2, \vartheta_3, \dots$, uniformly distributed over the interval $[0,1]$. This means that, for every $\lambda \in [0,1]$, the percentage of points $\vartheta_i, 1 \leq i \leq N$, which fall inside $[0,\lambda]$ should approach $\lambda$ as $N \rightarrow \infty$:
\begin{equation*}
\lim_{N \rightarrow \infty} \frac{\card\{i \ | \ 1 \leq i \leq N, \vartheta_i \in [0, \lambda]\}}{N} = \lambda \hspace{1cm} \text{for each } \lambda \in [0,1]. 
\end{equation*}

At time $t=0$, the Glimm algorithm starts by considering an approximation $u_\e(0, \cdot)$ of the initial datum $u(0, \cdot)$, which is constant on each interval of the form $[x_{m-1}, x_m)$ and such that its measure derivative has compact support. We shall take (remember that $u(0, \cdot)$ is right continuous)
\begin{equation*}
u_\e(0,x) = u(0,x_m) \hspace{1cm} \text{for all } x \in [x_m, x_{m+1}).
\end{equation*}
Notice that clearly
\begin{equation*}
\TV(u_\e(0); \R) \leq \TV(u(0); \R).
\end{equation*}

For times $t>0$ sufficiently small, the solution $u_\e = u_\e(t,x)$ is obtained by solving the Riemann problems corresponding to the jumps of the initial approximation $u_\e(0, \cdot)$ at the nodes $x_m$. By \eqref{E_uniform_hyp}, the solutions to the Riemann problems do not overlap on the time interval $[0, \e)$, and thus $u_\e(t)$ can be prolonged up to $t = \e$.

At time $t_1 = \e$ a restarting procedure is adopted: the function $u_\e(t_1-, \cdot)$ is approximate by a new function $u_\e(t_1,\cdot)$ which is piecewise constant, having jumps exactly at the nodes $x_m = m \e$. Our approximate solution $u_\e$ can now be constructed on the further time interval $[\e, 2\e)$, again by piecing together the solutions of the various Riemann problems determined by the jumps at the nodal points $x_m$. At time $t_2 = 2\e$, this solution is again approximated by a piecewise constant function, and the procedure goes on.

A key aspect of the construction is the restarting procedure. At each time $t_i = i \e$, we need to approximate $u_\e(t_i-, \cdot)$ with a piecewise constant function $u_\e(t_i, \cdot)$ having jumps precisely at the nodal points $x_m$. This is achieved by a random sampling technique. More precisely, we look at the number $\vartheta_i$ in the uniformly distributed sequence. On each interval $[x_{m-1},x_m)$, the old value of our solution at the intermediate point $\vartheta_i x_m + (1-\vartheta_i) x_{m-1}$ becomes the new value over the whole interval:
\[
u_\e(t_i, x) := u_\e \big( t_i-, (\vartheta_i x_m + (1-\vartheta_i) x_{m-1}) \big) \hspace{1cm} \text{for all } x \in [x_{m-1},x_m).
\]

One can prove that, if the initial datum $u(0,\cdot)$ has sufficiently small total variation, then an approximate solution can be constructed by the above algorithm for all times $t \geq 0$ and moreover
\begin{equation}
\label{E_bd_on_TV}
\TV(u_\e(t), \R) \leq \const \TV(u(0), \R).
\end{equation}
The bound \eqref{E_bd_on_TV} on the total variation of the function at times $t \geq 0$ can be obtained by standard arguments (see \cite{gli_65}, \cite{bre_00}), using Proposition \ref{P_V_equiv_TV} and Theorem \ref{T_cubic_potential} below.

For our purposes, it is convenient to redefine $u_\e$ inside the open strips $(i\e, (i+1)\e) \times \R$ as follows:
\begin{equation*}
u_\e(t,x):= 
\begin{cases}
u_\e((i+1)\e,m\e) & \text{if } m\e \leq x < m\e + t - i\e, \\
u_\e(i\e,m\e) & \text{if } m\e + t - i\e \leq x < (m+1)\e.
\end{cases}
\end{equation*}
In this way, $u_\e(t,\cdot)$ becomes a compactly supported, piecewise constant function for each time $t \geq 0$ with jumps along piecewise linear curves passing through the nodes $(i\e,m\e)$.

To conclude this section, let us introduce some notations which we will be used in the next. For any grid point $(i\e, m\e)$, $i \geq 0$, $m \in \Z$, set
\begin{equation*}
u^{i,m}:= u_\e(i\e, m\e),
\end{equation*}
and assume that the Riemann problem $(u^{i,m-1}, u^{i,m})$ is solved by
\begin{equation*}
u^{i,m} = T^{n}_{s_n^{i,m}} \circ \dots \circ T^{1}_{s_1^{i,m}} u^{i,m-1};
\end{equation*}
moreover denote by 
\begin{equation*}
\sigma_k^{i,m} :\I(s_k^{i,m}) \to \R, \qquad k=1, \dots, n,
\end{equation*}
the speed function of the $k$-th wavefront solving the Riemann problem $(u^{i,m-1}, u^{i,m})$.

Let us introduce also the following notation for the transversal and cubic amounts of interaction and for the amount of cancellation related to the two Riemann problems $(u_{i,m-1}, u_{i-1,m-1})$, $(u_{i-1,m-1}, u_{i,m})$ which interact at grid point $(i\e,(m-1)\e)$: 
\begin{equation*}
\Atrans(i\e,m\e) := \Atrans(u_{i,m-1}, u_{i-1,m-1}, u_{i,m}),
\end{equation*}
and for $k=1,\dots,n$,
\begin{equation*}
\begin{array}{c}
\Acubic_k(i\e,m\e) := \Acubic_k(u_{i,m-1}, u_{i-1,m-1}, u_{i,m}),  \\ [1em]
\Acanc_k(i\e, m\e) := \Acanc_k(u_{i,m-1}, u_{i-1,m-1}, u_{i,m}), \\ [1em]
\Acr_k(i\e, m\e) := \Acr_k(u_{i,m-1}, u_{i-1,m-1}, u_{i,m}).
\end{array}
\end{equation*}

\subsection{Known Lypapunov functionals}
\label{Ss_known_fcn}

In this section we recall the definitions and the basic properties of three already well known functionals, namely the \emph{total variation functional}, the functional introduced by Glimm in \cite{gli_65} which controls the transversal amounts of interaction and the functional introduced by the first author in \cite{bia_03}, which controls the cubic amounts of interaction.

\begin{definition}
Define the \emph{total variation along curves} as
\begin{equation*}
V(t) := \sum_{k=1}^n \sum_{m \in \Z} |s_k^{i,m}|, \qquad \text{ for any } t \in [i\e, (i+1)\e).
\end{equation*}
Define the \emph{transversal interaction functional} as
\begin{equation*}
\Qtrans(t):= 
\sum_{k=1}^n \sum_{h=1}^{k-1} \sum_{m > m'} |s_k^{i,m'}||s_h^{i,m}|, \qquad \text{ for any } t \in [i\e, (i+1)\e).
\end{equation*}
Define the \emph{cubic interaction functional} as
\begin{equation*}
\Qcubic(t):=  \sum_{k=1}^n \sum_{m,m' \in \Z} \int_{\I(s_k^{i,m})} \int_{\I(s_k^{i,m'})} \big| \sigma_k^{i,m}(\tau) - \sigma_k^{i,m'}(\tau') \big| d\tau' d\tau.
\end{equation*}
\end{definition}

\begin{remark}
\label{R_local_vs_global}
Notice that the three functionals $t \mapsto V(t), \Qtrans(t), \Qcubic(t)$ are local in time, i.e. their value at time $\bar t$ depends only on the solution $u_\e(\bar t)$ at time $\bar t$ and not on the solution at any other time $t \neq \bar t$. On the contrary, the functionals $\fQ_k, k =1, \dots, n$ we will introduce in Section \ref{S_fQ} to bound the difference in speed of the wavefronts before and after the interactions are non-local in time, i.e. their definition requires the knowledge of the whole solution in $\R^+ \times \R$.
\end{remark}

The following statements hold: for the proofs, see \cite{bre_00}, \cite{bia_03}.

\begin{proposition}
\label{P_V_equiv_TV}
There exists a constant $C>0$, depending only of the flux $f$, such that for any time $t \geq 0$
\begin{equation*}
\frac{1}{C} \TV(u(t)) \leq V(t) \leq C \TV(u(t)).
\end{equation*}
\end{proposition}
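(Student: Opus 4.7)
The plan is to prove the equivalence pointwise at each jump of the piecewise constant function $u_\e(t,\cdot)$ and then sum over $m \in \Z$. Fix $t \in [i\e,(i+1)\e)$ and recall that
\begin{equation*}
u^{i,m} = T(u^{i,m-1})(s_1^{i,m}, \dots, s_n^{i,m}) := T^{n}_{s_n^{i,m}} \circ \dots \circ T^{1}_{s_1^{i,m}} u^{i,m-1}.
\end{equation*}
Since $\TV(u_\e(t)) = \sum_{m \in \Z} |u^{i,m} - u^{i,m-1}|$ and $V(t) = \sum_{m \in \Z} \sum_{k=1}^n |s_k^{i,m}|$, it suffices to show that there is a constant $C = C(f) > 0$ such that, for every pair of adjacent states,
\begin{equation*}
\frac{1}{C} \sum_{k=1}^n |s_k^{i,m}| \;\leq\; \big| u^{i,m} - u^{i,m-1}\big| \;\leq\; C \sum_{k=1}^n |s_k^{i,m}|.
\end{equation*}

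For the upper bound, I would iterate the Lipschitz estimate $|T^k_s u - u| \leq \Lip(T^k_\cdot u)\,|s| \leq \const|s|$, which follows from Step 4 of the proof of Proposition \ref{P_RIP_constr} (the admissible curves $T^k_\cdot u$ are Lipschitz with constant depending only on $f$ in a neighborhood of the origin). Telescoping through $k = 1, \dots, n$ one obtains
\begin{equation*}
\big|T(u^{i,m-1})(s_1^{i,m},\dots,s_n^{i,m}) - u^{i,m-1}\big| \leq \const \sum_{k=1}^n |s_k^{i,m}|.
\end{equation*}

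For the lower bound, the key point is that the map $(s_1,\dots,s_n) \mapsto T(u^L)(s_1,\dots,s_n)$ is locally invertible near the origin, with a Lipschitz inverse whose constant depends only on $f$. Indeed, by \eqref{E_T_lipschitz} the Jacobian of $T(u^L)$ at $s = 0$ is the matrix $[r_1(u^L)\,|\,\cdots\,|\,r_n(u^L)]$, which is invertible by strict hyperbolicity (the $r_k$'s form a basis of $\R^n$ uniformly in $u^L \in B(0,\rho/2)$). By the Lipschitz version of the Inverse Function Theorem used in Step 5 of Proposition \ref{P_RIP_constr}, $T(u^L)$ has a Lipschitz inverse $S$ defined on a neighborhood of $u^L$ whose size and Lipschitz constant depend only on $f$; hence, since total variation is small, $|u^{i,m} - u^{i,m-1}|$ stays in the domain of $S$, and $\sum_k |s_k^{i,m}| = |S(u^{i,m}) - S(u^{i,m-1})| \leq \const |u^{i,m} - u^{i,m-1}|$ (using that $S(u^{i,m-1}) = 0$).

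Summing the two-sided estimate over $m \in \Z$ yields the claim. The only real obstacle is the uniformity of the constants in $u^L$; this is handled by noticing that all the bounds in Section \ref{Ss_Rp} (Lipschitz constants of $T^k_\cdot$, invertibility neighborhood of $T(u^L)$) are uniform for $u^L$ in the compact ball $B(0,\rho/2)$, thanks to the continuity of $r_k$ and the strict hyperbolicity assumption \eqref{E_uniform_hyp}. The required smallness of each jump follows from the a priori total variation bound, which itself is established inductively using exactly this pointwise equivalence together with the interaction functionals of the next subsection.
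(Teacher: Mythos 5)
Your argument is correct and is essentially the standard proof the paper itself defers to (the statement is given without proof, with a reference to \cite{bre_00}, \cite{bia_03}): upper bound by the uniform Lipschitz continuity of the curves $s \mapsto T^k_s u$, lower bound by the Lipschitz Inverse Function Theorem applied to $T(u^L)$, whose generalized Jacobian at $s=0$ is the uniformly invertible matrix of right eigenvectors. You also correctly flag the only delicate point, namely that the uniformity of the constants and the smallness of each jump are part of the simultaneous induction with the Glimm functional estimates.
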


\begin{theorem}
\label{T_cubic_potential}
The following hold:
\begin{enumerate}
\item the functionals $t \mapsto V(t), \Qtrans(t), \Qcubic(t)$ are constant on each interval $[i\e, (i+1)\e)$;
\item they are bounded by powers of the $\TV(u(t))$ as follows:
\begin{equation*}
\begin{split}
V(t) &\leq \const \TV(u(t)), \\ \Qtrans(t) &\leq \const \TV(u(t))^2, \\ \Qcubic(t) &\leq \const \TV(u(t))^3;
\end{split}
\end{equation*}
\item there exist constants $c_1, c_2, c_3 > 0$, depending only on the flux $f$, such that for any $i \in \N$, defining
\begin{equation*}
\Qknown(t) := c_1 V(t) + c_2 \Qtrans(t) + c_3 \Qcubic(t),
\end{equation*}
it holds
\begin{equation*}
\begin{split}
\sum_{m \in \Z}\bigg[\Atrans(i\e, m\e) + \sum_{k=1}^n \Big(\Acanc_k(i\e, m\e) &+ \Acubic_k(i\e, m\e)\Big)\bigg]  
\leq
\Qknown((i-1)\e) - \Qknown(i\e).
\end{split}
\end{equation*}
\end{enumerate}
\end{theorem}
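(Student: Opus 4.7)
The proof splits into the three stated parts, with the third being where essentially all the work lies.

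For Part (1), by the uniform bound $\lambda_k(u) \in (\hat\lambda_{k-1}, \hat\lambda_k) \subset (0,1)$ in \eqref{E_uniform_hyp}, combined with $\Delta x = \Delta t = \e$, the Riemann fans issuing from the nodes $((i-1)\e, m\e)$ do not overlap before time $i\e$. Thus no interaction occurs in the open strip $((i-1)\e, i\e)$, and the indexed quantities $s_k^{i-1,m}, \sigma_k^{i-1,m}$ are literally constant in that strip by the construction of the scheme. The three functionals depend only on these quantities, hence are constant on $[(i-1)\e, i\e)$.

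For Part (2), Proposition \ref{P_V_equiv_TV} gives $V(t) \leq \const \TV(u(t))$. For $\Qtrans$, a direct majorization by summing over all families yields $\Qtrans(t) \leq V(t)^2 \leq \const \TV(u(t))^2$. For $\Qcubic$, note that since $u_\e(t)$ ranges in $B(0,\delta_2)$ with $\delta_2 \lesssim \TV(u_0)$ and all generalized eigenvalues satisfy $\tilde\lambda_k = \lambda_k + O(|v_k|)$, the difference $|\sigma_k^{i,m}(\tau) - \sigma_k^{i,m'}(\tau')|$ for two wavefronts of the same family is bounded by $\const \, V(t)$; integrating over $\I(s_k^{i,m}) \times \I(s_k^{i,m'})$ and summing in $k, m, m'$ gives $\Qcubic(t) \leq \const \, V(t)^3 \leq \const \, \TV(u(t))^3$.

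Part (3) is the bulk of the proof, and the plan is to compute the jumps of $V, \Qtrans, \Qcubic$ node by node at an arbitrary grid time $i\e$. At the node $(i\e, m\e)$ the two incoming Riemann problems $(u^{i,m-1}, u^{i-1,m-1})$ and $(u^{i-1,m-1}, u^{i,m})$ merge into the outgoing one $(u^{i,m-1}, u^{i,m})$; Theorem \ref{T_bianchini} gives $|s_k - (s_k'+s_k'')| \leq \const(\Atrans + \sum_h \Acubic_h)(i\e,m\e)$ and therefore $\sum_k(|s_k'|+|s_k''|-|s_k|) \geq 2\sum_k \Acanc_k(i\e,m\e) - \const(\Atrans + \sum_h \Acubic_h)(i\e,m\e)$, which controls $\Delta V$ from above by the creation and from below by the cancellation. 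For $\Qtrans$ one uses the classical Glimm bookkeeping: at the node itself the transversal pairings between the left and right incoming Riemann problems disappear (a loss of exactly $\Atrans(i\e,m\e)$), while cross-terms against waves sitting at other nodes $m'\neq m$ are perturbed by at most $\const \, V(t)\,(\Atrans + \sum_h \Acubic_h)(i\e,m\e)$ because outgoing strengths differ from $s_k'+s_k''$ by that much. For $\Qcubic$ the analogous estimate from \cite{bia_03} yields a loss of $\const \sum_k \Acubic_k(i\e,m\e)$ at the node, with error terms of the same $V(t)\cdot(\ldots)$ form.

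Summing the nodewise bounds over $m$ and choosing $c_3$ large enough to absorb the error terms into $\Delta\Qcubic$, then $c_2\gg c_3$ so that $c_2\Delta\Qtrans$ absorbs the residual $\const V(t)$-type errors, and finally $c_1\gg c_2$ so that $c_1\Delta V$ dominates the cancellation term together with the creation coefficient remaining on the right, gives the asserted inequality, provided $\TV(u_0)$ is small enough that factors $V(t)\cdot c_j$ remain controlled. The principal obstacle is the careful control of the non-local perturbations in $\Qtrans$ and $\Qcubic$ caused by the slight change in outgoing wave strengths at the interacting node: every such change affects all cross-pairs with the waves at every other node $m'\neq m$, and one must keep these perturbations simultaneously smaller than the gains of order $\Atrans(i\e,m\e)$ and $\Acubic_k(i\e,m\e)$ at the node, which is exactly where the uniform smallness of $V(t)\leq\const\TV(u_0)$ (itself established by an induction concurrent with the decay of $\Qknown$) and Theorem \ref{T_bianchini} enter crucially.
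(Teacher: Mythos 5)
The paper itself gives no proof of this theorem; it only cites \cite{bre_00} and \cite{bia_03}, so your proposal must be judged against the standard argument. Parts (1) and (2) are essentially right. Part (1) is in fact immediate from the definitions (the functionals are defined through the time-$i\e$ data on the whole interval $[i\e,(i+1)\e)$). In Part (2), the bound $|\sigma_k^{i,m}(\tau)-\sigma_k^{i,m'}(\tau')|\leq \const\,\TV(u(t))$ should be justified by the fact that any two states attained by $u_\e(t,\cdot)$ differ by at most $\TV(u_\e(t))$ together with the Lipschitz continuity of the generalized eigenvalues; it has nothing to do with $\delta_2$ being comparable to $\TV(u_0)$ ($\delta_2$ is a fixed constant depending only on $f$). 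The conclusion is nonetheless correct.

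The genuine problem is the hierarchy of constants at the end of Part (3): as prescribed, the final absorption fails. The node-wise balances are $-\Delta V \geq 2\sum_k\Acanc_k - C_0\big(\Atrans+\sum_h\Acubic_h\big)$ (cancellation gain minus creation loss, via Theorem \ref{T_bianchini}), $-\Delta\Qtrans \geq \Atrans - C_0\,\TV\cdot\big(\Atrans+\sum_h\Acubic_h\big)$, and $-\Delta\Qcubic \geq \sum_h\Acubic_h - C_0\,\TV\cdot\big(\cdots\big)$. Hence the coefficient of $\Atrans$ in $-c_1\Delta V - c_2\Delta\Qtrans - c_3\Delta\Qcubic$ is $c_2 - c_1C_0 - O(\TV)(c_2+c_3)$, and similarly for $\Acubic_h$. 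Your choice $c_1\gg c_2\gg c_3$ makes these coefficients negative: the creation loss suffered by $V$ is of order $\Atrans+\sum_h\Acubic_h$ with a constant $C_0$ that you cannot assume to be small, so weighting $V$ most heavily amplifies precisely the term that the $\Qtrans$ and $\Qcubic$ gains are supposed to dominate. The correct hierarchy is the opposite one: take $c_1$ of order one (it is only needed to produce the $\Acanc_k$ term, which $V$ gains with coefficient $2$), then $c_2,c_3\geq c_1C_0+1$ so that the quadratic and cubic gains dominate the creation loss, and finally use $\TV(u_0)\ll 1$ to absorb the $O(\TV)$ perturbations. This is exactly Glimm's $V+CQ$ with the large constant on the interaction part, not on $V$. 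With that correction, the rest of your outline (node-wise jump computation, exact loss of $\Atrans$ in $\Qtrans$, the decrease of $\Qcubic$ by $\sum_k\Acubic_k$ from \cite{bia_03}, and control of the non-local cross-terms through Theorem \ref{T_bianchini}) is the standard proof.
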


\section{Interaction between two merging Riemann problems}
\label{S_local}

This section is devoted to prove the \emph{local} part of the proof of Theorem \ref{T_main}, as explained in the introduction. In particular we will consider two contiguous Riemann problems $(u^L, u^M)$, $(u^M, u^R)$ which are merging, producing the Riemann problem $(u^L, u^R)$ and we will introduce a \emph{global amount of interaction $\mathtt A$}, which bounds
\begin{enumerate}
\item the $L^\infty$-distance between the $u$-component of the elementary curves before and after the interaction;
\item the $L^1$-distance between the speed of the wavefronts before and after the interaction, i.e. the $\sigma$-component of the elementary curves;
\item the $L^1$-distance between the second derivatives of the reduced fluxes, before and after the interaction.
\end{enumerate}
This is done in Theorem \ref{T_general}. 

Let us first introduce some notations, as in Section \ref{Ss_def_aoi}. Consider two contiguous Riemann problem 
\begin{equation}
\label{E_two_riemann_1}
u^M = T^{n}_{s'_n} \circ \dots \circ T^{1}_{s'_1} u^{L}, \qquad  
u^{R} = T^{n}_{s''_n} \circ \dots \circ T^{1}_{s''_1} u^{M},
\end{equation}
and the Riemann problem obtained joining them,
\begin{equation*}
u^{R} = T^{n}_{s_n} \circ \dots \circ T^{1}_{s_1} u^{L}.
\end{equation*}
In particular the incoming curves are 
\begin{equation}
\label{E_incoming_curves}
\begin{array}{ccc}
\gamma_1' = (u_1', v_1', \sigma_1') := \gamma_1(u^L, s_1'), \quad & \gamma_k' = (u_k', v_k', \sigma_k') := \gamma_k\big(u_{k-1}'(s_{k-1}'), s_k'\big) & \text{for } k = 2, \dots, n, \\ [1em]
\gamma_1'' =  (u_1'', v_1'', \sigma_1'') := \gamma_1(u^M, s_1''), \quad & \gamma_k'' = (u_k'', v_k'', \sigma_k'')  := \gamma_k\big(u_{k-1}''(s_{k-1}''), s_k''\big) & \text{for } k = 2, \dots, n,
\end{array}
\end{equation}
while the outcoming ones are
\begin{align}
\label{E_outcoming_curves}
\gamma_1 &= (u_1, v_1, \sigma_1) := \gamma_1(u^L, s_1), & \gamma_k &= (u_k, v_k, \sigma_k) := \gamma_k\big(u_{k-1}(s_{k-1}), s_k\big) \quad \text{for } k = 2, \dots, n.
\end{align}

\noindent We will denote by $f_k', f_k'', f_k$ the reduced fluxes associated by \eqref{E_reduced_flux} to $\gamma_k', \gamma_k'', \gamma_k$ respectively; for simplicity, we will assume that $\gamma_k''$ and $f_k''$ are defined on $s_k' + \I(s_k'')$, instead of $\I(s_k'')$ and $f_k''(s_k') = f_k'(s_k')$: indeed, it is clear that adding a constant to $\tilde f_k$ does not vary system \eqref{E_fixed_pt}. 

Fix an index $k \in \{1, \dots n\}$ and consider the points (Figure \ref{F_el_curves_after_trans})
\begin{equation*}
\begin{array}{lll}
u^L_1 := u^L, \quad & u^L_k := T^{k-1}_{s''_{k-1}} \circ T^{k-1}_{s'_{k-1}} \circ \dots \circ T^1_{s''_1} \circ T^1_{s'_1} u^L, \quad & k \geq 2 \\ [1em]
u^M_k:= T^k_{s'_k} u^L_k, \quad & u^R_k:= T^k_{s''_k} u^M_k, \quad & k = 1,\dots,n.
\end{array}
\end{equation*}
By definition, the Riemann problem between $u^L_k$ and $u^M_k$ is solved by a wavefront of the $k$-th family with strength $s_k'$ and the Riemann problem between $u^M_k$ and $u^R_k$ is solved by a wavefront of the $k$-th family with strength $s''_k$. Denote by $\tilde \gamma_k' = (\tilde u_k', \tilde v_k', \tilde \sigma_k')$ the curve which solves the Riemann problem $[u^L_k, u^M_k]$ and by $\tilde f_k'$ the associated reduced flux (see \eqref{E_reduced_flux}). \\
Similarly, let $\tilde \gamma_k'' = (\tilde u_k'', \tilde v_k'', \tilde \sigma_k'')$ be the curve solving the Riemann problem $[u^M_k, u^R_k]$ and let $\tilde f_k''$ be the associated reduced flux. Clearly, $\tilde \gamma_k'$, $\tilde f_k'$ are defined on $\I(s_k)$, while, since we are going to perform the patching \eqref{E_f_cup_g}, \eqref{E_f_vartr_g}), we will assume as above that $\tilde \gamma_k''$ and $\tilde f_k''$ are defined on $s_k' + \I(s_k'')$ (instead of $\I(s_k'')$) and that $\tilde f_k''(s_k') = \tilde f_k'(s_k')$..

\begin{figure}
\begin{tikzpicture}

\draw[fill] (1,1) circle [radius = 0.04];
\draw (1,1) to [out=45,in=195] (3,1.2);
\node[below] at (2,1.8) {$\gamma_1' = \tilde \gamma_1'$};
\node[below] at (3,1.2) {$u_1^M$};

\draw[fill] (3,1.2) circle [radius = 0.04];
\draw (3,1.2) to [out=105,in=340] (3.2,3);
\node[right] at (3.2,2) {$\gamma_2'$};

\draw[fill] (3.2,3) circle [radius = 0.04];
\draw (3.2,3) to [out=45,in=250] (5,4);
\node[above] at (4,3.4) {$\gamma_3'$};

\draw[fill] (5,4) circle [radius = 0.04];
\draw (5,4) to [out=45,in=195] (7,4.2);
\node[above] at (6.2,4.2) {$\gamma_1''$};

\draw[fill] (7,4.2) circle [radius = 0.04];
\draw (7,4.2) to [out=105,in=340] (7,5.3);
\node[right] at (7.1,4.6) {$\gamma_2''$};

\draw[fill] (7,5.3) circle [radius = 0.04];
\draw (7,5.3) to [out=45,in=250] (9,7);
\node[above left] at (8,6) {$\gamma_3''$};

\draw[fill] (9,7) circle [radius = 0.04];

\draw[red] (3,1.2) to [out=15,in=195] (5,1.4);
\node[below] at (3.8,1.3) {$\tilde \gamma_1''$};

\draw[fill] (5,1.4) circle [radius = 0.04];
\draw[red] (5,1.4) to [out=105,in=340] (5.4,3);
\node[right] at (5.4,2.2) {$\tilde \gamma_2'$};
\node[below] at (5,1.4) {$u_1^R = u_2^L$};

\draw[fill] (5.4,3) circle [radius = 0.04];
\draw[red] (5.4,3) to [out=160,in=340] (5.2,5);
\node[right] at (5.4,3.6) {$\tilde \gamma_2''$};
\node[right] at (5.5,3) {$u_2^M$};

\draw[fill] (5.2,5) circle [radius = 0.04];
\draw[red] (5.2,5) to [out=45,in=190] (6.6,6.2);
\node[above left] at (5.8,5.8) {$\tilde \gamma_3'$};
\node[left] at (5.2,5) {$u_2^R = u_3^L$};

\draw[fill] (6.6,6.2) circle [radius = 0.04];
\draw[red] (6.6,6.2) to [out=10,in=220] (8,7);
\node[above left] at (7.3,6.3) {$\tilde \gamma_3''$};
\draw[fill] (8,7) circle [radius = 0.04];
\node[above left] at (6.6,6.2) {$u_3^M$};
\node[above] at (8,7) {$u_3^R$};

%
%
%
%
%

\node[below] at (1,1.1) {$u^L_1 = u^L$};
\node[above left] at (5,4) {$u^M$};
\node[above right] at (9,7) {$u^R$};

\end{tikzpicture}
\caption{Elementary curves of two interacting Riemann problems before and after transversal interactions.}
\label{F_el_curves_after_trans}
\end{figure}
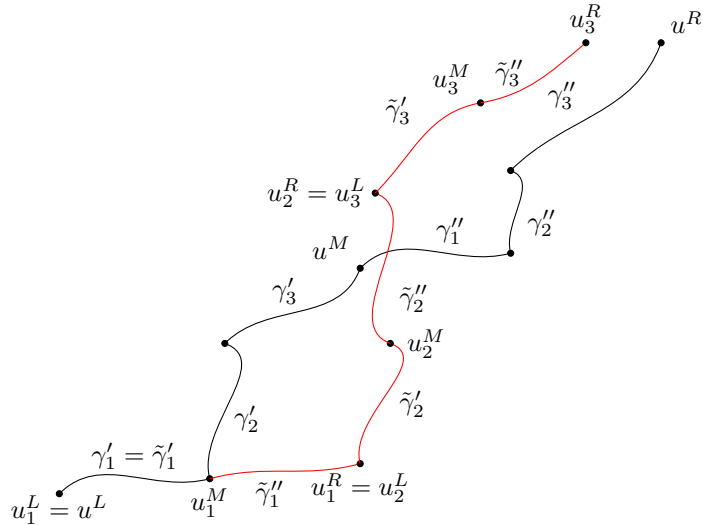

\subsection{Definition of the quadratic amount of interaction}
\label{Ss_def_quadr_amoun_int}

In this section we define a new quantity, namely the \emph{quadratic amount of interaction}, which will be used to bound the $L^1$-norm of the difference of speed between incoming and outgoing wavefronts. 

\begin{definition}
\label{D_aquadr}
If $s_k' s_k'' \geq 0$, we define the \emph{quadratic amount of interaction of the $k$-family} associated to the two Riemann problems \eqref{E_two_riemann_1} by
\begin{equation*}
\Aquadr_k(u^L, u^M, u^R) := 
\begin{cases}
\tilde f_k'(s_k') - \conv_{[0,s_k'+s_k'']} \big(\tilde  f_k' \cup \tilde f_k'' \big) (s_k') & \text{if } s_k' >0 , \ s_k'' >0, \\
\conc_{[s_k'+s_k'',0]} \big(\tilde f_k' \cup \tilde f_k'' \big)(s_k') - \tilde f_k'(s_k') & \text{if } s_k' < 0 ,\  s_k'' <0, \\
0 & \text{if } s_k' s_k'' \leq 0. 
\end{cases}
\end{equation*}
\end{definition}

\begin{definition}
\label{D_quadr_amoun_inter}
We define the \emph{total amount of interaction} associated to the two Riemann problems \eqref{E_two_riemann_1} as 
\begin{equation*}
\mathtt A(u^L, u^M, u^R) := \Atrans(u^L, u^M, u^R)  + \sum_{h=1}^n \Big(\Aquadr_h(u^L,u^M, u^R) + \Acanc_h(u^L, u^M, u^R) + \Acubic_h(u^L, u^M, u^R) \Big).
\end{equation*}
\end{definition}

\subsection{Distance between curves and between reduced fluxes}
\label{Ss_distance_curves}

The main result of this section is the following.
\begin{theorem}
\label{T_general}
For any $k = 1, \dots, n$,
\begin{itemize}
\item if $s_k' s_k'' \geq 0$, then
\begin{equation*}
\left.
\begin{array}{c}
\big\| (u_k' \cup u_k'') - u_k \big\|_{L^\infty(\I(s_k'+s_k'') \cap \I(s_k))} \\ [.7em]
\big\| (\sigma_k' \cup \sigma_k'') - \sigma_k \big\|_{L^1(\I(s_k'+s_k'') \cap \I(s_k))} \\ [.7em]
{\displaystyle \bigg\|\bigg(\frac{d^2f_k'}{d\tau^2} \cup \frac{d^2 f_k''}{d\tau^2} \bigg) - \frac{d^2 f_k}{d\tau^2} \bigg\|_{L^1(\I(s_k'+s_k'') \cap \I(s_k))}} \\
\end{array}
\right\}
\leq  \const \mathtt A(u^L, u^M, u^R);
\end{equation*}
\item if $s_k' s_k'' < 0$, then
\begin{equation*}
\left.
\begin{array}{c}
\big\|(u_k' \vartriangle  u_k'') - u_k\big\|_{L^\infty(\I(s_k'+s_k'') \cap \I(s_k))} \\ [.7em]
\big\|(\sigma_k' \vartriangle  \sigma_k'') - \sigma_k\big\|_{L^1(\I(s_k'+s_k'') \cap \I(s_k))} \\ [.7em]
{\displaystyle \bigg\|\bigg(\frac{d^2f_k'}{d\tau^2} \vartriangle  \frac{d^2 f_k''}{d\tau^2}\bigg) - \frac{d^2 f_k}{d\tau^2} \bigg\|_{L^1(\I(s_k'+s_k'') \cap \I(s_k))}} \\
\end{array}
\right\}
\leq  \const \mathtt A(u^L, u^M, u^R);
\end{equation*}
\end{itemize}
\end{theorem}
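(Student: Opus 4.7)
The strategy is to decompose the passage from the two incoming curves $(\gamma_k',\gamma_k'')$ to the outgoing curve $\gamma_k$ into two independent steps, corresponding to the two subsections \ref{Sss_basic_est} and \ref{Sss_elemnt_inter}. In the first step, one rearranges the waves of different families so that the waves of family $k$ become adjacent with a common base point $u_k^L$, producing the auxiliary curves $\tilde\gamma_k'$, $\tilde\gamma_k''$ of Figure \ref{F_el_curves_after_trans}; this step is a series of purely transversal operations. In the second step, the two same-family curves $\tilde\gamma_k',\tilde\gamma_k''$ are merged into the single solution $\gamma_k$ of the global Riemann problem $(u_k^L,u_k^R)$, with no further interference from other families. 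Because by Theorem \ref{T_bianchini} the change of strength $|s_k-(s_k'+s_k'')|$ is controlled by $\Atrans + \sum_h \Acubic_h$, the domain mismatch $\I(s_k'+s_k'')\triangle \I(s_k)$ contributes an error of the same order and can be ignored.

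For the first step I would establish Lipschitz estimates in the norm $\|\cdot\|_\dagger$ of \eqref{E_distance_gamma} for the map $(u^L,s)\mapsto \gamma(u^L,s)$ obtained as fixed point of \eqref{E_fixed_pt}. Using the contraction property of Lemma \ref{L_contract_dagger} and the smoothness of $\tilde r_k,\tilde\lambda_k$, a small perturbation $\delta u^L$ of the base point costs $O(\delta u^L)$ in each of $\|u-u'\|_\infty$, $\|v_k-v_k'\|_\infty$, $\|\sigma_k-\sigma_k'\|_{L^1}$, and correspondingly in the $L^1$ norm of $d^2f_k/d\tau^2 = \tilde\lambda_k\circ\gamma$. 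Since the transversal rearrangement shifts the base points of the $k$-th curves by at most $\const\,\Atrans(u^L,u^M,u^R)$, the passage from $(\gamma_k',\gamma_k'')$ to $(\tilde\gamma_k',\tilde\gamma_k'')$ produces an error of order $\const\,\Atrans$ in all three quantities appearing in the conclusion of the theorem. An analogous estimate for the perturbation of the length $s$ takes care of the domain mismatch.

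The second step is the heart of the argument. Here, on the common interval $\I(s_k'+s_k'')$ with base point $u_k^L$, the outgoing speed $\sigma_k$ is the derivative of $\conv(\tilde f_k'\cup\tilde f_k'')$ (respectively of $\conv(\tilde f_k'\vartriangle\tilde f_k'')$ in the cancellation case), while $\tilde\sigma_k'\cup\tilde\sigma_k''$ is obtained by piecing together the derivatives of the envelopes of $\tilde f_k'$ and $\tilde f_k''$ separately. In the homogeneous case $s_k's_k''\geq 0$ the $L^1$ difference of these derivatives is, by integration by parts, twice the area between the two envelopes, and by appendix lemmas on convex envelopes this area equals the height of the newly-created kink at $\tau=s_k'$ (precisely $\Aquadr_k$ by Definition \ref{D_aquadr}) plus contributions controlled by $\Acubic_k$. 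In the cancellation case $s_k's_k''<0$, the shorter of the two waves disappears and the additional $L^1$ cost is bounded by $\Acanc_k$. Once $\|\tilde\sigma_k'\cup\tilde\sigma_k''-\sigma_k\|_{L^1}$ is under control, the bounds on $\|(\tilde u_k'\cup\tilde u_k'')-u_k\|_\infty$ and on $\|(d^2\tilde f_k'/d\tau^2\cup d^2\tilde f_k''/d\tau^2)-d^2f_k/d\tau^2\|_{L^1}$ follow by plugging into \eqref{E_fixed_pt} and reusing the contraction of Lemma \ref{L_contract_dagger} with the speed perturbation as forcing term.

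The main obstacle is this second step, and specifically the transition from the $L^1$ speed estimate to the $L^1$ estimate on $d^2f_k/d\tau^2$: the operation of taking a convex envelope is not Lipschitz in $L^1$ of the derivative in general, so one cannot simply cite a soft principle and must instead exploit the explicit geometric identity that the area between $\conv(\tilde f_k'\cup\tilde f_k'')$ and $\conv \tilde f_k' \cup \conv \tilde f_k''$ is exactly the quadratic amount $\Aquadr_k$, up to cubic corrections absorbed into $\Acubic_k$ through the estimates on secant lines collected in the appendix. Handling the cancellation case uniformly with the homogeneous case, where the convex/concave structure switches as $s_k$ crosses zero, is the other delicate point and is what motivates the asymmetric definition \eqref{E_f_vartr_g} of the $\vartriangle$ operation.
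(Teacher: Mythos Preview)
Your overall strategy is the same as the paper's, but your two-step decomposition has a genuine gap: you identify the outgoing curve $\gamma_k$ with ``the single solution of the global Riemann problem $(u_k^L,u_k^R)$'', and this is false. By definition \eqref{E_outcoming_curves}, $\gamma_k$ starts at $u_{k-1}(s_{k-1})$, the endpoint of the \emph{outgoing} $(k-1)$-th curve, whereas $u_k^L=\tilde u''_{k-1}(s''_{k-1})$ is the endpoint of the red curve obtained after transversal rearrangement. These two base points differ by an amount depending on \emph{all} the strength changes $|s_h-(s_h'+s_h'')|$ for $h<k$, not only on the $k$-th one; and $\gamma_k$ has length $s_k$, not $s_k'+s_k''$. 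Your remark about the domain mismatch $\I(s_k'+s_k'')\triangle\I(s_k)$ addresses the length discrepancy but not the base-point discrepancy, which accumulates across families and requires an induction. The paper isolates this as a separate third step (Lemma \ref{L_general_cubic} and Corollary \ref{C_general_cubic}): after merging $\tilde\gamma_k',\tilde\gamma_k''$ into an intermediate curve $\tilde\gamma_k$ of length $s_k'+s_k''$ starting at $\tilde u_{k-1}(s'_{k-1}+s''_{k-1})$ (the green curves), one must still pass from $\tilde\gamma_k$ to $\gamma_k$, and this is where $\Acubic_h$ enters, via Theorem \ref{T_bianchini}.

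A secondary point: you locate the ``main obstacle'' at the passage from the $L^1$ speed estimate to the $L^1$ estimate on $d^2f_k/d\tau^2$, invoking ``cubic corrections absorbed into $\Acubic_k$''. In the paper's organization this is actually routine: once $D(\gamma'\cup\gamma'',\gamma)\leq\const\,\Aquadr_k$ is established via the contraction (Lemma \ref{L_interaction_same_family}), the second-derivative estimate follows directly from the explicit formula \eqref{E_second_derivative_f} and Proposition \ref{P_estim_diff_conv}, with no $\Acubic_k$ appearing at that stage. The cubic amount enters only through the third step you are missing, not through the same-family merging.
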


The proof of the theorem is a bit technical, but it is not difficult. It is essentialy based on the fact that we have used the $L^1$-norm of the difference in speed into the definition of the distance $D$ in Section \ref{Ss_proof_step_2} and on the computation in Lemma \ref{L_interaction_same_family}, which relates the difference in speed of the incoming wavefronts with the quadratic amount of interaction introduced in Definition \ref{D_aquadr}. 

The proof of Theorem \ref{T_general} is given in the following three subsections. In the first subsection we prove some basic estimates related to translations of the starting point of the curve which solves a Riemann problem and to changes of the length of such a curve; in the second subsection we consider the situation in which each of the two incoming Riemann problems is solved by a wavefront of a single family $k$; in the third subsection we conclude the proof of Theorem \ref{T_general}, piecing together the analysis of the previous two cases.

\subsubsection{Basic estimates}
\label{Sss_basic_est}

First of all, we prove the following three lemmas. The first is just an observation on the second derivative of the reduced flux. The second and the third provide estimates on the distance between curves and between reduced fluxes when varying the starting point of the curve or its length.

\begin{lemma}
\label{L_second_derivative_f}
Let $\gamma = (u, v_k, \sigma_k) := \gamma_k(u_0, s)$ the solution of the \eqref{E_fixed_pt}, i.e. the curve solving the Riemann problem with left state $u_0$ and length $s$. Denote by $f_k$ the reduced flux associated to $\gamma_k$ as in \eqref{E_reduced_flux}, i.e. 
\begin{equation*}
f_k(\tau) := \int_0^{\tau} \tilde \lambda_k(u(\varsigma), v_k(\varsigma), \sigma_k(\varsigma)) d\varsigma.
\end{equation*}
Then it holds
\begin{equation}
\label{E_second_derivative_f}
\frac{d^2 f_k}{d\tau^2} (\tau) = 
\frac{\partial \tilde \lambda_k}{\partial u}\big(\gamma_k(\tau)\big) \tilde r_k\big(\gamma_k(\tau)\big) 
+ \frac{\partial \tilde \lambda_k}{\partial v_k}\big(\gamma_k(\tau) \big) \bigg[\tilde \lambda_k\big(\gamma_k(\tau)\big) - \frac{d \conv_{[0,s]} f_k}{d\tau} (\tau) \bigg].
\end{equation}
\end{lemma}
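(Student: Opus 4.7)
The plan is a direct chain-rule computation, starting from the identity
\[
\frac{d f_k}{d\tau}(\tau) = \tilde\lambda_k\bigl(u(\tau), v_k(\tau), \sigma_k(\tau)\bigr),
\]
which holds a.e.\ by the definition \eqref{E_reduced_flux} of the reduced flux. Since $\tilde\lambda_k$ is smooth and the components $u, v_k$ are of class $C^{1,1}$ (hence a.e.\ differentiable), differentiating once more yields
\[
\frac{d^2 f_k}{d\tau^2} = \frac{\partial \tilde\lambda_k}{\partial u}(\gamma_k) \, u'(\tau) + \frac{\partial \tilde\lambda_k}{\partial v_k}(\gamma_k) \, v_k'(\tau) + \frac{\partial \tilde\lambda_k}{\partial \sigma_k}(\gamma_k) \, \sigma_k'(\tau) \qquad \text{a.e.\ in } [0,s].
\]
The first equation of \eqref{E_fixed_pt} gives $u'(\tau) = \tilde r_k(\gamma_k(\tau))$, and differentiating the second gives $v_k'(\tau) = \tilde\lambda_k(\gamma_k(\tau)) - \tfrac{d}{d\tau}\conv_{[0,s]} f_k(\tau)$. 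So the identity \eqref{E_second_derivative_f} will follow once we check that the third term above vanishes almost everywhere.

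The key point is therefore to prove that $\frac{\partial \tilde\lambda_k}{\partial \sigma_k}(\gamma_k(\tau))\, \sigma_k'(\tau) = 0$ for $\mathcal{L}^1$-a.e.\ $\tau \in [0,s]$. I would split the interval according to the sign of $v_k = f_k - \conv_{[0,s]} f_k \ge 0$. On the closed set $\{v_k = 0\}$ the second estimate in \eqref{E_delambdasudev} forces $\frac{\partial \tilde\lambda_k}{\partial \sigma_k}(\gamma_k(\tau)) = 0$, so the product is zero pointwise there. On the open set $\{v_k > 0\} = \{f_k > \conv_{[0,s]} f_k\}$, the standard structure of convex envelopes (Theorem \ref{convex_fundamental_thm} in the appendix) says that on each connected component the envelope $\conv_{[0,s]} f_k$ coincides with its secant line and is therefore affine; hence $\sigma_k = \frac{d}{d\tau}\conv_{[0,s]} f_k$ is constant on each such component, and $\sigma_k'(\tau) = 0$ almost everywhere on $\{v_k > 0\}$. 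In both cases the offending term disappears.

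Putting the three ingredients together yields exactly the right-hand side of \eqref{E_second_derivative_f}, since the bracket $\tilde\lambda_k(\gamma_k(\tau)) - \tfrac{d}{d\tau}\conv_{[0,s]} f_k(\tau)$ is precisely $v_k'(\tau)$. I do not expect any real obstacle: the only mildly delicate point is the almost-everywhere vanishing of $\sigma_k'$ on $\{v_k > 0\}$, which is a soft consequence of the envelope's affine behaviour on components of that open set; and the vanishing on $\{v_k = 0\}$ is an immediate consequence of the $O(|v_k|)$ bound in \eqref{E_delambdasudev}, which was exactly built into the center-manifold construction to make identities of this type hold.
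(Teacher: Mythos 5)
Your proposal is correct and follows essentially the same route as the paper: a chain-rule differentiation of $\frac{df_k}{d\tau}=\tilde\lambda_k(\gamma_k(\tau))$, substitution of $u'=\tilde r_k(\gamma_k)$ and $v_k'=\tilde\lambda_k(\gamma_k)-\frac{d}{d\tau}\conv_{[0,s]}f_k$, and the observation that $\frac{\partial\tilde\lambda_k}{\partial\sigma_k}(\gamma_k)\,\sigma_k'=0$ a.e., which the paper phrases contrapositively ($\sigma_k'\neq 0$ forces $v_k=0$, whence the partial derivative vanishes by \eqref{E_delambdasudev}) while you phrase it as a case split on $\{v_k=0\}$ versus $\{v_k>0\}$. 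The only cosmetic difference is that the paper adds the closing remark that, both sides being continuous, the a.e.\ identity upgrades to a pointwise one.
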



\begin{proof}
Observe that 
\begin{equation*}
\frac{\partial \tilde \lambda_k}{\partial \sigma_k}\big(\gamma_k(\tau)\big) \frac{d \sigma_k}{d \tau}(\tau) = 0 \quad \text{ for $\mathcal L^1$-a.e. } \tau \in [0,s].
\end{equation*}
Namely, if $\frac{d \sigma_k(\tau)}{d\tau} \neq 0$ for some $\tau$, then $v_k(\tau) = 0$ and thus, by \eqref{E_delambdasudev},
\begin{equation*}
\bigg|\frac{\partial \tilde \lambda_k}{\partial \sigma_k}\big(\gamma_k(\tau)\big) \bigg| \leq \const \big|v_k(\tau)\big| = 0.
\end{equation*}
As a consequence, formula \eqref{E_second_derivative_f} holds $\mathcal L^1$-a.e., and being both sides continuous we can conclude.
\end{proof}

\begin{lemma}[Translation of the starting point]
\label{L_translation}
Let $\gamma = \gamma_k(u_0, s) = (u,v_k,\sigma_k)$ and $\gamma' = \gamma_k(u_0', s) = (u', v_k',\sigma_k')$. Denote by $f_k, f_k'$ the reduced flux associated to $\gamma, \gamma'$ respectively.

Then it holds
\begin{subequations}
\begin{equation*}
\|u - u'\|_\infty  \leq \const |u_0 - u_0'|, \qquad \quad \|v_k - v_k'\|_\infty \leq \const |s||u_0 - u_0'|, 
\end{equation*}
\begin{equation*}
\|\sigma_k - \sigma_k'\|_1 \leq \const |s| |u_0 - u_0'|, \qquad \bigg\| \dfrac{d^2 f_k}{d\tau^2} - \dfrac{d^2 f_k'}{d\tau^2}\bigg\|_1 \leq \const |s| |u_0 - u_0'|.
\end{equation*}
\end{subequations}
\end{lemma}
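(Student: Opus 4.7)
The plan is to exploit the fixed-point characterization from Lemma~\ref{L_contract_dagger}. Comparing the defining integral equations for $\gamma$ and $\gamma'$ and repeating the contraction computation in Step~2 of the proof of Lemma~\ref{L_contract_dagger}, the only new contribution relative to the previous argument is the term $|u_0 - u_0'|$ coming from the different starting points in the $u$-equation. Therefore
\[
\|\gamma - \gamma'\|_\dagger = \|u-u'\|_\infty + \|v_k - v_k'\|_\infty + \|\sigma_k - \sigma_k'\|_1 \leq |u_0 - u_0'| + \tfrac12 \|\gamma - \gamma'\|_\dagger,
\]
so $\|\gamma - \gamma'\|_\dagger \leq 2 |u_0 - u_0'|$. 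This already gives the first estimate $\|u - u'\|_\infty \leq \const |u_0 - u_0'|$.

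The factor $|s|$ in the remaining estimates will come from the boundary condition $v_k(0) = v_k'(0) = 0$ combined with the uniform Lipschitz bound $\Lip(v_k), \Lip(v_k') \leq L$ from the definition of $\Gamma_k$, which yields $|v_k(\varsigma)|, |v_k'(\varsigma)| \leq L|s|$. Writing $\sigma_k - \sigma_k' = \tfrac{d}{d\tau}\bigl(\conv f_k(\gamma) - \conv f_k(\gamma')\bigr)$ and using the $L^1$ estimate for derivatives of convex envelopes from Proposition~\ref{P_estim_diff_conv} (already invoked at the end of Step~2 of the proof of Lemma~\ref{L_contract_dagger}),
\[
\|\sigma_k - \sigma_k'\|_1 \leq \int_0^s \bigl|\tilde\lambda_k(\gamma(\varsigma)) - \tilde\lambda_k(\gamma'(\varsigma))\bigr|\,d\varsigma.
\]
The mean value theorem together with $|\partial_{\sigma_k}\tilde\lambda_k| \leq \const|v_k|$ from \eqref{E_delambdasudev} gives
\[
\bigl|\tilde\lambda_k(\gamma(\varsigma)) - \tilde\lambda_k(\gamma'(\varsigma))\bigr| \leq \const\bigl(|u - u'| + |v_k - v_k'|\bigr) + \const |v_k(\varsigma)|\,|\sigma_k - \sigma_k'|,
\]
so integrating over $[0,s]$ and using $|v_k| \leq L|s|$ in the last term produces
\[
\|\sigma_k - \sigma_k'\|_1 \leq \const|s|\bigl(\|u - u'\|_\infty + \|v_k - v_k'\|_\infty\bigr) + \const L|s|\,\|\sigma_k - \sigma_k'\|_1.
\]
Absorbing the last term (since $|s|$ is small) and inserting the preliminary bound yields $\|\sigma_k - \sigma_k'\|_1 \leq \const|s||u_0 - u_0'|$. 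Substituting the identical pointwise control of $\tilde\lambda_k(\gamma) - \tilde\lambda_k(\gamma')$ into $v_k - v_k' = \bigl(f_k(\gamma) - f_k(\gamma')\bigr) - \bigl(\conv f_k(\gamma) - \conv f_k(\gamma')\bigr)$, together with Proposition~\ref{P_estim_diff_conv} bounding the convex-envelope differences in $L^\infty$, gives $\|v_k - v_k'\|_\infty \leq \const|s||u_0 - u_0'|$ in the same way.

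For the final estimate, apply Lemma~\ref{L_second_derivative_f} to express
\[
\frac{d^2 f_k}{d\tau^2}(\tau) = \frac{\partial \tilde\lambda_k}{\partial u}(\gamma)\,\tilde r_k(\gamma) + \frac{\partial \tilde\lambda_k}{\partial v_k}(\gamma)\bigl[\tilde\lambda_k(\gamma) - \sigma_k(\tau)\bigr],
\]
and similarly for $\gamma'$. Each coefficient is smooth in $\gamma$ with $\sigma_k$-derivative of order $|v_k|$, and the bracket $\tilde\lambda_k(\gamma) - \sigma_k$ is uniformly bounded. Taking the difference, integrating over $[0,s]$, and substituting the refined estimates on $\|u-u'\|_\infty$, $\|v_k - v_k'\|_\infty$, $\|\sigma_k - \sigma_k'\|_1$ (plus the $L^1$ bound on $\tilde\lambda_k(\gamma) - \tilde\lambda_k(\gamma')$ obtained along the way) reduces everything to quantities of order $|s||u_0-u_0'|$. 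The main obstacle is purely bookkeeping: the crucial structural observation, shared by all three refined estimates, is that $\partial_{\sigma_k}\tilde\lambda_k$ is itself of order $|v_k| \leq L|s|$, which upgrades the \emph{a priori} $L^1$ bound on $\sigma_k - \sigma_k'$ into an extra factor $|s|$ wherever it appears.
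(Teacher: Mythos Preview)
Your argument is correct, but it is organized differently from the paper's. You first obtain the coarse bound $\|\gamma-\gamma'\|_\dagger\le 2|u_0-u_0'|$ by comparing the two fixed-point systems directly, and then bootstrap: using $|v_k|\le L|s|$ to turn the $\sigma_k$-dependence of $\tilde\lambda_k$ (and of the coefficients in Lemma~\ref{L_second_derivative_f}) into an extra factor $|s|$ in the integrated estimates. The paper instead avoids any bootstrap by introducing the translated curve $\widetilde\gamma(\tau)=\gamma(\tau)+(u_0'-u_0,0,0)$, which has $\tilde v_k=v_k$ and $\tilde\sigma_k=\sigma_k$, and applying the contraction principle once: since $\widetilde\gamma$ differs from $\gamma$ only by a constant shift in the $u$-component, one computes $D(\widetilde\gamma,\mathcal T(\widetilde\gamma))\le\const|s||u_0-u_0'|$ directly, whence $D(\widetilde\gamma,\gamma')\le\const|s||u_0-u_0'|$, and the $v_k$ and $\sigma_k$ estimates follow immediately without a second pass. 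Your route is a bit longer but entirely sound; the paper's translation trick is cleaner in that the factor $|s|$ appears in a single step rather than being recovered afterwards.
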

\begin{proof}
It holds
\begin{equation*}
\left\{
\begin{aligned}
u(\tau) &= u_0 + \int_0^\tau \tilde r_k(\gamma(\varsigma)) d\varsigma \\
v_k(\tau) &= f_k (\gamma; \tau) - \conv_{[0,s]} f_k (\gamma; \tau) \\
\sigma_k(\tau) &= \frac{d}{d\tau} \conv_{[0,s]} f_k (\gamma; \tau)
\end{aligned}
\right.
\qquad
\left\{
\begin{aligned}
u'(\tau) &= u'_0 + \int_0^\tau \tilde r_k(\gamma'(\varsigma)) d\varsigma \\
v'_k(\tau) &= f_k (\gamma'; \tau) - \conv_{[0,s]} f_k (\gamma'; \tau) \\
\sigma'_k(\tau) &= \frac{d}{d\tau} \conv_{[0,s]} f_k (\gamma'; \tau)
\end{aligned}
\right.
\end{equation*}
Consider the curve
$\widetilde{\gamma}(\tau) = \gamma(\tau) + (u_0' - u_0,0,0)$, i.e. the translation of $\gamma$ from the starting point $(u_0, 0, \sigma_k(0))$ to $(u_0', 0, \sigma_k(0))$ and set $\widetilde{\widetilde{\gamma}} := \mathcal{T}(\widetilde \gamma)$. The curves $\widetilde{\gamma}$ and $\widetilde{\widetilde{\gamma}}$ satisfy the following systems
\begin{equation*}
\left\{
\begin{aligned}
\widetilde{u}(\tau) &= u_0' + \int_0^\tau \tilde r_k(\gamma(\varsigma)) d\varsigma \\
\tilde v_k(\tau) &= f_k (\gamma; \tau) - \conv_{[0,s]} f_k (\gamma; \tau) \\
\widetilde{\sigma}_k(\tau) &= \frac{d}{d\tau} \conv_{[0,s]} f_k (\gamma; \tau)
\end{aligned}
\right.
\qquad
\left\{
\begin{aligned}
\widetilde{\widetilde{u}}(\tau) &= u_0' + \int_0^\tau \tilde r_k(\widetilde{\gamma}(\varsigma)) d\varsigma \\
\widetilde{\widetilde{v}}_k(\tau) &= f_k (\widetilde{\gamma}; \tau) - \conv_{[0,s]} f_k (\widetilde{\gamma}; \tau) \\
\widetilde{\widetilde{\sigma}}_k(\tau) &= \frac{d}{d\tau} \conv_{[0,s]} f_k (\widetilde{\gamma}; \tau)
\end{aligned}
\right.
\end{equation*}

Let us prove now the first inequality. Using the Contraction Mapping Principle, we get 
\begin{equation*}
\|u-u'\|_\infty \leq \|u-\tilde u\|_\infty + \|\tilde u - u'\|_\infty \leq |u_0 - u_0'| + D(\widetilde \gamma, \gamma') \leq |u_0 - u_0'| + 2 D(\widetilde \gamma, \mathcal T(\widetilde \gamma)),
\end{equation*}
being the map $\mathcal T$ a contraction with constant $1/2$. \\
Since $\widetilde{\gamma}$ is obtained from $\gamma$ by translation of the initial point of the $u$-component, we get
\begin{equation*}
\begin{split}
|\tilde{\tilde u}(\tau) - \tilde u(\tau)| &\leq \int_0^{\tau} \big| \tilde r_k(\tilde \gamma_k(\varsigma)) - \tilde r_k(\gamma_k(\varsigma)) \big| d \varsigma \\
&\leq \int_{0}^\tau \Bigg( \bigg\| \frac{\partial \tilde r_k}{\partial u} \bigg\|_\infty \big| \tilde u(\varsigma) - u(\varsigma) \big| +  
\bigg\| \frac{\partial \tilde r_k}{\partial v_k} \bigg\|_\infty \big| \tilde v_k(\varsigma) - v_k(\varsigma) \big|  + 
\bigg\|\frac{\partial \tilde r_k}{\partial \sigma_k} \bigg\|_\infty \big| \tilde \sigma_k(\varsigma) - \sigma_k(\varsigma) \big| \Bigg) d\varsigma \\
&= \int_0^\tau  \bigg\| \frac{\partial \tilde r_k}{\partial u} \bigg\|_\infty \big| \tilde u(\varsigma) - u(\varsigma) \big| d\varsigma \\
&\leq \const |u_0 - u_0'||s|.
\end{split}
\end{equation*}
\\
Similarly,
\[
\big\|\widetilde{\widetilde{v}}_k - \widetilde{v}_k \big\|_\infty \leq \const |s||u_0 - u_0'|,
\quad
\big\|\widetilde{\widetilde{\sigma}}_k - \widetilde{\sigma}_k \big\|_1 \leq \const |s||u_0 - u_0'|,
\]
and thus
\begin{equation}
\label{E_distance_gamma_translation}
D(\widetilde \gamma, \widetilde{\widetilde{\gamma}}) \leq \const |s||u_0 - u_0'|.
\end{equation}
Hence
\begin{equation*}
\|u-u'\|_\infty \leq \const |u_0 - u_0'|.
\end{equation*}

In a similar way
\[
\|v_k - v_k'\|_\infty \leq \|v_k - \widetilde{v}_k\|_\infty + \|\widetilde{v}_k - v_k'\|_\infty 
\leq \|\widetilde{v}_k - v_k'\|_\infty \leq D(\widetilde{\gamma}, \gamma') \leq 2 D\big(\widetilde{\gamma}, \mathcal{T}(\widetilde{\gamma})\big),
\]
and
\[
\|\sigma_k - \sigma_k'\|_1 \leq \|\sigma_k - \widetilde{\sigma}_k\|_1 + \|\widetilde{\sigma}_k - \sigma_k'\|_1 
\leq \|\widetilde{\sigma}_k - \sigma_k'\|_1 \leq D(\widetilde{\gamma}, \gamma') \leq 2 D\big(\widetilde{\gamma}, \mathcal{T}(\widetilde{\gamma})\big).
\]
A further application of \eqref{E_distance_gamma_translation} yields the estimates on $v_k, \sigma_k$. 

Finally, using the chain rule, Lemma \ref{L_second_derivative_f}, Proposition \ref{P_estim_diff_conv} and the first part of the proof, we get
\begin{equation}
\label{E_translation_f}
\begin{split}
\bigg\| \dfrac{d^2 f_k}{d\tau^2} - \dfrac{d^2 f_k'}{d\tau^2}\bigg\|_1 &\leq \const \bigg[\int_0^s \Big(|u(\tau) - u'(\tau)| + |v_k(\tau) - v_k'(\tau) | + |\sigma_k(\tau) - \sigma_k'(\tau)|\Big) d\tau \bigg]  \\
&\leq \const |s| |u_0 - u_0'|. 
\end{split}
\end{equation}
This concludes the proof.
\end{proof}

\begin{lemma}[Change of the length of the curve]
\label{L_change_length}
Let $s', s'' \in \R, u_0 \in \R^n$ and assume $s's''\geq0$. Let
\begin{equation*}
\gamma = \gamma_k(u_0, s'+s'') = (u,v_k,\sigma_k), \quad \gamma' = (u', v_k',\sigma_k') := \gamma_k(u_0, s'), \quad \gamma'' = (u'', v_k'', \sigma_k'') := \gamma_k(u(s'), s'').
\end{equation*}
As before, denote by $f_k, f_k', f_k''$ the reduced fluxes associated by \eqref{E_reduced_flux} to $\gamma,\gamma', \gamma''$ respectively; assume also that $\gamma'', f_k''$ are defined on $s' + \I(s'')$ instead of $\I(s'')$.

Then it holds
\begin{equation}
\label{E_change_length_1}
D\big(\gamma|_{\I(s')},\gamma'\big) \leq \const |s'||s''|, \qquad D\big(\gamma|_{s'+ \I(s'')},\gamma''\big) \leq \const |s'||s''|,
\end{equation}
and
\begin{subequations}
\label{E_change_length_2}
\begin{equation}
\label{E_change_lenght_21}
\bigg\| \dfrac{d^2 f_k}{d\tau^2} - \dfrac{d^2 f_k'}{d\tau^2}\bigg\|_{L^1(\I(s'))} \leq \const D\big(\gamma|_{\I(s')},\gamma'\big) \leq \const |s||s'|,
\end{equation}
\begin{equation}
\label{E_change_lenght_22}
\bigg\| \dfrac{d^2 f_k}{d\tau^2} - \dfrac{d^2 f_k''}{d\tau^2}\bigg\|_{L^1(s'+ \I(s''))} \leq \const D\big(\gamma|_{s'+\I(s'')},\gamma''\big) \leq \const |s||s'|.
\end{equation}
\end{subequations}
\end{lemma}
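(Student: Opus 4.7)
My plan is to derive both \eqref{E_change_length_1} and \eqref{E_change_length_2} from the contraction property of the fixed-point operator of Lemma \ref{L_contract_dagger}, reducing everything to an estimate of how the convex envelope of the reduced flux $f_k(\gamma;\cdot)$ changes when the interval of definition is enlarged from $\I(s')$ to $\I(s'+s'')$. Set $\gamma^* := \gamma|_{\I(s')}$ and let $\mathcal T_{[0,s']}$ denote the version of $\mathcal T$ acting on $\Gamma_k(u_0,s')$; since $\gamma'$ is its unique fixed point and $\mathcal T_{[0,s']}$ is $\frac12$-contractive in the distance $D$, one has $D(\gamma^*,\gamma') \leq 2\,D\bigl(\gamma^*, \mathcal T_{[0,s']}(\gamma^*)\bigr)$. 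A direct componentwise inspection (analogous to Step 2 of Lemma \ref{L_contract_dagger}) shows that the $u$-component of $\mathcal T_{[0,s']}(\gamma^*) - \gamma^*$ vanishes identically, while the $v$- and $\sigma$-components equal precisely
\begin{equation*}
\conv_{\I(s'+s'')} f_k(\gamma;\tau) - \conv_{\I(s')} f_k(\gamma;\tau)
\qquad \text{and} \qquad
\frac{d}{d\tau}\Bigl[\conv_{\I(s'+s'')} f_k(\gamma;\tau) - \conv_{\I(s')} f_k(\gamma;\tau)\Bigr].
\end{equation*}

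The core step is then to bound the $L^\infty$-norm of the first quantity on $\I(s')$ and the $L^1$-norm of the second by $\const|s'||s''|$. The two envelopes agree with $f_k(\gamma;\cdot)$ at the common endpoint $0$, while at the inner endpoint $s'$ their difference is controlled by
\begin{equation*}
0 \leq f_k(\gamma;s') - \conv_{\I(s'+s'')} f_k(\gamma;s') \leq \const|s'||s''|,
\end{equation*}
which follows by comparing $\conv_{\I(s'+s'')} f_k(\gamma;\cdot)$ with the chord joining $0$ to $s'+s''$ and using that $\tilde\lambda_k\circ\gamma$ is uniformly Lipschitz (Step 1 of the proof of Lemma \ref{L_contract_dagger}). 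The $L^\infty$ bound on the envelope difference and the $L^1$ bound on its derivative then reduce to standard convex-envelope estimates on nested intervals, which I would invoke via Proposition \ref{P_estim_diff_conv} of Appendix \ref{S_appendix}. The second inequality in \eqref{E_change_length_1} is obtained by the symmetric argument on $s'+\I(s'')$: apply the analogous fixed-point operator $\mathcal T_{[s',s'+s'']}$ with initial point $u(s')$, observe that adding the constant $f_k(\gamma;s')$ does not alter a convex envelope beyond that same constant, and reduce once more to a difference of two nested convex envelopes of $f_k(\gamma;\cdot)$.

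The inequalities \eqref{E_change_length_2} on the second derivatives of the reduced fluxes follow from \eqref{E_change_length_1} by inserting the already-proved $L^\infty$ bounds on the $u$- and $v$-components and the $L^1$ bound on the $\sigma$-component into the explicit formula \eqref{E_second_derivative_f} for $d^2 f_k/d\tau^2$, using the Lipschitz regularity and boundedness of $\tilde\lambda_k$ and its partial derivatives together with the bound \eqref{E_delambdasudev} on $\partial_{\sigma_k}\tilde\lambda_k$; this is a direct analogue of computation \eqref{E_translation_f} in the proof of Lemma \ref{L_translation}. The main obstacle I expect is the nested convex-envelope estimate of the second paragraph: one needs the $L^\infty$ control on values and the $L^1$ control on slopes to hold simultaneously with the same quadratic factor $|s'||s''|$, which crucially exploits the $C^{1,1}$ regularity of $f_k(\gamma;\cdot)$ established in Lemma \ref{L_contract_dagger}.
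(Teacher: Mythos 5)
Your overall architecture is the same as the paper's: reduce to $D(\gamma|_{\I(s')},\gamma')\leq 2D(\gamma|_{\I(s')},\mathcal T(\gamma|_{\I(s')}))$ via the contraction of Lemma \ref{L_contract_dagger}, observe that the $u$-component is unchanged and that the $v$- and $\sigma$-components reduce to the difference of the convex envelopes of $f_k(\gamma;\cdot)$ over the nested intervals $[0,s']$ and $[0,s'+s'']$, and then obtain \eqref{E_change_length_2} by plugging the resulting bounds into formula \eqref{E_second_derivative_f} exactly as in \eqref{E_translation_f}. However, your justification of the central nested-envelope estimate has two concrete flaws. First, the inequality $f_k(\gamma;s')-\conv_{[0,s'+s'']}f_k(\gamma;s')\leq\const|s'||s''|$ does not follow ``by comparing with the chord joining $0$ to $s'+s''$'': the convex envelope lies \emph{below} that chord (any convex minorant of $g$ is below the secant line), so the chord comparison yields only a \emph{lower} bound on the gap $g-\conv g$. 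The stated endpoint bound is true, but it requires comparing $f_k$ with the affine face of the envelope on the contact interval $[c,d]\ni s'$ and using $\Lip(f_k')\leq\const$ there (which gives $\tfrac{M}{2}(s'-c)(d-s')\leq\tfrac{M}{2}|s'||s''|$). Second, Proposition \ref{P_estim_diff_conv} cannot deliver what you ask of it: it compares the envelopes of two \emph{different} functions over the \emph{same} interval, whereas here you need the same function over two \emph{nested} intervals. The relevant tool is Proposition \ref{P_diff_vel_proporzionale_canc} (together with the monotonicity statement of Proposition \ref{differenza_vel}), which is exactly how the paper proceeds: it bounds $\big\|\tfrac{d}{d\tau}\conv_{[0,s']}f_k(\gamma;\cdot)-\tfrac{d}{d\tau}\conv_{[0,s'+s'']}f_k(\gamma;\cdot)\big\|_{L^\infty([0,s'])}\leq\const|s''|$ and multiplies by the length $|s'|$ to get both the $L^\infty$ bound on $v$ and the $L^1$ bound on $\sigma$.

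Your endpoint-anchoring variant can in fact be completed, but only with an ingredient you do not mention: by Proposition \ref{differenza_vel} the difference $\delta:=\conv_{[0,s']}f_k(\gamma;\cdot)-\conv_{[0,s'+s'']}f_k(\gamma;\cdot)$ has nondecreasing derivative on $[0,s']$, i.e.\ $\delta$ is convex; since $\delta\geq 0$, $\delta(0)=0$ and $\delta(s')\leq\const|s'||s''|$, convexity gives $0\leq\delta'\leq$ and hence $\|\delta\|_\infty\leq\delta(s')$ and $\|\delta'\|_{L^1}=\delta(s')-\delta(0)\leq\const|s'||s''|$. Without this convexity step, knowing $\delta$ at the two endpoints controls neither $\|\delta\|_{L^\infty([0,s'])}$ nor $\|\delta'\|_{L^1}$. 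As written, then, the proposal's key quantitative step is not established; it becomes correct once the chord argument is replaced by the contact-interval argument and Proposition \ref{P_estim_diff_conv} is replaced by Propositions \ref{differenza_vel} and \ref{P_diff_vel_proporzionale_canc}.
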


\begin{proof}
We prove only the first inequality in \eqref{E_change_length_1} and \eqref{E_change_length_2} and only in the case  $s',s''\geq 0$; all the other cases can be treated similarly.

By the Contraction Mapping Theorem, $D\big( \gamma|_{\I(s')}, \gamma' \big) \leq 2 D\big(\gamma|_{\I(s')}, \widetilde \gamma \big)$, where $\widetilde \gamma = (\tilde u, \tilde v_k, \tilde \sigma_k) := \mathcal{T} (\gamma|_{\I(s')})$: 
\begin{equation*}
\left\{
\begin{aligned}
\tilde u(\tau) &= u_0 + \int_0^\tau \tilde r_i(\gamma(\varsigma)) d\varsigma \\
\tilde v_k(\tau) &= f_k (\gamma; \tau) - \conv_{[0,s']} f_k (\gamma; \tau) \\
\tilde \sigma_k(\tau) &= \frac{d}{d\tau} \conv_{[0,s']} f_k (\gamma; \tau)
\end{aligned}
\right.
\end{equation*}
Hence $\tilde u(\tau) = u(\tau)$ for any $\tau \in [0,s]$. It also holds
\[
\begin{split}
\big|\tilde v_k(\tau) - v_k(\tau)\big| &= \Big|\conv_{[0,s']}f_k(\gamma;\tau) - \conv_{[0,s'+s'']}f_k(\gamma;\tau)\Big| \\
&= \bigg|\int_0^\tau  \frac{d}{d\tau} \conv_{[0,s']} f_k (\gamma; \tau) - \frac{d}{d\tau} \conv_{[0,s'+s'']} f_k (\gamma; \tau) d\tau\bigg| \\
&\leq \int_0^\tau \bigg|\frac{d}{d\tau} \conv_{[0,s']} f_k (\gamma; \tau) - \frac{d}{d\tau} \conv_{[0,s'+s'']} f_k (\gamma; \tau)  \bigg| \\
&\leq \bigg\|\frac{d}{d\tau} \conv_{[0,s']} f_k(\gamma) - \frac{d}{d\tau} \conv_{[0,s'+s'']} \tilde f_k(\gamma)|_{[0,s']}  \bigg\|_\infty |s'| \\
\text{(by Prop. \ref{P_diff_vel_proporzionale_canc})} &\leq \const |s'||s''|.
\end{split}
\]
Clearly 
\begin{equation*}
\begin{split}
\|\tilde \sigma_k - \sigma_k\|_1 &\leq \Big\|\frac{d}{d\tau} \conv_{[0,s']} f_k(\gamma) - \frac{d}{d\tau} \conv_{[0,s'+s'']}  f_k(\gamma)|_{[0,s']}  \Big\|_1 \\
&\leq \Big\|\frac{d}{d\tau} \conv_{[0,s']} f_k(\gamma) - \frac{d}{d\tau} \conv_{[0,s'+s'']}  f_k(\gamma)|_{[0,s']}  \Big\|_\infty |s'| \\
\text{(by Prop. \ref{P_diff_vel_proporzionale_canc})} &\leq \const |s'||s''|.
\end{split}
\end{equation*}
Hence we obtain
\begin{equation*}
D\big(\gamma|_{\I(s')},\gamma'\big) \leq 2 D\big(\gamma|_{\I(s')}, \widetilde \gamma \big) \leq \const |s'||s''|.
\end{equation*}

The first inequality in \eqref{E_change_length_2} is proved as in Lemma \ref{L_translation}, inequality \eqref{E_translation_f}. The second inequality in \eqref{E_change_length_1} and the second inequality in \eqref{E_change_length_2} can be treated similarly. 
\end{proof}

\subsubsection{Elementary interactions}
\label{Sss_elemnt_inter}

Now, using the previous estimates, we study how the curves $\gamma_k$ and the reduced fluxes $f_k$ vary when an elementary interaction between two Riemann problems occurs: by elementary interaction we means that two adjacent Riemann problem contains waves belonging to one family only. We consider the cases when the waves belong to two different families (\emph{transversal interaction}) or to the same family and have same sign (\emph{non-transversal interaction} or simply \emph{interaction}).

The case when the waves belong to the same family but have different sign (\emph{cancellation}) will be controlled by Lemma \ref{L_change_length}. 

\begin{lemma}[Transversal interaction]
\label{L_transversal_interaction}
Let $u^M = T^k_{s_k}(u^L)$, $u^R = T^h_{s_h}(u^M)$. Set $\hat u^M := T^h_{s_h}(u^L)$, $\hat u^R:= T^k_{s_k}(\hat u^M)$ and 
\begin{align*}
\gamma_k &= (u_k, v_k, \sigma_k) := \gamma_k(u^L, s_k), & 
\gamma_h = (u_h, v_h, \sigma_h) := \gamma_h(u^M, s_h),  \\
\widehat \gamma_h &= (\hat u_h, \hat v_h, \hat \sigma_h) := \gamma_h(u^L, s_h), & 
\widehat \gamma_k = (\hat u_k, \hat v_k, \hat \sigma_k) :=\gamma_k(\hat u^M, s_k).
\end{align*}
Denote by $f_k, \hat f_k, f_h, \hat f_h$ the reduced fluxes associated to the curves $\gamma_k, \hat \gamma_k, \gamma_h, \hat \gamma_h$ respectively. Then it holds
\begin{subequations}
\label{E_transv}
\begin{equation}
\label{E_trans_u}
\|u_h - \hat u_h\|_\infty \leq \const |s_k|, \qquad \qquad \qquad \quad \ \|u_k - \hat u_k\|_\infty \leq \const |s_h|,
\end{equation}
\begin{equation}
\label{E_trans_v}
\quad \ \, \|v_h - \hat v_h\|_\infty \leq \const |s_k||s_h|, \qquad \qquad \qquad \|v_k - \hat v_k\|_\infty \leq \const |s_k||s_h|,
\end{equation}
\begin{equation}
 \label{E_trans_sigma}
\quad \quad \|\sigma_h - \hat \sigma_h\|_1 \leq \const |s_k||s_h|, \qquad \qquad \qquad \ \|\sigma_k - \hat \sigma_k\|_1 \leq \const |s_k||s_h|,
\end{equation}
\begin{equation*}
\bigg\| \frac{d^2 f_k}{d\tau^2} - \frac{d^2 \hat f_k}{d\tau^2}\bigg\|_1 \leq \const |s_k||s_h|, \qquad \qquad \bigg\| \frac{d^2 f_h}{d\tau^2} - \frac{d^2 \hat f_h}{d\tau^2}\bigg\|_1 \leq \const |s_k||s_h|,
\end{equation*}
\end{subequations}
and
\begin{equation}
\label{E_dist_final_points_translation}
|u^R - \hat u^R|  \leq \const |s_k||s_h|.
\end{equation}
\end{lemma}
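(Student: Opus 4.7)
The eight inequalities in \eqref{E_transv} split naturally into two groups of four that compare, family by family, the same elementary curve with two different starting points separated by a wave from the opposite family; they are instances of Lemma \ref{L_translation}. By contrast, \eqref{E_dist_final_points_translation} is the classical Glimm transversal interaction estimate: the \emph{endpoints} of the two composed maps differ by the product $|s_k||s_h|$, which is genuinely sharper than anything Lemma \ref{L_translation} delivers for the whole curves. I would therefore organize the proof in two parts.

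For \eqref{E_transv}, the first equation in \eqref{E_fixed_pt} gives $|u^M - u^L| = |T^k_{s_k}(u^L) - u^L| \leq \|\tilde r_k\|_\infty |s_k| \leq \const|s_k|$, and analogously $|\hat u^M - u^L| \leq \const|s_h|$. Applying Lemma \ref{L_translation} to the pair $(\gamma_h(u^M, s_h), \gamma_h(u^L, s_h))$, whose starting points differ by at most $\const|s_k|$, one obtains directly $\|u_h - \hat u_h\|_\infty \leq \const|s_k|$ as well as
\[
\|v_h - \hat v_h\|_\infty,\ \|\sigma_h - \hat \sigma_h\|_1,\ \bigg\|\frac{d^2 f_h}{d\tau^2} - \frac{d^2 \hat f_h}{d\tau^2}\bigg\|_1 \leq \const|s_h|\,|u^M - u^L| \leq \const|s_k||s_h|.
\]
The symmetric application to the pair $(\gamma_k(u^L, s_k), \gamma_k(\hat u^M, s_k))$ produces the second column of \eqref{E_transv}. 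The linear factor $|s_k|$ (respectively $|s_h|$) surviving in the $u$-component reflects the fact that the two curves remain globally displaced by the starting-point gap, while the ``gradient-type'' quantities $v, \sigma, d^2 f/d\tau^2$ pick up an additional factor $|s_\ell|$ because they only see the variation of the curve over an interval of length $|s_\ell|$.

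For \eqref{E_dist_final_points_translation}, I would use the telescopic identity
\[
u^R - \hat u^R = \bigl\{[T^h_{s_h}(T^k_{s_k}(u^L)) - T^k_{s_k}(u^L)] - [T^h_{s_h}(u^L) - u^L]\bigr\} - \bigl\{[T^k_{s_k}(T^h_{s_h}(u^L)) - T^h_{s_h}(u^L)] - [T^k_{s_k}(u^L) - u^L]\bigr\},
\]
each brace of which has the form $[T^\ell_s(v) - v] - [T^\ell_s(w) - w]$. Writing $T^\ell_s(v) - v = \int_0^s \tilde r_\ell\bigl(\gamma_\ell(v, s)(\tau)\bigr)\, d\tau$ and using Lipschitz regularity of $\tilde r_\ell$ together with the bound $|\gamma_\ell(v, s)(\tau) - \gamma_\ell(w, s)(\tau)| \leq \const|v - w|$ from Lemma \ref{L_translation}, one gets $|[T^\ell_s(v) - v] - [T^\ell_s(w) - w]| \leq \const|s|\,|v - w|$. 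Plugging $|v - w| \leq \const|s_k|$ in the first brace and $|v - w| \leq \const|s_h|$ in the second, both braces are controlled by $\const|s_k||s_h|$, and the proof of \eqref{E_dist_final_points_translation} is complete.

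No step is genuinely hard here. The only delicate point, and the only place where the characteristic structure of transversal interaction really enters, is the telescopic rearrangement above: it is exactly the cancellation between the two orders of composition that upgrades the obvious linear-in-$(|s_k|+|s_h|)$ bound on $|u^R - \hat u^R|$ to the product bound $\const|s_k||s_h|$. Once this rearrangement is in place, the rest is a book-keeping application of Lemma \ref{L_translation}.
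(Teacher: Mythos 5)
Your proof is correct and follows essentially the same route as the paper: the eight estimates in \eqref{E_transv} are exactly the paper's ``direct consequence of Lemma \ref{L_translation}'' step, and your telescopic identity for $u^R - \hat u^R$, once each brace is written as $\int_0^s[\tilde r_\ell(\gamma_\ell(v,s))-\tilde r_\ell(\gamma_\ell(w,s))]\,d\tau$, reduces to precisely the paper's pairing of the two integral representations of $u^R$ and $\hat u^R$ family by family. One small imprecision: Lemma \ref{L_translation} does \emph{not} give the pointwise bound $|\gamma_\ell(v,s)(\tau)-\gamma_\ell(w,s)(\tau)|\leq\const|v-w|$ you invoke, since the $\sigma$-component is controlled only in $L^1$; the estimate still goes through because $|\partial\tilde r_\ell/\partial\sigma_\ell|\leq\const|v_\ell|$ lets you absorb the speed term via $\|\sigma_\ell-\sigma_\ell'\|_1\leq\const|s||v-w|$, which is exactly how the paper handles it.
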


\begin{proof}
Inequalities \eqref{E_transv} are direct consequence of Lemma \ref{L_translation} and the fact that
\begin{align*}
\big| u^M - u^L \big| & \leq \const |s_k|, & \big| \hat u^M - u^L \big| \leq \const |s_h|.
\end{align*}
Let us now prove inequality \eqref{E_dist_final_points_translation}. We have 
\[
\begin{split}
u^R &= u^L + \int_0^{s_k} \tilde r_k(\gamma_k(\varsigma)) d\varsigma + \int_0^{s_h} \tilde r_h(\gamma_h(\varsigma)) d\varsigma, \\
\hat u^R &= u^L + \int_0^{s_h} \tilde r_h(\widehat \gamma_h(\varsigma)) d\varsigma + \int_0^{s_k} \tilde r_k(\widehat \gamma_k(\varsigma)) d\varsigma.
\end{split}
\]
Hence
\begin{equation*}
\begin{split}
|u^R - \hat u^R| &\leq \int_0^{s_k} \big| \tilde r_k(\gamma_k(\varsigma)) - \tilde r_k(\widehat \gamma_k(\varsigma))\big| d\varsigma 
+ \int_0^{s_h} \big| \tilde r_h(\gamma_h(\varsigma)) - \tilde r_h(\widehat \gamma_h(\varsigma))\big| d\varsigma \\
&\leq \const \bigg[ \int_0^{s_k}\Big( | u_k(\varsigma) - \hat u_k(\varsigma)| + |v_k(\varsigma) - \hat v_k(\varsigma)| + |\sigma_k(\varsigma) - \hat \sigma_k(\varsigma) \big|\Big) d\varsigma \\
& \qquad \qquad \qquad +  
\int_0^{s_h} \Big( |u_h(\varsigma) - \hat u_h(\varsigma)| + |v_h(\varsigma) - \hat v_h(\varsigma)| + |\sigma_h(\varsigma) - \hat \sigma_h(\varsigma) \big|\Big) d\varsigma \bigg] \\
&\leq \const \Big[  |s_k| \| u_k - \hat u_k\|_\infty + |s_k| \|v_k - \hat v_k\|_\infty + \|\sigma_k - \hat \sigma_k\|_1 \\
& \qquad \qquad \qquad |s_h| \| u_h - \hat u_h\|_\infty + |s_h| \|v_h - \hat v_h\|_\infty + \|\sigma_h - \hat \sigma_h\|_1 \Big] \\
\text{(by \eqref{E_trans_u}-\eqref{E_trans_sigma})} &\leq \const |s_k||s_h|,
\end{split}
\end{equation*}
thus concluding the proof of the lemma.
\end{proof}

Let $\bar u^1 \in B(0,\delta/2)$, $A \in \N$ and consider a family of $A$ consecutive curves, $\{\gamma_a\}_{a \in \{1, \dots, A\}}$: given
\begin{equation*}
\{1, \dots, A\} \ni a \mapsto (k(a),s^a) \in \{1, \dots, n\} \times \R,
\end{equation*}
define
\begin{equation*}
\gamma^a(\tau) = \big(u^a(\tau), v^a(\tau), \sigma^a(\tau)\big) = \gamma_{k(a)}\big(\bar u^a, s^a\big)(\tau), \qquad a \in 1,\dots,A,
\end{equation*}
where $\bar u^{a+1} = u^a(s^a)$, for $a = 1, \dots A-1$.

Let $\{\tilde \gamma_a = (\tilde u^a,\tilde v^a,\tilde \sigma^a)\}_{a \in \{1,\dots,A\}}$ be the curves obtained by switching the values of $k(a)$, $s(a)$ in two consecutive points $\bar a$, $\bar a+1$, $1 \leq \bar a < A$:
\begin{equation*}
\tilde k(a) = \begin{cases}
k(a) & a \not= \bar a, \bar a+1, \\
k(\bar a+1) & a = \bar a, \\
k(\bar a) & a = \bar a + 1,
\end{cases} \qquad \tilde s(a) = \begin{cases}
s(a) & a \not \bar a, \bar a+1, \\
s(\bar a+1) & a = \bar a, \\
s(\bar a) & a = \bar a + 1.
\end{cases}
\end{equation*}
We assume $\sum |s^a| \ll 1$ sufficiently small (depending only on $f$) so that the curves $\gamma_a$, $\tilde \gamma_a$ remain in $B(0,\delta)$.
%

Denote by $f^{a}, \tilde f^a$, $a =1, \dots, A$, the reduced fluxes associated to $\gamma^a, \widetilde \gamma^a$ respectively.

\begin{corollary}
\label{C_switching_curves}
It holds
\begin{align}
\label{E_switching_curves}
\begin{array}{cc}
\big\| u^a - \tilde u^a\big\|_{\infty} \leq \const |s^{\bar a}||s^{\bar a +1}|, \qquad &
\big\|v^a - \tilde v^a\big\|_{\infty} \leq \const |s^{\bar a}||s^{\bar a +1}|, \\ [1em]
\big\|\sigma^a - \tilde \sigma^a\big\|_{1} \leq \const |s^{\bar a}||s^{\bar a +1}|, \qquad &
\big\|f^a - \tilde f^a\big\|_{1} \leq \const |s^{\bar a}||s^{\bar a +1}|.
\end{array}
\end{align}
\end{corollary}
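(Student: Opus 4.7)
The proof splits naturally into three regimes according to the position of $a$ relative to the swap pair $\{\bar a, \bar a+1\}$. For $a < \bar a$, the data $k(a), s^a$ agree with $\tilde k(a), \tilde s^a$ and the starting points $\bar u^a = \tilde{\bar u}^a$ coincide by construction (the swap happens strictly later), so $\gamma^a = \tilde \gamma^a$ identically and all four inequalities in \eqref{E_switching_curves} are trivially zero. For $a \in \{\bar a, \bar a+1\}$, assuming the non-trivial case $k(\bar a) \neq k(\bar a+1)$, the two pairs $(\gamma^{\bar a},\gamma^{\bar a+1})$ and $(\tilde\gamma^{\bar a},\tilde\gamma^{\bar a+1})$ fit exactly the setting of Lemma \ref{L_transversal_interaction} with $u^L := \bar u^{\bar a}$, $(k, s_k) := (k(\bar a), s^{\bar a})$, $(h, s_h) := (k(\bar a+1), s^{\bar a+1})$. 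Estimates \eqref{E_trans_u}--\eqref{E_trans_sigma} and the bound on the second derivative of the reduced flux in that lemma yield \eqref{E_switching_curves} for $a \in \{\bar a, \bar a+1\}$.

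For $a > \bar a+1$, by construction the families $k(a) = \tilde k(a)$ and strengths $s^a = \tilde s^a$ match, so the curves $\gamma^a, \tilde\gamma^a$ differ only through their starting points. The strategy is to control $|\bar u^a - \tilde{\bar u}^a|$ and then invoke Lemma \ref{L_translation} (translation of the starting point) to transfer the perturbation to the whole curve. The base case is $a = \bar a+2$: estimate \eqref{E_dist_final_points_translation} in Lemma \ref{L_transversal_interaction} gives directly
\begin{equation*}
|\bar u^{\bar a+2} - \tilde{\bar u}^{\bar a+2}| \leq \const |s^{\bar a}||s^{\bar a+1}|.
\end{equation*}
For the inductive step, from the representation $\bar u^{a+1} = \bar u^a + \int_0^{s^a} \tilde r_{k(a)}(\gamma^a(\varsigma))\,d\varsigma$ and the analogue for $\tilde\gamma^a$, together with Lemma \ref{L_translation} applied at step $a$, one derives
\begin{equation*}
|\bar u^{a+1} - \tilde{\bar u}^{a+1}| \leq (1 + \const |s^a|)\,|\bar u^a - \tilde{\bar u}^a|.
\end{equation*}
Iterating from $\bar a+2$ to $a$,
\begin{equation*}
|\bar u^a - \tilde{\bar u}^a| \leq \exp\Bigl(\const \textstyle\sum_{j} |s^j|\Bigr) \cdot \const\,|s^{\bar a}||s^{\bar a+1}| \leq \const\,|s^{\bar a}||s^{\bar a+1}|,
\end{equation*}
the final bound holding because the hypothesis $\sum |s^a| \ll 1$ keeps the exponential factor universally bounded. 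Plugging this estimate into Lemma \ref{L_translation} yields the three inequalities on $u^a, v^a, \sigma^a$ (and on $d^2 f^a/d\tau^2$) for every $a > \bar a+1$.

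Finally, the $L^1$ estimate on $f^a - \tilde f^a$ itself, rather than on its second derivative, follows from $df^a/d\tau = \tilde\lambda_{k(a)}(\gamma^a)$: this makes the map $\gamma \mapsto f$ Lipschitz in the $\|\cdot\|_\dagger$ distance, so $\|f^a - \tilde f^a\|_\infty \leq \const \|\gamma^a - \tilde\gamma^a\|_\dagger \leq \const |s^{\bar a}||s^{\bar a+1}|$, and $\|f^a - \tilde f^a\|_1 \leq |s^a| \|f^a - \tilde f^a\|_\infty$ gives the conclusion. The only genuinely delicate point is the Gronwall-type propagation of the perturbation of starting points, and this is harmless thanks to the smallness of the total variation; the rest of the argument is a bookkeeping application of Lemmas \ref{L_transversal_interaction} and \ref{L_translation}.
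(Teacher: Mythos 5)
Your proof is correct and follows essentially the same route as the paper's: identically zero for $a<\bar a$, Lemma \ref{L_transversal_interaction} for $a\in\{\bar a,\bar a+1\}$, and induction on the displacement of the starting points via Lemma \ref{L_translation} for $a>\bar a+1$. Your explicit Gronwall-type factor $(1+\const|s^a|)$ per step, summing to a uniformly bounded exponential thanks to $\sum_a |s^a|\ll 1$, is in fact slightly more careful than the paper's terse ``use Lemma \ref{L_translation} to conclude,'' which read naively would let the constant grow geometrically in the number of steps.
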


\begin{proof}
For $a =1, \dots, \bar a - 1$, the l.h.s. of each inequality in \eqref{E_switching_curves} is equal to zero. For $a = \bar a, \bar a +1$, the proof is an immediate consequence of Lemma \ref{L_transversal_interaction}.

Assume that \eqref{E_switching_curves} has been proved up to $\check a \in \bar a + 1,\dots,A-1$; in particular
\begin{equation*}
\big| u^{\check a}(s^{\check a}) - \tilde u^{\check a}(\tilde s^{\check a}) \big| \leq \const |s^{\bar a}| |s^{\bar a + 1}|.
\end{equation*}
Observe that at step $\check a+1$ we have to translate the initial point $\bar u^{\check a+1} =  u^{\check a}(s^{\check a})$ of the curve $\gamma^{\check a+1}$ to the point $\tilde u^{\check a}(\tilde s^{\check a})$.

Now use Lemma \ref{L_translation} to conclude that \eqref{E_switching_curves} holds for $\check a+1$.
%
\end{proof}

We now consider interactions among wavefronts of the same family and with same sign.
Let
\begin{equation*}
\gamma' = (u',v_k',\sigma_k'):= \gamma_k(u^L,s'), \qquad \gamma'' = (u'',v_k'',\sigma_k''):= \gamma_k(u^M,s''), 
\end{equation*}
where $u^M:= u'(s')$  and $s's''>0$. Let
\begin{equation*}
\gamma= (u,v_k, \sigma_k):= \gamma(u^L, s'+s'').
\end{equation*}
Denote by $f_k', f_k'', f_k$ the reduced flux associated to $\gamma', \gamma'', \gamma$ respectively. Assume as before that $\gamma''$ and $f_k''$ are defined on the interval $s' + \I(s'')$, instead of $\I(s'')$, and that $f_k'(s') = f_k''(s_k)$.

\begin{lemma}[Interaction of wavefronts of the same family and with same sign]
\label{L_interaction_same_family}
It holds 
\begin{equation*}
D(\gamma' \cup \gamma'', \gamma) \leq \const \Aquadr(u^L, u^M, u^R), \qquad 
\bigg\| \frac{d^2 (f_k' \cup f_k'')}{d\tau^2} - \frac{d^2 f_k}{d\tau^2}\bigg\|_1 \leq \const \Aquadr(u^L, u^M, u^R).
\end{equation*}
\end{lemma}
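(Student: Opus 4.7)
The plan is to exploit that $\gamma$ is the unique fixed point of the contraction $\mathcal T$ on $\Gamma_k(u^L, s'+s'')$ built in Lemma \ref{L_contract_dagger}. Introduce the candidate curve $\check\gamma := \gamma' \cup \gamma''$, defined on $\I(s'+s'')$. First I would verify that $\check\gamma$ belongs to $\Gamma_k(u^L, s'+s'')$: the relevant Lipschitz and $\delta$-closeness bounds hold automatically because they hold separately for $\gamma'$ and $\gamma''$ by Lemma \ref{L_contract_dagger}. By the $1/2$-contraction property,
\begin{equation*}
D(\check\gamma, \gamma) \le 2\, D(\check\gamma, \mathcal T\check\gamma),
\end{equation*}
so the entire first estimate reduces to controlling $D(\check\gamma, \hat\gamma)$ with $\hat\gamma := \mathcal T\check\gamma$.

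For the $u$-component the two curves coincide exactly: splitting $\hat u(\tau) = u^L + \int_0^\tau \tilde r_k(\check\gamma)\,d\varsigma$ at $s'$ and using the fixed-point equations defining $\gamma'$ on $\I(s')$ and $\gamma''$ on $s'+\I(s'')$ gives $\hat u \equiv \check u$. Since the reduced flux of $\check\gamma$ equals $f_k' \cup f_k''$ (we arranged $f_k''(s') = f_k'(s')$), the differences $\hat v_k - \check v_k$ and $\hat\sigma_k - \check\sigma_k$ are, respectively, the function $g_{\mathrm{pw}} - g_{\mathrm{glob}}$ and its derivative, where
\begin{equation*}
g_{\mathrm{pw}} := \conv_{[0,s']} f_k' \,\cup\, \conv_{[s',s'+s'']} f_k'', \qquad g_{\mathrm{glob}} := \conv_{[0,s'+s'']}(f_k' \cup f_k'').
\end{equation*}
The crucial convex-analysis fact is that $g_{\mathrm{pw}}$ and $g_{\mathrm{glob}}$ agree outside a subinterval $[a,b]\subseteq\I(s'+s'')$ containing $s'$, on which $g_{\mathrm{glob}}$ is the affine secant line joining $(a,g_{\mathrm{pw}}(a))$ to $(b,g_{\mathrm{pw}}(b))$ while $g_{\mathrm{pw}} \ge g_{\mathrm{glob}}$ with equality at $a,b$. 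The pointwise maximum of $g_{\mathrm{pw}} - g_{\mathrm{glob}}$ is attained at $\tau = s'$ and equals exactly $\Aquadr_k$. Moreover, on $[a,b]$ the derivative $g_{\mathrm{glob}}'$ is a constant $c$, while $g_{\mathrm{pw}}'$ is a piecewise non-decreasing function with a single jump at $s'$, taking values in $[c - \alpha, c + \beta]$ where $\alpha,\beta \geq 0$ are precisely the slope deficits whose integrals over $[a,s']$ and $[s',b]$ equal $\Aquadr_k$ (since $\int_a^b (g_{\mathrm{pw}}' - g_{\mathrm{glob}}')\,d\tau = 0$ and $(g_{\mathrm{pw}}-g_{\mathrm{glob}})(s') = \Aquadr_k$). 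Hence the appendix results (analogous to Proposition \ref{P_estim_diff_conv}) give both $\|g_{\mathrm{pw}} - g_{\mathrm{glob}}\|_\infty \le \Aquadr_k$ and $\|g_{\mathrm{pw}}' - g_{\mathrm{glob}}'\|_1 \le \const\,\Aquadr_k$, yielding $D(\check\gamma,\hat\gamma) \le \const\,\Aquadr_k$ and completing the first inequality.

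For the inequality on the second derivatives of the reduced fluxes, I would apply Lemma \ref{L_second_derivative_f} to express both $\tfrac{d^2 f_k}{d\tau^2}$ and $\tfrac{d^2(f_k' \cup f_k'')}{d\tau^2}$ as the same smooth (Lipschitz in its arguments) function of $(u,v_k,\tilde\lambda_k,(\conv f_k)')$ evaluated along $\gamma$ and $\check\gamma$ respectively. Then I would repeat the chain-rule bookkeeping carried out in \eqref{E_translation_f}: the $L^1$-norm of the difference is controlled by the $L^\infty$-distance of the $u$- and $v_k$-components (already bounded by $\const\,\Aquadr_k$), by the $L^1$-distance of the $\sigma_k$-components, and by the $L^1$-distance $\|g_{\mathrm{pw}}' - g_{\mathrm{glob}}'\|_1$, all of which have been shown to be $\le \const\,\Aquadr_k$.

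The only delicate point I expect is the convex-analysis estimate $\|g_{\mathrm{pw}}' - g_{\mathrm{glob}}'\|_1 \le \const\,\Aquadr_k$, since a naive argument from $\|g_{\mathrm{pw}} - g_{\mathrm{glob}}\|_\infty \le \Aquadr_k$ alone would only give $L^1$-control that is \emph{sublinear} in $\Aquadr_k$. The linear bound uses the specific structure that on $[a,b]$ the global envelope is affine, so $g_{\mathrm{pw}}'$ is monotone on each of $[a,s']$ and $[s',b]$ and the total variation of $g_{\mathrm{pw}}' - g_{\mathrm{glob}}'$ matches the height of the corner at $s'$; this is precisely the kind of relation encoded in the appendix lemmas. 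Everything else in the proof is a routine application of the contraction mapping principle and the already-established regularity of $\tilde r_k$, $\tilde\lambda_k$.
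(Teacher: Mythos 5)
Your proposal is correct and follows essentially the same route as the paper: reduce via the $1/2$-contraction to estimating $D(\gamma'\cup\gamma'',\mathcal T(\gamma'\cup\gamma''))$, observe that the $u$-components coincide, and identify the $v_k$- and $\sigma_k$-discrepancies with $g_{\mathrm{pw}}-g_{\mathrm{glob}}$ and its derivative, whose $L^\infty$- and $L^1$-norms are computed to be exactly $\Aquadr_k$ and $2\Aquadr_k$ by the sign-definiteness of $(g_{\mathrm{pw}}-g_{\mathrm{glob}})'$ on each of $[0,s']$ and $[s',s'+s'']$ (your ``delicate point''), the second-derivative bound then following from Lemma \ref{L_second_derivative_f} and Proposition \ref{P_estim_diff_conv} just as in the paper.
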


\begin{proof}
We prove the lemma only in the case $s', s'' >0$, the case $s', s'' <0$ being entirely similar. We have
\begin{equation*}
\left\{
\begin{aligned} 
u'(\tau) &= u^L + \int_0^\tau \tilde r_k(\gamma'(\varsigma)) d\varsigma, \\
v_k'(\tau) &= f_k' (\tau) - \conv_{[0,s']} f_k' (\tau), \\
\sigma_k'(\tau) &= \frac{d}{d\tau} \conv_{[0,s']} f_k' (\tau),
\end{aligned} 
\right.
\qquad \qquad
\left\{
\begin{aligned} 
u''(\tau) &= u^M + \int_{s'}^\tau \tilde r_k(\gamma''(\varsigma)) d\varsigma, \\
v''_k(\tau) &= f_k'' (\tau) - \conv_{[s',s'+s'']} f_k'' (\tau), \\
\sigma''_k(\tau) &= \frac{d}{d\tau} \conv_{[s',s'+s'']} f_k'' (\tau).
\end{aligned} 
\right.
\end{equation*}

Set $\hat \gamma = (\hat u, \hat v_k, \hat \sigma_k): = \mathcal T(\gamma' \cup \gamma'')$:
\begin{equation*}
\left\{
\begin{aligned} 
\hat u(\tau) &= u^L + \int_{0}^{\min\{\tau,s'\}} \tilde r_k(\gamma'(\varsigma)) d\varsigma + \int_{\min\{\tau,s'\}}^\tau \tilde r_k(\gamma''(\varsigma)) d\varsigma, \\
\hat v_k(\tau) &= \big(f_k' \cup f_k''\big)(\tau) - \conv_{[0,s'+s'']} \big(f_k' \cup f_k''\big)(\tau), \\
\hat \sigma_k(\tau) &= \frac{d}{d\tau} \conv_{[0,s'+s'']} \big(f_k' \cup f_k''\big)(\tau).
\end{aligned} 
\right.
\end{equation*}

Let us prove the first inequality of the statement. It holds
\[
\| ( u' \cup u'' ) - \hat u \|_\infty = 0, \qquad \| ( v_k' \cup v_k'' ) - \hat v_k \|_\infty = \Aquadr(u^L, u^M, u^R).
\]
Moreover,
\begin{equation*}
\begin{split}
\big\| \big(\sigma_k' \cup \sigma_k''\big) - \hat \sigma_k \big\|_1
&= \int_0^{s'} \bigg| \frac{d \conv_{[0,s']}f_k'}{d\tau}(\tau) - \frac{d \conv_{[0,s'+s'']} (f_k' \cup f_k'')}{d\tau}  \bigg| d\tau \\ 
& \quad +
\int_{s'}^{s'+s''} \bigg| \frac{d \conv_{[s',s'+s'']}f_k''}{d\tau} - \frac{d \conv_{[0,s'+s'']} (f_k' \cup f_k'')}{d\tau}  \bigg| d\tau  \\
&= \int_0^{s'} \bigg[ \frac{d \conv_{[0,s']}f_k'}{d\tau} - \frac{d \conv_{[0,s'+s'']} (f_k' \cup f_k'')}{d\tau}  \bigg] d\tau \\ 
& \quad +
\int_{s'}^{s'+s''} \bigg[ \frac{d \conv_{[0,s'+s'']} (f_k' \cup f_k'')}{d\tau} - \frac{d \conv_{[s',s'+s'']}f_k''}{d\tau} \bigg] d\tau  \\
&= \Big(f_k'(s') - \conv_{[0,s'+s'']}\big(f_k' \cup f_k'' \big)(s')\Big) 
+ \Big(f_k''(s') - \conv_{[0,s'+s'']}\big(f_k' \cup f_k'' \big)(s') \Big) \\
&= 2 \Big(f_k'(s') - \conv_{[0,s'+s'']}\big(f_k' \cup f_k'' \big)(s')\Big) \\
&= 2 \ \Aquadr(u^L, u^M, u^R).
\end{split}
\end{equation*}
Hence
\[
D\big(\gamma' \cup \gamma'', \widehat \gamma \big) = 3 \Aquadr(u^L, u^M, u^R),
\]
and thus, by the Contraction Mapping Theorem,
\[
D\big(\gamma' \cup \gamma'', \gamma \big)  \leq 2 D\big(\gamma' \cup \gamma'', \widehat \gamma \big) \leq 6 \Aquadr(u^L, u^M, u^R). 
\]
The second inequality is a consequence of Lemma \ref{L_second_derivative_f}, Proposition \ref{P_estim_diff_conv} and the first part of the statement. 
\end{proof}

\subsubsection{Conclusion of the proof of Theorem \ref{T_general}}
\label{Sss_conclu_th_gen}

To prove Theorem \ref{T_general}, we piece together all the previous estimates as follows. First of all we split the operation of ``merging the two Riemann problems'' into three steps:
\begin{enumerate}
\item first we pass from the collection of curves \eqref{E_incoming_curves}, i.e. the black ones in Figure \ref{F_el_curves_general},
to the collection of curves
\begin{align}
\label{E_after_trans_curve}
&\widetilde{\gamma}_1'  = (\tilde u_1', \tilde v_1', \tilde \sigma_1') := \gamma_1(u^L, s_1'),  && \tilde \gamma_1''  = (\tilde u_1'', \tilde v_1'', \tilde \sigma_1'') := \gamma_1(u_1'(s_1'), s_1''), \crcr
& \tilde \gamma_k'  = (\tilde u_k', \tilde v_k', \tilde \sigma_k') := \gamma_k \big(\tilde u_{k-1}''(s_{k-1}''), s_k'\big), && 
\tilde \gamma_k''  = (\tilde u_k'', \tilde v_k'', \tilde \sigma_k'') := \gamma_k \big(\tilde u_{k}'(s_{k}'), s_k''\big), 
\ k = 2, \dots, n,
\end{align}
i.e. the red curves in Figure \ref{F_el_curves_general}; this first step will be called \emph{transversal interactions} and it will be studied in Lemma \ref{L_general_transversal}; 
\item as a second step, we let the curves of the same family interact, passing from the collection of red curves \eqref{E_after_trans_curve} to the collection of curves (green in Figure \ref{F_el_curves_general})
\begin{equation}
\label{E_after_same_family_curves}
\begin{split}
& \tilde \gamma_1 = (\tilde u_1, \tilde v_1, \tilde \sigma_1) := \gamma_1(u^L, s_1' + s_1''), \\
& \tilde \gamma_k = (\tilde u_k, \tilde v_k, \tilde \sigma_k) := \gamma_k\big( \tilde u_{k-1}(s'_{k-1}+s_{k-1}'') ,s_k'+s_k''\big), \ k =2, \dots, n;
\end{split}
\end{equation}
this operation will be called \emph{collision among waves of the same family} and it will be studied in Lemma \ref{L_general_same_family};
\item finally we pass from the collection of green curves \eqref{E_after_same_family_curves} to the outcoming collection of curves \eqref{E_outcoming_curves}, blue in Figure \ref{F_el_curves_general}; this operation will be called \emph{perturbation of the total variation due to nonlinearity}, and it will be studied in Lemma \ref{L_general_cubic} and its Corollary \ref{C_general_cubic}.
\end{enumerate}
We will denote by $\tilde f_k', \tilde f_k'', \tilde f_k$ the reduced fluxes associated respectively to 
$\tilde{\gamma}_k', \tilde{\gamma}_k'', \tilde{\gamma}_k$
and, as before, we will assume that $\tilde \gamma_k'', \tilde f_k''$ are defined on $s_k' + \I(s_k'')$, instead of $\I(s_k'')$, and $\tilde f_k''(s_k') = \tilde f_k'(s_k')$. Let us begin with the analysis of transversal interactions. 

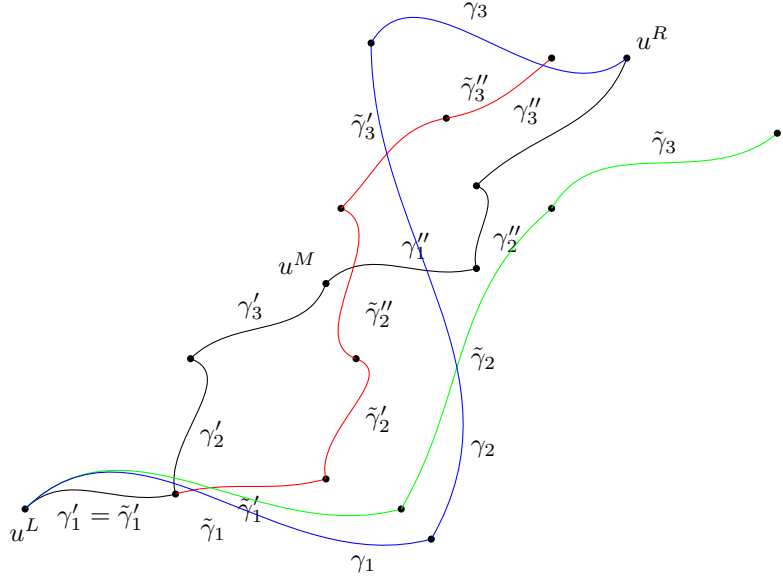
\begin{figure}
\begin{tikzpicture}

\draw[fill] (1,1) circle [radius = 0.04];
\draw (1,1) to [out=45,in=195] (3,1.2);
\node[below] at (2,1.2) {$\gamma_1' = \tilde \gamma_1'$};

\draw[fill] (3,1.2) circle [radius = 0.04];
\draw (3,1.2) to [out=105,in=340] (3.2,3);
\node[right] at (3.2,2) {$\gamma_2'$};

\draw[fill] (3.2,3) circle [radius = 0.04];
\draw (3.2,3) to [out=45,in=250] (5,4);
\node[above] at (4,3.4) {$\gamma_3'$};

\draw[fill] (5,4) circle [radius = 0.04];
\draw (5,4) to [out=45,in=195] (7,4.2);
\node[above] at (6.2,4.2) {$\gamma_1''$};

\draw[fill] (7,4.2) circle [radius = 0.04];
\draw (7,4.2) to [out=105,in=340] (7,5.3);
\node[right] at (7.1,4.6) {$\gamma_2''$};

\draw[fill] (7,5.3) circle [radius = 0.04];
\draw (7,5.3) to [out=45,in=250] (9,7);
\node[above left] at (8,6) {$\gamma_3''$};

\draw[fill] (9,7) circle [radius = 0.04];

\draw[red] (3,1.2) to [out=15,in=195] (5,1.4);
\node[below] at (4,1.3) {$\tilde \gamma_1'$};

\draw[fill] (5,1.4) circle [radius = 0.04];
\draw[red] (5,1.4) to [out=105,in=340] (5.4,3);
\node[right] at (5.4,2.2) {$\tilde \gamma_2'$};

\draw[fill] (5.4,3) circle [radius = 0.04];
\draw[red] (5.4,3) to [out=160,in=340] (5.2,5);
\node[right] at (5.4,3.6) {$\tilde \gamma_2''$};

\draw[fill] (5.2,5) circle [radius = 0.04];
\draw[red] (5.2,5) to [out=45,in=190] (6.6,6.2);
\node[above left] at (5.8,5.8) {$\tilde \gamma_3'$};

\draw[fill] (6.6,6.2) circle [radius = 0.04];
\draw[red] (6.6,6.2) to [out=10,in=220] (8,7);
\node[above left] at (7.3,6.3) {$\tilde \gamma_3''$};
\draw[fill] (8,7) circle [radius = 0.04];

\draw[green] (1,1) to [out=45,in=195] (6,1);
\node[below] at (3.5,1) {$\tilde \gamma_1$};

\draw[fill] (6,1) circle [radius = 0.04];
\draw[green] (6,1) to [out=60,in=220] (8,5);
\node[right] at (6.8,3) {$\tilde \gamma_2$};

\draw[fill] (8,5) circle [radius = 0.04];
\draw[green] (8,5) to [out=60,in=220] (11,6);
\node[above] at (9.5,5.6) {$\tilde \gamma_3$};
\draw[fill] (11,6) circle [radius = 0.04];

\draw[blue] (1,1) to [out=45,in=195] (6.4,0.6);
\node[below] at (5.5,0.5) {$\gamma_1$};

\draw[fill] (6.4,0.6) circle [radius = 0.04];
\draw[blue] (6.4,0.6) to [out=60,in=270] (5.6,7.2);
\node[right] at (6.8,1.8) {$\gamma_2$};

\draw[fill] (5.6,7.2) circle [radius = 0.04];
\draw[blue] (5.6,7.2) to [out=60,in=220] (9,7);
\node[above] at (7,7.4) {$\gamma_3$};

\node[below] at (1,1) {$u^L$};
\node[above left] at (5,4) {$u^M$};
\node[above right] at (9,7) {$u^R$};

\end{tikzpicture}
\caption{Elementary curves of two interacting Riemann problems before the interaction (black ones), after transversal interaction (red ones), after interaction/cancellation (collision) among wavefronts of the same family (green ones), after creation/cancellation (perturbation of the total variation) due to non-linearity (blue ones).}
\label{F_el_curves_general}
\end{figure}

\begin{lemma}
\label{L_general_transversal}
For any $k =1, \dots, n$, it holds
\begin{equation*}
\left.
\begin{aligned}
& \|u_k' - \tilde u_k'\|_{L^{\infty}(\I(s_k'))} \\
& \|u_k'' - \tilde u_k''\|_{L^{\infty}(s_k' + \I(s_k''))} \\
& \|\sigma_k' - \tilde \sigma_k'\|_{L^{1}(\I(s_k'))} \\
& \|\sigma_k'' - \tilde \sigma_k''\|_{L^{1}(s_k' + \I(s_k''))} \\
& {\displaystyle \bigg\|\frac{d^2 f_k'}{d\tau^2} - \frac{d^2 \tilde f_k'}{d\tau^2}\bigg\|_{L^1(\I(s_k'))}} \\
& {\displaystyle \bigg\|\frac{d^2 f_k''}{d\tau^2} - \frac{d^2 \tilde f_k''}{d\tau^2}\bigg\|_{L^1(s_k' + \I(s_k''))}}
\end{aligned} \right\} \leq \const \Atrans(u^L, u^M, u^R).
\end{equation*}
\end{lemma}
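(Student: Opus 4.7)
The plan is to realize the passage from the incoming curves $\{\gamma_k',\gamma_k''\}_{k=1,\dots,n}$ to the rearranged curves $\{\tilde\gamma_k',\tilde\gamma_k''\}_{k=1,\dots,n}$ as a composition of elementary adjacent transversal swaps, and to apply Corollary \ref{C_switching_curves} at each step. Initially, reading from the base point $u^L$ upwards, the chain of curves is ordered as $\gamma_1',\gamma_2',\dots,\gamma_n',\gamma_1'',\gamma_2'',\dots,\gamma_n''$, while the target ordering is $\tilde\gamma_1',\tilde\gamma_1'',\tilde\gamma_2',\tilde\gamma_2'',\dots,\tilde\gamma_n',\tilde\gamma_n''$. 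Sorting can be performed by a bubble procedure: for each $h=1,\dots,n-1$, slide the curve of strength $s_h''$ (family $h$) leftward past the curves of strengths $s_{h+1}',\dots,s_n'$ by $n-h$ adjacent swaps. In this way a distinct swap is assigned to each pair $(h,k)$ with $1\leq h<k\leq n$, for a total of $\binom{n}{2}$ adjacent swaps.

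At the swap associated with the pair $(h,k)$, Corollary \ref{C_switching_curves} applied to the current chain of $2n$ curves gives, on every single curve in the chain, $L^\infty$-bounds on the $u$- and $v$-components and an $L^1$-bound on the $\sigma$-component of size $\const |s_k'||s_h''|$. For the $L^1$-bound on $d^2 f/d\tau^2$, not stated explicitly in Corollary \ref{C_switching_curves}, we argue as in \eqref{E_translation_f}: by Lemma \ref{L_second_derivative_f}, $d^2 f/d\tau^2$ is a smooth function of $u,v_k,\sigma_k$ (through $\tilde\lambda_k,\tilde r_k$) and of the derivative of the convex envelope of $f$; combining the above $L^\infty$ and $L^1$ bounds with Proposition \ref{P_estim_diff_conv} (applied to control the difference of the derivatives of the convex envelopes) yields the same $\const |s_k'||s_h''|$ estimate on $\|d^2 f/d\tau^2 - d^2 \tilde f/d\tau^2\|_1$.

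Summing the contributions of all swaps produces
\[
\sum_{1\leq h<k\leq n}\const\,|s_k'||s_h''| \ = \ \const\,\Atrans(u^L,u^M,u^R),
\]
which is the desired bound for each of the six quantities listed in the statement. Observe that the wavefronts of a given family $k$ may be affected by several swaps (all those involving pairs $(h,k)$ with $h<k$ and all those of the form $(k,k')$ with $k'>k$), but their total contribution is still controlled by the sum above.

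The main subtlety is to verify that after each adjacent swap the updated chain of starting points, lengths and strengths still lies in the region where Corollary \ref{C_switching_curves} applies (in particular, starting points in $B(0,\delta/2)$ and lengths bounded by $\eta$), so that the swap estimate can be iterated. This is automatic under our standing smallness assumption, because the cumulative perturbation of any starting point across all the intermediate configurations is bounded by $\const\bigl(\sum_k|s_k'|\bigr)\bigl(\sum_h|s_h''|\bigr)\ll 1$, which keeps the whole deformation inside the domain where the fixed-point argument of Lemma \ref{L_contract_dagger} is valid.
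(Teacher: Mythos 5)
Your proposal is correct and follows essentially the same route as the paper: the paper's proof likewise realizes the passage from \eqref{E_incoming_curves} to \eqref{E_after_trans_curve} by sliding each $\gamma_h''$ leftward past $\gamma_{h+1}',\dots,\gamma_n'$ via adjacent swaps and then invokes Corollary \ref{C_switching_curves} together with Definition \ref{D_atrans}. Your supplementary remarks on the second-derivative bound (via Lemma \ref{L_second_derivative_f}, \eqref{E_translation_f} and Proposition \ref{P_estim_diff_conv}) and on keeping the intermediate configurations inside the domain of validity of Lemma \ref{L_contract_dagger} are accurate and merely make explicit what the paper leaves implicit.
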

\begin{proof}
The proof is an easy consequence of Corollary \ref{C_switching_curves} and Definition \ref{D_atrans} of $\Atrans(u^L, u^M, u^R)$, just observing that we can pass from the collection of curves \eqref{E_incoming_curves} to the collection of curves \ref{E_after_trans_curve} first switching the curve $\gamma_1''$ with all the curves $\gamma_k'$, $k =2, \dots, n$, then switching the curve $\gamma_2''$ with all the curves $\gamma_k'$, $k = 3, \dots, n$, and so on up to curve $\gamma_{n-1}''$. 
\end{proof}

Let us now analyze the collision among waves of the same family.

\begin{lemma}
\label{L_general_same_family}
For any $k =1, \dots, n$, 
\begin{itemize}
\item if $s_k' s_k'' \geq 0$, then
\begin{equation*}
\left.
\begin{aligned}
&\|\big(\tilde u_k' \cup \tilde u_k''\big) - \tilde u_k\|_{L^{\infty}(\I(s_k'+s_k''))} \\
&\|\big(\tilde \sigma_k' \cup \tilde \sigma_k''\big) - \tilde \sigma_k\|_{L^{1}(\I(s_k' + s_k''))} \\
&\bigg\|\Big(\frac{d^2 \tilde f_k'}{d\tau^2} \cup \frac{d^2 \tilde f_k''}{d\tau^2}\Big) - \frac{d^2 \tilde f_k}{d\tau^2}\bigg\|_{L^1(\I(s_k' + s_k''))}
\end{aligned}
\right\}
\leq \const \bigg[ \sum_{h=1}^k \Aquadr_h(u^L, u^M, u^R) + \Acanc_h(u^L, u^M, u^R) \bigg].
\end{equation*}
\item if $s_k' s_k'' < 0$, then
\begin{equation*}
\left.
\begin{aligned}
& \|\big(\tilde u_k' \vartriangle  \tilde u_k''\big) - \tilde u_k\|_{L^{\infty}(\I(s_k'+s_k''))} \\
& \|\big(\tilde \sigma_k' \vartriangle  \tilde \sigma_k''\big) - \tilde \sigma_k\|_{L^{1}(\I(s_k' + s_k''))} \\
& \bigg\|\Big(\frac{d^2 \tilde f_k'}{d\tau^2} \vartriangle  \frac{d^2 \tilde f_k''}{d\tau^2}\Big) - \frac{d^2 \tilde f_k}{d\tau^2}\bigg\|_{L^1(\I(s_k' + s_k''))}
\end{aligned}
\right\}
\leq \const \bigg[ \sum_{h=1}^k \Aquadr_h(u^L, u^M, u^R) + \Acanc_h(u^L, u^M, u^R) \bigg].
\end{equation*}
\end{itemize}
\end{lemma}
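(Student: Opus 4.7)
The proof proceeds by induction on $k$, exploiting the consecutive chain structure of both the post transversal curves $\tilde \gamma_1', \tilde \gamma_1'', \tilde \gamma_2', \tilde \gamma_2'', \dots$ and the post collision curves $\tilde \gamma_1, \tilde \gamma_2, \dots$. Set $p_k := \tilde u_k''(s_k' + s_k'')$ and $q_k := \tilde u_k(s_k'+s_k'')$, so that $\tilde \gamma_{k+1}'$ starts at $p_k$ while $\tilde \gamma_{k+1}$ starts at $q_k$; by convention $p_0 = q_0 = u^L$. The induction simultaneously establishes the three estimates in the statement together with the endpoint bound
\begin{equation*}
|p_k - q_k| \leq \const \sum_{h=1}^{k} \Big( \Aquadr_h(u^L, u^M, u^R) + \Acanc_h(u^L, u^M, u^R) \Big).
\end{equation*}

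At the inductive step I introduce the auxiliary curve $\hat \gamma_k := \gamma_k(p_{k-1}, s_k' + s_k'')$, which has the same starting point as $\tilde \gamma_k'$ but the combined length $s_k' + s_k''$ of $\tilde \gamma_k$. The passage from $\tilde \gamma_k' \cup \tilde \gamma_k''$ (respectively, $\tilde \gamma_k' \vartriangle \tilde \gamma_k''$) to $\hat \gamma_k$ is handled by Lemma \ref{L_interaction_same_family} when $s_k' s_k'' \geq 0$, producing a bound by $\const \Aquadr_k$; when $s_k' s_k'' < 0$, it is handled by Lemma \ref{L_change_length} applied to the longer of the two incoming subcurves (say $\tilde \gamma_k'$ if $|s_k'| \geq |s_k''|$), split as the truncated segment of length $s_k' + s_k''$ plus the cancelled tail of length $-s_k''$, producing a bound by $\const |s_k'+s_k''||s_k''| \leq \const \Acanc_k$; the opposite case $|s_k'| < |s_k''|$ is symmetric. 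The passage $\hat \gamma_k \to \tilde \gamma_k$ is a pure translation of the starting point from $p_{k-1}$ to $q_{k-1}$, over a curve of length $\leq \const$, so Lemma \ref{L_translation} controls it by $\const \, |p_{k-1} - q_{k-1}|$. The triangle inequality then yields the three desired estimates at level $k$ with a total bound proportional to $\Aquadr_k + \Acanc_k + |p_{k-1} - q_{k-1}|$, which by inductive hypothesis is $\const \sum_{h=1}^{k}(\Aquadr_h + \Acanc_h)$. The second derivative estimates on the reduced fluxes follow from the corresponding parts of the same three lemmas.

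To close the induction I update the endpoint bound. In the union case, $(\tilde u_k' \cup \tilde u_k'')$ evaluated at $\sup \I(s_k'+s_k'')$ equals $p_k$, so the $L^\infty$ estimate just derived directly gives $|p_k - q_k| \leq \const \sum_{h=1}^{k}(\Aquadr_h + \Acanc_h)$. In the cancellation case, the value of $(\tilde u_k' \vartriangle \tilde u_k'')$ at the endpoint coincides with the truncated dominant curve and not with $p_k$; the residual discrepancy $|T^k_{s_k'+s_k''}(p_{k-1}) - T^k_{s_k''} \circ T^k_{s_k'}(p_{k-1})|$ is, however, bounded by $\const \Acanc_k$ via a further application of Lemma \ref{L_change_length}, so the endpoint bound at level $k$ still follows.

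The main technical difficulty is the cancellation case: the operation $\vartriangle$ discards the shorter incoming curve, so the terminal point of $\tilde \gamma_k' \vartriangle \tilde \gamma_k''$ is not literally the physical endpoint $p_k$ of the two wave chain. Reconciling this gap with the consecutive chain requires the additional use of Lemma \ref{L_change_length} in the endpoint update, and this is precisely why the $\Acanc_h$ terms accumulate in the running bound $|p_k - q_k|$. Modulo this care, the proof is straightforward bookkeeping based on the estimates proved in Sections \ref{Sss_basic_est} and \ref{Sss_elemnt_inter}.
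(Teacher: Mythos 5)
Your proposal is correct and follows essentially the same route as the paper's proof: the paper also argues by induction on $k$ (organized as a separate endpoint estimate in its Step 1, i.e.\ your bound on $|p_k-q_k|$, followed by the norm estimates in its Step 2), introduces the same auxiliary curve $\hat\gamma_k$ of combined length $s_k'+s_k''$, and invokes Lemmas \ref{L_interaction_same_family}, \ref{L_change_length} and \ref{L_translation} in exactly the roles you assign them. In particular, the reconciliation of the $\vartriangle$ endpoint with the physical endpoint via an extra $\Acanc_k$, which you single out as the delicate point, is precisely how the paper handles the two cancellation subcases.
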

\begin{proof}
\textit{Step 1.} First we prove that for each $k =1, \dots, n$, it holds
\begin{equation}
\label{E_general_same_family_1}
\big| \tilde u_k''(s_k' + s_k'') - \tilde u_k(s_k' + s_k'')\big| \leq \const \sum_{h=1}^k \Big[\Aquadr_h(u^L, u^M, u^R) + \Acanc_h(u^L, u^M, u^R) \Big].
\end{equation}
Recalling that $\tilde u_k''$ is defined on $s_k' + \I(s_k'')$, set
\begin{equation*}
\widehat \gamma_k = (\hat u_k, \hat v_k, \hat \sigma_k):= 
\begin{cases}
\gamma_k\big(\tilde u_k'(0), s_k'+s_k''\big) & \text{if } s_k's_k'' \geq 0 \text{ or } \Big(s_k' s_k'' < 0 \text{ and } |s_k'| > |s_k''| \Big), \\
\gamma_k\big(\tilde u_k''(0), s_k'+s_k''\big) & \text{if } s_k' s_k'' < 0 \text{ and } |s_k''| > |s_k'|.\\
\end{cases}
\end{equation*}
In order to prove \eqref{E_general_same_family_1}, distinguish three cases:
\begin{itemize}
\item first assume that $s_k' s_k'' \geq 0$; the following computation holds:
\begin{equation*}
\begin{split}
\big| \tilde u_k''(s_k' + s_k'') - \tilde u_k(s_k' + s_k'')\big| &\leq 
\big| \tilde u_k''(s_k' + s_k'') - \hat u_k(s_k' + s_k'')\big| + \big| \hat u_k(s_k' + s_k'') - \tilde u_k(s_k' + s_k'')\big| \\
\text{(by Lemma \ref{L_interaction_same_family})} 
&\leq  \const \Big[ \Aquadr_k(u^L, u^M, u^R) + \big| \hat u_k(s_k' + s_k'') - \tilde u_k(s_k' + s_k'')\big| \Big]\\
\text{(by Lemma \ref{L_translation})} &\leq \const \Big[ \Aquadr_k(u^L, u^M, u^R) 
+ \big| \hat u_k(0) - \tilde u_k(0)\big|  \Big]; 
\end{split}
\end{equation*}

\item now assume that $s_k' s_k'' <0$ and $|s_k'| \geq |s_k''|$; in this case it holds, applying again Lemma \ref{L_translation} and using the fact that $\tilde u_k''(s_k') = \tilde u_k'(s_k')$,
\begin{equation*}
\begin{split}
\big| \tilde u_k''(s_k' + s_k'') - \tilde u_k(s_k' + s_k'')\big| 
&\leq \big| \tilde u_k''(s_k' + s_k'') - \tilde u_k''(s_k')\big| + 
\big| \tilde u_k'(s_k') + \tilde u_k'(s_k'+s_k'') \big| \\
& \qquad \qquad + \big| \tilde u_k'(s_k' + s_k'') - \tilde u_k(s_k' + s_k'')\big| \\
&\leq \const \Big[ |s_k''| + \big| \tilde u_k'(0) - \tilde u_k(0)\big| \Big] \\
&= \const \Big[ \Acanc_k(u^L, u^M, u^R) +  \big| \tilde u_k'(0) - \tilde u_k(0)\big| \Big];
\end{split}
\end{equation*}

\item finally assume that $s_k' s_k'' <0$ and $|s_k'| < |s_k''|$ and perform the following computation:
\begin{equation*}
\begin{split}
\big| \tilde u_k''(s_k' + s_k'') - \tilde u_k(s_k' + s_k'')\big|
&\leq \big|\tilde u_k''(s_k'+s_k'') -  \hat u_k(s_k'+s_k'') \big| 
+ \big| \hat u_k(s_k' + s_k'') - \tilde u_k(s_k' + s_k'') \big| \\
\text{(by Lemmas \ref{L_change_length} and \ref{L_translation})} &\leq \const \Big[|s_k'| + \big| \hat u_k(0) - \tilde u_k(0) \big| \Big] \\
\text{(since $\hat u_k(0) = \tilde u''_k(0)$)} &\leq \const \Big[|s_k'| + \big| \tilde u_k''(0) - \tilde u_k(0) \big| \Big] \\
\text{(since $\tilde u_k'(s_k') = \tilde u''_k(s_k')$)} 
&\leq \const \Big[|s_k'| + \big| \tilde u_k''(0) - \tilde u_k''(s_k') \big| +
\big| \tilde u_k'(s_k') - \tilde u_k'(0) \big| 
+ \big| \tilde u_k'(0) - \tilde u_k(0) \big| \Big] \\
\text{(by Lemma \ref{L_change_length})} &\leq \const \Big[ |s_k'| +  \big| \tilde u_k'(0) - \tilde u_k(0) \big| \Big] \\
&\leq \const \Big[ \Acanc_k(u^L, u^M, u^R) + \big| \tilde u_k'(0) - \tilde u_k(0) \big| \Big];
\end{split}
\end{equation*}
\end{itemize}
Summarizing the three previous cases, we obtain
\begin{equation}
\label{E_general_same_family_2}
\big| \tilde u_k''(s_k' + s_k'') - \tilde u_k(s_k' + s_k'')\big|
\leq \const \Big[\Aquadr_k(u^L, u^M, u^R) + \Acanc_k(u^L, u^M, u^R) + \big| \tilde u_k'(0) - \tilde u_k(0) \big| \Big].
\end{equation}
If $k=1$, $\tilde u_1'(0)  = \tilde u_1(0) = u^L$, and thus \eqref{E_general_same_family_2} yields \eqref{E_general_same_family_1}. If $k \geq 2$, one observes that 
\begin{equation*}
\tilde u_k'(0) - \tilde u_k(0)
=  \tilde u_{k-1}''(s_{k-1}' + s_{k-1}'') - \tilde u_{k-1}(s_{k-1}' + s_{k-1}'')
\end{equation*}
and argues by induction to obtain \eqref{E_general_same_family_1}.

\textit{Step 2}. Using Step 1 we can now conclude the proof of the lemma. 
We will prove only the inequalities related to the $u$-component, the proof of the other ones being completely analogous. We again study  the three cases separately:
\begin{itemize}
\item if $s_k' s_k'' \geq 0$, it holds
\begin{equation*}
\begin{split}
\big\| (\tilde u_k' \cup \tilde u_k'' ) - \tilde u_k \big\|_{L^\infty(\I(s_k' + s_k''))} &\leq
\big\| (\tilde u_k' \cup \tilde u_k'' ) - \hat u_k\big\|_{L^\infty(\I(s_k' + s_k''))}
+ \big\| \hat u_k - \tilde u_k \big\|_{L^\infty(\I(s_k' + s_k''))} \\
\text{(by Lemmas \ref{L_interaction_same_family} and \ref{L_translation})}
&\leq \const \Big[\Aquadr_k(u^L, u^M, u^R) + \big|\tilde u_k'(0) - \tilde u_k(0) \big| \Big];
\end{split}
\end{equation*}

\item if $s_k' s_k'' <0$ and $|s_k'| \geq |s_k''|$, it holds
\begin{equation*}
\begin{split}
\big\| (\tilde u_k' \vartriangle  \tilde u_k'' ) - \tilde u_k \big\|_{L^\infty(\I(s_k' + s_k''))} &\leq
\big\| (\tilde u_k' \vartriangle  \tilde u_k'' ) - \hat u_k\big\|_{L^\infty(\I(s_k' + s_k''))} 
+ \big\| \hat u_k - \tilde u_k \big\|_{L^\infty(\I(s_k' + s_k''))} \\
\text{(by Lemmas \ref{L_change_length} and \ref{L_translation})}
&\leq \const \Big[\Acanc_k(u^L, u^M, u^R) + \big|\tilde u_k'(0) - \tilde u_k(0) \big| \Big];
\end{split}
\end{equation*}

\item if $s_k' s_k'' < 0$ and $|s_k'| < |s_k''|$, it holds
\begin{equation*}
\begin{split}
\big\| (\tilde u_k' \vartriangle  \tilde u_k'' ) - \tilde u_k \big\|_{L^\infty(\I(s_k' + s_k''))} &\leq
\big\| (\tilde u_k' \vartriangle  \tilde u_k'' ) - \hat u_k\big\|_{L^\infty(\I(s_k' + s_k''))} 
+ \big\| \hat u_k - \tilde u_k \big\|_{L^\infty(\I(s_k' + s_k''))} \\
\text{(by Lemmas \ref{L_change_length} and \ref{L_translation})} 
&\leq \const \Big[\Acanc_k(u^L, u^M, u^R) + \big|\tilde u_k''(0) - \tilde u_k(0) \big| \Big] \\
\text{(since $\tilde u_k'(s_k') = \tilde u''_k(s_k')$)} &\leq \const \Big[\Acanc_k(u^L, u^M, u^R) 
+ \big|\tilde u_k''(0) - \tilde u_k''(s_k')\big| \\
& \qquad \qquad \qquad
+ \big| \tilde u_k'(s_k') - \tilde u_k'(0) \big|
+ \big| \tilde u_k'(0) - \tilde u_k(0) \big| \Big] \\
&\leq \const \Big[\Acanc_k(u^L, u^M, u^R) 
+ \big| \tilde u_k'(0) - \tilde u_k(0) \big| \Big].
\end{split}
\end{equation*}
\end{itemize}
Summarizing,
\begin{equation}
\label{E_general_same_family_3}
\left.
\begin{aligned}
&\big\| (\tilde u_k' \cup  \tilde u_k'' ) - \tilde u_k \big\|_{L^\infty(\I(s_k' + s_k''))} \\
&\big\| (\tilde u_k' \vartriangle  \tilde u_k'' ) - \tilde u_k \big\|_{L^\infty(\I(s_k' + s_k''))}
\end{aligned}
\right\}
\leq
\const \Big[ \Aquadr_k(u^L, u^M, u^R) + \Acanc_k(u^L, u^M, u^R) +  \big| \tilde u_k'(0) - \tilde u_k(0) \big| \Big].
\end{equation}
If $k=1$, \eqref{E_general_same_family_3} together with the fact that $\tilde u_1'(0) = \tilde u_1(0) = u^L$ yields the thesis. If $k \geq 2$, one observes that $\tilde u_k'(0) = \tilde u_{k-1}''(s_k'+s_k'')$ and $\tilde u_k(0) = \tilde u_{k-1}(s_k'+s_k'')$; hence, using \eqref{E_general_same_family_3} and \eqref{E_general_same_family_1} of Step 1, one gets the statement.
\end{proof}

Finally, let us analyze the perturbation of the total variation due to nonlinearity.

\begin{lemma}
\label{L_general_cubic}
For any $k = 1, \dots, n$ it holds
\begin{equation*}
\left.
\begin{aligned}
& \|\tilde u_k - u_k\|_{L^\infty(\I(s_k'+s_k'') \cap \I(s_k))} \\
& \|\tilde \sigma_k - \sigma_k\|_{L^1(\I(s_k'+s_k'') \cap \I(s_k))} \\
& \bigg\|\frac{d^2 \tilde f_k}{d\tau^2} - \frac{d^2 f_k}{d\tau^2}\bigg\|_{L^1(\I(s_k'+s_k'') \cap \I(s_k))} \\
\end{aligned}
\right\} 
\leq \const \sum_{h=1}^k\big| s_h - (s_h' + s_h'')\big|.
\end{equation*}
\end{lemma}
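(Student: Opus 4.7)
The plan is to prove the lemma by induction on $k$, comparing $\tilde\gamma_k$ with $\gamma_k$ through an auxiliary intermediate curve obtained by first fixing the starting point and then adjusting the length. Concretely, I would introduce
\[
\hat\gamma_k := \gamma_k\bigl(u_k(0),\, s_k' + s_k''\bigr),
\]
which shares the starting point of $\gamma_k$ but the length of $\tilde\gamma_k$. Then $\tilde\gamma_k$ and $\hat\gamma_k$ differ only by a translation of the initial point (of amount $|\tilde u_k(0) - u_k(0)|$), while $\hat\gamma_k$ and $\gamma_k$ differ only by the length change $|s_k - (s_k'+s_k'')|$. Lemmas \ref{L_translation} and \ref{L_change_length} are designed precisely to handle these two types of perturbations, for the $L^\infty$-norm of the $u$-component, the $L^1$-norm of the $\sigma$-component and the $L^1$-norm of the second derivative of the reduced flux simultaneously.

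For the base case $k=1$, both $\tilde\gamma_1$ and $\gamma_1$ start from $u^L$, so only the length varies. When $s_1$ and $s_1'+s_1''$ have opposite signs, the intersection $\I(s_1)\cap\I(s_1'+s_1'')$ is empty and the statement is vacuous; otherwise, one sets the smaller length as $s'$ and the difference as $s''$ in Lemma \ref{L_change_length}, which directly yields all three bounds of the statement. For the inductive step, Lemma \ref{L_translation} applied to $\tilde\gamma_k$ and $\hat\gamma_k$ produces bounds of the form $\const\,|\tilde u_k(0) - u_k(0)|$. The initial-point discrepancy is then estimated by a triangle inequality
\[
\bigl|\tilde u_{k-1}(s'_{k-1}+s''_{k-1}) - u_{k-1}(s_{k-1})\bigr| \leq \|\tilde u_{k-1} - u_{k-1}\|_\infty + \Lip(u_{k-1})\,\bigl|s_{k-1} - (s'_{k-1}+s''_{k-1})\bigr|,
\]
where one chooses the split according to whether $|s'_{k-1}+s''_{k-1}| \leq |s_{k-1}|$ or the reverse, so that the evaluation point lies in the domain of the reference curve. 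The first term is controlled by the inductive hypothesis; the second by the standard Lipschitz estimate on the elementary curve from Lemma \ref{L_contract_dagger}. A second application of Lemma \ref{L_change_length} to $\hat\gamma_k$ versus $\gamma_k$ contributes the $|s_k - (s_k'+s_k'')|$ term, and a final triangle inequality closes the induction with constants depending only on $f$.

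The proof is essentially routine, so the ``main obstacle'' is purely in the bookkeeping of domains: all estimates in the statement concern the intersection $\I(s_k'+s_k'')\cap\I(s_k)$, and one must check that $\hat\gamma_k$ is defined on a superset of this intersection and that the various triangle splits used during the induction remain valid for every relative configuration of magnitudes and signs of $s_h, s_h'+s_h''$. The opposite-sign case for $(s_k, s_k'+s_k'')$ makes the statement vacuous, and in the same-sign case the cleaner triangle split is chosen based on which length is larger, which is straightforward once notation is fixed. The bounds on $\|d^2 \tilde f_k/d\tau^2 - d^2 f_k/d\tau^2\|_1$ require no additional work since they are already built into the conclusions of both Lemma \ref{L_translation} and Lemma \ref{L_change_length}; no further appeal to Lemma \ref{L_second_derivative_f} is needed.
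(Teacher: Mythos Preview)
Your proposal is correct and follows essentially the same inductive scheme as the paper: introduce an auxiliary curve that agrees with one of $\tilde\gamma_k,\gamma_k$ in the starting point and with the other in the length, apply Lemmas \ref{L_translation} and \ref{L_change_length}, and close the induction by estimating $|\tilde u_{k-1}(s'_{k-1}+s''_{k-1}) - u_{k-1}(s_{k-1})|$ via the case split on which of $|s'_{k-1}+s''_{k-1}|,\ |s_{k-1}|$ is larger. The only (inconsequential) difference is the convention for the intermediate curve: the paper uses $\bar\gamma_k := \gamma_k(\tilde u_k(0), s_k)$ (same start as $\tilde\gamma_k$, same length as $\gamma_k$), whereas you use $\hat\gamma_k := \gamma_k(u_k(0), s_k'+s_k'')$; both choices lead to the identical two applications of the basic lemmas.
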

\begin{proof}
We prove only the inequality related to the $u$ component, the other ones being completely similar. The proof is by induction on $k$. If $(s_k'+s_k'')s_k \leq 0$, there is nothing to prove. Hence, let us assume $(s_k'+s_k'')s_k >0$. Set $\overline \gamma_k = (\overline u_k, \overline v_k, \overline \sigma_k) := \gamma_k\big(\tilde u_k(0), s_k\big)$. It holds
\begin{equation}
\label{E_general_cubic_1}
\begin{split}
\big\|\tilde u_k - u_k\big\|_{L^\infty(I(s_k'+s_k'') \cap I(s_k))} &\leq
\big\|\tilde u_k - \overline u_k\big\|_{L^\infty(I(s_k'+s_k'') \cap I(s_k))}
+ \big\|\overline u_k - u_k\big\|_{L^\infty(I(s_k'+s_k'') \cap I(s_k))} \\
\text{(by Lemmas \ref{L_change_length} and \ref{L_translation})} 
&\leq \const \Big[ \big| |s_k| - |s_k' + s_k''| \big| + \big| \overline u_k(0) - u_k(0) \big|
 \Big].
\end{split}
\end{equation}
If $k=1$, \eqref{E_general_cubic_1} yields the thesis. If $k \geq 2$, observe that
\begin{equation*}
\begin{split}
\big|\overline u_k(0) - u_k(0) \big| &= \big| \tilde u_k(0) - u_k(0) \big| \\
&= \big| \tilde u_{k-1}(s_{k-1}' + s_{k-1}'') - u_{k-1}(s_{k-1}) \big|\\
&\leq \left\{
\begin{array}{ll}
\begin{split}
\big| \tilde u_{k-1}(s_{k-1}'+s_{k-1}'') - \tilde u_{k-1}(s_{k-1})& \big| \\+ \big| \tilde u_{k-1}(s_{k-1}) - u_{k-1}(s_{k-1}&) \big|
\end{split} & \text{if } |s_{k-1}' + s_{k-1}''| \geq |s_{k-1}|, \\ [1em]
\begin{split} \big| \tilde u_{k-1}(s_{k-1}'+s_{k-1}'') -  u_{k-1}(s_{k-1}'+s_{k-1}'')& \big| \\ + \big| u_{k-1}(s_{k-1}'+s_{k-1}'') - u_{k-1}(s_{k-1}&) \big| \end{split} & \text{if } |s_{k-1}' + s_{k-1}''| < |s_{k-1}|,
\end{array} \right. \\
&\leq \const \Big[ \big| s_{k-1} - (s_{k-1}' + s_{k-1}'') \big| 
+ \big\|\tilde u_{k-1} - u_{k-1}\big\|_{L^\infty(I(s_{k-1}'+s_{k-1}'') \cap I(s_{k-1}))} \Big] \\
\text{(by induction)} &\leq \sum_{h=1}^{k-1} \Big| s_h - (s_h'+s_h'') \Big|.
\end{split}
\end{equation*}
Hence, using \eqref{E_general_cubic_1}, we get
\begin{equation*}
\big\| \tilde u_k - u_k \big\|_{L^\infty(I(s_k'+s_k'') \cap I(s_k))} \leq 
\const \Big[ \big|s_k - (s_k' + s_k'')\big| + \big|  \overline u_k(0) - u_k(0) \big| \Big] \leq \const \sum_{h=1}^k \big| s_h - (s_h' + s_h'')\big|.
\end{equation*}
\end{proof}

Applying Theorem \ref{T_bianchini}, we immediately obtain the following corollary.

\begin{corollary}
\label{C_general_cubic}
For any $k = 1, \dots, n$ it holds
\begin{equation*}
\left.
\begin{aligned}
&\|\tilde u_k - u_k\|_{L^\infty(\I(s_k'+s_k'') \cap \I(s_k))} \\
&\|\tilde \sigma_k - \sigma_k\|_{L^1(\I(s_k'+s_k'') \cap \I(s_k))} \\
&\bigg\|\frac{d^2 \tilde f_k}{d\tau^2} - \frac{d^2 f_k}{d\tau^2}\bigg\|_{L^1(\I(s_k'+s_k'') \cap \I(s_k))} \\
\end{aligned}
\right\} 
\leq \const \bigg[ \Atrans(u^L, u^M, u^R) + \sum_{h=1}^k \Acubic_h(u^L, u^M, u^R) \bigg].
\end{equation*}
\end{corollary}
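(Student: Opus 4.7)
The plan is to obtain the corollary as a one-line combination of Lemma \ref{L_general_cubic} and Theorem \ref{T_bianchini}. Lemma \ref{L_general_cubic} already bounds each of the three quantities $\|\tilde u_k - u_k\|_{L^\infty}$, $\|\tilde \sigma_k - \sigma_k\|_{L^1}$ and $\|d^2 \tilde f_k/d\tau^2 - d^2 f_k/d\tau^2\|_{L^1}$ on $\I(s_k'+s_k'') \cap \I(s_k)$ by $\const \sum_{h=1}^k |s_h - (s_h'+s_h'')|$, where $s_k$, $s_k'+s_k''$ denote the total signed strengths of the $k$-th waves in the outgoing and in the combined incoming Riemann problems respectively. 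The only remaining task is therefore to control the combinatorial quantity $\sum_{h=1}^k |s_h - (s_h'+s_h'')|$ in terms of the transversal and cubic amounts of interaction.

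Exactly this is the content of Theorem \ref{T_bianchini}, which yields
\begin{equation*}
\sum_{h=1}^k \big| s_h - (s_h' + s_h'')\big| \leq \sum_{h=1}^n \big| s_h - (s_h'+s_h'')\big| \leq \const \bigg[\Atrans(u^L,u^M,u^R) + \sum_{h=1}^n \Acubic_h(u^L,u^M,u^R)\bigg].
\end{equation*}
Plugging this bound into the estimate provided by Lemma \ref{L_general_cubic} gives the claimed inequality. Note that one actually gets a slightly stronger statement than required: the sum on the right-hand side can be taken over all $h = 1,\dots,n$, and monotonicity in the summation range allows us to keep the $\sum_{h=1}^k$ form stated in the corollary (or, equivalently, replace it by $\sum_{h=1}^n$, since $\Acubic_h \geq 0$ by Remark \ref{R_def_cub_posit}).

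There is no real obstacle to this proof; since both ingredients are already in hand, the ``hard part'' was already done in Lemma \ref{L_general_cubic} (the triangle-inequality--plus--induction argument splitting the three cases based on the sign of $(s_k'+s_k'')s_k$ and the relative size of $|s_{k-1}'+s_{k-1}''|$ versus $|s_{k-1}|$) and in Theorem \ref{T_bianchini} (the classical Glimm/Bianchini interaction estimate). The corollary itself is merely the concatenation of these two inequalities, and is written in the paper as a one-sentence deduction.
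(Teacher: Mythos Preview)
Your approach is exactly the paper's: the corollary is obtained by combining Lemma \ref{L_general_cubic} with Theorem \ref{T_bianchini}, and the paper's proof is literally the one-line sentence ``Applying Theorem \ref{T_bianchini}, we immediately obtain the following corollary.''

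One slip to flag: your remark about monotonicity is backwards. Theorem \ref{T_bianchini} as stated gives the bound with $\sum_{h=1}^n \Acubic_h$ on the right-hand side; since $\Acubic_h \geq 0$, this is a \emph{larger} quantity than $\sum_{h=1}^k \Acubic_h$, hence a \emph{weaker} upper bound, not a stronger one. You cannot pass from $\sum_{h=1}^n$ to $\sum_{h=1}^k$ by monotonicity. What your argument (and the paper's) actually proves is the estimate with $\sum_{h=1}^n \Acubic_h$; this is entirely sufficient for every subsequent use in the paper (the corollary is only ever applied to bound things by the total amount of interaction $\mathtt A$, which contains $\sum_{h=1}^n \Acubic_h$). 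The $\sum_{h=1}^k$ in the corollary's statement is either a minor overstatement or would require a sharper family-by-family version of Theorem \ref{T_bianchini} not recorded here.
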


\noindent It is easy to see that Theorem \ref{T_general} follows from Lemmas \ref{L_general_transversal}, \ref{L_general_same_family} and Corollary \ref{C_general_cubic}.

\section{\texorpdfstring{Lagrangian representation for the Glimm approximate solution $u_\e$}{Lagrangian representation for the Glimm approximate solution}}
\label{S_lagr_repr}

In this section we define the notion of \emph{Lagrangian representation} of an approximate solution $u_\e$ obtained by the Glimm scheme to the Cauchy problem \eqref{E_gen_syst}-\eqref{E_initial_datum}, and we explicitly construct a Lagrangian representation satisfying some useful additional properties. At the end of the section we introduce some notions related to the Lagrangian representation; in particular, the notion of \emph{effective flux $\feff_k(t)$ of the $k$-th family at time $t$} will have a major role in the next sections.

\subsection{Definition of Lagrangian representation}
\label{Ss_lagrang_def}

Given a piecewise constant approximate solution $u_\e$ constructed by the Glimm scheme (see Section \ref{Ss_glimm_sol}), for any time $t \geq 0$ define the quantities
\begin{equation*}
L_k^+(t) := \sum_{m \in \Z} \big[s_k^{i,m} \big]^+, \qquad
L_k^-(t) := - \sum_{m \in \Z} \big[s_k^{i,m} \big]^-, \quad \text{if } t \in [i\e, (i+1)\e).
\end{equation*}
It is easy to see that $|L_k^+(t)| + |L_k^-(t)| \leq \const \TV(u_\e(t))$. A \emph{Lagrangian representation} for $u_\e$ is a set $\W$ called \emph{the set of waves}, together with 
\begin{itemize}
\item the maps
\begin{align*}
& {\rm family} : \W \to \{1, \dots, n\} && \text{the family of the wave $w \in \W$}, \\
& \mathcal S:\W \to \{\pm 1\} && \text{the sign of the wave $w \in \W$}, \\
& \tcr: \W \to [0, + \infty) && \text{the creation time of the wave $w \in \W$}, \\
& \tcanc: \W \to (0, + \infty] && \text{the cancellation time of the wave $w \in \W$},
\end{align*}
\item a relation, which we will denote by $\leq$,
\item the map, called \emph{position function},
\begin{equation*}
\mathtt x : \big\{ (t,w) \in [0, \infty) \times \W \ \big| \ \tcr(w) \leq t <\tcanc(w)\big\} \to \R,
\end{equation*}
\end{itemize}
which satisfy the conditions (1)-(4) below.

For the sake of convenience, set
\begin{equation*}
\begin{split}
\W_k &:= \big\{ w \in \W \ \big| \ {\rm family}(w) = k\big\}, \\ 
\W_k(t) &:= \big\{w \in \W_k \ \big| \ \tcr(w) \leq t < \tcanc(w) \big\}, \\
\W_k^{\pm}(t) &:= \big\{w \in \W_k(t) \ \big| \ \mathcal S(w) = \pm 1\big\}.
\end{split}
\end{equation*}

\begin{figure}
\resizebox{14cm}{8cm}{\input{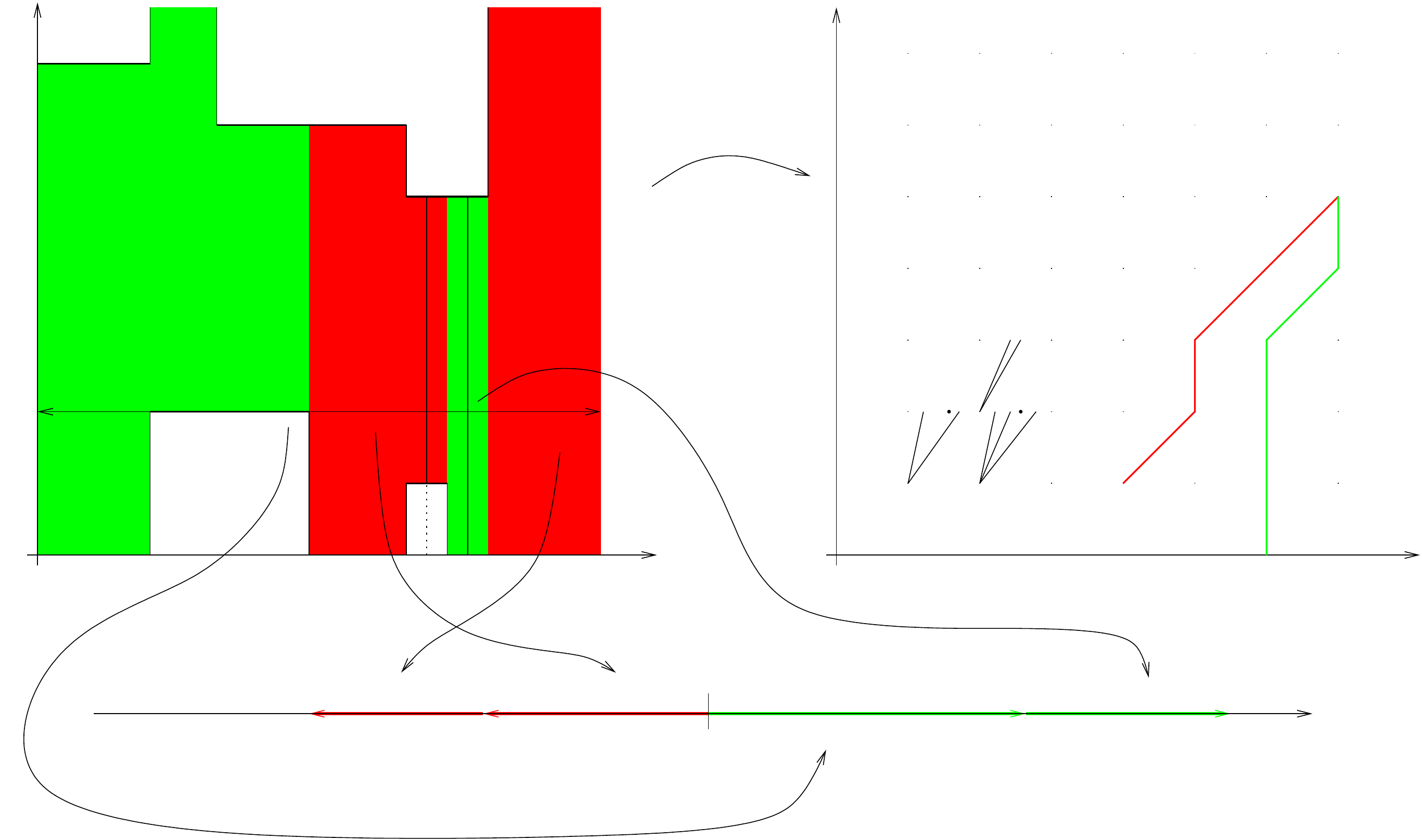_t}}
\caption{The Lagrangian representation: at each $t$ the sum of the length of the red/green regions gives the set $L^-_k$, $L^+_k$, and $\mathtt x$ follows the trajectory of each wave $w$. The map $\Phi_k(t)$ is order reversing on $\W_k^-(t)$ (red) and order preserving on $\W^+_k(t)$ (green).}
\label{Fi_lagr_repr}
\end{figure}

The additional conditions to be satisfied by a Lagrangian representation are the following:
\begin{enumerate}
\item  for any family $k$, time $t$, sign $\pm 1$, the relation $\leq$ is a total order both on $\W_k^+(t)$ and on $\W_k^-(t)$; if $\mathcal I \subseteq \W_k^{\pm}(t)$ is an interval in the order set $(\W_k^{\pm}(t),\leq)$, we will say that $\mathcal I$ is \emph{an interval of waves (i.o.w.) at time $t$};
\item the map $\mathtt x$ satisfies:
\begin{enumerate}
\item for fixed time $t$, $\mathtt x(t,\cdot): \W_k(t) \to \R$ is increasing; 
\item for fixed $w \in \W$, the map $\mathtt x(\cdot, w): [\tcr(w), \tcanc(w)) \to \R$ is Lipschitz;
\item \label{Point_2c_lagra} for any point $(\bar t, \bar x) \in [0, +\infty) \times \R$, all the waves in
\begin{equation*}
\W_k(\bar t, \bar x) := \mathtt x(\bar t)^{-1}(\bar x) \cap \W_k
\end{equation*}
have the same sign; 
\end{enumerate}
\item \label{Pt_isomorph_ordered_sets} there exist maps $\Phi_k(t) : \W_k(t) \to \I\big(L_k^-(t) \big) \cup \I\big(L_k^+(t)\big)$ such that
$\Phi_k(t)|_{\W_k^+(t)}: \W_k^+(t) \to \I\big( L_k^+(t) \big)$ is an isomorphism of ordered sets, while $\Phi_k(t)|_{\W_k^-(t)}:\W_k^-(t) \to \I\big( L_k^-(t) \big)$ is an antisomorphism of ordered sets;
%
\item \label{Point_4_lagr_repr} there exist maps $\hat{\gamma}_k(t) : \W_k(t) \to \mathcal D_k \subseteq \R^m \times \R \times \R$, $\hat{\gamma}_k(t) = \big(\hat u_k(t), \hat v_k(t), \hat \sigma_k(t) \big)$, such that 
\begin{enumerate}
\item for any $\bar x \in \R$, setting
\begin{equation*}
u^L := \lim_{x \to \bar x^-} u_\e(t,x), \qquad u^R := \lim_{x \to \bar x^+} u_\e(t,x),
\end{equation*}
the collection of curves
\begin{equation*}
\Big\{ \W_k(t,\bar x) \ni w \mapsto \hat \gamma_k(t,w)  
\Big\}_{k = 1, \dots, n},
\end{equation*}
solves the Riemann problem $(u^L, u^R)$; 
\item for any $w \in \W_k^\pm(i\e)$, if $\tcanc(w) \geq (i+1)\e$, then for any time $t \in [i\e, (i+1)\e)$ it holds
\begin{equation}
\label{E_ode}
\mathtt x(t,w) =
\begin{cases}
\mathtt x(i\e,w) & \text{if } \vartheta_{i+1} \geq \hat \sigma_k(i\e,w), \\
\mathtt x(i\e,w) + (t-i\e) & \text{if } \vartheta_{i+1} < \hat \sigma_k(i\e,w).
\end{cases} 
\end{equation}
\end{enumerate}
\end{enumerate}

\subsection{Explicit construction of a Lagrangian representation}
\label{Ss_explic_lagr}

In this section we prove the following theorem.

\begin{theorem}
\label{T_lagrangian}
There exists at least one Lagrangian representation for the approximate solution $u_\e$ constructed by the Glimm scheme, which moreover satisfies the following conditions: for any grid point $(i\e,m\e) \in \N\e \times \Z\e$,
\begin{enumerate}[label=(\alph*)]
\item \label{Pt_iow_cons} the set $\W_k(i\e,m\e) \cap \W_k((i-1)\e)$ is an i.o.w. both at time $(i-1)\e$ and at time $i\e$, while the set $\W_k(i\e,m\e) \setminus \W_k((i-1)\e)$ is an i.o.w. at time $i\e$;
\item \label{Pt_affine} the map
\begin{equation*}
\Phi_k((i-1)\e)(\W_k(i\e,m\e) \cap \W_k((i-1)\e)) \overset{\Phi_k(i\e) \circ \Phi_k((i-1)\e)^{-1}}{\longrightarrow} \Phi_k(i\e)(\W_k(i\e,m\e) \cap \W_k((i-1)\e))
\end{equation*}
is an affine map with Lipschitz constant equal to $1$.
\end{enumerate}
\end{theorem}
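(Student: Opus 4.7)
The plan is to construct the Lagrangian representation by induction on the grid time $i\e$, matching at each restart the waves inherited from the previous time step with the waves of the new Riemann problems via the merging conventions \eqref{E_f_cup_g}--\eqref{E_f_vartr_g} introduced in Section \ref{S_local}.

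\textbf{Initialization at $t=0$.} For each $m \in \Z$ and $k \in \{1,\dots,n\}$, Proposition \ref{P_RIP_constr} furnishes the elementary $k$-curve $\gamma_k^{0,m} \colon \I(s_k^{0,m}) \to \mathcal{D}_k$ solving the Riemann problem $(u^{0,m-1}, u^{0,m})$. I take the set of $k$-waves at jump $m\e$ to be the parameter interval $\I(s_k^{0,m})$, identifying each $\tau \in \I(s_k^{0,m})$ with a wave $w$ of family $k$, sign $\sign(s_k^{0,m})$, creation time $0$, position $\mathtt x(0,w) = m\e$, and curve value $\hat\gamma_k(0,w) = \gamma_k^{0,m}(\tau)$. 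The total order on $\W_k^+(0)$ is the lexicographic order on $(m,\tau)$; on $\W_k^-(0)$, on $(m,-\tau)$. The cumulative-strength map $\Phi_k(0)$ then satisfies condition (3) of the definition, and \eqref{E_ode} extends $\mathtt x$ to $[0,\e)$.

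\textbf{Inductive step at time $i\e$.} Assume the representation has been built on $[0,(i-1)\e]$ and carried into $[(i-1)\e,i\e)$ by \eqref{E_ode}. Fix a position $m\e$. Because of the Glimm sampling with parameter $\vartheta_i$, the state $u^{i,m-1}$ is obtained from $u^{i-1,m-2}$ by applying exactly those waves of the Riemann problem at $(m-1)\e$ whose speed $\hat\sigma_k((i-1)\e,\cdot)$ is $\leq \vartheta_i$, and analogously for $u^{i,m}$. Hence the new Riemann problem at $m\e$ factors as the merger of two contiguous problems $(u^{i,m-1},u^{i-1,m-1})$ and $(u^{i-1,m-1},u^{i,m})$ in the sense of Section \ref{S_local}, whose $k$-parts have strengths $s_k'$ and $s_k''$ corresponding respectively to the ``moved-in'' waves (from $(m-1)\e$, speed $>\vartheta_i$) and the ``stayed'' waves (at $m\e$, speed $\leq\vartheta_i$); the outgoing $k$-curve of the new Riemann problem has strength $s_k$. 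I embed the two incoming parameter intervals into $\I(s_k)$ using \eqref{E_f_cup_g} if $s_k' s_k'' \geq 0$ and \eqref{E_f_vartr_g} if $s_k' s_k'' < 0$; inherited waves whose image falls outside $\I(s_k)$ are assigned $\tcanc = i\e$; the residual portion $\I(s_k) \setminus (\I(s_k'+s_k'') \cap \I(s_k))$, whose total measure is bounded by $\Acr_k(i\e,m\e)$ (Corollary \ref{C_aocr}), is populated by newly created waves with $\tcr = i\e$, $\mathcal S = \sign(s_k)$, placed at a fixed endpoint of $\I(s_k)$. I then set $\hat\gamma_k(i\e,w) := \gamma_k(u^{i,m-1},s_k)(\tau_w)$ for the new parameter $\tau_w$, sort globally by $(m,\tau)$ as at time zero, and take $\Phi_k(i\e)$ to be the cumulative-strength map.

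\textbf{Verification and main obstacle.} Conditions (1)--(3) are immediate from the cumulative-sum construction and the lexicographic global order; condition (4a) is automatic because $\hat\gamma_k(i\e,\cdot)$ restricted to $\W_k(i\e,m\e)$ is by construction the $k$-th elementary curve of the new Riemann problem; condition (4b) is the defining equation \eqref{E_ode}. Property (a) holds because the inherited waves landing at $(i\e,m\e)$ form at time $(i-1)\e$ the fast-speed tail of the $k$-waves at $(m-1)\e$ concatenated with the slow-speed head of the $k$-waves at $m\e$ (an interval of waves in the global order), and at time $i\e$ they fill the contiguous sub-interval $\I(s_k'+s_k'') \cap \I(s_k)$ of $\I(s_k)$. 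Property (b) holds because on inherited waves the embedding is the identity in parameter coordinates, hence $\Phi_k(i\e) \circ \Phi_k((i-1)\e)^{-1}$ is a translation. The main technical point is fixing a consistent convention for placing the newly created waves (of total strength at most $\Acr_k$) so that all inherited blocks remain intervals in the global order and the slope-$1$ affine property of (b) is preserved; inserting them at a fixed endpoint of $\I(s_k)$ suffices. Everything else is a routine inductive verification.
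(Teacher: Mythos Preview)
Your proposal is correct and takes essentially the same approach as the paper: an inductive construction in which waves are identified with points of the parameter intervals $\I(s_k^{i,m})$, surviving waves are re-embedded via the conventions \eqref{E_f_cup_g}--\eqref{E_f_vartr_g} (so that $\Phi_k(i\e)\circ\Phi_k((i-1)\e)^{-1}$ is a translation on them), and created waves are placed at the far endpoint $\I(s_k)\setminus\I(s_k'+s_k'')$. The paper's write-up is more explicit on two points you label ``routine'': it checks via separate lemmas that the sign of a wave is time-consistent and that the global order induced by $\Phi_k$ is preserved across the restart (ruling out, in particular, the case $m'>m''$ by a monotonicity-of-$\sigma_k$ argument), and in the verification of (a) it distinguishes carefully between the \emph{arriving} waves (your ``fast tail $\cup$ slow head'') and the \emph{surviving} waves $\Sigma_k^{(1)}\cup\Sigma_k^{(0)}$, which are a further truncation by $\I(s_k'+s_k'')\cap\I(s_k)$.
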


Roughly speaking, the first condition means that we can insert/remove waves due to nonlinear interaction in an ordered way, the second condition focuses on the map at the level of $\I(L^-_k(t)) \cup \I(L^+_k(t))$.

\begin{proof}
The proof is divided in three steps:
\begin{enumerate}
\item first we define, for any family $k$ and any grid point $(i\e,m\e)$, $i \in \N$, $m \in \Z$, the set $\W_k(i\e,m\e)$ of $k$-th waves which at time $i\e$ are located at point $m\e$, together with the maps $\Phi_k^{i,m} = \Phi_k(i\e)|_{\W_k(i\e,m\e)}$;
\item then using the definitions given in Step (1), we construct all the other objects needed to have a Lagrangian representation, i.e. the set of waves $\W$, the sign $\mathcal S(w)$ of any wave $w$, the creation and cancellations time $\tcr(w), \tcanc(w)$, the relation $\leq$ and the position function $\mathtt x$;
\item finally we show that the additional properties (a) and (b) hold.
\end{enumerate}

\bigskip
\noindent \textit{Step 1.} The definitions of $\W_k(i\e,m\e)$ and $\Phi_k(i\e)$ are given by induction on times $i\e, i \in \N$, assuming that at each time
the map
\begin{equation}
\label{E_form_Phi}
\Phi_k^{i,m}: \W_k(i\e, m\e) \to \left( \sum_{\substack{m' < m \\ \sign(s_k^{i,m'}) = \sign(s_k^{i,m})}} s_k^{i, m'} \right) + \I(s_k^{i,m}),
\end{equation}
is a bijection;

At time $i\e = 0$, for any $m \in \Z$, define
\[
\W_k(0,m\e) := \I(s_k^{0,m}) \times \{0\} \times \{m\e\} \times \{k\} 
\]
and 
the bijection
\begin{equation*}
\Phi_k^{0,m}(w) := \left( \sum_{\substack{m' < m \\ \sign(s^{0,m'}_k) = \sign(s^{0,m}_k)}}s^{0,m'}_k \right) + \tau, \qquad \text{for } w = (\tau, 0, m\e, k).
\end{equation*}


For any $m \in \Z$, let us now define $\W_k(i\e, m\e)$ and $\Phi_k^{i,m}$
at time $i\e$, $i \geq 1$, assuming to have already defined, for any $m \in \Z$, the set $\W_k((i-1)\e, m\e) \subseteq \R \times \N\e \times \Z\e \times \{1, \dots, n\}$ and the bijections
\begin{equation*}
\Phi_k^{i-1,m}(t) : \W_k((i-1)\e,m\e) \to \I(L^-_k(t)) \cup \I(L^+_k(t))
\end{equation*}
at time $t = (i-1)\e$ of the form \eqref{E_form_Phi}.

If 
\begin{equation*}
\big\{\gamma_k^{i-1,m}\big\}_{k = 1, \dots, n}, \qquad \gamma_k^{i-1,m} = (u_k^{i-1,m}, v_k^{i-1,m}, \sigma_k^{i-1,m})
\end{equation*}
is the collection of curves which solves the Riemann problem $(u^{i-1,m-1}, u^{i-1,m})$, then, assuming that each $\gamma_k^{i-1,m}$ is defined on the set
\begin{equation*}
\left( \sum_{\substack{m' < m \\ \sign(s_k^{i-1,m'}) = \sign(s_k^{i-1,m})}} s_k^{i-1, m'} \right) + \I(s_k^{i-1,m})
\end{equation*}
instead of $\I(s_k^{i-1,m})$, set
\begin{equation*}
\hat \gamma_k^{i-1,m} = (\hat u_k^{i-1,m}, \hat v_k^{i-1,m}, \hat \sigma_k^{i-1,m}) := \gamma_k^{i-1,m} \circ \Phi_k^{i-1,m}.
\end{equation*}

\noindent Set also
\begin{equation*}
\begin{split}
& \W_k^{(0)}((i-1)\e, m\e) := \Big\{ w \in \W_k((i-1)\e, m\e) \ \big| \ \hat \sigma_k^{i-1,m}(w) \leq \vartheta_{i} \Big\}, \\
& \W_k^{(1)}((i-1)\e, m\e) := \Big\{ w \in \W_k((i-1)\e, m\e) \ \big| \ \hat \sigma_k^{i-1,m}(w) > \vartheta_{i} \Big\}.
\end{split}
\end{equation*}

Now fix $m \in \Z$ and define the set of waves located in $(i\e, m\e)$ as follows. First notice that, since $\Phi_k^{i-1,m}$ is a bijection and the collection of curves $\{\gamma_k^{i-1,m}\}_{k = 1, \dots, n}$ solves the Riemann problem $(u^{i-1,m-1}, u^{i-1,m})$, then there exist $a,b,s',s'' \in \R$ such that
\begin{equation}
\label{E_proiezione_phi}
\begin{split}
\Phi_k^{i-1,m-1} - a&: \W_k^{(1)}((i-1)\e, (m-1)\e) \to \I(s'), \\
\Phi_k^{i-1,m} - b&: \W_k^{(0)}((i-1)\e, m\e) \to  s' + \I(s'')
\end{split}
\end{equation}
are bijections. Set $s := s_k^{i,m}$ and define
\begin{equation}
\label{E_created_waves}
\begin{split}
\Sigma_k^{(1)}(i\e, m\e) &:= \Big[\Phi_k^{i-1,m-1} - a \Big]^{-1} \Big( \I(s') \cap \I(s'+s'') \cap \I(s) \Big), \\
\Sigma_k^{(0)}(i\e, m\e) &:= \Big[ \Phi_k^{i-1,m}  - b \Big]^{-1} \Big( \big(s'+  \I(s'')\big) \cap \I(s'+s'') \cap \I(s) \Big), \\
C_k(i\e, m\e) &:= \Big\{(\tau, i\e, m\e, k) \ \big| \ \tau \in \I(s) \setminus \I(s'+s'') \Big\}.
\end{split}
\end{equation}
Finally set
\begin{equation*}
\W_k(i\e, m\e) := \Sigma_k^{(1)}(i\e, m\e) \cup \Sigma_k^{(0)}(i\e, m\e) \cup C_k(i\e, m\e).
\end{equation*}
We will refer to $C_k(i\e, m\e)$ as the set of \emph{waves created at point $(i\e, m\e)$}. 

We have now to define the map $\Phi_k^{i,m}$.
%
%
%
First let us introduce the auxiliary map $\Psi: \Sigma_k^{(1)}(i\e, m\e) \cup \Sigma_k^{(0)}(i\e, m\e) \to \I(s) \cap \I(s'+s'')$ setting
\begin{equation}
\label{E_psi}
\Psi(w) := 
\begin{cases}
\Phi_k^{i-1,m-1}(w) - a & \text{if } w \in \Sigma^{(1)}(i\e, m\e), \\
\Phi_k^{i-1,m}(w) - b & \text{if } w \in \Sigma^{(0)}(i\e, m\e).
\end{cases}
\end{equation}
Since 
\[
\Big( \I(s') \cap \I(s'+s'') \cap \I(s) \Big)  \cap 
\Big( \big(s'+  \I(s'')\big) \cap \I(s'+s'') \cap \I(s) \Big) = \emptyset,
\]
it follows that $\Psi$ is a bijection. Define now the function $\Phi_k^{i,m}$ on $\W_k(i\e, m\e)$ as follows:
\begin{subequations}
\label{E_codom_phi_k}
\begin{equation}
\label{E_codomain_phi}
\Phi_k^{i,m} : \W_k(i\e, m\e) \to \left( \sum_{\substack{m' < m \\ \sign(s_k^{i,m'}) = \sign(s_k^{i,m})}} s_k^{i, m'} \right) + \I(s_k^{i,m}),
\end{equation}
\begin{equation}
\label{E_phi_k}
\Phi_k^{i,m}(w):= \left( \sum_{\substack{m' < m \\ \sign(s_k^{i,m'}) = \sign(s_k^{i,m})}} s_k^{i, m'} \right) + 
\begin{cases}
\Psi(w) & \text{if } w \in \Sigma^{(1)}(i\e, m\e) \cup \Sigma^{(0)}(i\e, m\e), \\
\tau & \text{if } w \in C(i\e, m\e) \text{ and } w = (\tau, i\e, m\e,k). \\
\end{cases}
\end{equation}
\end{subequations}
It is immediate to see that $\Phi_k^{i,m}$ is a bijection. We conclude this first step observing that
\begin{equation}
\label{E_Wkdisjoint}
\W_k(i\e,m\e) \cap \W_k(i\e,m'\e) = \emptyset \qquad \text{ if } m \neq m'.
\end{equation}

%
%
%


\begin{figure}
\resizebox{14cm}{11cm}{\input{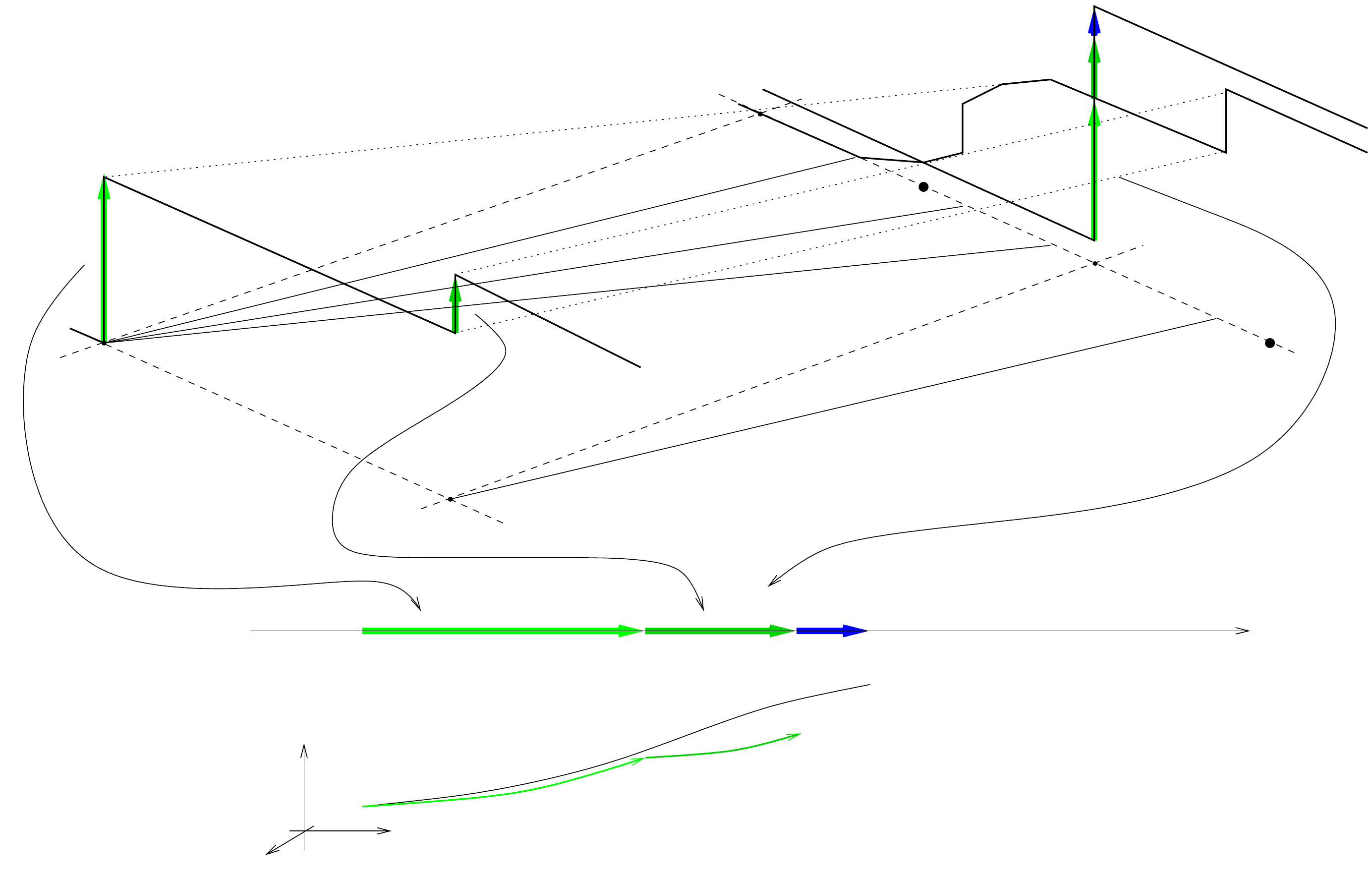_t}}
\caption{The use of the auxillary map $\Phi_k(t)$ to induce the correct ordering on $\W_k(t)$: the colliding $k$-th waves (green) at $((i-1)\e,(m-1)\e)$, $((i-1)\e,m\e)$ are mapped to the $k$-th waves at $(i\e,m\e)$ preserving the ordering on $\I(L^+_k((i-1)\e))$, and the new waves (blue) are added to the right.}
\end{figure}

\bigskip

\noindent \textit{Step 2}. We now define all the other objects which appears in the definition of Langrangian representation.
For any $k \in \{1, \dots, n\}$, $i \in \N$,  set
\begin{equation*}
\W_k(i\e) := \bigcup_{m \in \Z} \W_k(i\e,m\e), 
\qquad
\W_k := \bigcup_{i \in \N} \W_k(i\e), 
\qquad
\W := \bigcup_{k =1}^{n} \W_k. 
\end{equation*}
It holds 
\[
\W_k(i\e), \W_k, \W \subseteq \R \times \N\e \times \Z\e \times \{1, \dots, n\}. 
\] 
Since $\W_k \cap \W_h = \emptyset$ if $k \neq h$, we can define the family of a wave $w \in \W_k$ as  ${\rm family}(w) = k $.
Now for any $k$-th wave $w \in \W_k$, define its creation and cancellation time as
\begin{equation*}
\tcr(w) := \min \big\{i \in \N \ \big| \ w \in \W_k(i\e) \big\}, \qquad
\tcanc(w) := \sup \big\{i \in \N \ \big | \ w \in \W_k(i\e) \big\} + \e;
\end{equation*}
it is not difficult to see that $w \in \W_k(i\e)$ for any $i\e \in [\tcr(w), \tcanc(w))$. 
 The sign $\mathcal S(w)$ of a wave $w \in \W$ is defined as
\begin{equation}
\label{E_def_segno}
\mathcal S(w) := \sign(s_k^{i,m}) \qquad \text{ if } w \in \W_k(i\e,m\e).
\end{equation}
To show that the definition \eqref{E_def_segno} is well posed, it is sufficient to prove the following lemma.

\begin{lemma}
\label{L_segno_cons}
For any $w \in \Sigma^{(1)}_k(i\e, m\e) \cup \Sigma^{(0)}_k(i\e, m\e)$, it holds
\[
\mathcal S\big(w\big) = \sign(s_k^{i,m}).
\]
\end{lemma}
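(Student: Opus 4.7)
The plan is to argue by induction on the time index $i$, with the inductive hypothesis that for every $j \le i-1$ and every $m' \in \Z$, every wave $w \in \W_k(j\e, m'\e)$ satisfies $\mathcal S(w) = \sign(s_k^{j,m'})$. The base case at $i=0$ will be immediate from the base-step definition, since $\W_k(0, m\e) = \I(s_k^{0,m}) \times \{0\} \times \{m\e\} \times \{k\}$, whose first coordinate lies entirely on the side of $0$ prescribed by $\sign(s_k^{0,m})$.

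For the inductive step, first treat $w \in \Sigma^{(1)}_k(i\e, m\e) \subseteq \W_k^{(1)}((i-1)\e, (m-1)\e) \subseteq \W_k((i-1)\e, (m-1)\e)$. The inductive hypothesis then gives $\mathcal S(w) = \sign(s_k^{i-1,m-1})$. The normal form \eqref{E_form_Phi} places the image of $\Phi_k^{i-1,m-1}$ in the half-line selected by $\sign(s_k^{i-1,m-1})$, and since $\Phi_k^{i-1,m-1} - a$ is a bijection from $\W_k^{(1)}((i-1)\e, (m-1)\e)$ onto $\I(s')$, one will obtain $\sign(s') = \sign(s_k^{i-1,m-1})$. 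The defining inclusion $\Phi_k^{i-1,m-1}(w) - a \in \I(s') \cap \I(s'+s'') \cap \I(s)$ requires in particular $\I(s') \cap \I(s) \neq \emptyset$, and because each $\I(\cdot)$ lies strictly on one side of $0$ determined by its argument, this forces $\sign(s') = \sign(s) = \sign(s_k^{i,m})$, yielding $\mathcal S(w) = \sign(s_k^{i,m})$. The case $w \in \Sigma^{(0)}_k(i\e, m\e)$ will be entirely parallel, with $(\Phi_k^{i-1,m-1}, a, s')$ replaced by $(\Phi_k^{i-1,m}, b, s'')$: the inductive hypothesis yields $\mathcal S(w) = \sign(s_k^{i-1,m}) = \sign(s'')$ (reading off the latter equality from the shifted codomain $s' + \I(s'')$), and the non-emptiness of $(s' + \I(s'')) \cap \I(s)$ will force $\sign(s'') = \sign(s)$ through a brief case analysis on the four sign-patterns of $(s', s'')$.

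The main obstacle will be a subtle sub-case in the $\Sigma^{(0)}_k$ analysis, namely the cancellation regime in which $s'$ and $s''$ have opposite signs and magnitudes such that $(s' + \I(s'')) \cap \I(s'+s'')$ collapses to the single boundary point $\{s' + s''\}$; the associated wave then formally carries $\sign(s'') \neq \sign(s)$. This isolated boundary wave is a measure-zero artefact of the open/closed endpoint convention on $\I(\cdot)$, and the plan is to dispose of it either by absorbing it into $C_k(i\e, m\e)$---where the sign is assigned through $\sign(\tau) = \sign(s'+s'') = \sign(s)$, consistent with the lemma---or by a small adjustment of the endpoint in the definition of $\Sigma^{(0)}_k$. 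Modulo this minor endpoint bookkeeping, the induction will close and the lemma follows for the continuum of waves.
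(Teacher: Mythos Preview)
Your argument is the same as the paper's: for $w\in\Sigma^{(1)}_k$ one reads off $\mathcal S(w)=\sign(s')$ from the previous step and then uses that $\I(s')\cap\I(s'+s'')\cap\I(s)\neq\emptyset$ forces $ss'>0$. The paper leaves the induction implicit and dismisses the $\Sigma^{(0)}_k$ case as ``completely similar''.

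You are right that the $\Sigma^{(0)}_k$ case is \emph{not} completely similar. In the cancellation regime $s's''<0$ with $|s'|>|s''|$ one has, with the half-open convention on $\I(\cdot)$, $(s'+\I(s''))\cap\I(s'+s'')=\{s'+s''\}$, and the single wave sitting there carries $\sign(s'')\neq\sign(s'+s'')$. This is a genuine endpoint artefact the paper does not mention (and it in fact also spoils the disjointness claim used to argue that $\Psi$ is a bijection just above \eqref{E_codom_phi_k} at that one point). Your second proposed fix---a small adjustment of the endpoint so that the shorter packet is entirely removed in the cancellation regime, making $\Sigma^{(0)}_k=\emptyset$ there---is the clean resolution. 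The first fix (absorbing into $C_k$) does not quite work as stated, since $C_k(i\e,m\e)$ consists by definition of freshly labeled tuples $(\tau,i\e,m\e,k)$, whereas the offending wave is a pre-existing element of $\W_k((i-1)\e,m\e)$; it should simply be declared cancelled rather than reborn. Modulo this measure-zero bookkeeping, your proof and the paper's coincide.
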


\begin{proof}
Assume $w \in \Sigma^{(1)}_k(i\e, m\e)$, the case $w \in \Sigma^{(0)}_k(i\e, m\e)$ being completely similar. Define $a,b,s',s''$ as in \eqref{E_proiezione_phi}. It holds $\mathcal S(w) = \sign(s') $ and
\begin{equation}
\label{E_lemma_segno}
\Phi_k^{i-1,m-1}(w) - a \in \I(s') \cap \I(s'+s'') \cap \I(s),
\end{equation}
where $s = s_k^{i,m}$. From \eqref{E_lemma_segno}, since $ \I(s') \cap \I(s'+s'') \cap \I(s) \neq \emptyset$, it follows that $s s'  >0$.
\end{proof}

Set 
\begin{equation*}
\W_k^+(i\e) := \big\{w \in \W_k(i\e) \ \big| \ \mathcal S(w) = +1 \big\}, \qquad
\W_k^-(i\e) := \big\{w \in \W_k(i\e) \ \big| \ \mathcal S(w) = -1 \big\}.
\end{equation*}

Define now the position function $\mathtt x$ for times $i\e, i \in \N$, as follows: for any $w \in \W$, for any time $i\e \in [\tcr(w), \tcanc(w))$, set
\begin{equation*}
\mathtt x(i\e,w) := m\e, \quad \text{if } w \in \W_k(i\e,m\e).
\end{equation*}
The definition is well posed thanks to \eqref{E_Wkdisjoint}. 

Using \eqref{E_Wkdisjoint} we can define the map
\begin{equation*}
\Phi_k(i\e) : \W_k(i\e) \to \I\big(L_k^-(i\e) \big) \cup \I\big(L_k^+(i\e)\big)
\end{equation*}
through the formula 
\begin{equation*}
\Phi_k(i\e)|_{\W_k(i\e,m\e)} := \Phi_k^{i,m},
\end{equation*}
and the map
\begin{equation*}
\hat \gamma_k(i\e) : \W_k(i\e) \to \mathcal D_k
\end{equation*}
through the formula
\begin{equation*}
\hat \gamma_k(i\e)|_{\W_k(i\e,m\e)} := \hat \gamma_k^{i,m}.
\end{equation*}
From
\begin{equation*}
\left( \sum_{\substack{m' < m \\ \sign(s_k^{i,m'}) = \sign(s_k^{i,m})}} s_k^{i, m'} \right) + \I(s_k^{i,m}) \cap 
\left( \sum_{\substack{m' < r \\ \sign(s_k^{i,m'}) = \sign(s_k^{i,r})}} s_k^{i, m'} \right) + \I(s_k^{i,r}) = \emptyset,
\quad \text{ for $m \neq r$ },
\end{equation*}
we have that $\Phi_k(i\e)$ is a bijection. 

Next define the relation $\leq$ on $\W$ as any relation such that the maps
\begin{equation*}
\Phi_k(i\e)|_{\W_k^+(i\e)} : \W_k^+(i\e) \to \I\big(L_k^+(i\e)\big), \qquad \Phi_k(i\e)|_{\W_k^-(i\e)} : \W_k^-(i\e) \to \I\big(L_k^-(i\e)\big),
\end{equation*}
become respectively an isomorphism and an antisomorphism of ordered sets.

To prove that this definition is well posed, it is sufficient to show that the following lemma holds.

\begin{lemma}
Let $w',w'' \in \W_k((i-1)\e) \cap \W_k(i\e)$ and assume that they have the same sign. Then 
\begin{equation*}
\Phi_k((i-1)\e)(w') \leq \Phi_k((i-1)\e)(w'') \quad
\Longleftrightarrow \quad
\Phi_k(i\e)(w') \leq \Phi_k(i\e)(w'').
\end{equation*}
\end{lemma}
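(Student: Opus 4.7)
The approach is a case analysis on the grid positions of $w', w''$ at the two times, combined with the monotonicity of the speed $\hat\sigma_k^{i-1,m}(\tau)$ in the local parameter $\tau$: since by \eqref{E_fixed_pt} this speed is the derivative of $\conv_{[0,s_k^{i-1,m}]} f_k$ when $s_k^{i-1,m} > 0$ (resp. of the concave envelope when $s_k^{i-1,m} < 0$), it is non-decreasing in $\tau$ for positive waves and non-increasing for negative waves. This monotonicity is the only non-combinatorial ingredient I expect to need.

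Let $m_1, m_2$ be the grid indices of $w', w''$ at $(i-1)\e$ and $\bar m_1, \bar m_2$ their indices at $i\e$; the dynamics \eqref{E_ode} forces $\bar m_j \in \{m_j, m_j + 1\}$. The statement is symmetric under $w' \leftrightarrow w''$, so I may assume $P' := \Phi_k((i-1)\e)(w') \leq P'' := \Phi_k((i-1)\e)(w'')$ and aim to prove $Q' := \Phi_k(i\e)(w') \leq Q'' := \Phi_k(i\e)(w'')$. I would handle positive waves in detail; the negative case is obtained by reversing the signs of the grid offsets and the $\tau$-monotonicity of $\hat\sigma_k$. From \eqref{E_form_Phi}, $P' \leq P''$ splits into (a) $m_1 < m_2$, or (b) $m_1 = m_2 =: m$ and $\tau'_{i-1} \leq \tau''_{i-1}$, where $\tau_{i-1}(w)$ denotes $\Phi_k^{i-1,m}(w)$ minus the grid offset.

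In case (a), $\bar m_1 \leq m_1 + 1 \leq m_2 \leq \bar m_2$. If $\bar m_1 < \bar m_2$ the grid offsets in \eqref{E_codom_phi_k} immediately give $Q' < Q''$; if $\bar m_1 = \bar m_2$ then necessarily $m_2 = m_1 + 1$ with $w'$ moving and $w''$ staying, so $w' \in \Sigma^{(1)}_k(i\e, \bar m_1 \e)$ and $w'' \in \Sigma^{(0)}_k(i\e, \bar m_1 \e)$, and the decomposition in \eqref{E_proiezione_phi}--\eqref{E_phi_k} places $\Psi(w')$ and $\Psi(w'')$ in the disjoint intervals $\I(s')$ and $s' + \I(s'')$ respectively, with the first strictly below the second, hence $Q' < Q''$. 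In case (b), if both waves stay or both move, they land in the same $\Sigma^{(0)}_k$ or $\Sigma^{(1)}_k$ set at $i\e$, and the construction \eqref{E_psi}--\eqref{E_phi_k} makes $\Phi_k(i\e) \circ \Phi_k((i-1)\e)^{-1}$ a translation (this is precisely the content of property (b) of the theorem, which I plan to verify simultaneously), so $Q' - Q'' = P' - P'' \leq 0$; if $w'$ stays and $w''$ moves, the grid inequality $\bar m_1 = m < m + 1 = \bar m_2$ gives $Q' < Q''$.

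The one subcase requiring the monotonicity input, and therefore the main content of the lemma, is case (b) with $w'$ moving and $w''$ staying: this would force $\hat\sigma_k^{i-1,m}(w') > \vartheta_i \geq \hat\sigma_k^{i-1,m}(w'')$, but the hypothesis $\tau'_{i-1} \leq \tau''_{i-1}$ together with the non-decreasing character of $\hat\sigma_k^{i-1,m}$ yields $\hat\sigma_k^{i-1,m}(w') \leq \hat\sigma_k^{i-1,m}(w'')$, a contradiction (the only escape $w' = w''$ is harmless, giving $Q' = Q''$). The main technical obstacle I anticipate is the sign bookkeeping in the negative-wave case, where the $\Phi$-map is order-reversing on $\W_k^-$, grid offsets are negative, and $\hat\sigma_k$ is non-increasing in $\tau$, so each of the monotonic comparisons above flips in several places; however the logical structure of the case analysis is unchanged and the same contradiction is produced, closing the argument.
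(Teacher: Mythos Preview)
Your proof is correct and follows essentially the same approach as the paper's: a case analysis on grid positions, with the one nontrivial subcase disposed of by the monotonicity of $\hat\sigma_k^{i-1,m}$ in the local parameter, yielding a contradiction. The only organizational difference is that you case on the positions $m_1,m_2$ at time $(i-1)\e$, while the paper cases on the positions $m',m''$ at time $i\e$; your case (b) with $w'$ moving and $w''$ staying is exactly the paper's case (iii), and your case (a) with $\bar m_1=\bar m_2$ corresponds to the interval-disjointness subcase of the paper's case (ii).
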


\begin{proof}
It is sufficient to prove the implication "$\Longrightarrow$". Suppose that both $w'$ and $w''$ are positive, the other case being similar, and that $w' \neq w''$; assume $w' \in \W_k(i\e,m'\e)$, $w'' \in \W_k(i\e,m''\e)$. Distinguish three cases:
\begin{enumerate}[label=(\roman*)]
\item $m' < m''$: in this case, by the definition of the codomains of the maps $\{\Phi_k^{i,r}\}_{r \in \Z}$ in \eqref{E_codomain_phi}, it is immediate to see that the $\Phi_k(i\e)(w') \leq \Phi_k(i\e)(w'')$;
\item $m'=m''$; using the notations as in \eqref{E_proiezione_phi}-\eqref{E_phi_k}, it is sufficient to prove that $\Psi(w') \leq \Psi(w'')$. If both $w'$ and $w''$ belong to $\Sigma_k^{(0)}(i\e,m'\e)$ (or if they both belong to $\Sigma_k^{(1)}(i\e,m'\e)$), the conclusion follows from 
\begin{equation*}
\Psi(w') = \Phi_k((i-1)\e)(w') - a \leq \Phi_k((i-1)\e)(w'') - a = \Psi(w');
\end{equation*}
if $w' \in \Sigma^{(0)}(i\e, m'\e)$, $w'' \in \Sigma^{(1)}(i\e, m'\e)$, then $s'$ and $s''$ must be greater than zero; by definition, $\Psi(w') = \Phi_k((i-1)\e,w') - a \in \I(s')$ and $\Psi(w'') = \Phi_k((i-1)\e, w'') - b \in s' + \I(s'')$, and thus, since $s', s'' > 0$, we have $\Psi(w') \leq \Psi(w'')$;
\item $m' > m''$; since by hypothesis $\Phi_k((i-1)\e)(w') \leq \Phi_k((i-1)\e)(w'')$, the only possibility is that
\begin{equation*}
w'  \in \W_k^{(1)}((i-1)\e, m''\e), \qquad w'' \in \W_k^{(0)}((i-1)\e, m''\e), \qquad m' = m''+1;
\end{equation*}
hence
\begin{equation*}
\sigma_k^{i-1,m''}(\Phi_k^{i-1,m''}(w'')) = \hat \sigma_k^{i-1,m''}(w'') \leq \vartheta_i < \hat \sigma_k^{i-1,m''}(w') = \sigma_k^{i-1,m''}(\Phi_k^{i-1,m''}(w'));
\end{equation*}
since $\sigma_k^{i-1,m''}$ is non decreasing, we get $\Phi_k^{i-1,m''}(w'') < \Phi_k^{i-1,m''}(w')$, a contradiction. \qedhere
\end{enumerate}
\end{proof}

Finally extend all the definitions above for times $t \in (i\e, (i+1)\e)$, defining $\mathtt x(t,w)$ according to \eqref{E_ode} and setting
\begin{equation*}
\Phi_k(t) := \Phi_k(i\e), \qquad \hat \gamma_k(t) := \hat \gamma_k(i\e), \quad \text{ if } t \in [i\e,(i+1)\e).
\end{equation*}
It is not difficult to see that Properties (1)-(4) of the Definition of Lagrangian representation hold.

\bigskip

\noindent \textit{Step 3}. We are left to show that the additional properties (a) and (b) hold.

\smallskip

\noindent (a) From \eqref{E_created_waves}, it follows
\begin{equation*}
\W_k(i\e,m\e) \cap \W_k((i-1)\e) = \Sigma_k^{(1)}(i\e,m\e) \cup \Sigma_k^{(0)}(i\e,m\e). 
\end{equation*}
To prove that this set is an interval of waves at time $i\e$, just observe that for any $w \in \Sigma_k^{(1)}(i\e,m\e) \cup \Sigma_k^{(0)}(i\e,m\e)$ and for any $w' \in C_k(i\e,m\e)$, it holds $\Phi_k(i\e)(w) = \Phi_k^{i,m}(w) \leq \Phi_k^{i,m}(w') = \Phi_k(i\e)(w')$ and thus $w \leq w'$. 

\noindent Let us now prove that $\W_k(i\e,m\e) \cap \W_k((i-1)\e)$ is an interval of waves also at time $(i-1)\e$. First notice that both $\Sigma_k^{(1)}(i\e,m\e)$ and $\Sigma_k^{(0)}(i\e,m\e)$ are intervals of waves at time $(i-1)\e$, since by \eqref{E_created_waves} they are pre-images of intervals through an (anti)isomorphism of ordered sets. Hence, to prove that  $\Sigma_k^{(1)}(i\e,m\e) \cup \Sigma_k^{(0)}(i\e,m\e)$ is an interval of waves at time $(i-1)\e$, take $w \in \Sigma_k^{(1)}(i\e,m\e)$, $w' \in \Sigma_k^{(0)}(i\e,m\e)$, and $z \in \W_k((i-1)\e)$ with $w \leq z \leq w'$: it is sufficient to show that $z \in \Sigma_k^{(1)}(i\e,m\e) \cup \Sigma_k^{(0)}(i\e,m\e)$. Since $\Sigma_k^{(1)}(i\e,m\e), \Sigma_k^{(0)}(i\e,m\e) \neq \emptyset$, we have that (using the same notations as in \eqref{E_proiezione_phi}-\eqref{E_created_waves}), $s',s'',s$ have the same sign (say positive) and $a = b$. Hence
\begin{equation*}
a \leq \Phi_k((i-1)\e)(w) \leq \Phi_k((i-1)\e)(z) \leq \Phi_k((i-1)\e)(w') \leq a + \min\{s'+s'',s\};
\end{equation*}
therefore $\Phi_k((i-1)\e)(z) - a \in \Big( \I(s') \cap \I(s'+s'') \cap \I(s) \Big) \cup  \Big( \big(s'+  \I(s'')\big) \cap \I(s'+s'') \Big)$ and thus $z \in \Sigma_k^{(1)}(i\e,m\e) \cup \Sigma_k^{(0)}(i\e,m\e)$, thus showing that $\W_k(i\e,m\e) \cap \W_k((i-1)\e)$ is an interval of waves at time $(i-1)\e$. 

\noindent Finally, since $\Phi_k(i\e)$ is an (anti)isomorphism of ordered set, using \eqref{E_created_waves} and \eqref{E_phi_k}, we immediately get that $C_k(i\e,m\e)$ is an interval of waves at time $i\e$, thus concluding the proof of Property (a).
\smallskip

\noindent (b) Assume both $\Sigma_k^{(1)}(i\e, m\e)$ and $\Sigma_k^{(0)}(i\e, m\e)$ are not empty and contains positive waves; if they contain negative waves or if one of them is empty, the proof is similar. Using again the same notations as in \eqref{E_proiezione_phi}-\eqref{E_phi_k}, since they both are not empty, then $a=b$ in \eqref{E_psi}. Hence $\Psi$ coincides with $\Phi_k((i-1)\e)$ up to a constant. Moreover, by \eqref{E_codomain_phi}, $\Phi_k(i\e)$ coincides with $\Psi$ up to a constant. Hence $\Phi_k(i\e) \circ \Phi_k((i-1)\e)^{-1}$ is an affine map, with slope equal to $1$. 
\end{proof}

As an immediate corollary of Properties \ref{Pt_iow_cons}-\ref{Pt_affine} we get
\begin{corollary}
\label{C_iow_same_pos}
The following hold. 
\begin{enumerate}
\item \label{Pt_arrivano} Let $i \in \N$, $m \in \Z$, $\mathcal I \subseteq \W_k^{(1)}((i-1)\e, (m-1)\e) \cup \W_k^{(0)}((i-1)\e, m\e)$ be an i.o.w. at time $(i-1)\e$. Then either $\mathcal I \cap \W_k(i\e)$ is empty or it is an i.o.w. both at time $(i-1)\e$ and at time $i\e$.
\item \label{Pt_sono} Let $\mathcal I \subseteq \W_k(i\e,m\e)$ be an i.o.w. at time $i\e$. Then either $\mathcal I \cap \W_k((i-1)\e)$ is empty or it is an i.o.w. both at time $(i-1)\e$ and at time $i\e$.
\item \label{Pt_misura_iow} For any $\mathcal I \subseteq \W_k(i\e,m\e) \cap \W_k((i-1)\e)$, it holds $\mathcal L^1\big( \Phi_k((i-1)\e)(\mathcal I)\big) = \mathcal L^1\big(\Phi_k(i\e)(\mathcal I)\big)$. 
\end{enumerate}
\end{corollary}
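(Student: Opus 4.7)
The plan is to deduce all three claims from properties \ref{Pt_iow_cons} and \ref{Pt_affine} of Theorem \ref{T_lagrangian}, combined with the elementary fact that the intersection of two order intervals in a totally ordered set is again an order interval (possibly empty). The common setup is the observation that by \eqref{E_created_waves}
\[
\W_k(i\e,m\e) \cap \W_k((i-1)\e) = \Sigma_k^{(1)}(i\e,m\e) \cup \Sigma_k^{(0)}(i\e,m\e),
\]
whose elements all share a common sign by Lemma \ref{L_segno_cons} (namely $\sign(s_k^{i,m})$), and which is an i.o.w.\ at both times $(i-1)\e$ and $i\e$ by Property \ref{Pt_iow_cons}. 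Moreover Property \ref{Pt_affine} says that on this set the change of parameterization $\Phi_k(i\e)\circ\Phi_k((i-1)\e)^{-1}$ is affine with Lipschitz constant $1$; since each $\Phi_k(t)$ is either an order isomorphism or antisomorphism on the corresponding $\W_k^{\pm}(t)$, the slope must be $+1$, so this map is an order-preserving isometry between $\Phi_k((i-1)\e)(\Sigma_k^{(1)}\cup\Sigma_k^{(0)})$ and $\Phi_k(i\e)(\Sigma_k^{(1)}\cup\Sigma_k^{(0)})$.

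For Point \eqref{Pt_arrivano}, waves in $\mathcal I$ either move to $(i\e,m\e)$ or disappear there by \eqref{E_ode}, so $\mathcal I \cap \W_k(i\e) = \mathcal I \cap (\Sigma_k^{(1)}\cup\Sigma_k^{(0)})$. If the common sign on $\Sigma_k^{(1)}\cup\Sigma_k^{(0)}$ differs from the sign of $\mathcal I$ the intersection is empty; otherwise it is the intersection of two order intervals inside the same $\W_k^{\pm}((i-1)\e)$, hence an i.o.w.\ at time $(i-1)\e$. Since the two orderings agree on $\Sigma_k^{(1)}\cup\Sigma_k^{(0)}$ by the slope-$1$ statement, this intersection is also an order interval in $\Sigma_k^{(1)}\cup\Sigma_k^{(0)}$ with respect to the time-$i\e$ order, and as $\Sigma_k^{(1)}\cup\Sigma_k^{(0)}$ is itself an i.o.w.\ at time $i\e$, the conclusion follows.

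For Point \eqref{Pt_sono}, the argument is symmetric: $\mathcal I \cap \W_k((i-1)\e) \subseteq \Sigma_k^{(1)}\cup\Sigma_k^{(0)}$ is the intersection of two i.o.w.\ at time $i\e$, hence an i.o.w.\ at time $i\e$, and Property \ref{Pt_affine} transports the order-interval property to time $(i-1)\e$. For Point \eqref{Pt_misura_iow}, since $\mathcal I$ is contained in $\W_k(i\e,m\e)\cap\W_k((i-1)\e)$, the affine slope-$1$ map from Property \ref{Pt_affine} is an isometry on $\Phi_k((i-1)\e)(\mathcal I)$, and isometries preserve one-dimensional Lebesgue measure, giving the equality.

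The main conceptual subtlety is that the total order on $\W_k^{\pm}$ itself depends on $t$, so ``i.o.w.\ at time $(i-1)\e$'' and ``i.o.w.\ at time $i\e$'' refer to a priori different orderings; Properties \ref{Pt_iow_cons} and \ref{Pt_affine} are precisely what guarantees that, on the set of waves surviving across a single time step, the two orderings coincide and the interval structure can be freely transferred between the two times.
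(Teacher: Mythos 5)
Your proof is correct and follows exactly the route the paper intends: the paper states this corollary with no written proof ("As an immediate corollary of Properties (a)--(b)\dots"), and your argument supplies precisely the omitted details, namely that $\W_k(i\e,m\e)\cap\W_k((i-1)\e)=\Sigma_k^{(1)}\cup\Sigma_k^{(0)}$ is an i.o.w.\ at both times with consistent ordering, so intersections of order intervals and the nesting of intervals transfer between the two times, while the slope-$1$ affine change of parameterization gives the measure equality in Point (3).
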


\subsection{Further definitions and remarks}
\label{Ss_further_def}

We conclude this section by introducing some useful notions that we will frequently use hereinafter, in particular, as we already said, the notion of \emph{effective flux $\feff_k(t)$ of the $k$-th family at time $t$}.  

\begin{definition}
Fix $\bar t \geq 0$. Let $\mathcal I \subseteq \W_k(\bar t)$ be an interval of waves at time $\bar t$. Set $I := \Phi_k(\bar t)(\mathcal I)$. By Property \eqref{Pt_isomorph_ordered_sets} of the Definition of Lagrangian representation, $I$ is an interval in $\R$ (possibly made by a single point). Let us define:
\begin{itemize}
\item \emph{the strength of $\mathcal I$} as 
\[
|\mathcal I| := \mathcal L^1(I);
\] 
\item the \emph{Rankine-Hugoniot speed given to the interval of waves $\mathcal I$ by a function $g: \R \to \R$} as
\begin{equation*}
\sigma^{\text{rh}}(g, \mathcal I) := 
\begin{cases}
\frac{g(\sup I)-g(\inf I)}{\sup I - \inf I} & \text{if  $I$ is not a singleton}, \\
g'(I) & \text{if $I$ is a singleton};
\end{cases}
\end{equation*}
\item for any $w \in \mathcal I$, the \emph{entropic speed given to the wave $w$ by the Riemann problem $\mathcal I$ and the flux function $g$} as 
\[
\sigma^{\text{ent}}(g,\mathcal I,w) :=
\left\{ \begin{array}{ll}
{\displaystyle \frac{d}{d\tau} \conv_I g \Big(\Phi_k(\bar t)(w)\Big)} & \text{if } \mathcal S_k(w) = +1, \\ [1em]
{\displaystyle \frac{d}{d\tau} \conc_I g \Big(\Phi_k(\bar t)(w)\Big)} & \text{if } \mathcal S_k(w) = -1.
\end{array} \right.
\]
\end{itemize}
If $\sigma^{\text{rh}}(g, \mathcal I) = \sigma^{\text{ent}}(g,\mathcal I,w)$ for any $w \in \mathcal I$, we will say that $\mathcal I$ is \emph{entropic} w.r.t. the function $g$.

We will also say that \emph{the Riemann problem $\mathcal I$ with flux function $g$ divides $w,w'$} if $\sigmaent(g,\mathcal I,w) \neq \sigmaent(g,\mathcal I,w')$.
\end{definition}

We recall that by definition an interval of waves is made of waves with the same sign.


\begin{remark}
\label{W_speed_increasing_wrt_waves}
Notice that $\sigmaent$ is always increasing on $\mathcal I$, whatever the sign of $\mathcal I$ is, by the monotonicity properties of the derivatives of the convex/concave envelopes. 
\end{remark}


\begin{remark}
\label{R_partition_iow}
Given a function $g$ and an interval of waves $\mathcal I$, we can always partition $\mathcal I$ through the equivalence relation
\[
z \sim z'  \quad \Longleftrightarrow \quad z,z' \text{ are not divided by the Riemann problem $\mathcal I$ with flux function $g$}.
\]
As a consequence of Remark \ref{W_speed_increasing_wrt_waves}, we have that each element of this partition is an entropic interval of waves and the relation induced by the order $\leq$ on the partition (see Section \ref{Ss_notation}) is still a total order.
\end{remark}

Let us conclude this section, introducing the following notion.

\begin{definition}
\label{D_effect_flux}
For each family $k =1, \dots n$ and for each time $t \geq 0$ define the \emph{effective flux of the $k$-th family at time $t$} as any function
\begin{equation*}
\feff_k(t, \cdot): [L_k^-, L_k^+] \to \R
\end{equation*}
whose second derivative satisfies the following relation:
\begin{equation*}
\begin{split}
&\frac{\partial^2 \feff_k(t, \cdot)}{\partial \tau^2}(\tau) := 
\frac{d \tilde \lambda(\hat \gamma(t, w))}{d \tau},
\end{split}
\end{equation*}
for $\mathcal L^1$-a.e. $\tau \in [L_k^-, L_k^+]$, where $w = \Phi_k(t)^{-1}(\tau)$.
\end{definition}

\begin{remark}
Let us observe the following:
\begin{enumerate}
\item $\feff_k(t, \cdot)$ is defined up to affine function; 
\item since the second derivative of $\feff_k(t, \cdot)$ is an $L^\infty$-function, it turns out that $\feff_k(t, \cdot)$ is a $C^{1,1}$-function;
\item $\feff_k(t, \cdot) = \feff_k(i\e, \cdot)$ for any $t \in [i\e, (i+1)\e)$;
\item it is quite easy to see that for any time $i\e, i\in \N$ and for any $m \in \Z$, on the interval 
\begin{equation*}
\left( \sum_{\substack{m' < m \\ \sign(s_k^{i,m'}) = \sign(s_k^{i,m})}} s_k^{i, m'} \right) + \I(s_k^{i,m})
\end{equation*}
the $k$-th effective flux function $\feff_k(i\e, \cdot)$ coincides, up to an affine function, with the $k$-th reduced flux associated to the Riemann problem located at $(i\e, m\e)$ as defined in \eqref{E_reduced_flux}.
\end{enumerate} 
\end{remark}

\section{Analysis of waves collision}
\label{S_analysis_wave_coll}

Starting with this section we enter in the heart of our construction. We introduce in fact the notion of \emph{pair of waves $(w,w')$ which have already interacted} and \emph{pair of waves $(w,w')$ which have never interacted} at time $\bar t$. For any pair of waves $(w,w')$ and for a fixed time $\bar t$, we define an interval of waves $\mathcal I(\bar t,w,w')$ and a partition $\mathcal P(\bar t,w,w')$ of this interval: these objects in some sense summarize the past ``common'' history of the two waves, from the moment in which they have split (if they have already interacted) or from the last time in which one of them is created, if they have never interacted.

The interval $\mathcal I(\bar t,w,w')$ and its partition $\mathcal P(\bar t,w,w')$ will be crucial in order to define the functional $\mathcal Q_k$ in Section \ref{S_fQ} and to prove that it satisfies the inequality \eqref{E_bound_on_fQ}. 


\subsection{Wave packages}
\label{Ss_wave_pack}

We start by defining an equivalence relation between waves, which will be useful to pass from the uncountable sets of waves $\W(t)$ at time $t$ to the finite quotient set, whose elements will be called \emph{wave packages}.

For any $\bar t \geq 0$ and $w \in \W_k(\bar t)$, $\bar t \in [i\e, (i+1)\e)$, define the \emph{wave package to which $w$ belongs} as the set
\begin{equation}
\label{E_big_waves}
\mathcal E(\bar t,w) := \bigg\{ w'  \in \W_{k}(\bar t) \ \Big| \ \tcr(w) = \tcr(w'), \ 
\mathtt x (t,w) = \mathtt x(t,w') \text{ for all }
t \in \big[ \tcr(w), (i+1)\e \big) \bigg\}.
\end{equation}
In Section \ref{Ss_interacting} we will denote this equivalence relation as $\bowtie$.

\begin{remark}
Notice that is it natural to require that the condition in \eqref{E_big_waves} holds on the time interval $[\tcr(w), (i+1)\e)$ instead of $[\tcr(w), i\e]$ since it could happen that $\mathtt x(i\e, w) = \mathtt x(i\e, w')$, but $\mathtt x(t,w) \neq \mathtt x(t,w')$ for $t > i\e$, while we want to give definitions which are "left-continuous in time".
\end{remark}

\begin{lemma}
\label{L_discrete_partition}
The collection 
$\big\{ \mathcal E(\bar t, w) \ \big| \ w \in \W(\bar t) \big\}$
is a finite partition of $\W(\bar t)$ and the order induced by the $\leq$ is a total order both on the set $\big\{ \mathcal E(\bar t, w) \ \big| \ w \in \W^+_{k}(\bar t) \big\}$ and on the set $\big\{ \mathcal E(\bar t, w) \ \big| \ w \in \W^-_{k}(\bar t) \big\}$, $k = 1,\dots,n$.
\end{lemma}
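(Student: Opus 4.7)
The plan is to prove both assertions simultaneously by induction on the integer $i$ with $\bar t\in[i\e,(i+1)\e)$, establishing the stronger statement that every wave package $\mathcal{E}(\bar t,w)$ is itself an i.o.w.\ in $(\W_k^{\pm}(\bar t),\le)$, and that only finitely many packages are nonempty. Once this strengthened form is known, the total order on packages follows immediately, because pairwise disjoint intervals inside a totally ordered set are themselves totally ordered by the induced relation $\preceq$ of Section \ref{Ss_notation}.

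For the base case I would use that the initial datum has compact support, so only finitely many grid points $(0,m\e)$ carry nonzero waves. For $\bar t\in[0,\e)$, two waves with $\tcr=0$ are equivalent exactly when they sit at the same initial grid point and move together on $[0,\e)$, which by \eqref{E_ode} happens precisely when their speeds $\hat\sigma_k(0,\cdot)$ lie on the same side of $\vartheta_1$. Since $\hat\sigma_k(0,\cdot)$ is the derivative of the convex (respectively concave) envelope of the $k$-reduced flux, it is monotone on each $\W_k^{\pm}(0,m\e)$; consequently each such set splits into at most two i.o.w.s, yielding a finite collection of packages, each of which is an interval of waves.

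For the inductive step, assuming the claim at time $(i-1)\e$, I would decompose each package at time $\bar t\in[i\e,(i+1)\e)$ according to the grid point $(i\e,m\e)$ at which its waves lie at time $i\e$ (well defined because members of a package share their entire trajectory). Waves in the package with $\tcr<i\e$ form a subset of $\W_k(i\e,m\e)\cap\W_k((i-1)\e)$ arising from a single prior package restricted to those waves surviving at $(i\e,m\e)$; by the inductive hypothesis and Corollary \ref{C_iow_same_pos}\ref{Pt_arrivano} such a subset is an i.o.w.\ at time $i\e$. Waves in the package with $\tcr=i\e$ form a subset of $C_k(i\e,m\e)$, itself an i.o.w.\ by Theorem \ref{T_lagrangian}\ref{Pt_iow_cons}. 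Enforcing the additional requirement that members agree also on $[i\e,(i+1)\e)$ splits each of these i.o.w.s along the monotone map $\hat\sigma_k(i\e,\cdot)$ at the threshold $\vartheta_{i+1}$, producing at most two sub-i.o.w.s each; this preserves the interval-of-waves structure and at most doubles the number of packages, so finiteness is maintained.

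The main subtlety I expect is the book-keeping required to prevent \emph{accidental fusions}: two distinct packages at time $(i-1)\e$ may send waves to the same grid point $(i\e,m\e)$ without ever becoming a single package at time $\bar t$, because their trajectories on $[\tcr,i\e)$ necessarily differ. Corollary \ref{C_iow_same_pos}\ref{Pt_arrivano}--\ref{Pt_sono} is precisely the tool that keeps the interval-of-waves structure clean through this filtering, by guaranteeing that the intersections appearing in the decomposition above are genuine i.o.w.s both at the previous and at the current time.
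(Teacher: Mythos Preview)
Your proposal is correct, and it proves the strictly stronger statement that every wave package is an interval of waves; once that is known, both the finiteness and the total ordering follow at once. The paper's own proof is much terser and proceeds along a different line: it obtains finiteness directly by observing that a trajectory $t\mapsto\mathtt x(t,w)$ on $[\tcr(w),(i+1)\e)$ is completely determined by the finite sequence of grid positions $\mathtt x(j\e,w)$, and that for each $j\le i$ only finitely many nodal points carry mass; and it obtains the total order by appealing in one line to the monotonicity of $\mathtt x(\bar t,\cdot)$.

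What your inductive argument buys is that it makes explicit the two structural facts that the paper's one-line order argument is silently using: namely Theorem~\ref{T_lagrangian}\ref{Pt_iow_cons} (created waves at a node are placed \emph{after} all surviving waves at that node, so a surviving wave cannot be sandwiched between two newly created ones, and vice versa) and Corollary~\ref{C_iow_same_pos} (surviving portions of i.o.w.s remain i.o.w.s). Without these, monotonicity of $\mathtt x$ at a single time $\bar t$ is not enough to force equal creation times within a package. Conversely, the paper's direct counting argument for finiteness avoids your bookkeeping of how many packages can appear at each step, and scales immediately to any fixed $\bar t$ without tracking the doubling-plus-creation bound.
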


\begin{proof}
Clearly $\big\{ \mathcal E(\bar t, w) \ \big| \ w \in \W(\bar t) \big\}$ is a partition of $\W(\bar t)$. To see that it is finite, just observe that the curve $\mathtt x(t,\cdot)$ is uniquely determined by assigning the points $m\e = \mathtt x(i\e,\cdot)$, and for all fixed time $\bar t$ the set of nodal points supporting $D_x u_\e(t)$, $t \leq \bar t$, is finite. 
Finally, the monotonicity of $\mathtt x(\bar t, \cdot)$ implies the statement about the order.
\end{proof}

%

\subsection{Characteristic interval}
\label{W_waves_collision}

We now define the notion of pairs of waves which \emph{have never interacted before a fixed time $\bar t$} and pairs of waves which \emph{have already interacted at a fixed time $\bar t$} and to any pair of waves $(w,w')$ we will associate an interval of waves $\mathcal I(\bar t,w,w')$.

\begin{definition}
\label{D_interagite_non_interagite}
Let $\bar t$ be a fixed time and let $w,w' \in \W_k(\bar t)$. We say that 
\begin{itemize}
\item \emph{$w,w'$ interact at time $\bar t$} if $\mathtt x(\bar t, w) = \mathtt x(\bar t, w')$;
\item \emph{$w,w'$ have already interacted at time $\bar t$} if there is $t \leq \bar t$ such that $w,w'$ interact at time $t$;
\item \emph{$w,w'$ have never interacted at time $\bar t$} if for any $t \leq \bar t$, they do not interact at time $t$. 
\item \emph{$w,w'$ will interact after time $\bar t$} if there is $t > \bar t$ such that $w,w'$ interact at time $t$. 
\item \emph{$w,w'$ are joined in the real solution at time $\bar t$} if there is a right neighborhood of $\bar t$, say $[\bar t, \bar t+ \zeta)$, such that they interact at any time $t \in [\bar t, \bar t+\zeta)$;
\item \emph{$w,w'$ are divided in the real solution at time $\bar t$} if they are not joined at time $\bar t$.
\end{itemize}
\end{definition}

\begin{lemma}
\label{L_interagite_stesso_segno}
Assume that the waves $w,w'$ have already interacted at time $\bar t$. Then they have the same sign.
\end{lemma}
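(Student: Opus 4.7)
The plan is to invoke directly Property \eqref{Point_2c_lagra} in the definition of Lagrangian representation, which says that at any space-time point $(t,\bar x)$ all waves of the same family meeting at $\bar x$ share a common sign. First, by Definition \ref{D_interagite_non_interagite}, the hypothesis that $w$ and $w'$ have already interacted at time $\bar t$ gives us some time $t^* \leq \bar t$ at which $\mathtt x(t^*, w) = \mathtt x(t^*, w') =: \bar x$; in particular both $w$ and $w'$ lie in $\W_k(t^*)$, so $\tcr(w), \tcr(w') \leq t^* < \tcanc(w), \tcanc(w')$, and therefore both belong to the set $\W_k(t^*, \bar x) = \mathtt x(t^*)^{-1}(\bar x) \cap \W_k$.

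Then I would apply Property \eqref{Point_2c_lagra}, according to which every wave in $\W_k(t^*, \bar x)$ carries the same sign. This yields $\mathcal S(w) = \mathcal S(w')$ at the moment of interaction. Since the map $\mathcal S : \W \to \{\pm 1\}$ is defined on waves (it does not depend on time), this equality is the conclusion of the lemma.

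I do not expect any real obstacle here: the lemma is essentially a rewording of axiom \eqref{Point_2c_lagra}, with the only mild care being to verify that $w, w'$ are both alive at $t^*$ (which follows from the very fact that $\mathtt x(t^*, \cdot)$ is defined on them) so that $\W_k(t^*, \bar x)$ is genuinely the object to which the axiom applies. The proof will therefore consist of two short sentences assembling these two observations.
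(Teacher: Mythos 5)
Your argument is exactly the one the paper intends: the hypothesis produces a time $t^*$ at which $\mathtt x(t^*,w)=\mathtt x(t^*,w')$, and Property \eqref{Point_2c_lagra} of the definition of Lagrangian representation then forces $\mathcal S(w)=\mathcal S(w')$. The paper's proof is precisely this one-line appeal to Property \eqref{Point_2c_lagra}, so your proposal is correct and coincides with it.
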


The proof is an easy consequence of Property \eqref{Point_2c_lagra} of the definition of Lagrangian representation, page \pageref{Point_2c_lagra}.

\begin{remark}
\label{R_divise_solo_in_cancellazioni}
It $\bar t \neq i\e$ for each $i \in \N$, then two waves are divided in the real solution if and only if they have different position. If $\bar t = i\e$, they are divided if there exists a time $t > \bar t$, arbitrarily close to $\bar t$, such that $w,w'$ have different positions at time  $t$. 
\end{remark}

\begin{definition}
Fix a time $\bar t$ and two $k$-waves $w, w' \in \W_k(\bar t)$, $w < w'$. Assume that $w,w'$ are divided in the real solution at time $\bar t$. Define the \emph{time of last splitting} $\tsp(\bar t,w,w')$ (if $w,w'$ have already interacted at time $\bar t$) and the \emph{time of next interaction} $\tint(\bar t, w, w')$ (if $w,w'$ will interact after time $\bar t$) by the formulas
\begin{subequations}
\begin{equation*}
\tsp(\bar t,w,w') := \max \big\{ t \leq \bar t \ | \ \mathtt x(t,w) = \mathtt x(t,w') \big\},
\end{equation*}
\begin{equation*}
\tint(\bar t, w,w') := \min \big\{ t > \bar t \ | \ \mathtt x(t,w) = \mathtt x(t,w') \big\}.
\end{equation*}
\end{subequations}
(In the case one of sets is empty we assume the corresponding time to be $\pm \infty$.)
Moreover set
\begin{equation*}
\xsp(\bar t,w,w') := \mathtt x(\tsp(\bar t,w,w'), w) = \mathtt x(\tsp(\bar t,w,w'),w')
\end{equation*}
and
\begin{equation*}
\xint(\bar t,w,w') := \mathtt x(\tint(\bar t,w,w'),w) = \mathtt x(\tint(\bar t,w,w'),w'),
\end{equation*}
whenever defined.
\end{definition}

Observe that $\tsp(\bar t,w,w'), \tint(\bar t,w,w') \in \N\e$ and $\xsp(\bar t,w,w'), \xint(\bar t,w,w') \in \Z\e$. 

\begin{definition}
\label{D_char_interval}
Let $w,w' \in \W(\bar t)$ be divided in the real solution at time $\bar t$ and assume they have the same sign. Define the \emph{characteristic interval of $w,w'$ at time $i\e$}, denoted by $\mathcal I(\bar t, w, w')$, as follows. \\
First we define $\mathcal I (\bar t,w,w')$ for times $\bar t = i\e$, $i \in \N$. 
\begin{enumerate}
\item If $w,w'$ have never interacted at time $i\e$, set
\begin{equation}
\label{E_I_mai_int}
\mathcal I(i\e,w,w') = 
\left\{ \begin{array}{ll}
\big\{z \in \W_k(i\e) \ \big| \ \mathcal S(z) = \mathcal S(w) \text{ and }   z < \E(i\e,w')\big\} \cup \E(i\e,w') & \text{if } \tcr(w) \leq \tcr(w'), \\ [1em]
\E(i\e,w) \cup \big\{ z \in \W_k(i\e) \ \big| \ \mathcal S(z) = \mathcal S(w) \text{ and }  z > \E(i\e,w) \big\} & \text{if } \tcr(w) > \tcr(w'); \\
\end{array} \right.
\end{equation}
\item If $w,w'$ have already interacted at time $i\e$, argue by recursion:
\begin{enumerate}
\item \label{Point_2a_def_inter} if $i\e = \tsp(i\e,w,w')$, set
\[
\mathcal I(i\e,w,w') := \W(i\e, \mathtt x(i\e,w)) = \W(i\e, \mathtt x(i\e,w'));
\]
\item \label{Point_2b_def_inter} if $i\e > \tsp(i\e,w,w')$, define $\mathcal I(i\e,w,w')$ as the smallest interval in $(\W_k^\pm(i\e), \leq)$ which contains $\mathcal I((i-1)\e, w,w') \cap \W_k(i\e)$, i.e.
\[
\begin{split}
\mathcal I(i\e,w,w') := 
\Big\{
z \in \W_k(i\e)\ & \Big|\  \mathcal S(z) = \mathcal S(w) = \mathcal S(w') \\ 
& \text{ and } \exists \, y,y' \in \mathcal I((i-1)\e,w,w') \cap \W_k(i\e)
\text{ such that } y \leq z \leq y'
\Big\}.
\end{split}
\]
\end{enumerate}
\end{enumerate}
Finally set
\[
\mathcal I(\bar t,w,w') := \mathcal I(i\e,w,w') \qquad \text{ for } \bar t \in [i\e,(i+1)\e).
\]
\end{definition}


\begin{lemma}
\label{L_iow_tecn}
Let $w,w' \in \W_k(i\e)$ be divided in the real solution at time $i\e$ and assume that they have the same sign. Then the following hold:
\begin{enumerate}
\item \label{Pt_I_is_iow} $\mathcal I(i\e,w,w')$ is an interval of waves at time $\bar t$;
\item \label{Pt_I_as_big_waves} $\mathcal I(i\e,w,w') = \bigcup_{z \in \mathcal I(i\e,w,w')} \E(i\e,z)$.
\end{enumerate}
Moreover, if $w,w'$ have already interacted at time $i\e$,
\begin{enumerate}[resume]
\item \label{Pt_I_waves_survived} if $i\e > \tsp(i\e,w,w')$, then 
\begin{equation*}
\mathcal I(i\e,w,w') \cap \W_k((i-1)\e) = 
\mathcal I((i-1)\e,w,w') \cap \W_k(i\e);
\end{equation*}
\item \label{Pt_z_inserita_dopo} if $z \in \mathcal I(i\e,w,w')$, then 
\[
z \in \mathcal I(t,w,w') \qquad \text{ for any } t \in \big[\max\big\{\tsp(i\e,w,w'), \tcr(z)\big\}, i\e\big];
\]
\item \label{Pt_I_contains_created} if $z,z' \in \W_k(i\e)$, $\tcr(z) = \tcr(z') = i\e$ and $\mathtt x(i\e,z) = \mathtt x(i\e,z')$, then
$z \in \mathcal I(i\e,w,w')$ if and only if $z' \in \mathcal I(i\e,w,w')$.
\end{enumerate}
\end{lemma}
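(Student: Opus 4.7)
The plan is to prove all five assertions simultaneously by strong induction on $i \geq 0$, since their verification at step $i$ will invoke the inductive statements at step $i-1$. The base case $i=0$ falls under case (1) of Definition \ref{D_char_interval} (every pair of waves present at time $0$ has never interacted), and all the statements are either vacuous or immediate from the explicit description of $\mathcal I(0,w,w')$ as an initial (or final) segment of $\W_k^\pm(0)$ together with a single wave package. For the inductive step, we analyze each of the three clauses (1), (2a), (2b) of Definition \ref{D_char_interval} against each of the five assertions.

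Parts (1) and (2) are structural. In case (1), the set $\mathcal I(i\e,w,w')$ is exhibited as "everything of the correct sign strictly below $\E(i\e,w')$, together with $\E(i\e,w')$"; since by Lemma \ref{L_discrete_partition} the wave packages form a totally ordered finite partition of $\W_k^\pm(i\e)$, such a set is automatically an interval of waves and a union of packages. In case (2a), $\W(i\e,\mathtt x(i\e,w))$ is both an i.o.w.\ (the $\Phi_k(i\e)$-preimage of a contiguous block by Property \eqref{Pt_isomorph_ordered_sets}) and a union of packages (waves at a common grid point either belong to the same package or live in different packages with the same position at time $i\e$, but in either event each whole package sits inside $\W(i\e,\mathtt x(i\e,w))$). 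In case (2b), $\mathcal I(i\e,w,w')$ is defined as the smallest $\leq$-interval containing a certain set, so (1) is immediate; for (2) one uses the inductive conclusion at step $i-1$ and the observation that $\E(i\e,z) \cap \mathcal I((i-1)\e,w,w')$ is either all of $\E(i\e,z) \cap \W_k((i-1)\e)$ or empty, because wave packages sharing a position at $i\e$ still respect the $\Phi_k(i-1)$-ordering. Part (5) similarly reduces to the observation that the newly created waves $C_k(i\e,m\e)$ of a fixed sign are mapped by $\Phi_k(i\e)$ to a contiguous block of the codomain that is entirely either inside or outside of $\mathcal I(i\e,w,w')$, by inspection of each clause of the definition.

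Parts (3) and (4) carry the real content. For (3), in case (2b) we have by construction
\[
\mathcal I((i-1)\e,w,w') \cap \W_k(i\e) \subseteq \mathcal I(i\e,w,w') \cap \W_k((i-1)\e),
\]
and for the reverse inclusion we must rule out that a wave $z \in \W_k(i\e) \cap \W_k((i-1)\e)$ not belonging to $\mathcal I((i-1)\e,w,w')$ could be sandwiched between two elements $y \leq z \leq y'$ of $\mathcal I((i-1)\e,w,w') \cap \W_k(i\e)$. Here Property \ref{Pt_iow_cons} of Theorem \ref{T_lagrangian} and Property \ref{Pt_affine} are essential: they guarantee that $\Phi_k(i\e) \circ \Phi_k((i-1)\e)^{-1}$ is affine with slope $1$ on each block of surviving waves, so the orderings at the two times agree on $\W_k(i\e) \cap \W_k((i-1)\e)$ and no new sandwiching can arise. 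Part (4) is then a backward-in-time traversal: given $z \in \mathcal I(i\e,w,w')$, part (3) immediately yields $z \in \mathcal I((i-1)\e,w,w')$ whenever $i\e > \tsp(i\e,w,w')$ and $z$ already exists at time $(i-1)\e$; iterating down to $\max\{\tsp(i\e,w,w'),\tcr(z)\}$ establishes the claim.

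The main obstacle is the step just described: verifying that the "smallest interval containing" construction in case (2b) introduces no spurious old waves. Once this monotonicity of $\mathcal I(\cdot,w,w')$ under restriction in time is secured via the affine-slope-one property of the transition maps $\Phi_k(i\e) \circ \Phi_k((i-1)\e)^{-1}$, together with the fact from Corollary \ref{C_iow_same_pos}\eqref{Pt_sono} that the intersection of an i.o.w.\ at time $i\e$ with $\W_k((i-1)\e)$ is itself an i.o.w.\ at both times, parts (3)–(4) propagate cleanly through the recursion and the remaining assertions are finite bookkeeping on the ordered partition into wave packages.
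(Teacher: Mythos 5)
Your proposal is correct and follows essentially the same route as the paper: Part \eqref{Pt_I_waves_survived} via the sandwiching argument combined with the fact that $\mathcal I((i-1)\e,w,w')$ is an interval of waves at time $(i-1)\e$, Part \eqref{Pt_I_contains_created} via the interval-of-waves property of $C_k(i\e,m\e)$ from Theorem \ref{T_lagrangian}\ref{Pt_iow_cons}, and Part \eqref{Pt_I_as_big_waves} by induction using the other parts. The only cosmetic difference is that you package everything as one simultaneous strong induction and attribute the order-consistency across times to Property \ref{Pt_affine}, whereas the paper gets it for free from $\leq$ being a single global relation on $\W$; neither changes the substance.
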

\begin{proof}
\textit{Point \eqref{Pt_I_is_iow}}. Immediate from the definition of $\mathcal I(i\e,w,w')$.

\smallskip    

\noindent \textit{Point \eqref{Pt_I_waves_survived}}.  The inclusion "$\supseteq$" is straightforward. To prove the inclusion "$\subseteq$", take $z \in \mathcal I(i\e,w,w') \cap \W_k((i-1)\e)$. By definition of $\mathcal I(i\e,w,w')$, there are $y,y' \in \mathcal I((i-1)\e,w,w') \cap \W_k(i\e)$ such that $y \leq z \leq y'$. Since $z \in \W_k((i-1)\e)$ and, by Point  \eqref{Pt_I_is_iow}, $\mathcal I((i-1)\e,w,w')$ is an interval of waves at time $(i-1)\e$, it must be $z \in \mathcal I((i-1)\e,w,w')$.  

\smallskip

\noindent \textit{Point \eqref{Pt_z_inserita_dopo}}. Easy consequence of Point \eqref{Pt_I_waves_survived}.

\smallskip

\noindent \textit{Point \eqref{Pt_I_contains_created}}. 
Set $m\e :=  \mathtt x(i\e,z) = \mathtt x(i\e,z')$. 
By symmetry, it is sufficient to prove one implication. Let $z \in \mathcal I(i\e,w,w')$. Since $z,z'$ are created at time $i\e$, we have $z,z' \in C_k(i\e,m\e)$ (the case where $\tsp(t,w.w') = i\e$ is straightforward). From $z \in \mathcal I(i\e,w,w')$, we deduce that there exist $y,y' \in \mathcal I((i-1)\e,w,w') \cap \W_k(i\e)$ such that $y \leq z \leq y'$. We want to prove that $y \leq z' \leq y'$. Assume by contradiction that $y' < z$. This implies $z \leq y' < z$. Since $z,z' \in C_k(i\e,m\e)$, $y' \in \W_k(i\e)$ and, by Point \ref{Pt_iow_cons} of Theorem \ref{T_lagrangian}, $C_k(i\e,m\e)$ is an interval of waves at time $i\e$, we get $y' \in C_k(i\e,m\e)$, a contradiction, since $y' \in \W_k((i-1)\e)$. In a similar way one proves that $y \leq z'$ and thus we get $z' \in \mathcal I(i\e,w,w')$.

\smallskip

\noindent \textit{Point \eqref{Pt_I_as_big_waves}}. 
If $w,w'$ have never interacted, the proof is an immediate consequence of the definition \eqref{E_I_mai_int} and Lemma \ref{L_discrete_partition}. Assume thus $w,w'$ have already interacted at time $i\e$ and argue by induction on $i$. The only non-trivial inclusion is "$\supseteq$". Let $z \in \mathcal I(i\e,w,w')$, $z' \in \E(i\e,z)$; we would like to prove that $z' \in \mathcal I(i\e,w,w')$.
\begin{enumerate}
\item If $i\e = \tsp(i\e,w,w')$, then, $\mathtt x(i\e, w) = \mathtt x(i\e,w') = \mathtt x(i\e,z) = \mathtt x(i\e,z')$ and thus $z' \in \mathcal I(i\e,w,w')$ by definition (Point \eqref{Point_2a_def_inter} of Definition \ref{D_char_interval}).
\item If $i\e > \tsp(i\e,w,w')$, assume that the statement is proved at time $(i-1)\e$, i.e.
\begin{equation}
\label{E_I_union_E}
\mathcal I((i-1)\e,w,w') = \bigcup_{y \in \mathcal I((i-1)\e,w,w')} \E((i-1)\e,y).
\end{equation}
Distinguish two cases:
\begin{enumerate}
\item if $\tcr(z) = \tcr(z') = i\e$, then $\mathtt x(i\e,z) = \mathtt x(i\e,z')$ and thus, by Point \eqref{Pt_I_contains_created}, $z' \in \mathcal I(i\e,w,w')$;
\item if $\tcr(z) = \tcr(z') < i\e$, since $z \in \mathcal I(i\e,w,w') \cap \W_k((i-1)\e)$, then, by Point \eqref{Pt_I_waves_survived}, $z \in \mathcal I((i-1)\e,w,w')$ and thus by \eqref{E_I_union_E}, $\E((i-1),z) \subseteq \mathcal I((i-1)\e,w,w')$. Since $z' \in \E((i-1)\e,z)$, we get 
\[
z' \in \mathcal I((i-1)\e,w,w') \cap \W_k(i\e) = \mathcal I(i\e,w,w') \cap \W_k((i-1)\e),
\]
again by Point \eqref{Pt_I_waves_survived}.
\end{enumerate}
\end{enumerate}
\end{proof}

\subsection{Partition of the characteristic interval}
\label{Ss_partition}

We now define a partition $\mathcal P(\bar t,w,w')$ of the interval of waves $\mathcal I(\bar t,w,w')$ for any time $\bar t$ such that $w,w'$ are divided in the real solution at time $\bar t$, with the properties that each element of $\mathcal P(\bar t, w,w')$ is an  interval of waves at time $i\e$, entropic w.r.t. the flux $\feff_k(\bar t)$ of Definition \ref{D_effect_flux}.

We first give the definition by recursion on times $i\e, i \in \N$. 
\begin{enumerate}
\item If $w,w'$ have never interacted at time $i\e$, the equivalence classes of the partition $\mathcal P(i\e, w,w')$ are singletons.
\item Assume now that $w,w'$ have already interacted and are divided in the real solution at time $i\e$:
\begin{enumerate}
\item if $i\e = \tsp(i\e, w,w')$, then $\mathcal P(i\e,w,w')$ is given by the equivalence relation
\begin{eqnarray*}
z \sim z'  \quad \Longleftrightarrow \quad
\left\{ \begin{array}{l}
z,z' \text{ are not divided
by the Riemann problem $\W_k(i\e, \mathtt x(i\e,w))$} \\ [.2em]
\text{with flux function $\feff_k(i\e, \cdot)$};
\end{array} \right.
\end{eqnarray*}
\item \label{Point_2b_part_I} if $i\e > \tsp(i\e, w,w')$, (i.e. $w,w'$ are divided in the real solution also at time $(i-1)\e$), then $\mathcal P(i\e, w,w')$ is given by the equivalence relation
\begin{eqnarray*}
z \sim z' \quad \Longleftrightarrow \quad 
\left\{ \begin{array}{l}
\left[ \begin{array}{l}
z,z' \text{ belong to the same equivalence class $\mathcal J \in\mathcal P((i-1)\e,w,w')$ and} \\ [.2em]
\text{the Riemann problem $\mathcal J \cap \W(i\e)$ with flux $\feff_k(i\e, \cdot)$ does not divide them} 
\end{array} \right] \\ [1em]
\text{or } \\ [.5em]
\Big[ \tcr(z) = \tcr(z') = i\e \text{ and } z = z' \Big].
\end{array} \right.
\end{eqnarray*}
\end{enumerate}
\end{enumerate}
Observe that the previous definition is well posed, provided that $\mathcal J \cap \W(i\e)$ is an  interval of waves at time $i\e$. This will be an easy consequence of Proposition \ref{P_divise_partizione_implica_divise_realta} and Corollary \ref{C_iow_same_pos}, Point \eqref{Pt_arrivano}.

Finally extend the definition of $\mathcal P(\bar t,w,w')$ also for times $\bar t \in (i\e, (i+1)\e)$, setting
\begin{equation*}
\mathcal P(\bar t,w,w') = \mathcal P(i\e,w,w') \qquad \text{ for any }\bar t \in [i\e, (i+1)\e).
\end{equation*}

\begin{remark}
As a consequence of Remark \ref{R_partition_iow} 
we immediately see that each element of the partition $\mathcal P(\bar t,w,w')$ is an entropic interval of waves (w.r.t. the flux function $\feff_k(\bar t, \cdot)$) and the relation induced on $\mathcal P(\bar t,w,w')$ by the order $\leq$ is still a total order on $\mathcal P(\bar t,w,w')$.
\end{remark}

Let us prove now some properties of the partition $\mathcal P(\bar t,w,w')$.

\begin{lemma}
\label{L_big_waves}
Let $\bar t$ be a fixed time and let $w,w',z,z' \in \W_k(\bar t)$, $z \in \E(\bar t,w)$, $z' \in \E(\bar t,w')$. 
Then 
\begin{equation*}
\mathcal I(\bar t,w,w') = \mathcal I(\bar t,z,z') \qquad \text{and} \qquad \mathcal P(\bar t,w,w') = \mathcal P(\bar t,z,z').
\end{equation*}
\end{lemma}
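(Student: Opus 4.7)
The plan is to reduce to grid times $\bar t=i\e$, since the definitions of $\mathcal I(\bar t,\cdot,\cdot)$, $\mathcal P(\bar t,\cdot,\cdot)$ and $\E(\bar t,\cdot)$ are all constant on each interval $[i\e,(i+1)\e)$, and then argue by induction on $i\in\N$.

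The driving observation will be that the hypotheses $z\in\E(i\e,w)$, $z'\in\E(i\e,w')$ give $\tcr(z)=\tcr(w)$, $\tcr(z')=\tcr(w')$, and identical position histories $\mathtt x(t,z)=\mathtt x(t,w)$, $\mathtt x(t,z')=\mathtt x(t,w')$ for every $t\in[\tcr(w),(i+1)\e)$, respectively $[\tcr(w'),(i+1)\e)$. As a consequence: $(z,z')$ interact at time $t\leq i\e$ iff $(w,w')$ do; their splitting status (never interacted / already interacted / divided in the real solution) coincides; $\tsp(i\e,z,z')=\tsp(i\e,w,w')$ whenever defined; $\E(i\e,z)=\E(i\e,w)$ and $\E(i\e,z')=\E(i\e,w')$; and $\mathcal S(z)=\mathcal S(w)$ (and analogously for $z',w'$) by Property $(2)(c)$ of the definition of Lagrangian representation, applied at the creation time of $w$.

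With these invariants in hand, the induction will run through the three branches of Definition \ref{D_char_interval} for $\mathcal I(i\e,w,w')$. In the never-interacted case, formula \eqref{E_I_mai_int} only involves the signs, the comparison $\tcr(w)\lessgtr\tcr(w')$, and one of the classes $\E(i\e,w)$, $\E(i\e,w')$, all unchanged under $w\mapsto z$, $w'\mapsto z'$. In the case $i\e=\tsp(i\e,w,w')$, the interval reduces to $\W_k(i\e,\mathtt x(i\e,w))=\W_k(i\e,\mathtt x(i\e,z))$. In the case $i\e>\tsp(i\e,w,w')$, one first observes that $\tcr(w)\leq\tsp(i\e,w,w')\leq(i-1)\e$, so that all four waves already exist at time $(i-1)\e$ and the position equality on $[\tcr(w),(i+1)\e)$ restricts to give $z\in\E((i-1)\e,w)$ and $z'\in\E((i-1)\e,w')$; the inductive hypothesis then yields $\mathcal I((i-1)\e,w,w')=\mathcal I((i-1)\e,z,z')$, and taking the smallest $\leq$-interval containing the intersection with $\W_k(i\e)$ preserves the equality at time $i\e$.

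The equality $\mathcal P(\bar t,w,w')=\mathcal P(\bar t,z,z')$ will follow from the same induction once the equality of the characteristic intervals is established: the equivalence relation defining each refinement step invokes only $\feff_k(i\e,\cdot)$, the Riemann problem associated to $\mathcal I$ (or to $\W_k(i\e,\mathtt x(i\e,w))$ at the splitting time) and, in the case $i\e>\tsp$, the partition $\mathcal P((i-1)\e,w,w')$ restricted to $\W_k(i\e)$ together with the trivial clause on waves created at time $i\e$---all invariants of the equivalence $\E$. The argument is essentially book-keeping: there is no single hard step, only the verification that every object appearing in the recursive definitions of $\mathcal I$ and $\mathcal P$ is manifestly invariant under the $\E$-relation, which is precisely what the definition of $\E$ was tailored to guarantee.
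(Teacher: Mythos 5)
Your proposal is correct and is exactly the argument the paper has in mind: the paper dismisses this lemma with ``an easy consequence of the previous definitions,'' and your induction on grid times, together with the observation that every ingredient of the recursive definitions of $\mathcal I$ and $\mathcal P$ (sign, creation times, positions, $\tsp$, the classes $\E$, and the previous-step interval and partition) is invariant under replacing $w,w'$ by $\E$-equivalent waves, is precisely the bookkeeping being left implicit. No gaps.
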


\begin{proof}
The proof is  an easy consequence of the previous definitions. 
%
\end{proof}

\begin{lemma}
\label{L_partition_at_different_times}
Let $t_1,t_2 \geq 0$, $w,w' \in \W_k(t_1) \cap \W_k(t_2)$. Assume that $w,w'$ are divided in the real solution at time $t_1$ and 
\begin{equation*}
0 \leq \tsp(t_1,w,w') \leq t_1 \leq t_2 < \tint(t_1,w,w'). 
\end{equation*}
Let $\mathcal J \in \mathcal P(t_2,w,w')$. Then either $\mathcal J \cap \W_k(t_1) = \emptyset$ or $\mathcal J \cap \W_k(t_1) = \mathcal J$ and $\mathcal J$ is an  interval of waves at time $t_1$.
\end{lemma}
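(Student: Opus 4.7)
The plan is to prove both assertions of the dichotomy by induction on $i_2 - i_1 \geq 0$, where $t_j = i_j\e$ for $j=1,2$; this reduction is lossless since both $\mathcal P(\cdot, w, w')$ and $\W_k(\cdot)$ are constant on half-open intervals $[i\e,(i+1)\e)$, and the base case $i_1 = i_2$ is trivial.

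Two preliminary observations drive the argument. First, the hypotheses $t_1 \leq t_2 < \tint(t_1,w,w')$ together with $w,w'$ divided at $t_1$ force $\tsp(t,w,w') = \tsp(t_1,w,w')$ for every $t \in [t_1,t_2]$, so the recursive rule in Point \eqref{Point_2b_part_I} of the definition of $\mathcal P$ governs every transition inside $[t_1,t_2]$. Second, and crucially, for every $s \in [\tsp(t_1,w,w'), t_2] \cap \e\N$, every class $\mathcal J^{(s)} \in \mathcal P(s,w,w')$ consists of waves sharing a common spatial position at time $s$: at $s=\tsp$ this holds by construction (all classes live at $\mathtt x(\tsp,w)$), and in the inductive step it holds because an equivalence class is entropic with respect to $\feff_k(s,\cdot)$, which agrees up to an affine function with the local reduced flux at its common position, so all its waves share the same $\hat\sigma_k(s,\cdot)$ and therefore move together under the sampling rule \eqref{E_ode}.

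For the inductive step fix $\mathcal J \in \mathcal P(t_2,w,w')$ and inspect the equivalence relation defining it. One of two alternatives must occur: either (A) $\mathcal J = \{z\}$ with $\tcr(z) = t_2$, which immediately yields $\mathcal J \cap \W_k(t_1) = \emptyset$; or (B) no wave of $\mathcal J$ is created at $t_2$, so by condition (i) of the equivalence relation all waves of $\mathcal J$ belong to a single class $\mathcal J'' \in \mathcal P(t_2 - \e, w, w')$, giving $\mathcal J \subseteq \mathcal J''$. Applying the inductive hypothesis to $\mathcal J''$ produces two sub-cases. If $\mathcal J'' \cap \W_k(t_1) = \emptyset$, then every $z'' \in \mathcal J''$ satisfies $\tcr(z'') > t_1 \geq \tsp$; but any wave created strictly after $\tsp$ forms a singleton at its creation time (condition (i) fails because it did not exist before, and condition (ii) forces equality), and this singleton is preserved by every subsequent refinement, forcing $\mathcal J'' = \{z''\}$; hence $\mathcal J = \{z''\}$ and $\mathcal J \cap \W_k(t_1) = \emptyset$. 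Otherwise $\mathcal J'' \subseteq \W_k(t_1)$ and $\mathcal J''$ is an i.o.w.\ at $t_1$, and $\mathcal J \subseteq \mathcal J''$ yields $\mathcal J \cap \W_k(t_1) = \mathcal J$.

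It remains to upgrade $\mathcal J \subseteq \W_k(t_1)$ to the statement that $\mathcal J$ is an i.o.w.\ at time $t_1$. For this I would apply Corollary \ref{C_iow_same_pos}, Point \eqref{Pt_sono}, iteratively from $s = t_2$ down to $s = t_1$: the common-position observation furnishes an index $m^{(s)}$ with $\mathcal J \subseteq \W_k(s,m^{(s)}\e)$ at each step, $\mathcal J$ is an i.o.w.\ at $s$ by the previous step, and $\mathcal J \cap \W_k(s-\e) = \mathcal J$ is nonempty since $\mathcal J \subseteq \mathcal J'' \subseteq \W_k(t_1) \subseteq \W_k(s-\e)$, so the corollary transfers the i.o.w.\ property down one time step. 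After finitely many iterations one reaches $t_1$. The main obstacle is precisely the common-position observation for equivalence classes of $\mathcal P$, which must be argued by carefully unpacking how the entropic refinement in the recursive definition of $\mathcal P$ interacts with the Glimm sampling rule and with the identification of $\feff_k$ with the local reduced flux up to an affine function.
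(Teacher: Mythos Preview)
Your argument is correct, but it takes a noticeably more elaborate route than the paper's, particularly for the interval-of-waves claim.

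For the dichotomy, both you and the paper run the same induction on $t_2$; the only difference is that when $\mathcal J'' \cap \W_k(t_1)=\emptyset$ you detour through the singleton structure of late-created waves, whereas the immediate inclusion $\mathcal J\subseteq\mathcal J''$ already yields $\mathcal J\cap\W_k(t_1)=\emptyset$ in one line. Your singleton remark is true, just unnecessary.

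The genuine divergence is in proving that $\mathcal J$ is an i.o.w.\ at $t_1$. The paper folds this into the \emph{same} induction: having $\mathcal K\in\mathcal P(t_2-\e,w,w')$ with $\mathcal J\subseteq\mathcal K$ and (by the inductive hypothesis) $\mathcal K$ an i.o.w.\ at $t_1$, it takes $z,z'\in\mathcal J$, $y\in\W_k(t_1)$ with $z\leq y\leq z'$, observes $y\in\mathcal K$, and then uses that the entropic refinement $\mathcal K\cap\W_k(t_2)$ with flux $\feff_k(t_2)$ does not separate $z,z'$ --- hence, by monotonicity of $\sigmaent$, not $y$ either --- to conclude $y\in\mathcal J$. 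This is a single squeeze argument with no auxiliary iteration. You instead descend from $t_2$ to $t_1$ one grid step at a time via Corollary~\ref{C_iow_same_pos}(\ref{Pt_sono}), which forces you to supply, at every intermediate $s$, a common spatial position for $\mathcal J$. To get that you effectively reprove Proposition~\ref{P_divise_partizione_implica_divise_realta} inline (your ``second preliminary observation'') and implicitly build the chain $\mathcal J\subseteq\mathcal J''\subseteq\mathcal J'''\subseteq\cdots$ of parent classes. This works, but it is a heavier apparatus than the paper's two-line squeeze, and you should note that the chain argument (that $\mathcal J$ sits inside a single class of $\mathcal P(s,w,w')$ for each intermediate $s$) is what actually licenses the common-position claim for $\mathcal J$ itself --- this step is somewhat compressed in your write-up. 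What your approach buys is that it makes the dependence on the common-position property completely explicit, which is honest since the well-posedness of $\mathcal P$ already rests on it.
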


\begin{proof}
It is sufficient to prove the lemma for $t_1, t_2 \in \N\e$. Fix $t_1 \in \N\e$, $w,w' \in \W_k(t_1)$ as in the statement of the lemma. We prove the lemma by induction on times $t_2 \in \N\e$, $t_2 = t_1, \dots, \tint(t_1,w,w') - \e$.

If $t_2 = t_1$ the proof is trivial. Hence assume that the lemma is proved for time $t_2 - \e$ and let us prove it for time $t_2 \in \N\e$, with $t_1 + \e \leq t_2 \leq \tint(t_1,w,w') - \e$. Let $\mathcal J \in \mathcal P(t_2,w,w')$ and assume that $\mathcal J \cap \W_k(t_1) \neq \emptyset$. Let $z \in \mathcal J \cap \W_k(t_1)$, $z' \in \mathcal J$. Since $z \sim z'$ at time $t_2$ and $z \in \W_k(t_1)$, with $t_1 < t_2$, by definition of equivalence classes, there must be $\mathcal K \in \mathcal P(t_2 -\e, w,w')$ such that $z,z' \in \mathcal K$ and $\mathcal K \supseteq \mathcal J$. By inductive assumption, $\mathcal K \cap \W_k(t_1) = \mathcal K$ and thus
\begin{equation*}
\mathcal J \cap \W_k(t_1) = \mathcal J \cap \mathcal K \cap \W_k(t_1) = \mathcal J \cap \mathcal K = \mathcal J,
\end{equation*}
thus proving the first part of the statement.

Let now $z,z' \in \mathcal J \subseteq \mathcal K$, $y \in \W_k(t_1)$, $z \leq y \leq z'$. By inductive assumption $y \in \mathcal K$; since $\mathcal K \cap \W_k(t_2)$ with flux function $\feff_k(t_2,\cdot)$ does not divide $z,z'$ and $z \leq y \leq z'$, we have that $\mathcal K \cap \W_k(t_2)$ does not divide $z,z',y$ and thus $y \in \mathcal J$, thus proving also the second part of the lemma.
\end{proof}

\begin{proposition}
\label{P_divise_partizione_implica_divise_realta}
Let $\bar t \geq 0$, let $w,w' \in \W_k(\bar t)$ be divided in the real solution at time $\bar t$ and have the same sign and let $\mathcal J \in \mathcal P(\bar t, w,w')$. Then $\mathtt x(\bar t, \cdot)$ is constant on $\mathcal J$ and the Riemann problem $\W(\bar t, \mathtt x(\bar t, \mathcal J))$ with flux function $\feff_k(\bar t, \cdot)$ does not divide waves in $\mathcal J$, i.e. also the map $z \mapsto \hat \sigma_k(\bar t)(z)$ introduced in Point \eqref{Point_4_lagr_repr} of the definition of Lagrangian representation is constant on $\mathcal J$.
%
%
\end{proposition}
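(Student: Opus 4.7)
The plan is to reduce first to grid times $\bar t = i\e$ and then to induct on $i$. Since $\mathcal P(\bar t,w,w') = \mathcal P(i\e,w,w')$ and $\hat \sigma_k(\bar t,\cdot) = \hat \sigma_k(i\e,\cdot)$ on the slab $[i\e,(i+1)\e)$, and since the position $\mathtt x(t,z)$ evolves on such a slab by the uniform rule \eqref{E_ode}, whose behaviour depends only on $\vartheta_{i+1}$ and on $\hat \sigma_k(i\e,z)$, both assertions at a generic $\bar t \in [i\e,(i+1)\e)$ will follow from the same two assertions at $\bar t = i\e$ as soon as the constancy of $\hat \sigma_k(i\e,\cdot)$ on the class $\mathcal J$ is established. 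I therefore focus on grid times and argue by induction on $i \in \N$.

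For the base cases I check the two situations in which $\mathcal P(i\e,w,w')$ is defined non-recursively. If $w,w'$ have never interacted at time $i\e$ then every class is a singleton and the claim is vacuous. If instead $i\e = \tsp(i\e,w,w')$, the class $\mathcal J$ is by definition an equivalence class of the relation ``not divided by the Riemann problem $\W_k(i\e,\mathtt x(i\e,w))$ with flux $\feff_k(i\e,\cdot)$''; hence every element of $\mathcal J$ lives at the common position $\mathtt x(i\e,w) = \mathtt x(i\e,w')$, and, by Point \eqref{Point_4_lagr_repr} of the definition of Lagrangian representation together with the observation that $\feff_k(i\e,\cdot)$ agrees with the reduced flux up to an affine function, receives the same value of $\hat \sigma_k(i\e,\cdot)$.

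For the inductive step, I assume the proposition at time $(i-1)\e$ and take $\mathcal J \in \mathcal P(i\e,w,w')$ with $i\e > \tsp(i\e,w,w')$. By the recursive definition either $\mathcal J = \{z\}$ is the singleton of a wave $z$ with $\tcr(z) = i\e$ (case in which the claim is trivial), or there exists $\mathcal K \in \mathcal P((i-1)\e,w,w')$ with $\mathcal J \subseteq \mathcal K \cap \W_k(i\e)$, and $\mathcal J$ consists of those elements of $\mathcal K \cap \W_k(i\e)$ not divided by the Riemann problem $\mathcal K \cap \W_k(i\e)$ with flux $\feff_k(i\e,\cdot)$. By the inductive hypothesis, $\mathtt x((i-1)\e,\cdot) \equiv m'\e$ and $\hat \sigma_k((i-1)\e,\cdot) \equiv \theta$ are constant on $\mathcal K$; applying \eqref{E_ode} at time $(i-1)\e$ to every $z \in \mathcal K \cap \W_k(i\e)$, the Lipschitz continuity of $\mathtt x(\cdot,z)$ gives the common value $\mathtt x(i\e,z) = m'\e$ if $\vartheta_i \geq \theta$ and $\mathtt x(i\e,z) = m'\e + \e$ otherwise, which proves the first claim on $\mathcal J$. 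Moreover, depending on the sign of $\vartheta_i - \theta$, $\mathcal K$ is contained entirely in $\W_k^{(0)}((i-1)\e,m'\e)$ or entirely in $\W_k^{(1)}((i-1)\e,m'\e)$, so Corollary \ref{C_iow_same_pos}\eqref{Pt_arrivano} guarantees that $\mathcal K \cap \W_k(i\e)$ is an interval of waves at time $i\e$ contained in $\W_k(i\e,m\e)$ for the corresponding $m\e$. The Riemann problem in the definition of $\mathcal J$ is then well-posed, and the condition ``not divided'' in that definition reads exactly as ``constant value of the entropic speed given by $\feff_k(i\e,\cdot)$'', which equals $\hat \sigma_k(i\e,\cdot)$ by Point \eqref{Point_4_lagr_repr} of the Lagrangian representation.

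The main obstacle I anticipate is precisely the verification that $\mathcal K \cap \W_k(i\e)$ is an interval of waves at time $i\e$ — a prerequisite for case \eqref{Point_2b_part_I} in the definition of $\mathcal P(i\e,w,w')$ to make sense at all. This is not automatic from the fact that $\mathcal K$ is an interval of waves at time $(i-1)\e$: one genuinely needs the common-speed conclusion of the inductive hypothesis in order to place $\mathcal K$ on a single side of the Glimm sampling ($\W_k^{(0)}$ versus $\W_k^{(1)}$) before invoking Corollary \ref{C_iow_same_pos}\eqref{Pt_arrivano}. Once this is settled, the rest is bookkeeping on the recursive definition of the partition.
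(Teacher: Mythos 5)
Your overall strategy coincides with the paper's: reduce to grid times, induct on $i$, and follow the recursive definition of $\mathcal P$ case by case. Your base cases are fine, and your treatment of the positional claim in the inductive step (constancy of $\hat\sigma_k((i-1)\e,\cdot)$ on $\mathcal K$ plus \eqref{E_ode} forces a common position at time $i\e$) is exactly the intended argument. Your remark that the well-posedness of the Riemann problem $\mathcal K\cap\W_k(i\e)$ in case \eqref{Point_2b_part_I} must itself be bootstrapped through this induction is correct and is acknowledged by the paper in the remark following the definition of the partition.

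There is, however, a genuine gap in your last step. The relation defining $\mathcal J$ requires that $z,z'$ be not divided by the Riemann problem $\mathcal K\cap\W_k(i\e)$ with flux $\feff_k(i\e,\cdot)$, i.e.\ that $\sigmaent(\feff_k(i\e),\mathcal K\cap\W_k(i\e),z)=\sigmaent(\feff_k(i\e),\mathcal K\cap\W_k(i\e),z')$, where the entropic speed is the derivative of the convex envelope of $\feff_k(i\e)$ taken over the \emph{subinterval} $\Phi_k(i\e)(\mathcal K\cap\W_k(i\e))$. The quantity $\hat\sigma_k(i\e,\cdot)$, on the other hand, is the entropic speed assigned by the \emph{full} Riemann problem $\W_k(i\e,m\e)$, i.e.\ the derivative of the convex envelope over the whole interval $\Phi_k(i\e)(\W_k(i\e,m\e))$. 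These are convex envelopes over nested but different intervals, and they do not coincide in general, so the condition in the definition of $\mathcal J$ does not ``read exactly as'' constancy of $\hat\sigma_k(i\e,\cdot)$. What is true — and what you still need to invoke — is the implication ``not divided by the sub-Riemann problem $\Rightarrow$ not divided by the full Riemann problem'': for $u_1<u_2$ in the subinterval the difference of derivatives of the convex envelope over the larger interval is nonnegative and bounded above by the corresponding difference over the smaller interval (Proposition \ref{differenza_vel}, applied on each side of the subinterval), so if the latter vanishes so does the former. Inserting this one appeal to Proposition \ref{differenza_vel} closes the argument and makes your proof agree with the paper's.
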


\begin{proof}
Clearly it is sufficient to prove the proposition for times $i\e, i \in \N$, since if the proposition is proved at time $i\e$ it holds for times $t \in [i\e, (i+1)\e)$. Hence let $\bar t = i\e$ for some $i \in \N$. If $w,w'$ have never interacted at time $i\e$, then the proof is trivial, because $\mathcal J$ is a singleton.

Assume thus that $w,w'$ have already interacted at time $i\e$. Let $\mathcal J \in \mathcal P(i\e,w,w')$ and let $z,z' \in \mathcal J$. We want to prove that 
\begin{equation}
\label{E_stessa_pos_stessa_vel}
\mathtt x(i\e,z) = \mathtt x(i\e,z') \qquad \text{ and } \qquad \hat \sigma_k(i\e,z) = \hat \sigma_k(i\e,z'). 
\end{equation}
We argue by induction on $i$.
\begin{enumerate}
\item If $i\e = \tsp(i\e,w,w')$, then \eqref{E_stessa_pos_stessa_vel} is an immediate consequence of the definition of $\mathcal P(i\e,w,w')$.
\item If $i\e > \tsp(i\e,w,w')$, two cases arise:
\begin{enumerate}
\item $\tcr(z) = \tcr(z') = i\e$ and $z=z'$: in this case the conclusion is trivial;
\item there is $\mathcal K \in \mathcal P((i-1)\e,w,w')$ such that $z,z' \in \mathcal K$ and the Riemann problem $\mathcal K \cap \W_k(i\e)$ with flux function $\feff_k(i\e,\cdot)$ does not divide $z,z'$ (Point \eqref{Point_2b_part_I} above); in this case by the inductive assumption $\mathtt x((i-\tfrac{1}{2})\e,\cdot)$ is constant on $\mathcal K$ and thus we can set
\begin{equation*}
m\e := \mathtt x(i\e,\mathcal K \cap \W(i\e)).
\end{equation*}
Moreover, since the Riemann problem $\mathcal K \cap \W_k(i\e) = \mathcal K \cap \W_k(i\e,m\e)$ does not divide $z,z'$, by Proposition \ref{differenza_vel} also the Riemann problem $\mathcal W_k(i\e,m\e)$ does not divide $z,z'$, i.e. $\hat \sigma_k(i\e,z) = \hat \sigma_k (i\e,z')$. \qedhere
\end{enumerate}
\end{enumerate}
\end{proof}

\begin{definition}
Let $A,B$ two sets, $A \subseteq B$. Let $\mathcal P$ be a partition of $B$. We say that $\mathcal P$  \emph{can be restricted to $A$} if for any $C \in \mathcal P$, either $C \subseteq A$ or $C \subseteq B \setminus A$. We also write
\[
\mathcal P|_{A} := \big\{ C \in \mathcal P \ \big| \ C \subseteq A \big\}.
\]
\end{definition}

Clearly $\mathcal P$ can be restricted to $A$ if and only if it can be restricted to $B \setminus A$.

\begin{proposition}
\label{P_partition_restr}
Let $\bar t \geq 0$ be a fixed time. Let $w,w',z,z' \in \W(\bar t)$, $z \leq w < w' \leq z'$, assume that $w,w'$ are divided in the real solution at time $\bar t$ and they have the same sign. Then $\mathcal P(\bar t, z,z')$ can be restricted both to $\mathcal I (\bar t,z,z') \cap \mathcal I(\bar t,w,w')$ and to $\mathcal I(\bar t,z,z') \setminus \mathcal I(\bar t,w,w')$.
%
\end{proposition}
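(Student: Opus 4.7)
The proof will proceed by induction on the integer $i$ with $\bar t \in [i\e, (i+1)\e)$. Both $\mathcal P(\bar t, z, z')$ and $\mathcal I(\bar t, w, w')$ are constant on such intervals, so we reduce to $\bar t = i\e$. The goal is to show that every class $\mathcal J \in \mathcal P(i\e, z, z')$ is either entirely contained in $\mathcal I(i\e, w, w')$ or entirely disjoint from it.

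For the base cases of the recursive definition of $\mathcal P$: if $z, z'$ have never interacted at $i\e$, then $\mathcal P(i\e, z, z')$ is a partition into singletons and the conclusion is immediate; if $i\e = \tsp(i\e, z, z')$, then $\mathtt x(i\e, z) = \mathtt x(i\e, z')$, and monotonicity of $\mathtt x(i\e, \cdot)$ combined with $z \leq w < w' \leq z'$ forces $\mathtt x(i\e, w) = \mathtt x(i\e, w') = \mathtt x(i\e, z)$; hence $\tsp(i\e, w, w') = i\e$ and $\mathcal I(i\e, w, w') = \W_k(i\e, \mathtt x(i\e, w)) = \mathcal I(i\e, z, z')$, making the restriction trivial.

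For the inductive step, $i\e > \tsp(i\e, z, z')$, fix $\mathcal J \in \mathcal P(i\e, z, z')$. If $\mathcal J = \{y\}$ with $\tcr(y) = i\e$ the claim is trivial. Otherwise $\mathcal J \subseteq \mathcal K \cap \W_k(i\e)$ for some $\mathcal K \in \mathcal P((i-1)\e, z, z')$, and I split according to $w, w'$ at time $i\e$. First, if $\tsp(i\e, w, w') < i\e$, then $w, w' \in \W_k((i-1)\e)$ are divided at $(i-1)\e$, and the inductive hypothesis yields the desired dichotomy for $\mathcal K$ with respect to $\mathcal I((i-1)\e, w, w')$; this transfers to $\mathcal I(i\e, w, w')$ via the identity $\mathcal I(i\e, w, w') \cap \W_k((i-1)\e) = \mathcal I((i-1)\e, w, w') \cap \W_k(i\e)$ provided by Lemma \ref{L_iow_tecn}. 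Second, if $\tsp(i\e, w, w') = i\e$, then $\mathcal I(i\e, w, w') = \W_k(i\e, \mathtt x(i\e, w))$, and since $\mathtt x(i\e, \cdot)$ is constant on $\mathcal J$ by Proposition \ref{P_divise_partizione_implica_divise_realta}, the dichotomy is immediate.

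The delicate case is when $w, w'$ have never interacted at $i\e$. Let $w^* \in \{w, w'\}$ have the later creation time; by \eqref{E_I_mai_int} the boundary of $\mathcal I(i\e, w, w')$ inside $\mathcal I(i\e, z, z')$ is the interval $\E(i\e, w^*)$. First, $\tcr(w^*) > \tsp(i\e, z, z')$: otherwise both $w, w'$ would exist at $\tsp$ and monotonicity of $\mathtt x(\tsp, \cdot)$ would place them at the common position $\xsp(i\e, z, z')$, contradicting that they never interacted. Now $\mathcal J$ is an interval in $(\W_k^\pm(i\e), \leq)$ contained in $\mathcal I(i\e, z, z')$, while $\mathcal I(i\e, w, w')$ is a lower (or upper) subset of $\mathcal I(i\e, z, z')$ bounded by $\E(i\e, w^*)$; if $\mathcal J$ straddled, by the interval property it would contain some $y \in \E(i\e, w^*) \cap \mathcal I(i\e, z, z')$ (the alternative, $\E(i\e, w^*) \cap \mathcal I(i\e, z, z') = \emptyset$, makes $\mathcal I(i\e, w, w') \cap \mathcal I(i\e, z, z')$ trivially empty or all of $\mathcal I(i\e, z, z')$). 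Any such $y$ has $\tcr(y) = \tcr(w^*) > \tsp(i\e, z, z')$, and by Lemma \ref{L_iow_tecn} lies in $\mathcal I(\tcr(y), z, z')$; at its creation time $y$ enters $\mathcal P(\tcr(y), z, z')$ as a singleton (the second alternative in Case 2(b) of the definition of $\mathcal P$), and since $\mathcal P$ classes can only subdivide under subsequent refinements, $\{y\}$ remains a singleton in $\mathcal P(i\e, z, z')$, forcing $\mathcal J = \{y\}$---contradicting straddling. The main obstacle is exactly this last case; its resolution rests on the singleton-persistence property for waves created strictly after $\tsp(i\e, z, z')$.
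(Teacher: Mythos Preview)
Your proof is correct and follows essentially the same approach as the paper's: the case analysis (singletons for never-interacted $z,z'$; Proposition~\ref{P_divise_partizione_implica_divise_realta} when $\tsp(i\e,w,w')=i\e$; the transfer via Lemma~\ref{L_iow_tecn}\eqref{Pt_I_waves_survived} when $\tsp(i\e,w,w')<i\e$; and singleton-persistence for waves in $\E(i\e,w^*)$ created after $\tsp(i\e,z,z')$ when $w,w'$ never interacted) matches the paper's argument. The only organizational difference is that the paper treats ``$w,w'$ never interacted'' as a separate top-level case before starting the induction, whereas you fold it into the inductive step; also note that your parenthetical alternative $\E(i\e,w^*)\cap\mathcal I(i\e,z,z')=\emptyset$ is in fact vacuous, since $z\leq w^*\leq z'$ and $z,z'\in\mathcal I(i\e,z,z')$ force $w^*\in\mathcal I(i\e,z,z')$, hence $\E(i\e,w^*)\subseteq\mathcal I(i\e,z,z')$ by Lemma~\ref{L_iow_tecn}\eqref{Pt_I_as_big_waves}.
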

\begin{proof}
As before, it is sufficient to prove the proposition for times $i\e, i \in \N$. 
If $z,z'$ have never interacted at time $i\e$, the proof is immediate being the equivalent classes singletons. Hence, assume that $z,z'$ have already interacted at time $i\e$. Let $\mathcal J \in \mathcal P(i\e,z,z')$ such that $\mathcal J \cap \mathcal I(i\e,w,w') \neq \emptyset$. We want to prove that $\mathcal J \subseteq \mathcal I(i\e,w,w')$.

Assume first that $w,w'$ have never interacted at time $i\e$. Suppose w.l.o.g. that $\tcr(w) \leq \tcr(w')$, the case $\tcr(w) > \tcr(w')$ being analogous. Since $w,w'$ have never interacted at time $i\e$, while $z,z'$ have already interacted, it must hold $\tcr(w') > \tsp(i\e,z,z')$. It holds
\begin{equation*}
\emptyset \neq \mathcal J \cap \mathcal I(i\e,w,w') = \bigg( \mathcal J \cap \Big\{ y \in \W_k(i\e) \ big| \ \mathcal S(y) = \mathcal S(w) \text{ and }  y < \E(i\e,w') \Big\} \bigg) \cup 
\bigg( \mathcal J \cap \E(i\e,w') \bigg).
\end{equation*}
Distinguish two cases:
\begin{enumerate}
\item if $\mathcal J \cap \E(i\e,w') \neq \emptyset$, since $\tcr(w') > \tsp(i\e,z,z')$, $\mathcal J$ is a singleton by Point \eqref{Point_2b_part_I}, page \pageref{Point_2b_part_I}, and thus $\mathcal J \subseteq \E(i\e,w') \subseteq \mathcal I(i\e,w,w')$;
\item otherwise, if $\mathcal J \cap \E(i\e,w') = \emptyset$ and $\mathcal J \cap   \big\{ y \in \W_k(i\e) \ \big| \ \mathcal S(y) = \mathcal S(w) \text{ and }  y < \E(i\e,w') \big\} \neq \emptyset$,  since $\mathcal J$ is an  interval of waves and $\E(i\e,w') \neq \emptyset$, it must hold $\mathcal J \subseteq \big\{ y \in \W_k(i\e) \ \big| \ y < \E(i\e,w') \big\} \subseteq \mathcal I(i\e,w,w')$.
\end{enumerate}

\noindent Assume now $w,w'$ have already interacted at time $i\e$. We argue by induction.
\begin{enumerate}
\item If $i\e = \tsp(i\e,w,w')$, then $\mathcal I(i\e,w,w') = \W_k(i\e, \mathtt x(i\e,w))$ and thus $\mathcal J \cap \W_k(i\e, \mathtt x(i\e,w)) \neq \emptyset$. By Proposition \ref{P_divise_partizione_implica_divise_realta}, it must hold $\mathcal J \subseteq \W_k(i\e, \mathtt x(i\e,w)) = \mathcal I(i\e,w,w')$.
\item If $i\e > \tsp(i\e,w,w')$, assume that the proposition is proved for time $(i-1)\e$. Distinguish two more cases:
\begin{enumerate}
\item at least one wave in $\mathcal J$ is created at time $i\e$; in this case, $\mathcal J$ is a singleton and thus $\mathcal J \subseteq \mathcal I(i\e,w,w')$;
\item all the waves in $\mathcal J$ already exist at time $(i-1)\e$; in this case by the definition of $\mathcal P(i\e,z,z')$, there is $\mathcal K \in \mathcal P((i-1)\e,z,z')$ such that $\mathcal J \subseteq \mathcal K$. Now observe that
\begin{equation*}
\begin{split}
\emptyset & \neq \mathcal J \cap \mathcal I(i\e,w,w') \\
& = \mathcal J \cap \mathcal I(i\e,w,w') \cap \W_k((i-1)\e) \\
\text{(by Lemma \ref{L_iow_tecn}, Point \eqref{Pt_I_waves_survived})} & = \mathcal J \cap  \mathcal I((i-1)\e,w,w') \cap \W_k(i\e) \\
& \subseteq \mathcal K \cap \mathcal I((i-1)\e,w,w') \cap \W_k(i\e) \\
& \subseteq \mathcal K \cap \mathcal I((i-1)\e,w,w').
\end{split}
\end{equation*}
Hence, by inductive assumption, $\mathcal K \subseteq \mathcal I((i-1)\e,w,w')$ and thus we can conclude, noticing that
\begin{equation*}
\mathcal J \subseteq \mathcal K \cap \W_k(i\e) \subseteq 
\mathcal I((i-1)\e,w,w') \cap \W_k(i\e) =
\mathcal I(i\e,w,w') \cap \W_k((i-1)\e) \subseteq \mathcal I(i\e,w,w'),
\end{equation*}
where we have again used Lemma \ref{L_iow_tecn}, Point \eqref{Pt_I_waves_survived}. \qedhere
\end{enumerate}
\end{enumerate}
%
%
%
%
%
%
%
%
%
\end{proof}

\begin{proposition}
\label{P_stessa_part}
Let $\bar t \geq 0$ be a fixed time and let $w,w', z,z' \in \W_k(\bar t)$, $z \leq w < w' \leq z'$, and assume that $w,w'$ are divided in the real solution at time $\bar t$.
\begin{enumerate}
\item \label{Point_1_stessa_part} If $w,w'$ have already interacted at time $\bar t$, $z,z' \in \mathcal I(\bar t,w,w')$ and $\tcr(z), \tcr(z') \leq \tsp(\bar t,w,w')$, then $\mathcal I(\bar t,z,z') = \mathcal I(\bar t,w,w')$ and $\mathcal P(\bar t,z,z') = \mathcal P(\bar t,w,w')$.
\item \label{Point_2_stessa_part} If $w,w'$ have already interacted at time $\bar t$, but at least one wave between $z,z'$ is created after $\tsp(\bar t,w,w')$, then $z,z'$ have never interacted at time $\bar t$.
\item \label{Point_3_stessa_part} If $w,w'$ have never interacted at time $\bar t$, 
\begin{itemize}
\item if $\tcr(w) \leq \tcr(w')$ and $z' \in \E(\bar t,w')$, then
$z,z'$ have never interacted at time $\bar t$; 
\item if $\tcr(w) > \tcr(w')$ and $z \in \E(\bar t,w)$, then 
$z,z'$ have never interacted at time $\bar t$.
\end{itemize}
\end{enumerate}
\end{proposition}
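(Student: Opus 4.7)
My plan is to exploit the monotonicity of $\mathtt x(t, \cdot)$ on $\W_k(t)$ (Property (2)(a) of the Lagrangian representation) applied to the ordering $z \leq w < w' \leq z'$, combined with the inductive-in-time construction of $\mathcal I$ and $\mathcal P$. The three points will be handled separately.

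For Point \eqref{Point_1_stessa_part}, I will set $t_s := \tsp(\bar t, w, w')$ and first invoke Lemma \ref{L_iow_tecn}, Point \eqref{Pt_z_inserita_dopo}, to obtain $z, z' \in \mathcal I(t_s, w, w') = \W_k(t_s, \mathtt x(t_s, w))$, so that $\mathtt x(t_s, z) = \mathtt x(t_s, z') = \mathtt x(t_s, w)$. Next I will show $\tsp(\bar t, z, z') = t_s$: if there were $t' \in (t_s, \bar t]$ with $\mathtt x(t', z) = \mathtt x(t', z')$, the ordering $z \leq w < w' \leq z'$ together with monotonicity of $\mathtt x(t', \cdot)$ would force $\mathtt x(t', w) = \mathtt x(t', w')$, contradicting the maximality of $t_s$. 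This yields $\mathcal I(t_s, z, z') = \W_k(t_s, \mathtt x(t_s, z)) = \mathcal I(t_s, w, w')$, and the partitions coincide at $t_s$ because both are induced by the same Riemann problem with the same flux $\feff_k(t_s, \cdot)$. Finally I will run a straightforward induction on grid times in $(t_s, \bar t]$ using Definition \ref{D_char_interval}, Point (2)(b), and its partition analogue, both of which depend only on the previous values of $\mathcal I$, $\mathcal P$ and on $\W_k(i\e)$, to transfer the equalities from $t_s$ up to $\bar t$.

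For Point \eqref{Point_2_stessa_part}, I will assume WLOG $\tcr(z) > t_s := \tsp(\bar t, w, w')$. By maximality of $t_s$ and monotonicity, $\mathtt x(t, w) < \mathtt x(t, w')$ for every $t \in (t_s, \bar t]$. For $t \in [\tcr(z), \bar t] \subseteq (t_s, \bar t]$ both $z$ and $z'$ exist and monotonicity combined with $z \leq w$, $w' \leq z'$ yields
\[
\mathtt x(t, z) \leq \mathtt x(t, w) < \mathtt x(t, w') \leq \mathtt x(t, z'),
\]
while for $t < \tcr(z)$ the wave $z$ does not exist, so $z, z'$ are never at the same position on $[0, \bar t]$. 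Point \eqref{Point_3_stessa_part} is analogous: in the case $\tcr(w) \leq \tcr(w')$ and $z' \in \E(\bar t, w')$ (the other case is symmetric), one has $\mathtt x(t, w) < \mathtt x(t, w')$ on $[\tcr(w'), \bar t]$ by the same monotonicity, while by definition of $\E$, $\tcr(z') = \tcr(w')$ and $\mathtt x(t, z') = \mathtt x(t, w')$ on this interval; then $z \leq w$ and monotonicity give $\mathtt x(t, z) \leq \mathtt x(t, w) < \mathtt x(t, z')$ whenever $z$ exists, and $z'$ does not exist for $t < \tcr(z')$.

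The main obstacle is the equality $\tsp(\bar t, z, z') = \tsp(\bar t, w, w')$ in Point \eqref{Point_1_stessa_part}: ruling out that $z, z'$ meet again in $(t_s, \bar t]$ is the crucial step, and it hinges on the combination of the ordering $z \leq w < w' \leq z'$ with the strict separation of $\mathtt x(t, w)$ and $\mathtt x(t, w')$ after $t_s$. Once this is established, the inductive transfer to $\bar t$ is mechanical.
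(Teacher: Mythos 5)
Your proof is correct and follows essentially the same route as the paper: recursion on grid times for Point (1), and monotonicity of $\mathtt x(t,\cdot)$ applied to the chain $z \leq w < w' \leq z'$ for Points (2) and (3). Your explicit verification that $\tsp(\bar t,z,z') = \tsp(\bar t,w,w')$ fills in a step the paper's recursion uses only implicitly (it is needed in order to invoke Definition \ref{D_char_interval}, Point (2)(b), for the pair $z,z'$ at each inductive step).
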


\begin{proof}
Let us prove the first point by recursion. As before it is sufficient to prove the proposition for times $i\e$, $i \in \N$. If $i\e = \tsp(i\e,w,w')$, then the proof is obvious. Let thus $i\e > \tsp(i\e,w,w')$ and assume that the proposition holds at time $(i-1)\e$. Then by Point \eqref{Point_2b_def_inter} of Definition \ref{D_char_interval}
\[
\begin{split}
\mathcal I(i\e,w,w') &= 
\Big\{
y \in \W_k(i\e)\ big|\  \mathcal S(y) = \mathcal S(w) = \mathcal S(w') \\ 
& \qquad \qquad \qquad \text{ and } \exists\, \tilde y,\tilde y' \in \mathcal I((i-1)\e,w,w') \cap \W_k(i\e)
\text{ such that } \tilde y \leq y \leq \tilde y'
\Big\} \\
(\text{recursion})\ &= \Big\{
y \in \W_k(i\e)\ big|\  \mathcal S(y) = \mathcal S(z) = \mathcal S(z') \\ 
& \qquad \qquad \qquad \text{ and } \exists\, \tilde y,\tilde y' \in \mathcal I((i-1)\e,z,z') \cap \W_k(i\e)
\text{ such that } \tilde y \leq y \leq \tilde y'
\Big\} \\
&= \mathcal I(i\e,z,z').
\end{split}
\] 
Now assume that $y,y' \in \mathcal I(i\e,w,w') = \mathcal I(i\e,z,z')$. Then it holds
\begin{equation*}
\begin{split}
y \sim y' \text{ w.r.t. $\mathcal P(i\e,w,w')$} \ &\Longleftrightarrow 
\left\{ \begin{array}{l}
\!\left[ \begin{array}{l}
y,y' \text{ belong to the same equivalence} \\
\text{class $\mathcal J \in\mathcal P((i-1)\e,w,w')$ at time $(i-1)\e$} \\
\text{and the Riemann problem $\mathcal J \cap \W(i\e)$} \\
\text{with flux  $\feff_k(i\e,\cdot)$ does not divide them}
\end{array} \right] \\
\text{or } \\
\left[ \ \tcr(y) = \tcr(y') \text{ and } y = y' \ \right]
\end{array} \right. \\
\Big( \text{by }& \mathcal P((i-1)\e,w,w') = \mathcal P((i-1)\e,z,z') \Big) \\
&\Longleftrightarrow
\left\{ \begin{array}{l}
\! \left[ \begin{array}{l}
y,y' \text{ belong to the same equivalence} \\
\text{class $\mathcal J \in\mathcal P((i-1)\e,z,z')$ at time $(i-1)\e$} \\
\text{and the Riemann problem $\mathcal J \cap \W(i\e)$} \\
\text{with flux function $\feff_k(i\e,\cdot)$ does not divide them}
\end{array} \right] \\
\text{or } \\
\left[ \ \tcr(y) = \tcr(y') \text{ and } y = y' \ \right]
\end{array} \right. \\
&\Longleftrightarrow \ \, y \sim y' \text{ w.r.t. the partition $\mathcal P(i\e, z,z')$}.
\end{split}
\end{equation*}
Hence $\mathcal P(i\e,w,w')  = \mathcal P(i\e,z,z')$.

Let us now prove the second point, assuming w.l.o.g. that $\tcr(z) > \tsp(\bar t,w,w')$. Assume by contradiction that $z,z'$ have already interacted at time $\bar t$. This means that there exists a time $\tilde t \leq \bar t$ such that $\mathtt x(\tilde t,z) = \mathtt x(\tilde t,z')$.  Clearly $\tilde t \geq \tcr(z) > \tsp(\bar t,w,w')$. Therefore, at time $\tilde t$, $w,w',z,z' \in \W_k(\tilde t)$ and thus, by the monotonicity of $\mathtt x$, it should happen $\mathtt  \mathtt x(\tilde t, z) = \mathtt x(\tilde t,w) = \mathtt x(\tilde t,w')  = \mathtt x(\tilde t,z')$, a contradiction, since $\bar t \geq \tilde t > \tsp(\bar t,w,w')$.

Let us now prove the third part of the proposition. We consider only the case $\tcr(w) \leq \tcr(w')$, the case $\tcr(w) > \tcr(w')$ being completely similar. By contradiction, assume that $z,z'$ have already interacted at time $\bar t$. This means that there is a time $\tilde t \leq \bar t$ such that $\mathtt x(\tilde t, z) = \mathtt x(\tilde t,z')$. Since $z' \in \E(\bar t,w')$, it must hold $\tilde t \geq \tcr(z') = \tcr(w') \geq \tcr(w)$. Hence $w,w',z,z' \in \W_k(\tilde t)$ and by the monotonicity of $\mathtt x$, we have $\mathtt x(z) = \mathtt x(w) = \mathtt x(w') = \mathtt x(z')$, a contradiction since $w,w'$ have never interacted at time $\bar t \geq \tilde t$.
\end{proof}

\section{\texorpdfstring{The functional $\mathfrak Q_k$}{The new interaction functional}}
\label{S_fQ}

Now we have all the tools we need to define the functional $\fQ_k$ for $k =1,\dots,n$ and to prove that it satisfies the inequality \eqref{E_bound_on_fQ}, thus obtaining the \emph{global} part of the proof of Theorem \ref{T_main}.

In Section \ref{Ss_def_fQ} we give the definition of $\fQ_k$, using the intervals $\mathcal I(\bar t,w,w')$ and their partitions $\mathcal P(\bar t,w,w')$. In Section \ref{Ss_statement_main_thm} we state the main theorem of this last part of the paper, i.e. inequality \eqref{E_bound_on_fQ} and we give a sketch of its proof, which will be written down in details in Sections \ref{Ss_one_created}, \ref{Ss_both_conserved}, \ref{Ss_interacting}.

\subsection{\texorpdfstring{Definition of the functional $\mathfrak Q_k$}{Definition of the functional}}
\label{Ss_def_fQ}

We define now for each family $k =1,\dots, n$, the functional $\fQ_k = \fQ_k(t)$, which bounds the change in speed of the waves in the approximate solution $u_\e$, or more precisely, which satisfies \eqref{E_bound_on_fQ}. We first define the weight $\mathfrak q_k(t,\tau,\tau')$ of a pair of waves
\begin{equation*}
w = \Phi_k(t)^{-1}(\tau), \ w' = \Phi_k(t)^{-1}(\tau')) \in \W_k^\pm, \qquad w < w',
\end{equation*}
at time $t$, and then we define the functional $\fQ_k(t)$ as the sum (integral) over all pairs $(\tau,\tau')$ of the weight $\mathfrak q_k(t,\tau,\tau')$. 

Let us start with the definition of the weights $\mathfrak q_k(t,\tau,\tau')$ at times $t = i\e$, $i \in \N$. Fix $i\e \in \N\e$ and let $\tau, \tau' \in (0, L_k^+(i\e)]$ (resp. $\tau, \tau' \in [L_k^-(i\e), 0)$); $\tau, \tau'$ correspond to the two waves $w:=\Phi_k(i\e)^{-1}(\tau)$, $w':=\Phi_k(i\e)^{-1}(\tau')$ in $\W_k(i\e)$ having the same sign. We define the \emph{weight associated to the pair $(\tau,\tau')$ at time $i\e$} as follows:
\begin{itemize}
\item if $w,w'$ are not divided in the real solution at time $i\e$ or if they are divided but they will never interact after time $i\e$, set 
\begin{equation*}
\mathfrak q_k(i\e,\tau,\tau') := 0;
\end{equation*}
\item if $w,w'$ are divided in the real solution at time $i\e$ and they will interact after time $i\e$, set
\begin{equation*}
\mathfrak q_k(i\e,\tau,\tau') := \frac{\pi_k(i\e,\tau,\tau')}{d_k(i\e,\tau,\tau')},
\end{equation*}
\end{itemize}
where $\pi_{k}(i\e,\tau, \tau')$, $d_{k}(i\e, \tau, \tau')$ are defined as follows. Since $w,w'$ will interact after time $i\e$, then $i\e < \tint(i\e,w,w')$.
Let 
\begin{equation}
\label{E_element_of_part}
\begin{split}
&\mathcal J, \mathcal J' \in \mathcal P(i\e,w,w'), \text{ such that } w \in \mathcal J, w' \in \mathcal J', \\
&\mathcal K, \mathcal K' \in \mathcal P(\tint(i\e,w,w') - \e, w,w'), \text{ such that } w \in \mathcal K, w' \in \mathcal K'.
\end{split}
\end{equation} 
be the element of the partition containing $w,w'$ at time $i\e$ and at time $\tint(i\e,w,w') - \e$ respectively, and set 
\begin{equation}
\label{E_gg'}
\mathcal G := \mathcal K \cup \big\{z \in \mathcal J \ \big| \ z > \mathcal K \big\}, \qquad
\mathcal G' := \mathcal K' \cup  \big\{z \in \mathcal J' \ \big| \ z < \mathcal K \big\},
\end{equation}
and
\begin{equation*}
\mathcal B := \mathcal K \cup \Big\{ z \in \W_k(i\e) \ \big| \ \mathcal S(z) = \mathcal S(w) = \mathcal S(w') \text{ and } \mathcal K < z < \mathcal K' \Big\} \cup \mathcal K'.
\end{equation*}
By Lemma \ref{L_partition_at_different_times}, $\mathcal G, \mathcal G'$ are i.o.w.s at time $i\e$. We can now define 
%
\begin{equation}
\label{E_def_pi}
\pi_k(i\e,\tau,\tau') := 
\Big[\sigmarh(\feff_k(i\e), \mathcal G) - \sigmarh(\feff_k(i\e), \mathcal G')\Big]^+
\end{equation}
and
\begin{equation}
\label{E_def_d}
d_k(i\e,\tau,\tau') := \mathcal L^1 \big(\Phi_k(i\e)(\mathcal B) \big).
\end{equation}
%

\noindent As usual, set
\begin{equation*}
\mathfrak q_k(t,\tau, \tau') = \mathfrak q_k(i\e,\tau, \tau') \qquad \text{for } t \in [i\e, (i+1)\e).
\end{equation*}

\begin{remark}
It is easy to see that $\mathfrak q_k(i\e,\tau,\tau')$ is uniformly bounded: in fact,
\begin{equation*}
0 \leq \mathfrak q_k(i\e, \tau, \tau') = \frac{\pi_k(i\e,\tau, \tau')}{d_k(i\e, \tau, \tau')} \leq \|D^2 \feff_k(\bar t)\|_\infty \leq \const.
\end{equation*}
\end{remark}

We can finally define the functional $\fQ_k(t)$ as 
\begin{equation*}
\fQ_k(t) := \fQ^+_k(t) + \fQ^-_k(t),
\end{equation*}
where 
\begin{equation*}
\fQ^+_k(t) := \int_0^{L_k^+(t)} d\tau \int_\tau^{L^+_k(t)} d \tau' \mathfrak q_k\big(t, \tau,\tau'\big)
\end{equation*}
and
\begin{equation*}
\fQ^-_k(t) := \int_{L_k^-(t)}^0 d\tau \int_\tau^0 d \tau' \mathfrak q_k\big(t, \tau',\tau\big).
\end{equation*}

\begin{remark}
Clearly $\fQ_k(t)$ is constant on the time intervals $[i\e, (i+1)\e)$ and it changes its value only at times $i\e$, $i \in \N$. 
\end{remark}

\subsection{Statement of the main theorem and sketch of the proof}
\label{Ss_statement_main_thm}

We now state the main theorem of this last part of the paper and give a sketch of its proof: with this theorem, the proof of the Theorem \ref{T_main} is completed.


\begin{theorem}
\label{T_variation_fQ}
For any $i \in \N$, $i \geq 1$, it holds
\begin{equation}
\label{E_main}
\begin{split}
\fQ_k(i\e) - \fQ_k((i-1)\e)
 \leq
- \sum_{m \in \Z} \Aquadr_k(i\e,m\e) 
 + \const \TV(u(0); \R) 
\sum_{m \in \Z} \mathtt A(i\e,m\e). 
\end{split}
\end{equation}
\end{theorem}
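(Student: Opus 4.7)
The plan is to decompose the difference $\fQ_k(i\e) - \fQ_k((i-1)\e)$ according to the past/present status of the pairs of waves appearing in the defining integrals, and to identify each piece with either the principal decay term $-\Aquadr_k$ or with an error term controlled by $\const \TV \cdot \mathtt A$. By Corollary \ref{C_iow_same_pos}, Point \eqref{Pt_misura_iow}, the map $\Phi_k((i-1)\e)$ and $\Phi_k(i\e)$ both measure the set of surviving waves in the same way, so we may perform all computations by integrating over the waves $w,w' \in \W_k(i\e) \cup \W_k((i-1)\e)$ (with their respective parametrizations) and split the domain into three disjoint regions:
\begin{itemize}
\item[(A)] pairs $(w,w')$ such that at least one wave was created at time $i\e$, i.e.\ belongs to $\bigcup_m C_k(i\e,m\e)$;
\item[(B)] pairs $(w,w')$ of waves which both existed at time $(i-1)\e$ and which have become joined in the real solution at time $i\e$, i.e.\ $\mathtt x(i\e,w) = \mathtt x(i\e,w')$;
\item[(C)] pairs $(w,w')$ of waves which both existed at time $(i-1)\e$ and which remain divided in the real solution at time $i\e$.
\end{itemize}

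First I would treat region (A). By Property \ref{Pt_iow_cons} of Theorem \ref{T_lagrangian}, created waves form an i.o.w.\ placed above all surviving waves in $\Phi_k(i\e)$; by Corollary \ref{C_aocr} and Theorem \ref{T_bianchini}, the total strength of created waves at the step $i\e$ is bounded by $\sum_m \mathtt A(i\e,m\e)$. Since $0 \leq \mathfrak q_k \leq \const$, each integral involving a created wave paired against an arbitrary surviving wave $w'$ is bounded by $\const \cdot \TV(u_\e) \cdot \sum_m \mathtt A(i\e,m\e)$. This is the content of what should become Proposition \ref{P_prop_crea_fin}.

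Next I would treat region (C), which is the most technical bookkeeping. Here both $w$ and $w'$ exist at the two times, and by Lemma \ref{L_partition_at_different_times} together with Proposition \ref{P_partition_restr} the partition elements containing $w,w'$ at time $(i-1)\e$ restrict correctly at time $i\e$. The only sources of variation of $\mathfrak q_k$ are therefore: (i) a refinement of $\mathcal J$ (and $\mathcal J'$) into subpartitions at time $i\e$, which can only decrease $|\mathcal G|, |\mathcal G'|$ and can only increase the jump $\sigmarh(\feff_k,\mathcal G)-\sigmarh(\feff_k,\mathcal G')$; and (ii) a change of the effective flux $\feff_k$ in the strips $\Phi_k(\cdot)(\mathcal G)$, $\Phi_k(\cdot)(\mathcal G')$, $\Phi_k(\cdot)(\mathcal B)$, bounded by the third inequality of Theorem \ref{T_general}. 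Using the elementary inequality $|a_1/b_1 - a_2/b_2| \leq |a_1-a_2|/b_1 + a_2 |b_1-b_2|/(b_1 b_2)$, combined with the uniform bound $\mathfrak q_k \leq \const$ and the fact that the total variation $\TV(u_\e)$ bounds $L_k^\pm$, all the contributions in (C) are absorbed into $\const \TV \cdot \sum_m \mathtt A(i\e,m\e)$.

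The core of the argument, and the main obstacle, is region (B). For any pair $(w,w')$ in this region, $\mathfrak q_k(i\e,\tau,\tau') = 0$ since they are no longer divided, while at time $(i-1)\e$ we have $\tint((i-1)\e,w,w') = i\e$, so the defining groups $\mathcal G, \mathcal G'$ at time $(i-1)\e$ coincide with the partition elements $\mathcal K, \mathcal K'$, and $\mathcal B$ coincides with the full collection of waves belonging, at time $i\e$, to the colliding Riemann problems located at the grid point $(i\e,m\e)$ corresponding to the meeting place of $w$ and $w'$. The contribution of (B) to $\fQ_k(i\e)-\fQ_k((i-1)\e)$ is therefore
\begin{equation*}
-\sum_{m \in \Z} \sum_{\substack{\mathcal K < \mathcal K' \\ \mathcal K,\mathcal K' \subseteq \W_k(i\e,m\e)}} \bigl[\sigmarh(\feff_k((i-1)\e),\mathcal K)-\sigmarh(\feff_k((i-1)\e),\mathcal K')\bigr]^+ \cdot \frac{|\mathcal K|\,|\mathcal K'|}{d_k((i-1)\e,\tau,\tau')},
\end{equation*}
where the internal sum is taken over ordered pairs of partition elements at time $(i-1)\e$ that are being merged. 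Using Proposition \ref{P_divise_partizione_implica_divise_realta} to identify the jumps of $\sigmarh(\feff_k)$ across these consecutive partition elements with the discrete derivative of $\feff_k((i-1)\e,\cdot)$, and then telescoping these jumps against $d_k$, one recognizes the right-hand side as exactly the area between $\tilde f_k' \cup \tilde f_k''$ and its convex envelope on the strip $\Phi_k((i-1)\e)(\mathcal B)$, that is, as $-\Aquadr_k(i\e,m\e)$. The delicate part is that the effective flux $\feff_k((i-1)\e,\cdot)$ used in the definition of $\mathfrak q_k$ differs from the reduced fluxes $\tilde f_k', \tilde f_k''$ that enter the definition of $\Aquadr_k$; the mismatch is bounded in $L^1$ by Theorem \ref{T_general} and Lemma \ref{L_transversal_interaction}, with a factor proportional to $\TV(u_\e) \cdot \sum_m \mathtt A(i\e,m\e)$, which is exactly the error term allowed in \eqref{E_main}. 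Summing (A), (B), (C) yields \eqref{E_main}.
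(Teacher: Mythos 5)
Your three-region decomposition coincides with the paper's: region (A) is the set $(K_m\times K_{m'})\setminus(T_m\times T_{m'})$ handled in Proposition \ref{P_prop_crea_fin}, region (C) is the comparison of $T_m\times T_{m'}$ with $S_m\times S_{m'}$ handled in Theorem \ref{T_decreasing}, and region (B) is the term $-\sum_m\iint_{J_m^L\times J_m^R}\mathfrak q_k((i-1)\e)$ handled in Theorem \ref{T_increasing}. Your treatment of (A) is essentially the paper's. In (C) there is a sign slip: the refinement of the partition at time $i\e$ can only \emph{decrease} the positive part $\big[\sigmarh(\feff_k,\mathcal G)-\sigmarh(\feff_k,\mathcal G')\big]^+$ (Step 4 of Lemma \ref{L_delta_dpi}, via Proposition \ref{P_diff_ri}), not increase it; this matters, because an increase caused by refinement could occur with no interaction at the pair's location and therefore could not be absorbed into $\const\,\TV(u(0))\sum_m\mathtt A(i\e,m\e)$. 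The argument survives only because the true monotonicity goes the favorable way.

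The genuine gap is region (B), which carries the decay term. You assert that summing $\big[\sigmarh(\cdot,\mathcal K)-\sigmarh(\cdot,\mathcal K')\big]^+\,|\mathcal K|\,|\mathcal K'|/d_k$ over merged partition elements ``telescopes'' to the area between $\tilde f=\tilde f_k'\cup\tilde f_k''$ and its convex envelope, i.e.\ to $\Aquadr_k(i\e,m\e)$. This is exactly the content of Proposition \ref{P_increasing} and it does not follow from a telescoping computation: the positive parts obstruct cancellation; the denominators $d_k((i-1)\e,\tau,\tau')$ are pair-dependent (one only has the upper bound $d_k\le\tau_R-\tau_L$ from Proposition \ref{P_divise_partizione_implica_divise_realta}); and the elements $\mathcal K,\mathcal K'$ entering $\tilde\pi(\tau,\tau')$ depend on the whole past history of the pair through $\mathcal I((i-1)\e,w,w')$ and $\mathcal P((i-1)\e,w,w')$, so different pairs in $J_m^L\times J_m^R$ see different decompositions of the same interval. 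What must be proved is the one-sided bound \eqref{E_delta_sigma}, comparing the difference of the two global Rankine--Hugoniot speeds over $(\tau_L,\tau_M]$ and $(\tau_M,\tau_R]$ with $\iint\tilde\pi$; the paper gets this from the case analysis \eqref{E_tilde_pi_su_phi_0} (which rests on Proposition \ref{P_stessa_part}), the restriction Lemma \ref{L_incastro_2}, and an induction over the finite tree generated by $\Pi_0,\dots,\Pi_3$ (Lemma \ref{L_estim_phi_zero_psi_alpha}). Note also that the conclusion is an inequality $\iint\tilde{\mathfrak q}_k\ge\Aquadr_k$, not the identity you claim. Your preliminary replacement of $\feff_k((i-1)\e)$ by $\tilde f$ at a cost $\const\,\TV(u(0))\,\Atrans(i\e,m\e)$ is, on the other hand, exactly Lemma \ref{L_tilde_q}. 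Without the tree argument (or a substitute for it), the term $-\sum_m\Aquadr_k(i\e,m\e)$ in \eqref{E_main} is unproved.
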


\begin{proof}[Sketch of the proof]
First of all observe that it is sufficient to prove inequality \eqref{E_main} separately for $\fQ^+_k$ and $\fQ^-_k$. In particular, we will prove only that
\begin{equation*}
\fQ_k^+(i\e) - \fQ_k^+((i-1)\e)
 \leq
- \sum_{\substack{m \in \Z \\ \mathcal S(\W_k(i\e,m\e)) = 1}} \Aquadr_k(i\e,m\e)
 + \const \TV(u(0); \R) 
\sum_{m \in \Z} \mathtt A(i\e,m\e),
\end{equation*}
since the proof of the same inequality for $\fQ^-_k$ is completely similar. 

For any $m \in \Z$, set (see Figure \ref{Fi_quadr_inter})
\begin{equation}
\label{E_trans_surv}
\begin{split}
J_m^L &:= \Phi_k ((i-1)\e )\Big( \W_k^{(1)} \big((i-1)\e, (m-1)\e\big) \cap \W_k^+((i-1)\e) \Big), \\
J_m^R &:= \Phi_k ((i-1)\e) \Big( \W_k^{(0)} \big((i-1)\e, m\e\big) \cap \W_k^+((i-1)\e) \Big), \\
J_m &:= J_m^L \cup J_m^R, \\
K_m &:= \Phi_k(i\e)\Big(\W_k(i\e,m\e) \cap \W_k^+(i\e)  \Big), \\
S_m &:= \Phi_k ((i-1)\e)\Big( \big( \Sigma_k^{(1)}(i\e,m\e) \cup \Sigma_k^{(0)}(i\e,m\e)\big) \cap \W_k^+((i-1)\e) \Big), \\
T_m &:= \Phi_k (i\e) \Big(\big(\Sigma_k^{(1)}(i\e,m\e) \cup \Sigma_k^{(0)}(i\e,m\e)\big) \cap \W_k^+(i\e) \Big).
\end{split}
\end{equation}
Observe that if $\tau, \tau' \in J_m^L$ (or $\tau, \tau' \in J_m^R)$, then $\Phi_k^{-1}((i-1)\e)(\tau)$, $\Phi_k^{-1}((i-1)\e)(\tau')$ are not divided in the real solution at time $(i-1)\e$ and thus $\mathfrak q_k((i-1)\e, \tau, \tau') = 0$. 

\begin{figure}
\resizebox{14cm}{8cm}{\input{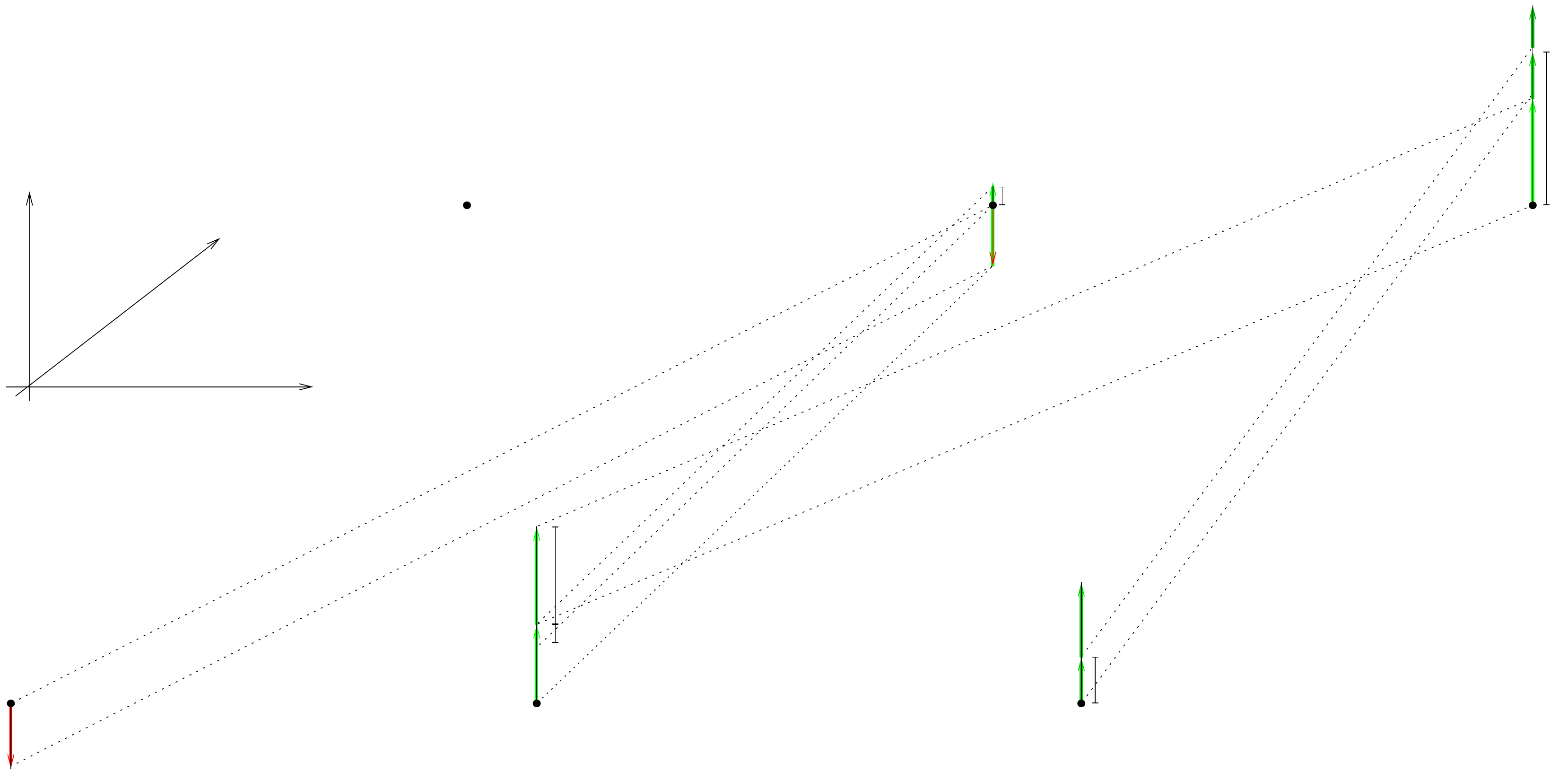_t}}
\caption{The quantities defined in \eqref{E_trans_surv}: $J_m^L$ (resp. $J_m^R$) are the waves at $((i-1)\e, (m-1)\e)$ (resp. $((i-1)\e, m\e))$ which travel towards $(i\e,m\e)$; $S_m$ is made by all the waves in $J_m := J_m^L \cup J_m^R$ which are not canceled at time $i\e$, while the map of $S_m$ at $(i\e,m\e)$ gives the transmitted waves $T_m$; finally $K_m$ is the set of all waves at $(i\e,m\e)$ and thus the created waves at $(i\e,m\e)$ are those in $K_m \setminus T_m$.}
%
%
%
\label{Fi_quadr_inter}
\end{figure}

\noindent Similarly, if $\tau, \tau' \in K_m$, $\tau < \tau'$, setting 
$w:=\Phi_k^{-1}(i\e)(\tau)$, $w':=\Phi_k^{-1}(i\e)(\tau')$ 
then either $w,w'$ are not divided at time $i\e$, and thus $\mathfrak q_k(i\e, \tau, \tau') = 0$, or they are divided at time $i\e$, i.e. they have different positions at times $t \in (i\e,(i+1)\e)$; in this second case, with the same notations as in \eqref{E_element_of_part}-\eqref{E_gg'}, we can use  the monotonicity properties of the derivative of the convex envelope and the fact that the element of the partition $\mathcal P(i\e,w,w')$ are entropic w.r.t. the function $\feff_k(i\e)$ to obtain
\begin{equation*}
0 \geq \sigmarh(\feff_k(i\e), \mathcal J) - \sigmarh(\feff_k(i\e), \mathcal J') \geq \sigmarh(\feff_k(i\e), \mathcal G) - \sigmarh(\feff_k(i\e), \mathcal G'),
\end{equation*}
and thus $\pi_k(i\e,\tau, \tau') = 0 = \mathfrak q_k(i\e,\tau,\tau')$.

We can thus perform the following computation:
\begin{equation*}
\begin{split}
\fQ_k^+(i\e) - \fQ_k^+((i-1)\e) &\leq \sum_{m < m'} \Bigg[ \iint_{T_m \times T_{m'}} \mathfrak q_k(i\e, \tau, \tau') d\tau d\tau' 
+ \iint_{(K_m \times K_{m'}) \setminus (T_m \times T_{m'})} \mathfrak q_k(i\e, \tau, \tau') d\tau d\tau' \\
&~ \quad \qquad \qquad \qquad - \iint_{S_m \times S_{m'}} \mathfrak q_k((i-1)\e, \tau, \tau') d\tau d\tau' \Bigg] \\
&~ \quad - \sum_{m \in \Z} \iint_{J_m^L \times J_m^R} \mathfrak q_k((i-1)\e, \tau, \tau') d\tau d\tau'.
\end{split}
\end{equation*}
We will now separately study:
\begin{enumerate}
\item in Section \ref{Ss_one_created}, the integral over \emph{pairs of waves such that at least one of them is created at time $i\e$}:
\begin{equation}
\label{E_one_created}
\sum_{m < m'} \iint_{(K_m \times K_{m'}) \setminus (T_m \times T_{m'})} \mathfrak q_k(i\e,\tau,\tau') d\tau d\tau' \leq \const \TV(u(0)) \sum_{m \in \Z}  
\mathtt A(i\e,m\e).
\end{equation}
\item in Section \ref{Ss_both_conserved}, the variation of the integral over \emph{pairs of waves which exist both at time $(i-1)\e$ and at time $i\e$}:
\begin{equation}
\label{E_both_conserved}
\sum_{m < m'} \Bigg[ \iint_{T_m \times T_{m'}} \mathfrak q_k(i\e) d\tau d\tau' - \iint_{S_m \times S_{m'}} \mathfrak q_k((i-1)\e) d\tau d\tau' \Bigg] \leq  \const \TV(u(0)) \sum_{r \in \Z} \mathtt A(i\e,r\e). 
\end{equation}
\item in Section \ref{Ss_interacting}, the (negative) term, related to \emph{pairs of waves which are divided at time $(i-1)\e$ and are interacting at time $i\e$}:
\begin{equation}
\label{E_interacting}
- \sum_{m \in \Z} \iint_{J_m^L \times J_m^R} \mathfrak q_k((i-1)\e) d\tau d\tau' \leq 
- \sum_{\substack{m \in \Z \\ \mathcal S(\W_k(i\e,m\e)) = 1}} \Aquadr_k(i\e,m\e)
+
\const \TV(u(0)) \sum_{m \in \Z} \mathtt A(i\e,m\e).
\end{equation}
\end{enumerate}

It is easy to see that inequality \eqref{E_main} in the statement of Theorem \ref{T_variation_fQ} follows from \eqref{E_one_created}, \eqref{E_both_conserved}, \eqref{E_interacting}.
\end{proof}

\subsection{Analysis of pairs with at least one created wave}
\label{Ss_one_created}

The integral over pair of waves such that at least one of them is created at time $i\e$ is estimated in the following proposition.

\begin{proposition}
\label{P_prop_crea_fin}
It holds
\begin{equation*}
\sum_{m<m'} \iint_{(K_m \times K_{m'}) \setminus (T_m \times T_{m'})} \mathfrak q_k(i\e,\tau,\tau') d\tau d\tau'  \leq \const \TV(u(0)) \sum_{m \in \Z}  
\mathtt A(i\e,m\e).
\end{equation*}
\end{proposition}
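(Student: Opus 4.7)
The plan is to reduce the statement to the two elementary facts recalled in Section \ref{Ss_def_fQ} and Section \ref{Ss_def_aoi}: the weight $\mathfrak q_k(i\e,\tau,\tau')$ is uniformly bounded (by $\|D^2\feff_k\|_\infty \leq \const$), and the ``mass'' of waves created at a grid point is controlled by the amount of creation. The only bookkeeping step is a set-theoretic decomposition of the integration domain.

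First I would write
\begin{equation*}
(K_m\times K_{m'})\setminus(T_m\times T_{m'})
\;\subseteq\;
\bigl((K_m\setminus T_m)\times K_{m'}\bigr)\,\cup\,\bigl(K_m\times(K_{m'}\setminus T_{m'})\bigr),
\end{equation*}
which follows from the tautology ``$(a,b)\notin T_m\times T_{m'}$ iff $a\notin T_m$ or $b\notin T_{m'}$''. Using $\mathfrak q_k(i\e,\tau,\tau')\leq \const$, both pieces of the resulting integral are bounded by a constant times the product of the $\mathcal L^1$-measures of the corresponding factors. Summing over $m<m'$ and enlarging both sums to all of $\Z$ gives
\begin{equation*}
\sum_{m<m'}\iint_{(K_m\times K_{m'})\setminus(T_m\times T_{m'})}\mathfrak q_k(i\e,\tau,\tau')\,d\tau\,d\tau'
\;\leq\; \const\,\Bigl(\sum_{m\in\Z}|K_m\setminus T_m|\Bigr)\Bigl(\sum_{m\in\Z}|K_m|\Bigr).
\end{equation*}

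Next I would identify the two factors. By the definitions \eqref{E_created_waves}, \eqref{E_codom_phi_k} and the construction of $\Phi_k^{i,m}$, one has $K_m\setminus T_m = \Phi_k(i\e)(C_k(i\e,m\e)\cap\W_k^+(i\e))$, whose $\mathcal L^1$-measure is exactly the mass of positive $k$-waves created at $(i\e,m\e)$; therefore $|K_m\setminus T_m|\leq \Acr_k(i\e,m\e)$. Similarly, $\sum_{m}|K_m|\leq L_k^+(i\e)\leq \const\,\TV(u_\e(i\e))\leq \const\,\TV(u(0);\R)$ by Proposition \ref{P_V_equiv_TV} and \eqref{E_bd_on_TV}. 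Plugging these two estimates into the display above yields
\begin{equation*}
\sum_{m<m'}\iint_{(K_m\times K_{m'})\setminus(T_m\times T_{m'})}\mathfrak q_k(i\e,\tau,\tau')\,d\tau\,d\tau'
\;\leq\; \const\,\TV(u(0);\R)\sum_{m\in\Z}\Acr_k(i\e,m\e).
\end{equation*}

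To finish, I would apply Corollary \ref{C_aocr}, which gives
\begin{equation*}
\Acr_k(i\e,m\e)\;\leq\; \Atrans(i\e,m\e)+\sum_{h=1}^n\Acubic_h(i\e,m\e)\;\leq\; \mathtt A(i\e,m\e),
\end{equation*}
and the conclusion follows. There is no real analytical obstacle: the proof rests entirely on the uniform bound on $\mathfrak q_k$ and on interpreting $K_m\setminus T_m$ as the newly created waves at the grid point. The only point requiring a moment of care is the set-algebra step, in particular that each created/transmitted wave in $K_m\setminus T_m$ is paired against the whole of $K_{m'}$ (and symmetrically), so both factors automatically separate.
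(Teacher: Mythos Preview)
Your proof is correct and follows essentially the same approach as the paper: the same set-theoretic decomposition $(K_m\times K_{m'})\setminus(T_m\times T_{m'})\subseteq\bigl((K_m\setminus T_m)\times K_{m'}\bigr)\cup\bigl(K_m\times(K_{m'}\setminus T_{m'})\bigr)$, the uniform bound on $\mathfrak q_k$, the identification $|K_m\setminus T_m|\leq\Acr_k(i\e,m\e)$, the bound $\sum_m|K_m|\leq L_k^+(i\e)$ via \eqref{E_bd_on_TV}, and finally Corollary~\ref{C_aocr}.
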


\begin{proof}
In fact,
\begin{equation*}
\begin{split}
\mathcal L^2\big((K_m \times K_{m'}) \setminus (T_m \times T_{m'}) \big) &\leq 
\mathcal L^2\big((K_m \setminus T_m) \times K_{m'}\big)
+ \mathcal L^2\big(K_m  \times (K_{m'} \setminus T_{m'}) \big) \\
&\leq \mathcal L^1(K_{m'}) \mathcal L^1(K_m \setminus T_m) + \mathcal L^1 (K_m) \mathcal L^1 (K_{m'} \setminus T_{m'}).
\end{split}
\end{equation*}
Hence
\begin{equation*}
\begin{split}
\sum_{m<m'} \iint_{(K_m \times K_{m'}) \setminus (T_m \times T_{m'})} \mathfrak q_k&(i\e,\tau,\tau') d\tau d\tau' \\
&\leq \const \sum_{m<m'} \mathcal L^2\big((K_m \times K_{m'}) \setminus (T_m \times T_{m'}) \big) \\
&\leq \const \sum_{m<m'} \mathcal L^1(K_{m'}) \mathcal L^1(K_m \setminus T_m) + \mathcal L^1 (K_m) \mathcal L^1 (K_{m'} \setminus T_{m'}) \\
&\leq \const \sum_{m' \in \Z} \mathcal L^1(K_{m'}) \sum_{m \in \Z} \mathcal L^1(K_m \setminus T_m) \\
&\leq \const L_k^+ (i\e) \sum_{m \in \Z} \Acr_k(i\e,m\e) \\
\text{(by \eqref{E_bd_on_TV} and Corollary \ref{C_aocr})} &\leq 
\const \TV(u(0)) \sum_{m \in \Z} \mathtt A(i\e,m\e). \qedhere
\end{split}
\end{equation*}
\end{proof}

\subsection{\texorpdfstring{Analysis of pairs of waves which exist both at time $(i-1)\e$ and at time $i\e$}{Analysis of pairs of waves which exist at both times}}
\label{Ss_both_conserved}

The aim of this section is to estimate the variation of the integral over pair of waves which exist both at time $(i-1)\e$ and at time $i\e$. More precisely we prove the following theorem.

\begin{theorem}
\label{T_decreasing}
It holds
\begin{equation}
\label{E_pair_in_bij}
\begin{split}
\sum_{m < m'} \Bigg[ \iint_{T_m \times T_{m'}} \mathfrak q_k(i\e,\tau,\tau') d\tau d\tau' - \iint_{S_m \times S_{m'}} \mathfrak q_k(&(i-1)\e,\tau,\tau') d\tau d\tau' \Bigg] \\ 
&\leq \const \TV(u(0)) \sum_{r \in \Z} \mathtt A(i\e,r\e). 
\end{split}
\end{equation}
\end{theorem}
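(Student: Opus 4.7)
The strategy is to set up a bijection between waves surviving from time $(i-1)\e$ to $i\e$, recast the left-hand side of \eqref{E_pair_in_bij} as an integral of the \emph{pointwise} weight variation over pairs at time $(i-1)\e$, and then bound that variation using the local estimates of Section \ref{S_local}.

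\textbf{Step 1 (Change of variables).} By Theorem \ref{T_lagrangian}\ref{Pt_affine} and Corollary \ref{C_iow_same_pos}\eqref{Pt_misura_iow}, for every $m \in \Z$ the map
\begin{equation*}
\Psi^m_i \,:=\, \Phi_k(i\e) \circ \Phi_k((i-1)\e)^{-1} \colon \Phi_k((i-1)\e)(\Sigma_k^{(1)}(i\e,m\e)\cup \Sigma_k^{(0)}(i\e,m\e)) \to T_m
\end{equation*}
is affine with slope $1$, so that $\Psi^m_i(S_m) = T_m$ and the change of variables $\tau \mapsto \Psi^m_i(\tau)$, $\tau' \mapsto \Psi^{m'}_i(\tau')$ rewrites the left-hand side of \eqref{E_pair_in_bij} as
\begin{equation*}
\sum_{m < m'} \iint_{S_m \times S_{m'}} \bigl[\mathfrak q_k(i\e,\Psi^m_i(\tau),\Psi^{m'}_i(\tau')) - \mathfrak q_k((i-1)\e,\tau,\tau')\bigr]\,d\tau\,d\tau'.
\end{equation*}
Under this identification, the two values of $\tau$ in the integrand correspond to the same pair of physical waves $w,w'\in\W_k^+((i-1)\e)\cap\W_k^+(i\e)$.

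\textbf{Step 2 (Pointwise variation of the weight).} Fix such a surviving pair $(w,w')$. One only needs to bound the positive part of $\mathfrak q_k(i\e,\cdot,\cdot) - \mathfrak q_k((i-1)\e,\cdot,\cdot)$. Writing $\mathfrak q_k = \pi_k/d_k$ and using the elementary inequality $\pi/d - \pi'/d' \leq (\pi - \pi')/d + \pi'|d'-d|/(d\,d')$, the analysis splits into two contributions. Since $d_k$ is the $\Phi_k$-measure of an interval of waves $\mathcal B \subseteq \W_k(i\e)$ (see \eqref{E_def_d}), any decrease in $d_k$ from $(i-1)\e$ to $i\e$ comes solely from cancellations at the unique grid point $r\e$ with $r$ such that the restarting happens, and is therefore bounded by $\Acanc_k(i\e,r\e)$. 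For the numerator $\pi_k$ (see \eqref{E_def_pi}) we use:
\begin{enumerate}
\item the refinement structure of the partition (Lemma \ref{L_partition_at_different_times}) together with the monotonicity of $\tau\mapsto \tfrac{d}{d\tau}\conv_I \feff_k(t)$ (Remark \ref{W_speed_increasing_wrt_waves}), which gives that replacing $\mathcal P((i-1)\e,w,w')$ by the refinement $\mathcal P(i\e,w,w')$ can only \emph{decrease} the difference of Rankine–Hugoniot speeds in \eqref{E_def_pi} (up to the flux variation discussed next);
\item the $L^1$ estimate on the change of the second derivatives of the reduced/effective flux provided by Theorem \ref{T_general}, which controls the variation $\feff_k(i\e,\cdot) - \feff_k((i-1)\e,\cdot)$ on the interval $\Phi_k(i\e)(\W_k(i\e,r\e)\cap\W_k^+(i\e))$ by $\const\,\mathtt A(i\e,r\e)$.
\end{enumerate}
Combining these two ingredients with the uniform bound $\pi_k \leq \const\, d_k$ (Definition \ref{D_effect_flux} and $\|D^2\feff_k\|_\infty\leq\const$) yields pointwise
\begin{equation*}
\bigl[\mathfrak q_k(i\e,\Psi^m_i(\tau),\Psi^{m'}_i(\tau')) - \mathfrak q_k((i-1)\e,\tau,\tau')\bigr]^+ \leq \const \cdot \mathbf 1_{\mathcal H(r\e)}(\tau,\tau') \cdot \bigl(\text{local contribution at } r\e\bigr),
\end{equation*}
where $\mathcal H(r\e)$ is the set of pairs whose weight is actually affected by the interaction at $r\e$.

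\textbf{Step 3 (Summation) and main obstacle.} Integrating in $(\tau,\tau')$ and using $\mathcal L^1(S_m),\mathcal L^1(T_m)\leq\const\,\TV(u_\e(t))\leq\const\,\TV(u(0))$ from \eqref{E_bd_on_TV}, one of the two wave coordinates contributes a factor $\TV(u(0))$ and the other is confined to the region affected by the interaction, producing exactly a bound of the form $\const\,\TV(u(0))\,\mathtt A(i\e,r\e)$ at each grid point $r\e$; summation over $r$ then yields \eqref{E_pair_in_bij}. The main obstacle lies in Step 2: for pairs whose characteristic interval $\mathcal I(\cdot,w,w')$ actually \emph{straddles} the node $(i\e,r\e)$, the sets $\mathcal G,\mathcal G',\mathcal B$ defined in \eqref{E_gg'}–\eqref{E_def_d} can change nontrivially across the time step, because a partition element $\mathcal J$ of $\mathcal P((i-1)\e,w,w')$ may be split by the new Riemann problem into several elements of $\mathcal P(i\e,w,w')$, some of which end up inside $\mathcal K$ or $\mathcal K'$ and some outside. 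Showing that the arising combinatorial rearrangement of the Rankine–Hugoniot speeds has positive part controlled by Theorem \ref{T_general} and by the cancellation amount, rather than by the (much larger) total strengths involved, is the delicate bookkeeping that drives the entire argument.
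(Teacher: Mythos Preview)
Your Step 1 is correct and matches the paper: the affine bijection $\Theta = \Phi_k(i\e)\circ\Phi_k((i-1)\e)^{-1}$ reduces the problem to a pointwise estimate of $\Delta\mathfrak q_k(\tau,\tau') := \mathfrak q_k(i\e,\Theta(\tau),\Theta(\tau')) - \mathfrak q_k((i-1)\e,\tau,\tau')$ on $S_m\times S_{m'}$.

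The gap is in Steps 2 and 3. Your claim that the change in $d_k$ and $\pi_k$ ``comes solely from \dots\ the unique grid point $r\e$'' is incorrect. For a pair $(w,w')$ with $w\in S_m$, $w'\in S_{m'}$, the set $\mathcal B$ in \eqref{E_def_d} and the sets $\mathcal G,\mathcal G'$ in \eqref{E_gg'} span \emph{every} grid point between $m$ and $m'$, so cancellations, creations, and flux changes at \emph{each} $r=m,\dots,m'$ contribute. The correct pointwise bound (the paper's Lemma~\ref{L_delta_dpi}) is
\[
|\Delta d_k(\tau,\tau')| + \Delta\pi_k(\tau,\tau') \;\leq\; \const \sum_{r=m}^{m'} \mathtt A(i\e,r\e),
\]
and consequently
\[
\Delta\mathfrak q_k(\tau,\tau') \;\leq\; \frac{\const}{d_k((i-1)\e,\tau,\tau')}\sum_{r=m}^{m'}\mathtt A(i\e,r\e) \;\leq\; \frac{\const}{\tau'-\tau}\sum_{r=m}^{m'}\mathtt A(i\e,r\e).
\]
The factor $\dfrac{1}{\tau'-\tau}$ is essential and is missing from your final pointwise bound. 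Without it, your Step~3 heuristic (``one coordinate contributes $\TV$, the other is confined'') cannot work: if the pointwise bound were merely $\const\sum_{r=m}^{m'}\mathtt A(i\e,r\e)$, then after Fubini you would get $\sum_r\mathtt A(i\e,r\e)\sum_{m\leq r<m'}\mathcal L^1(S_m)\mathcal L^1(S_{m'})$, which is of order $\TV(u(0))^2\sum_r\mathtt A(i\e,r\e)$---one power of $\TV$ too many.

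What actually closes the argument is the logarithmic gain coming from the weight $\dfrac{1}{\tau'-\tau}$. After Fubini the relevant quantity is
\[
\sum_{m\leq r}\sum_{m'>r}\iint_{S_m\times S_{m'}} \frac{d\tau\,d\tau'}{\tau'-\tau} \;\leq\; \int_0^{\sup S_r}\!\!\int_{\inf S_{r+1}}^{L_k^+}\frac{d\tau'\,d\tau}{\tau'-\tau} \;=\; \int_{\inf S_{r+1}}^{L_k^+}\log\frac{x}{x-\sup S_r}\,dx \;\leq\; L_k^+\log 2,
\]
which is $\const\,\TV(u(0))$. This is the ``elementary integration by parts'' the paper invokes; it replaces your confinement argument, which does not hold here because both waves of the pair range freely.
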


We first need a preliminary result. As a starting point, observe that for each $m \in \Z$, by Points \ref{Pt_iow_cons} and \ref{Pt_affine} of Theorem \ref{T_lagrangian}, the map
\begin{equation*}
\Theta: S_m \to T_m, \qquad \Theta := \Phi_k(i\e) \circ \Phi_k((i-1)\e)^{-1},
\end{equation*}
is an affine function with slope $1$. We now prove the following lemma, which estimates the change of the numerator $\pi_{k}$ and the denominator $d_{k}$ in the definition of $\mathfrak q_{k}$, formulas \eqref{E_def_pi} and \eqref{E_def_d}.

\begin{lemma}
\label{L_delta_dpi}
For any $m < m'$ and for  any $\tau \in S_m$, $\tau' \in S_{m'}$, setting 
\begin{subequations}
\label{E_delta_d_delta_pi}
\begin{equation}
\label{E_delta_d}
\Delta d_{k}(\tau,\tau') := d_{k}\big(i\e, \Theta(\tau), \Theta(\tau')\big) - d_{k}\big((i-1)\e, \tau, \tau'\big),
\end{equation}
\begin{equation}
\label{E_delta_pi}
\Delta \pi_{k}(\tau, \tau') := \pi_{k}\big(i\e, \Theta(\tau), \Theta(\tau')\big) - \pi_{k}\big((i-1)\e, \tau, \tau'\big).
\end{equation}
\end{subequations}
the following inequalities hold:
\begin{equation*}
\begin{split}
\big|\Delta d_{k}(\tau, \tau')\big| \leq \const \sum_{r =  m}^{ m'} \mathtt A(i\e,r\e), \qquad
\Delta \pi_{k}(\tau, \tau') \leq \const \sum_{r = m}^{m'} \mathtt A(i\e,r\e).
\end{split}
\end{equation*}
\end{lemma}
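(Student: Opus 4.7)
The two bounds are proved separately, but both rest on the same mechanism. By Corollary \ref{C_iow_same_pos} point \ref{Pt_misura_iow}, the map $\Theta=\Phi_k(i\e)\circ\Phi_k((i-1)\e)^{-1}$ preserves $\mathcal L^1$-length on $\Phi$-images of surviving waves, so any discrepancy between the quantities at times $(i-1)\e$ and $i\e$ must come from waves that are either created at $(i\e,r\e)$ or present at $((i-1)\e,\cdot)$ but cancelled at $(i\e,r\e)$. Since the wave $w=\Phi_k((i-1)\e)^{-1}(\tau)$ sits in column $m-1$ or $m$ and $w'=\Phi_k((i-1)\e)^{-1}(\tau')$ sits in column $m'-1$ or $m'$ at time $(i-1)\e$, and the interval $\mathcal B$ stays between the partition elements containing $w,w'$, the relevant creation/cancellation is localized to columns $m\e,\dots,m'\e$.

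For $\Delta d_k$, write $\mathcal B$, $\mathcal B'$ for the sets used to define $d_k$ at times $(i-1)\e$ and $i\e$ respectively, and view them as intervals of waves via Proposition \ref{P_partition_restr} and Lemma \ref{L_partition_at_different_times}. The symmetric difference $\Theta(\mathcal B)\triangle\mathcal B'$ consists of waves that are cancelled at $(i\e,r\e)$ (present in $\mathcal B$ but not in $\mathcal B'$ after $\Theta$) or created at $(i\e,r\e)$ (present in $\mathcal B'$ but not in $\Theta(\mathcal B)$), for $m\leq r\leq m'$. Since $\Theta$ is an affine isometry on its domain, one obtains
\[
|\Delta d_k(\tau,\tau')|\leq\sum_{r=m}^{m'}\bigl(\Acr_k(i\e,r\e)+\Acanc_k(i\e,r\e)\bigr)\leq\const\sum_{r=m}^{m'}\mathtt A(i\e,r\e),
\]
using Corollary \ref{C_aocr} in the last step.

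For $\Delta\pi_k$, the natural representation is
\[
\sigmarh(\feff_k(t),\mathcal G)=\frac{1}{\mathcal L^1(\Phi_k(t)(\mathcal G))}\int_{\Phi_k(t)(\mathcal G)}\frac{d\feff_k(t)}{d\tau}(\xi)\,d\xi,
\]
so $\pi_k$ is an average-value difference of $d\feff_k/d\tau$ over two neighbourhoods of $w$ and $w'$. The change in $d\feff_k/d\tau$ as one passes from time $(i-1)\e$ to time $i\e$ is, on each column $r\e$ with $m\leq r\leq m'$, controlled in $L^\infty$ by the $L^1$-change of $d^2\feff_k/d\tau^2 = d\tilde\lambda_k/d\tau$, which by Theorem \ref{T_general} is at most $\const\,\mathtt A(i\e,r\e)$. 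The contribution to $\Delta\pi_k$ coming from the change of the intervals $\mathcal G,\mathcal G'$ themselves (creation/cancellation) is absorbed into the same bound by the Lipschitz property of $\feff_k$ together with the estimate on $\Delta d_k$ obtained above. Summing over $r=m,\dots,m'$ yields $\Delta\pi_k(\tau,\tau')\leq\const\sum_{r=m}^{m'}\mathtt A(i\e,r\e)$.

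The main obstacle will be the bookkeeping of the intervals $\mathcal G,\mathcal G',\mathcal B$ themselves: they depend on both the current partition $\mathcal P(i\e,w,w')$ and the "future-interaction" partition $\mathcal P(\tint-\e,w,w')$, and one must verify that after application of $\Theta$ the corresponding objects at the two times differ only on waves created/cancelled at columns $m\e,\dots,m'\e$. The right tools for this are Lemma \ref{L_partition_at_different_times} (to propagate partition elements across the two consecutive times inside the same interaction epoch), Lemma \ref{L_big_waves} (to replace $w,w'$ by any representatives of their wave packages), and Proposition \ref{P_partition_restr} (to restrict the partitions to $\mathcal B$ and to $\mathcal G\cup\mathcal G'$); once this combinatorial picture is settled, the quantitative estimates above go through routinely.
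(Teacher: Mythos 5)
Your treatment of $\Delta d_k$ matches the paper's: decompose $\mathcal A$ and $\mathcal B$ into surviving waves (on which $\Theta$ is a measure-preserving affine bijection, Corollary \ref{C_iow_same_pos}) plus cancelled/created waves localized in columns $m,\dots,m'$, and close with Corollary \ref{C_aocr}. That half is fine.

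The $\Delta\pi_k$ half has a genuine gap. You attribute the change of the intervals $\mathcal G,\mathcal G'$ entirely to creation/cancellation and claim it is ``absorbed \dots by the Lipschitz property of $\feff_k$ together with the estimate on $\Delta d_k$.'' But $\mathcal G,\mathcal G'$ are built from the partition elements $\mathcal J\in\mathcal P((i-1)\e,w,w')$ versus $\tilde{\mathcal J}\in\mathcal P(i\e,w,w')$, and the partition at time $i\e$ is a \emph{refinement} of (the restriction of) the one at time $(i-1)\e$: an element can split because the Riemann problem at $(i\e,r\e)$ separates waves that were previously undivided. This shrinkage of $\tilde{\mathcal J}$ relative to $\mathcal J\cap\W_k(i\e)$ is \emph{not} controlled by $\mathtt A(i\e,r\e)$ (nor by $\Delta d_k$, which only tracks $\mathcal B$, built from the future partition elements $\mathcal K,\mathcal K'$ that are common to both times). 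A Lipschitz bound on $\sigmarh$ in the interval endpoints therefore cannot give $\const\sum_r\mathtt A(i\e,r\e)$. The paper's proof handles this by a sign argument (its Step 4): introducing the intermediate intervals $\mathcal H\supseteq\mathcal G$, $\mathcal H'\supseteq\mathcal G'$ and using Proposition \ref{P_diff_ri} together with the entropicity of the partition elements (the convex envelope of $\feff_k(i\e)$ over $\Phi_k(i\e)(\mathcal J\cap\W_k(i\e))$ touches the graph at $\sup\Phi_k(i\e)(\tilde{\mathcal J})$) to show
\[
\Big[\sigmarh(\feff_k(i\e),\mathcal G)-\sigmarh(\feff_k(i\e),\mathcal G')\Big]^+ \le \Big[\sigmarh(\feff_k(i\e),\mathcal H)-\sigmarh(\feff_k(i\e),\mathcal H')\Big]^+ .
\]
This is precisely why the lemma asserts only a one-sided bound on $\Delta\pi_k$ while $\Delta d_k$ is bounded in absolute value; your argument, if it worked, would prove a two-sided bound, which is false. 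The remaining comparisons ($\mathcal F$ versus $\mathcal H$ via cancellation and Proposition \ref{P_ri_lip}, and the change of flux on the fixed interval $\mathcal H$ via the $L^1$ bound on $d^2\feff_k/d\tau^2$ from Theorem \ref{T_general}) are captured, at least in spirit, by your sketch; it is the splitting step that is missing and cannot be routed around quantitatively.
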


\begin{proof}
Fix $w:= \Phi_k((i-1)\e)^{-1}(\tau)$, $w':= \Phi_k((i-1)\e)^{-1}(\tau')$ and let 
\begin{align*}
\mathcal J, \mathcal J' &\in \mathcal P((i-1)\e,w,w'), && w \in \mathcal J, w' \in \mathcal J', \\
\tilde{\mathcal J}, \tilde{\mathcal J}' &\in \mathcal P(i\e,w,w'), && w \in \tilde{\mathcal J}, w' \in \tilde{\mathcal J}', \\ 
\mathcal K, \mathcal K' &\in \mathcal P(\tint(i\e,w,w') - \e, w,w'), && w \in \mathcal K, w' \in \mathcal K'.
\end{align*} 

\noindent Set also 
\begin{equation*}
\mathcal A := \mathcal K \cup \big\{z \in \W_k^+((i-1)\e) \ \big| \ \mathcal K < z < \mathcal K' \big\} \cup \mathcal K', \qquad
\mathcal B := \mathcal K \cup \big\{z \in \W_k^+(i\e) \ \big| \ \mathcal K < z < \mathcal K' \big\} \cup \mathcal K'.
\end{equation*}
It is easy to see that 
\begin{equation}
\label{E_a_b_tra_m_m'}
\mathcal A \subseteq \bigcup_{r = m}^{m'} \Big\{w \in \W_k((i-1)\e) \ \big| \ \lim_{t \rightarrow i\e} \mathtt x(t,w) = r\e \Big\}, \qquad
\mathcal B \subseteq \bigcup_{r = m}^{m'} \W_k(i\e,m\e).
\end{equation}
Moreover, it holds
\begin{equation*}
\mathcal A  = \big( \mathcal A \setminus \W_k(i\e) \big) \cup \big( \mathcal A \cap \W_k(i\e) \big)
 = \big( \mathcal A \setminus \W_k(i\e) \big) \cup  \bigcup_{r = m}^{m'} \big( \mathcal A \cap \W_k(i\e,r\e) \big)
\end{equation*}
and 
\begin{equation*}
\begin{split}
\mathcal B &= \big( \mathcal B \setminus \W_k((i-1)\e) \big) \cup \big( \mathcal B \cap \W_k((i-1)\e) \big) \\
&= \big( \mathcal B \setminus \W_k((i-1)\e) \big) \cup  \bigcup_{r = m}^{m'} \big( \mathcal B \cap \W_k((i-1)\e) \cap \W_k(i\e,r\e) \big). 
\end{split}
\end{equation*}
Notice that for any $r = m, \dots, m'$
\begin{equation}
\label{E_dist_waves_cons}
\mathcal A \cap \W_k(i\e,r\e) = \mathcal B \cap \W_k((i-1)\e) \cap \W_k(i\e,r\e),
\end{equation}
and thanks to this equality and Corollary \ref{C_iow_same_pos}, the set in \eqref{E_dist_waves_cons} is an interval of waves both at time $(i-1)\e$ and at time $i\e$. Hence, by Point \eqref{Pt_misura_iow} of Corollary \ref{C_iow_same_pos},
\begin{equation}
\label{E_theta}
\begin{array}{ccccc}
\Theta &:& \Phi_k \big((i-1)\e\big) \Big( \mathcal A \cap \W_k(i\e,r\e) \Big) &\to& \Phi_k \big(i\e\big) \Big( \mathcal B \cap \W_k((i-1)\e) \cap \W_k(i\e,r\e) \Big), \\ [.5em]
&& \tau &\mapsto& \Theta := \Phi_k(i\e) \circ \Phi_k((i-1)\e)^{-1}(\tau)
\end{array}
\end{equation}
is an affine function with slope equal to $1$ and thus
\begin{equation}
\label{E_dist_meas_waves_cons}
\mathcal L^1 \Big( \Phi_k ((i-1)\e) \big( \mathcal A \cap \W_k(i\e,r\e) \big) \Big) = \mathcal L^1 \Big( \Phi_k (i\e) \big(\mathcal B \cap \W_k((i-1)\e) \cap \W_k(i\e,r\e) \big)  \Big).
\end{equation}

We now prove separately the two inequalities of the statement.

\smallskip
\noindent \textit{Proof of \eqref{E_delta_d}.}
We have
\begin{equation*}
\begin{split}
\big| \Delta d(\tau, \tau') \big| 
&=  \big| d\big(i\e, \Theta(\tau), \Theta(\tau')\big) - d\big((i-1)\e, \tau, \tau'\big) \big|\\
&= \Big| \mathcal L^1 \Big( \Phi_k(i\e)\big(\mathcal B\big) \Big) - \mathcal L^1 \big(\Phi_k((i-1)\e)\big(\mathcal A\big) \big)
\Big| \\
&= \bigg| \mathcal L^1 \big( \Phi_k (i\e) \big( \mathcal B \setminus \W_k((i-1)\e) \big) \big) + \sum_{r = m}^{m'} 
\mathcal L^1 \Big( \Phi_k(i\e) \big( \mathcal B \cap \W_k((i-1)\e) \cap \W_k(i\e, m\e) \big) \Big) \\
&\qquad \qquad 
- \mathcal L^1 \Big( \Phi_k((i-1)\e) \big(\mathcal A \setminus \W_k(i\e) \big) \Big)
- \sum_{r = m}^{m'} \mathcal L^1 \Big( \Phi_k((i-1)\e) \big(\mathcal A \cap \W_k(i\e,r\e) \big)
 \Big)
\bigg| \\
\text{(by \eqref{E_dist_meas_waves_cons})} 
&\leq \Big| \mathcal L^1 \Big( \Phi_k (i\e) \big( \mathcal B \setminus \W_k((i-1)\e) \big) \Big) \Big| +
\Big| \mathcal L^1 \Big( \Phi_k((i-1)\e) \big(\mathcal A \setminus \W_k(i\e) \big) \Big) \Big| \\
\text{(by \eqref{E_a_b_tra_m_m'})} &\leq \sum_{r = m}^{m'} \Acr_k(i\e,r\e) + \Acanc_k(i\e,r\e) \\
\text{(by Cor. \ref{C_aocr})} &\leq \sum_{r = m}^{m'} \mathtt A(i\e,r\e). 
\end{split}
\end{equation*}

\smallskip
\noindent \textit{Proof of \eqref{E_delta_pi}.} The proof of the second inequality in \eqref{E_delta_d_delta_pi} is more involved. Define
\begin{align*}
\mathcal F &:= \mathcal K \cup \Big(\mathcal J \cap \big\{z \in \W_k((i-1)\e) \ \big| \ z > \mathcal K \big\} \Big), &
\mathcal F' &:= \mathcal K' \cup \Big(\mathcal J' \cap \big\{z \in \W_k((i-1)\e) \ \big| \ z < \mathcal K' \big\} \Big), \\
\mathcal G &:= \mathcal K \cup \Big(\tilde{\mathcal J} \cap \big\{z \in \W_k(i\e) \ \big| \ z > \mathcal K \big\} \Big), &
\mathcal G' &:= \mathcal K' \cup \Big(\tilde{\mathcal J}' \cap \big\{z \in \W_k(i\e) \ \big| \ z < \mathcal K' \big\} \Big);
\end{align*}
$\mathcal F, \mathcal F'$ are i.o.w.s at time $(i-1)\e$, while $\mathcal G, \mathcal G'$ are i.o.w.s at time $i\e$. Hence the sets
\begin{align*}
F &:= \Phi_k((i-1)\e) (\mathcal F), &
F'&:= \Phi_k((i-1)\e)(\mathcal F'), \\
G&:= \Phi_k(i\e)(\mathcal G), &
G'&:= \Phi_k(i\e)(\mathcal G'),
\end{align*}
are intervals in $\R$. 
Moreover, since $\tau \mapsto \feff_k(t)(\tau)$ is defined up to affine function, we can assume that
\begin{equation}
\label{E_feff_der_sec}
\frac{d \feff_k(i\e)}{d \tau}\big(\inf \Phi_k(i\e)(\mathcal K)\big) 
= \frac{d \feff_k((i-1)\e)}{d \tau}\big(\inf \Phi_k((i-1)\e)(\mathcal K)\big) = 0.
\end{equation}

We divide now the proof of the second inequality in several steps.

\begin{figure}
\resizebox{14cm}{8cm}{\input{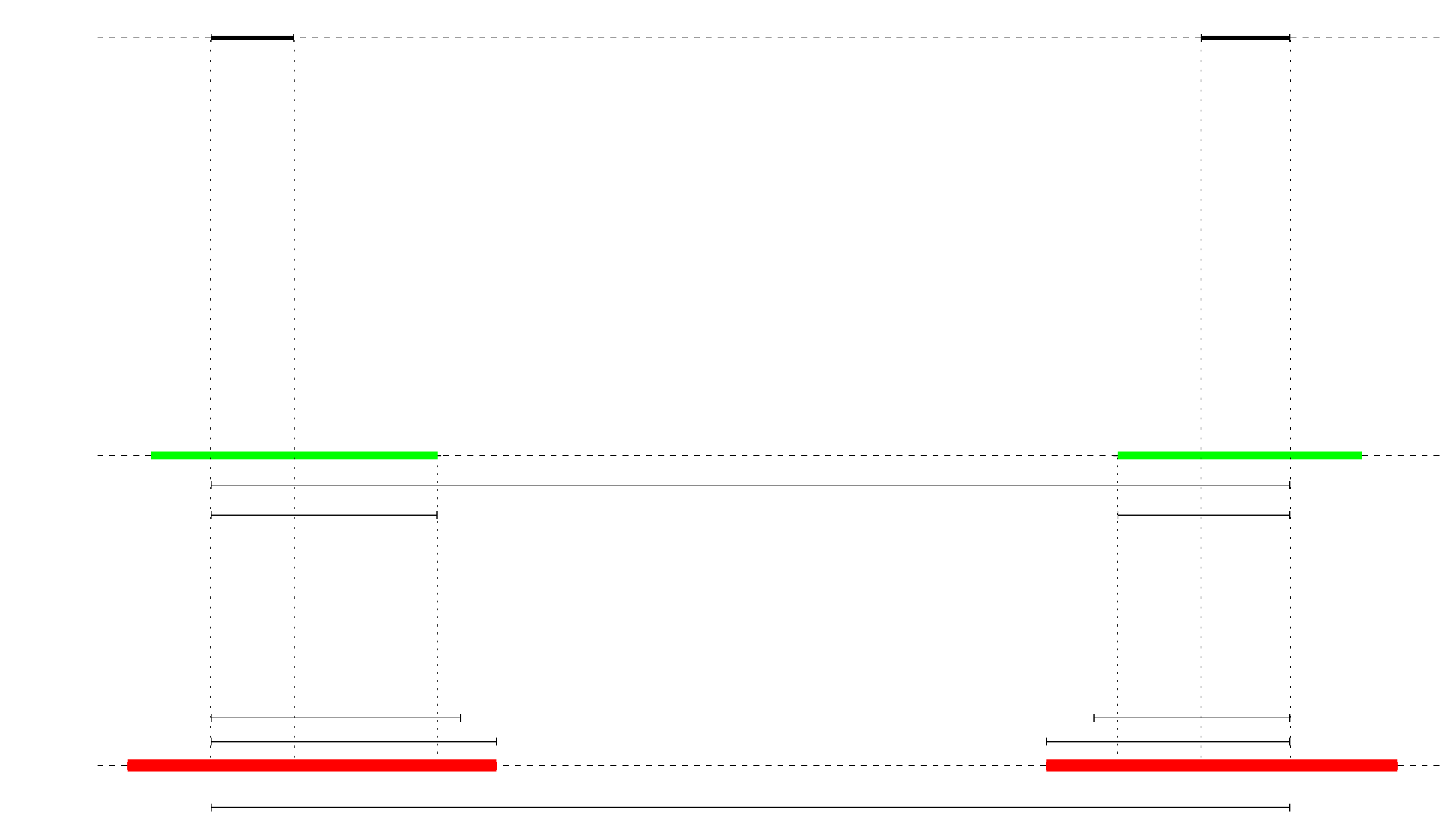_t}}
\caption{The various set used in the proof of \eqref{E_delta_pi}: in Step 2 pass from the waves in $\mathcal F, \mathcal F'$ to the waves $\mathcal H, \mathcal H'$ which survives at $t = i\e$; in Step 3 change the flux $\feff_k((i-1)\e)$ to $\feff_k(i\e)$ for the intervals $\mathcal H,\mathcal H'$; in Step 4 observe that $\mathcal G,\mathcal G'$ are shorter that $\mathcal H,\mathcal H'$ because of a splitting has occurred.}
\label{Fi_estim_sped}
\end{figure}

\noindent \textit{Step 1.}
Define
\begin{equation*}
\mathcal H := 
\mathcal K \cup \big\{z 
\in \mathcal J \cap \W_k(i\e) \ \big| \ z > \mathcal K \big\}, \qquad
\mathcal H' := 
\mathcal K' \cup \big\{z 
\in \mathcal J' \cap \W_k(i\e) \ \big| \ z < \mathcal K' \big\}.
\end{equation*}
We now show that the sets $\mathcal H, \mathcal H'$ are i.o.w.s both at time $i\e$ and at time $(i-1)\e$ and 
\begin{equation*}
\mathcal H \subseteq \mathcal J \cap \W_k(i\e), \qquad \mathcal H' \subseteq \mathcal J' \cap \W_k(i\e).
\end{equation*}
Moreover also the sets 
\begin{align*}
H_{i-1} &:= \Phi_k((i-1)\e)(\mathcal H), & H_{i} &:= \Phi_k(i\e)(\mathcal H), \\
H_{i-1}' &:= \Phi_k((i-1)\e)(\mathcal H'), & H_{i}' &:= \Phi_k(i\e)(\mathcal H'),
\end{align*}
are intervals in $\R$. 

\smallskip

\noindent \textit{Proof of Step 1.}
We prove only the statements related to $\mathcal H$, those related to $\mathcal H'$ being completely analogous. Clearly $\mathcal H \subseteq \mathcal J \cap \W_k(i\e)$. Moreover the set 
\begin{equation*}
\mathcal M:= \big\{z \in \W_k(i\e) \ \big| \ z \in \mathcal K \text{ or } z > \mathcal K \big\}
\end{equation*}
is clearly an i.o.w. at time $i\e$. Since we can write $\mathcal H$ as intersection of two i.o.w.s at time $i\e$ as
\begin{equation*}
\mathcal H = \mathcal M \cap \big( \mathcal J \cap \W_k(i\e) \big),
\end{equation*}
it follows that also $\mathcal H$ is an i.o.w. at time $i\e$.
Moreover, since $\mathcal H = \mathcal H \cap \W_k((i-1)\e)$, by Proposition \ref{P_divise_partizione_implica_divise_realta} and Corollary  \ref{C_iow_same_pos}, Point \eqref{Pt_sono}, $\mathcal H$ is an i.o.w. also at time $(i-1)\e$. As an immediate consequence $H_{i-1}$ and $H_i$ are intervals in $\R$. 

\bigskip

\noindent \textit{Step 2.}
We have
\begin{equation*}
H_{i-1} \subseteq F, \qquad \mathcal L^1(F) - \mathcal L^1(H_{i-1}) \leq \Acanc(i\e,m\e),
\end{equation*}
and
\begin{equation*}
\Big|\sigmarh(\feff_k((i-1)\e),\mathcal H) - \sigmarh(\feff_k((i-1)\e),\mathcal F) \Big| \leq \const \Acanc_k(i\e,m\e).
\end{equation*}
Similarly, $H_{i-1}' \subseteq F'$, $\mathcal L^1(F') - \mathcal L^1(H_{i-1}') \leq \Acanc(i\e,m'\e)$ and
\begin{equation*}
\Big|\sigmarh(\feff_k((i-1)\e),\mathcal H') - \sigmarh(\feff_k((i-1)\e),\mathcal F') \Big| \leq \const \Acanc_k(i\e,m'\e).
\end{equation*}

\smallskip
\noindent \textit{Proof of Step 2.}
We prove only the first part of the statement, the second one being completely similar. Clearly $\mathcal H \subseteq \mathcal F$. Hence $H_{i-1} = \Phi_k((i-1)\e)(\mathcal H) \subseteq \Phi_k((i-1)\e)(\mathcal F) = F$. Moreover, by Proposition \ref{P_divise_partizione_implica_divise_realta}, it follows that
\begin{equation*}
\mathcal F \setminus \mathcal H \subseteq \Big\{w \in \W_k((i-1)\e) \setminus \W_k(i\e) \ \big| \ \lim_{t \rightarrow i\e} \mathtt x(t,w) = m\e \Big\}
\end{equation*}
and thus 
\begin{equation*}
\begin{split}
\mathcal L^1(F) - \mathcal L^1(H_{i-1}) &= \mathcal L^1(\Phi_k((i-1)\e)(\mathcal F)) - \mathcal L^1(\Phi_k((i-1)\e)(\mathcal H)) \\
&= \mathcal L^1(\Phi_k((i-1)\e)(\mathcal F \setminus \mathcal H)) \\
&\leq \mathcal L^1\Big(\Big\{w \in \W_k((i-1)\e) \setminus \W_k(i\e) \ \big| \ \lim_{t \rightarrow i\e} \mathtt x(t,w) = m\e \Big\}\Big) \\
&\leq \Acanc_k(i\e,m\e).
\end{split}
\end{equation*}
Moreover, by Proposition \ref{P_ri_lip},
\begin{equation*}
\Big|\sigmarh(\feff_k((i-1)\e,\mathcal H) - \sigmarh(\feff_k((i-1)\e,\mathcal F) \Big| \leq \mathcal L^1(F) - \mathcal L^1(H_{i-1})\leq \const \Acanc_k(i\e,m\e).
\end{equation*}

\bigskip

\noindent \textit{Step 3.}
It holds
\begin{equation*}
\Big| \sigmarh(\feff_k(i\e),\mathcal H) - \sigmarh \big( \feff_k((i-1)\e),\mathcal H \big) \Big| \leq \const \sum_{r=m}^{m'} \mathtt A(i\e,r\e),
\end{equation*}
\begin{equation*}
\Big| \sigmarh(\feff_k(i\e),\mathcal H') - \sigmarh \big( \feff_k((i-1)\e),\mathcal H' \big) \Big| \leq \const \sum_{r=m}^{m'} \mathtt A(i\e,r\e).
\end{equation*}

\smallskip
\noindent \textit{Proof of Step 3.}
In this step, we prove only the second inequality and assume that $\mathcal L^1(H_i) = \mathcal L^1(H_{i-1}) >0$, since the first inequality and the other cases can be treated similarly (and actually the computations are simpler). 

We have
\begin{equation*}
\begin{split}
\Big| \sigmarh(\feff_k(i\e),\mathcal H'&) - \sigmarh(\feff_k((i-1)\e),\mathcal H') \Big| \\
&= \bigg| \frac{1}{\mathcal L^1(H_i')} \int_{H_i'} \frac{d \feff_k(i\e)}{d\varsigma}(\varsigma) d\varsigma - \frac{1}{\mathcal L^1(H_{i-1}')} \int_{H_{i-1}'} \frac{d \feff_k((i-1)\e)}{d\tau}(\tau) d\tau \bigg| \\
\text{(by \eqref{E_feff_der_sec})} &= \bigg| \frac{1}{\mathcal L^1(H_i')} \int_{H_i'} \int_{\inf \Phi_k(i\e)(\mathcal K)}^{\varsigma} \frac{d^2 \feff_k(i\e)}{d\xi^2}(\xi) d\xi d\varsigma \\
& \qquad \qquad \qquad \qquad - \frac{1}{\mathcal L^1(H_{i-1}')} \int_{H_{i-1}'} \int_{\inf \Phi_k((i-1)\e)(\mathcal K)}^{\tau} \frac{d^2 \feff_k((i-1)\e)}{d\eta^2}(\eta) d \eta d\tau \bigg|,
\end{split}
\end{equation*}
and, remembering that $\mathcal L^1(H_i') = \mathcal L^1(H_{i-1}')$ and integrating by parts,
\begin{equation*}
\begin{split}
\ldots &= \frac{1}{\mathcal L^1(H_i')} \bigg| 
\int_{\inf \Phi_k(i\e)(\mathcal K)}^{\sup H_i'} \frac{d^2 \feff_k(i\e)}{d\xi^2}(\xi) \Big(\sup H_i' - \max \big\{\xi, \inf H_i' \big\} \Big) d\xi \\
& \qquad \qquad \qquad \qquad
- \int_{\inf \Phi_k((i-1)\e)(\mathcal K)}^{\sup H_{i-1}'} \frac{d^2 \feff_k((i-1)\e)}{d\eta^2}(\eta) \Big(\sup H_{i-1}' - \max \big\{\eta, \inf H_{i-1}' \big\} \Big) d\eta 
\bigg| \\
&= \frac{1}{\mathcal{L}^1(H_i')} \Bigg| 
\sum_{r = m}^{m'} \int_{[ \inf \Phi_k(i\e)(\mathcal K), \sup H_i'] \cap K_r} \frac{d^2 \feff_k(i\e)}{d\xi^2}(\xi) \Big(\sup H_i' - \max \big\{\xi, \inf H_i' \big\} \Big) d\xi \\
&\qquad \qquad
- \sum_{r = m}^{m'} \int_{[ \inf \Phi_k((i-1)\e)(\mathcal K), \sup H_{i-1}'] \cap J_r}
\frac{d^2 \feff_k((i-1)\e)}{d\eta^2}(\eta) \Big(\sup H_{i-1}' - \max \big\{\eta, \inf H_{i-1}' \big\} \Big) d\eta 
\Bigg|,
\end{split}
\end{equation*}
where $K_r,J_r$ are defined in \eqref{E_trans_surv}; using now that $\mathcal L^1 (K_r \setminus T_r) = \Acr_k(i\e,r\e)$, while $\mathcal L^1(J_{r} \setminus S_{r}) = \Acanc_k(i\e,r\e)$ we can proceed as
\begin{equation*}
\begin{split}
\ldots &= \frac{1}{\mathcal L^1(H_i')} \Bigg|
\sum_{r =m}^{m'} \const \mathcal L^1(H_i) \Big( \Acr_k(i\e,r\e) + \Acanc_k(i\e,r\e) \Big) \\
& \qquad \qquad \qquad + 
\sum_{r =m}^{m'} \int_{[ \inf \Phi_k(i\e)(\mathcal K), \sup H_i'] \cap T_r} \frac{d^2 \feff_k(i\e)}{d\xi^2}(\xi) \Big(\sup H_i' - \max \big\{\xi, \inf H_i' \big\} \Big) d\xi \\
& \qquad \qquad \qquad
- \sum_{r =m}^{m'} \int_{[ \inf \Phi_k((i-1)\e)(\mathcal K), \sup H_{i-1}'] \cap S_r}
\frac{d^2 \feff_k(i\e)}{d\eta^2}(\eta) \Big(\sup H_{i-1}' - \max \big\{\eta, \inf H_{i-1}' \big\} \Big) d\eta 
\Bigg|,
\end{split}
\end{equation*}
using that the function $\Theta: [ \inf \Phi_k((i-1)\e)(\mathcal K), \sup H_{i-1}'] \cap S_r \to [ \inf \Phi_k(i\e)(\mathcal K), \sup H_i'] \cap T_r$ introduced in \eqref{E_theta} is an affine bijection with derivative $1$,
\begin{equation*}
\begin{split}
\ldots &\leq \frac{1}{\mathcal L^1(H_i')}
\sum_{r= m}^{m'} \const \mathcal L^1(H_i) \Big( \Acr_k(i\e,m\e) + \Acanc_k(i\e,m\e) \Big) \\
& \qquad + \frac{1}{\mathcal L^1(H_i')} \Bigg| \sum_{r= m}^{m'} \int_{[ \inf \Phi_k((i-1)\e)(\mathcal K), \sup H_{i-1}'] \cap S_r} \bigg[ \frac{d^2 \feff_k(i\e)}{d\xi^2}\big(\Theta(\eta) \big) - 
\frac{d^2 \feff_k((i-1)\e)}{d\eta^2}(\eta) \bigg] \\
& \qquad \qquad \qquad \qquad \qquad \qquad \qquad \qquad \qquad \qquad \qquad \qquad \cdot \Big( \sup H_{i-1}' - \max \big\{\eta, \inf H_{i-1}' \big\} \Big) d\eta 
 \Bigg| \\
& \leq \const \sum_{r=m}^{m'} \Big( \Acr_k(i\e,r\e) + \Acanc_k(i\e,r\e) \Big) 
+ \sum_{r=m}^{m'} \int_{S_r} \bigg| \frac{d^2 \feff_k(i\e)}{d\xi^2}\big(\Theta(\eta)\big) - 
\frac{d^2 \feff_k((i-1)\e)}{d\eta^2}(\eta) \bigg| d\eta \\
& \leq 
\const \sum_{r=m}^{m'} \Bigg( \Acr_k(i\e,r\e) + \Acanc_k(i\e,r\e) 
+ \bigg\|\frac{d^2 \feff_k(i\e)}{d\xi^2} \circ \Theta - 
\frac{d^2 \feff_k((i-1)\e)}{d\eta^2} \bigg\|_{L^1(S_r)} \Bigg),
\end{split}
\end{equation*}
and finally by Theorem \ref{T_general} and Corollary \ref{C_aocr}
\begin{equation*}
\begin{split}
\Big| \sigmarh(\feff_k(i\e),\mathcal H') &- \sigmarh(\feff_k((i-1)\e),\mathcal H') \Big| \\
& \leq 
\const \sum_{r=m}^{m'} \Bigg( \Acr_k(i\e,r\e) + \Acanc_k(i\e,r\e) 
+ \bigg\|\frac{d^2 \feff_k(i\e)}{d\xi^2} \circ \Theta - 
\frac{d^2 \feff_k((i-1)\e)}{d\eta^2} \bigg\|_{L^1(S_r)} \Bigg) \\
&\leq \const \sum_{r=m}^{m'} \mathtt A(i\e,r\e).
\end{split}
\end{equation*}

%
%
%

\bigskip

\noindent \textit{Step 4.}
It holds
\begin{equation*}
\Big[\sigmarh(\feff_k(i\e),\mathcal G) - \sigmarh(\feff_k(i\e),\mathcal G') \Big]^+ - \Big[\sigmarh(\feff_k(i\e),\mathcal H)  - \sigmarh(\feff_k(i\e),\mathcal H')\Big]^+ \leq 0.
\end{equation*}

%
%

\smallskip
\noindent \textit{Proof of Step 4.} 
We want to use Proposition \ref{P_diff_ri} with
\begin{equation*}
g = \feff_k(i\e), \quad [a,b] = \Phi_k(i\e)(\mathcal J \cap \W_k(i\e)), \quad 
\bar u = \sup \Phi_k(i\e) (\tilde{\mathcal J}), \quad
u = \inf \Phi_k(i\e)(\mathcal K).
\end{equation*}
Indeed, by definition of the partition $\mathcal P(i\e,w,w')$ (Point \eqref{Point_2b_part_I} at page \pageref{Point_2b_part_I}), it holds
\begin{equation*}
\conv_{\Phi_k(i\e)(\mathcal J \cap \W_k(i\e))} \feff_k(i\e) (\sup \Phi_k(i\e)( \tilde{\mathcal J})) 
= \feff_k(i\e) (\sup \Phi_k(i\e)(\tilde{\mathcal J})),
\end{equation*}
i.e. ${\displaystyle \conv_{[a,b]} g}(\bar u) = g(\bar u)$.

We thus have
\begin{equation}
\label{E_senza_segno_sx}
\begin{split}
\sigmarh(\feff_k(i\e), \mathcal G) &= \sigmarh \Big( \feff_k(i\e), \big[ \inf \Phi_k(i\e)(\mathcal K), \sup \Phi_k(i\e)(\tilde{\mathcal J}) \big] \Big) \\
&\leq \sigmarh \Big( \feff_k(i\e), \big[ \inf \Phi_k(i\e)(\mathcal K), \sup \Phi_k(i\e)(\mathcal J \cap \W_k(i\e)) \big] \Big) \\
&= \sigmarh(\feff_k(i\e), \mathcal H).
\end{split}
\end{equation}
In a similar way one can prove that
\begin{equation}
\label{E_senza_segno_dx}
\sigmarh(\feff_k(i\e, \mathcal G') \geq \sigmarh(\feff_k(i\e,\mathcal H').
\end{equation}
Using \eqref{E_senza_segno_sx} and \eqref{E_senza_segno_dx}, one gets the conclusion.

\bigskip

\bigskip

\noindent \textit{Step 5.} We can finally conclude the proof of \eqref{E_delta_pi}, showing that
\begin{equation*}
\Delta \pi(\tau, \tau') \leq \const \sum_{r = m}^{m'} \mathtt A(i\e,m\e).
\end{equation*}

\smallskip

\noindent \textit{Proof of Step 5.}
We can perform the following computation:
\begin{equation*}
\begin{split}
\Delta &\pi(\tau, \tau') \\
&= \pi(i\e, \Theta(\tau), \Theta (\tau')) - \pi ((i-1)\e, \tau, \tau') \\
& = \Big[\sigmarh(\feff_k(i\e),\mathcal G) - \sigmarh(\feff_k(i\e),\mathcal G') \Big]^+ - \Big[\sigmarh(\feff_k((i-1)\e),\mathcal F)  - \sigmarh(\feff_k((i-1)\e),\mathcal F')\Big]^+ \\
& = \Big[\sigmarh(\feff_k(i\e),\mathcal G) - \sigmarh(\feff_k(i\e),\mathcal G') \Big]^+ - \Big[\sigmarh(\feff_k(i\e),\mathcal H)  - \sigmarh(\feff_k(i\e),\mathcal H')\Big]^+ \\
&~ \quad + 
\Big[\sigmarh(\feff_k(i\e),\mathcal H) - \sigmarh(\feff_k(i\e),\mathcal H') \Big]^+ - \Big[\sigmarh(\feff_k((i-1)\e),\mathcal H)  - \sigmarh(\feff_k((i-1)\e),\mathcal H')\Big]^+ \\
&~ \quad + 
\Big[\sigmarh(\feff_k((i-1)\e),\mathcal H) - \sigmarh(\feff_k((i-1)\e),\mathcal H') \Big]^+ - \Big[\sigmarh(\feff_k((i-1)\e),\mathcal F)  - \sigmarh(\feff_k((i-1)\e),\mathcal F')\Big]^+ \\
& \leq \Big[\sigmarh(\feff_k(i\e),\mathcal G) - \sigmarh(\feff_k(i\e),\mathcal G') \Big]^+ - \Big[\sigmarh(\feff_k(i\e),\mathcal H)  - \sigmarh(\feff_k(i\e),\mathcal H')\Big]^+ \\
&~ \quad +
\Big| \sigmarh(\feff_k(i\e),\mathcal H) - \sigmarh(\feff_k((i-1)\e),\mathcal H) \Big| + 
\Big| \sigmarh(\feff_k(i\e),\mathcal H') - \sigmarh(\feff_k((i-1)\e),\mathcal H') \Big| \\
&~ \quad + \Big|\sigmarh(\feff_k((i-1)\e),\mathcal H) - \sigmarh(\feff_k((i-1)\e),\mathcal F) \Big| + 
\Big|\sigmarh(\feff_k((i-1)\e),\mathcal H') - \sigmarh(\feff_k((i-1)\e),\mathcal F') \Big| \\
& \text{(by Steps 2, 3, 4 above)} \\
& \leq \const \sum_{r=m}^{m'} \mathtt A(i\e),m\e). \qed
\end{split}
\end{equation*}

%
This concludes the proof of Lemma \ref{L_delta_dpi}.
\end{proof}

\noindent We can now prove Theorem \ref{T_decreasing}.

\begin{proof}[Proof of Theorem \ref{T_decreasing}]
Fix $m < m'$, $\tau \in S_m, \tau' \in S_{m'}$ and define
\begin{equation*}
\Delta \mathfrak q_k(\tau, \tau'):= \mathfrak q_k \big( i\e, \Theta(\tau), \Theta(\tau') \big) -  \mathfrak q_k \big( (i-1)\e, \tau, \tau' \big).
\end{equation*}
Since $m < m'$, it follows that $\Delta \mathfrak q_k(\tau, \tau')$ can be greater or equal than $0$ only if $\Phi_k((i-1)\e)^{-1}(\tau)$, $\Phi_k((i-1)\e)^{-1}(\tau')$ will interact after time $i\e$. In this case it holds
\begin{equation}
\label{E_pair_in_bij_2}
\begin{split}
\Delta \mathfrak q_k(\tau, \tau') &=  \mathfrak q_k \big( i\e, \Theta(\tau), \Theta(\tau') \big) -  \mathfrak q_k \big( (i-1)\e, \tau, \tau' \big) \crcr
& = \frac{\pi\big(i\e, \Theta(\tau), \Theta(\tau')\big)}{d\big(i\e, \Theta(\tau), \Theta(\tau')\big)} 
- \frac{\pi\big((i-1)\e, \tau, \tau'\big)}{d\big((i-1)\e, \tau, \tau'\big)} \crcr
& \leq  \pi\big(i\e, \Theta(\tau), \Theta(\tau')\big) \bigg(\frac{1}{d\big(i\e, \Theta(\tau), \Theta(\tau')\big)} 
- \frac{1}{d\big((i-1)\e, \tau, \tau'\big)}  \bigg) \crcr
& \qquad \qquad 
+ \frac{1}{d\big((i-1)\e, \tau, \tau'\big)} \bigg( 
\pi\big(i\e, \Theta(\tau), \Theta(\tau')\big) 
- \pi\big((i-1)\e, \tau, \tau'\big)
\bigg) \crcr
& = \frac{1}{d\big((i-1)\e, \tau, \tau'\big)}
\frac{\pi\big(i\e, \Theta(\tau), \Theta(\tau')\big) }{d\big(i\e, \Theta(\tau), \Theta(\tau')\big)} \Delta d(\tau, \tau')
+ \frac{1}{d\big((i-1)\e, \tau, \tau'\big)}  \Delta \pi(\tau, \tau') \crcr
& \leq \const \frac{1}{d\big((i-1)\e, \tau, \tau'\big)} \Big( \big|\Delta d(\tau, \tau')\big| + \Delta \pi(\tau, \tau')\Big) \\
& \leq \const \frac{1}{\tau' - \tau} \Big( \big|\Delta d(\tau, \tau')\big| + \Delta \pi(\tau, \tau')\Big) \crcr
\text{(by Lemma \ref{L_delta_dpi})} & \leq \const \frac{1}{\tau' - \tau} \sum_{r=m}^{m'} \mathtt A(i\e,r\e). 
\end{split}
\end{equation}
%

\noindent As observed before, for each $m \in \Z$, by Point \ref{Pt_affine} of Theorem \ref{T_lagrangian}, the map
\begin{equation*}
\Theta: S_m \to T_m, \qquad \Theta := \Phi_k(i\e) \circ \Phi_k((i-1)\e)^{-1},
\end{equation*}
is an affine function with slope $1$. Hence for any pair of integers $m < m'$ it is well defined the change of variable
\begin{equation*}
\Theta \times \Theta : S_m \times S_{m'} \to T_m \times T_{m'},
\end{equation*}
and we have
\begin{equation}
\label{E_pair_in_bij_1}
\begin{split}
\iint_{T_m \times T_{m'}} &\mathfrak q_k(i\e, \varsigma, \varsigma') d\varsigma d\varsigma' = 
\iint_{S_m \times S_{m'}} \mathfrak q_k (i\e, \Theta(\tau), \Theta(\tau')) d\tau d\tau'.
\end{split}
\end{equation}
We can now estimate the l.h.s of \eqref{E_pair_in_bij} as follows:
\begin{equation*}
\begin{split}
\sum_{m < m'} \Bigg[ \iint_{T_m \times T_{m'}} \mathfrak q_k(i\e) d\tau& d\tau' - \iint_{S_m \times S_{m'}} \mathfrak q_k((i-1)\e) d\tau d\tau' \Bigg] \\
\text{(by \eqref{E_pair_in_bij_1})} &= \sum_{m < m'} \iint_{S_m \times S_{m'}} \Big[ \mathfrak q_k \big( i\e, \Theta(\tau), \Theta(\tau') \big) -  \mathfrak q_k \big( (i-1)\e, \tau, \tau' \big) \Big] d\tau d\tau' \\
\text{(by \eqref{E_pair_in_bij_2})}  
& \leq \const \sum_{m < m'} \iint_{S_m \times S_{m'}} \frac{1}{\tau'-\tau} \sum_{r = m}^{m'} \mathtt A(i\e,r\e) d\tau d\tau' \\
& = \const \Bigg[ 
\sum_{r \in \Z} \mathtt A(i\e,r\e) \sum_{m = - \infty}^{r} \sum_{m' = r+1}^{+\infty} \iint_{S_m \times S_{m'}} \frac{1}{\tau'-\tau} d\tau d\tau' \\
& \qquad \qquad \qquad \qquad \qquad \qquad
+ \sum_{r \in \Z} \mathtt A(i\e,r\e) \sum_{m = - \infty}^{r-1} \iint_{S_m \times S_{r}} \frac{1}{\tau'-\tau} d\tau d\tau' \Bigg] \\
& \leq \const \Bigg[\sum_{r \in \Z} \mathtt A(i\e,r\e) \int_0^{\sup S_r} \int_{\inf S_{r+1}}^{+\infty} \frac{1}{\tau'-\tau} d\tau d\tau \\
& \qquad \qquad \qquad \qquad \qquad \qquad
+  \sum_{r \in \Z} \mathtt A(i\e,r\e) \int_0^{\sup S_{r-1}} \int_{\inf S_{r}}^{+\infty} \frac{1}{\tau'-\tau} d\tau d\tau' 
\Bigg],
\end{split}
\end{equation*}
and since $\sup S_{r-1}< \inf S_r \leq \sup S_r < \inf S_{r+1}$ after an elementary integration by parts,
\begin{equation*}
\begin{split}
\ldots & \leq \const L_k\big((i-1)\e\big) \sum_{r \in \Z} \mathtt A(i\e,m\e) \\
\text{(by \eqref{E_bd_on_TV})} & \leq \const \TV(u(0)) \sum_{r \in \Z} \mathtt A(i\e,m\e),
\end{split}
\end{equation*} 
thus concluding the proof of Theorem \ref{T_decreasing}.
\end{proof}

\subsection{Analysis of interacting waves}
\label{Ss_interacting}

This section is devoted to conclude the proof of Theorem \ref{T_variation_fQ}, showing that inequality \eqref{E_interacting} holds, i.e. estimating the (negative) term related to pairs of waves which are divided at time $(i-1)\e$ and are interacting at time $i\e$. In particular we will prove the following theorem:

\begin{theorem}
\label{T_increasing}
The following estimate holds:
\begin{equation}
\label{E_increasing}
- \sum_{m \in \Z}
 \iint_{J_m^L \times J_m^R} \mathfrak q_k((i-1)\e) d\tau d\tau' 
\leq 
- \sum_{m \in \Z} \Aquadr_k(i\e,m\e)
+
\const \TV(u(0)) \sum_{m \in \Z} \mathtt A(i\e,m\e).
\end{equation}
\end{theorem}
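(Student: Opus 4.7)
The plan is to focus on positive outgoing waves at each collision point $(i\e, m\e)$ and to exploit a geometric convexity estimate relating the weighted double integral of speed differences to the convexification gap defining $\Aquadr_k$; the case of negative outgoing waves follows symmetrically with concave envelopes.

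First I would observe that for any pair of waves $(w, w')$ with $\Phi_k((i-1)\e)(w) \in J_m^L$ and $\Phi_k((i-1)\e)(w') \in J_m^R$, the two waves sit at the distinct positions $(m-1)\e$ and $m\e$ at time $(i-1)\e$ and hence have never interacted at that time. Case~(1) of the definition of $\mathcal P((i-1)\e, w, w')$ therefore forces $\mathcal J = \mathcal K = \{w\}$ and $\mathcal J' = \mathcal K' = \{w'\}$ to be singletons, and since $\tint((i-1)\e, w, w') = i\e$ we get $\mathcal G = \{w\}$, $\mathcal G' = \{w'\}$, so the weight collapses to
\[
\mathfrak q_k((i-1)\e, \tau, \tau') = \frac{\bigl[\tfrac{d \feff_k((i-1)\e)}{d\tau}(\tau) - \tfrac{d \feff_k((i-1)\e)}{d\tau}(\tau')\bigr]^+}{d_k((i-1)\e, \tau, \tau')},
\]
where $d_k$ equals the measure under $\Phi_k((i-1)\e)$ of the positive $k$-waves ordered between $w$ and $w'$, which in absence of interferences from other families coincides with $\tau' - \tau$.

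The next step is to replace $\feff_k((i-1)\e)$ on $J_m^L \cup J_m^R$ by the reduced fluxes $\tilde f_k' \cup \tilde f_k''$ that enter the definition of $\Aquadr_k(i\e, m\e)$. By the remark following Definition \ref{D_effect_flux}, $\feff_k((i-1)\e)$ coincides there, up to affine function, with the reduced fluxes $f_k^{i-1, m-1}$, $f_k^{i-1, m}$ of the ingoing Riemann problems, and by Lemma \ref{L_general_transversal} these differ from the post-transversal fluxes $\tilde f_k', \tilde f_k''$ in the $L^1$ norm of the second derivative by at most $\const \Atrans(i\e, m\e)$. The denominator $d_k$ likewise differs from $\tau' - \tau$ by a contribution from other-family cancellations and creations ordered between the two positions, which is controlled via Corollary \ref{C_aocr} by $\mathtt A(i\e, m\e)$. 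After integration, these two perturbations amount to a total error bounded by $\const \mathtt A(i\e, m\e)$ times the measure of $J_m^L \cup J_m^R$. The heart of the argument is then the following convexity inequality: for any $C^1$ function $g: [a, b] \to \R$ and any $c \in (a, b)$,
\[
g(c) - \conv_{[a, b]} g(c) \leq \iint_{a \leq \tau < c < \tau' \leq b} \frac{[g'(\tau) - g'(\tau')]^+}{\tau' - \tau}\, d\tau\, d\tau'.
\]
To prove it, let $[\alpha, \beta] \subseteq [a, b]$ be the maximal affine piece of $\conv_{[a,b]} g$ through $c$ and set $A := (c-\alpha)^{-1}\int_\alpha^c g'$, $B := (\beta - c)^{-1}\int_c^\beta g'$; a short computation yields the exact identity
\[
g(c) - \conv_{[a,b]} g(c) = \frac{(c-\alpha)(\beta - c)(A - B)}{\beta - \alpha} = \frac{1}{\beta - \alpha}\iint_{\alpha \leq \tau < c < \tau' \leq \beta}(g'(\tau) - g'(\tau'))\, d\tau\, d\tau'.
\]
The inequality then follows from the two pointwise estimates $[x]^+ \geq x$ and $1/(\tau' - \tau) \geq 1/(\beta - \alpha)$ valid on the integration region, together with the observation that enlarging the integration region from $[\alpha, \beta]$ to $[a, b]$ only increases the (nonnegative) integrand. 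Applied with $g = \tilde f_k' \cup \tilde f_k''$ and $c = s_k'$, the left-hand side is exactly $\Aquadr_k(i\e, m\e)$ by Definition \ref{D_aquadr}.

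Summing over $m$ with $\mathcal S(\W_k(i\e, m\e)) = +1$ and assembling Steps~1--3 gives \eqref{E_increasing} once the total Step~2 error is bounded by $\sum_m L_k((i-1)\e)\cdot \mathtt A(i\e, m\e) \leq \const \TV(u_0)\sum_m \mathtt A(i\e, m\e)$ via \eqref{E_bd_on_TV}. The main obstacle is not the geometric inequality itself, which is clean, but rather Step~2: careful bookkeeping is required to track how transversal interactions, cancellations of other families, and creation/cancellation inside family $k$ propagate through both the numerator $\pi_k$ and the denominator $d_k$ of $\mathfrak q_k$ without introducing an uncontrolled multiplicative constant. This parallels the analysis of Theorem \ref{T_decreasing} but here every term must be compared to the single merging Riemann problem at $(i\e, m\e)$ defining $\Aquadr_k$.
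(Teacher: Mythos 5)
Your Step 2 (the transversal adjustment, replacing $\feff_k((i-1)\e)$ by $\tilde f_k' \cup \tilde f_k''$ with error $\const \Atrans(i\e,m\e)\,\mathcal L^1(J_m)$) and your convexity identity in Step 3 (expressing $\tilde f(\tau_M) - \conv \tilde f(\tau_M)$ as a double integral of slope differences over the maximal affine piece $[\tau_L,\tau_R]$) both match what the paper does, in Lemma \ref{L_tilde_q} and in the opening computation of Proposition \ref{P_increasing} respectively. But Step 1 contains a genuine and fatal gap: it is \emph{not} true that a pair $(w,w')$ with $w$ at $(m-1)\e$ and $w'$ at $m\e$ at time $(i-1)\e$ "have never interacted at that time." Definition \ref{D_interagite_non_interagite} declares the pair to have already interacted if they shared a position at \emph{any} $t \le (i-1)\e$; two waves can meet, split, travel separately, and be about to re-collide at $i\e$. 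For such pairs $\mathcal J$ and $\mathcal J'$ are the (generally non-singleton) elements of $\mathcal P((i-1)\e,w,w')$ containing $w$ and $w'$, and the numerator is $\bigl[\sigmarh(\tilde f,\mathcal J)-\sigmarh(\tilde f,\mathcal J')\bigr]^+$, a difference of \emph{secant} slopes over these blocks, not of pointwise derivatives. Since taking the positive part does not commute with averaging, this can be strictly smaller than $\bigl[\tfrac{d\tilde f}{d\tau}(\tau)-\tfrac{d\tilde f}{d\tau}(\tau')\bigr]^+$ for individual $(\tau,\tau')$, so your pointwise lower bound on the integrand fails exactly where it is needed. If your Step 1 were correct, the entire non-local machinery of Sections \ref{S_analysis_wave_coll}--\ref{S_fQ} (characteristic intervals, partitions, wave packages) would be superfluous; it exists precisely to handle these already-interacted pairs.

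The paper closes this gap with the tree construction in the proof of Proposition \ref{P_increasing}: the rectangle $\widehat{\mathcal L}\times\widehat{\mathcal R}$ is recursively split by the operators $\Pi_0,\dots,\Pi_3$ according to the characteristic intervals $\mathcal I((i-1)\e,\cdot,\cdot)$, and on each terminal block (Lemma \ref{L_estim_phi_zero_psi_alpha}) the signed double integral of $\tfrac{d\tilde f}{d\tau}(\tau)-\tfrac{d\tilde f}{d\tau}(\tau')$ is regrouped over the partition elements $U_r, V_{r'}$, on which the average of the derivative equals the Rankine--Hugoniot speed; only \emph{then} is the positive part taken, term by term, which is where the inequality $\ge$ is gained. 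The recursion is unavoidable because the partition $\mathcal P((i-1)\e,w,w')$ depends on the pair, and Propositions \ref{P_partition_restr} and \ref{P_stessa_part} are needed to make the regroupings consistent. A secondary inaccuracy: $d_k((i-1)\e,\tau,\tau')$ is the measure of $\Phi_k((i-1)\e)(\mathcal B)$ with $\mathcal B$ running from all of $\mathcal K$ to all of $\mathcal K'$, so it is generally larger than $\tau'-\tau$ and involves no other families; the correct bound used in the paper is $d_k \le \tau_R-\tau_L$, which fortunately is also what your convexity identity requires, so this part is repairable — but Step 1 is not.
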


To prove this theorem, we first study the change of the l.h.s. of \eqref{E_increasing} due to transversal interactions (Lemma \ref{L_tilde_q}) and then we study the interaction between waves of the same family and same sign (Proposition \ref{P_increasing}). 

First of all we introduce the following notations. Fix $m \in \Z$ and assume $J_m^L, J_m^R \neq \emptyset$ (see \eqref{E_trans_surv} for the definition). Consider the grid point $(i\e,m\e)$ and the two incoming Riemann problems $(u^{i,m-1}, u^{i-1,m-1})$ and $(u^{i-1,m-1}, u^{i,m})$. Assume that
\begin{equation*}
u^{i-1,m-1} = T^{n}_{s'_n} \circ \dots \circ T^{1}_{s'_1} u^{i,m-1}, \qquad  
u^{i,m} = T^{n}_{s''_n} \circ \dots \circ T^{1}_{s''_1} u^{i-1,m-1},
\end{equation*}
and that their elementary curves are 
\begin{align*}
\gamma_1' &= (u_1', v_1', \sigma_1') := \gamma_1(u^{i,m-1}, s_1'), & \gamma_h' &= (u_h', v_h', \sigma_h') := \gamma_h\big(u_{h-1}'(s_{h-1}'), s_h'\big), \quad \text{for } h = 2, \dots, n, \crcr
\gamma_1''&=  (u_1'', v_1'', \sigma_1'') := \gamma_1(u^{i-1,m-1}, s_1''), & \gamma_h'' &= (u_h'', v_h'', \sigma_h'')  := \gamma_h\big(u_{h-1}''(s_{h-1}''), s_h''\big), \quad \text{for } h = 2, \dots, n,
\end{align*}
see Section \ref{Ss_Rp} for the notation.

We are interested in the $k$-th family. Denote by $f_k', f_k''$ the reduced fluxes associated respectively to $\gamma_k', \gamma_k''$ and w.l.o.g. assume that $f_k', f_k''$ are defined respectively on $J_m^L$, $J_m^R$. Now consider the collection of curves
\begin{align*}
&\tilde \gamma_1'  = (\tilde u_1', \tilde v_1', \tilde \sigma_1') := \gamma_1(u^{i,m-1}, s_1'),  && \tilde \gamma_1''  = (\tilde u_1'', \tilde v_1'', \tilde \sigma_1'') := \gamma_1(u_1'(s_1'), s_1''), \crcr
& \tilde \gamma_h'  = (\tilde u_h', \tilde v_h', \tilde \sigma_h') := \gamma_h \big(\tilde u_{h-1}''(s_{h-1}''), s_h'\big), && 
\tilde \gamma_h''  = (\tilde u_h'', \tilde v_h'', \tilde \sigma_h'') := \gamma_h \big(\tilde u_{h}'(s_{h}'), s_h''\big), 
\quad \text{for } h = 2, \dots, n.
\end{align*}
This is the situation after the transversal interactions considered at the beginning of Section \ref{S_local}, see Figure \ref{F_el_curves_after_trans}. For the $k$-th (fixed) family, denote by $\tilde f_k', \tilde f_k''$ the reduced fluxes associated to the curves $\tilde \gamma_k', \tilde \gamma_k''$ and let
\begin{equation*}
\tilde f := \tilde f_k' \cup \tilde f_k''.
\end{equation*}
As before, w.l.o.g. assume that $\tilde f_k', \tilde f_k''$ are defined respectively on $J_m^L$, $J_m^R$.

For any $(\tau, \tau') \in J_m^L \times J_m^R$, consider $w:= \Phi_k((i-1)\e)^{-1}(\tau)$, $w' :=  \Phi_k((i-1)\e)^{-1}(\tau')$ and define the quantity
\begin{equation*}
\tilde{\mathfrak q}_k(\tau, \tau') := 
\frac{\tilde \pi_k(\tau, \tau')}{d_k((i-1)\e,\tau, \tau')},
\end{equation*}
where $\tilde \pi(\tau, \tau')$ is defined as follows:
if $\mathcal J, \mathcal J' \in \mathcal P((i-1)\e,w,w')$ are the element of the partition at time $(i-1)\e$  containing $w,w'$ respectively,
then $\tilde \pi(\tau, \tau')$ is defined as in \eqref{E_def_pi}, with $\tilde f$ instead of $\feff_k((i-1)\e)$, i.e.
\begin{equation*}
\tilde \pi(\tau, \tau'):=
\Big[ \sigmarh(\tilde f, \mathcal J) - \sigmarh(\tilde f, \mathcal J') \Big]^+.
\end{equation*}
Recall that since $w,w'$ are interacting at time $i\e$, then $\mathcal J = \mathcal K$, $\mathcal J' = \mathcal K'$ in \eqref{E_element_of_part}. We can now study the change of the l.h.s. of \eqref{E_increasing}, due to transversal interaction. This is done in the next lemma.

\begin{lemma}
\label{L_tilde_q}
It holds
\begin{equation*}
\begin{split}
\iint_{J_m^L \times J_m^R} \Big[ - \mathfrak q_k((i-1)\e, \tau, \tau') +  \tilde{\mathfrak q}_k(\tau, \tau') \Big] d\tau d\tau' &\leq \const \Atrans(i\e,m\e) \mathcal L^1(J_m) \\
&\leq \const \TV(u(0)) \Atrans(i\e,m\e).
\end{split}
\end{equation*}
\end{lemma}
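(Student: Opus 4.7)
The plan is to exploit the fact that $\mathfrak q_k((i-1)\e,\tau,\tau')$ and $\tilde{\mathfrak q}_k(\tau,\tau')$ share the same denominator, so only the numerators differ, and the difference is controlled by the transversal amount of interaction through Lemma \ref{L_general_transversal}. First I would observe that, since $w := \Phi_k((i-1)\e)^{-1}(\tau)$ and $w' := \Phi_k((i-1)\e)^{-1}(\tau')$ interact at time $i\e$, one has $\tint((i-1)\e,w,w') = i\e$, so the sets $\mathcal K, \mathcal K'$ appearing in the definition of $\pi_k((i-1)\e,\tau,\tau')$ coincide with the partition elements $\mathcal J, \mathcal J' \in \mathcal P((i-1)\e,w,w')$ containing $w,w'$, and in particular $\mathcal G = \mathcal J$, $\mathcal G' = \mathcal J'$. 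Moreover, by Proposition \ref{P_divise_partizione_implica_divise_realta} both the position $\mathtt x$ and the speed $\hat\sigma_k$ are constant on each partition element, so $\mathcal J \subseteq \W_k^{(1)}((i-1)\e,(m-1)\e)$ and $\mathcal J' \subseteq \W_k^{(0)}((i-1)\e,m\e)$, yielding $\Phi_k((i-1)\e)(\mathcal J) \subseteq J_m^L$ and $\Phi_k((i-1)\e)(\mathcal J') \subseteq J_m^R$.

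Next I would compare the two fluxes on $J_m$. By Definition \ref{D_effect_flux}, $\feff_k((i-1)\e)$ restricted to $J_m^L$ (resp. $J_m^R$) coincides up to an affine function with the reduced flux $f_k'$ (resp. $f_k''$) of the incoming curve, while $\tilde f = \tilde f_k' \cup \tilde f_k''$ corresponds to the curves obtained after the transversal interactions. Lemma \ref{L_general_transversal} then gives
\begin{equation*}
\|D^2(\feff_k((i-1)\e) - \tilde f)\|_{L^1(J_m)} \leq \const \Atrans(i\e,m\e).
\end{equation*}
Normalizing the affine ambiguity so that $\feff_k((i-1)\e)$ and $\tilde f$ agree along with their first derivatives at a chosen point of $J_m$ (using the continuity of both first derivatives across the junction point between $J_m^L$ and $J_m^R$, which holds by the compatibility of the intermediate states), this bounds the oscillation of $D(\feff_k((i-1)\e) - \tilde f)$ on $J_m$ by $\const \Atrans(i\e,m\e)$. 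Since $\sigmarh(g,\mathcal J)$ is the mean of $g'$ on $\Phi_k((i-1)\e)(\mathcal J)$, the elementary inequality $[x]^+ - [y]^+ \leq |x-y|$ gives
\begin{equation*}
\tilde\pi_k(\tau,\tau') - \pi_k((i-1)\e,\tau,\tau') \leq \bigl|\sigmarh(\tilde f - \feff_k, \mathcal J) - \sigmarh(\tilde f - \feff_k, \mathcal J')\bigr| \leq \const \Atrans(i\e,m\e).
\end{equation*}

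Finally, using that $d_k((i-1)\e,\tau,\tau') \geq \tau' - \tau$ (since $\Phi_k((i-1)\e)(\mathcal B)$ is an interval containing $[\tau,\tau']$ by monotonicity of $\Phi_k$), the integral reduces to
\begin{equation*}
\iint_{J_m^L \times J_m^R} \frac{d\tau\,d\tau'}{\tau' - \tau} = (L+R)\log(L+R) - L\log L - R\log R \leq (\log 2)(L+R),
\end{equation*}
where $L = \mathcal L^1(J_m^L)$, $R = \mathcal L^1(J_m^R)$, and the final inequality follows from the concavity of $-x\log x$. This yields the first bound, and the second then follows from $\mathcal L^1(J_m) \leq \mathcal L^1(\I(L_k^+((i-1)\e))) \leq \const \TV(u(0))$ via \eqref{E_bd_on_TV} and Proposition \ref{P_V_equiv_TV}. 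The main delicate point is recognizing that the naive integration of $1/(\tau'-\tau)$ produces an apparent logarithmic divergence, which is absorbed by the entropy-type identity and leaves a clean linear bound in $\mathcal L^1(J_m)$.
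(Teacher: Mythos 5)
Your proposal is correct and follows essentially the same route as the paper's proof: you bound $\tilde\pi_k - \pi_k$ by $\const\,\Atrans(i\e,m\e)$ using the $L^1$ estimate on the second derivatives of the reduced fluxes from Lemma \ref{L_general_transversal} (after fixing the affine normalization of $\feff_k((i-1)\e)$, exactly as in \eqref{E_varsigma}), note that the two weights share the denominator $d_k((i-1)\e,\tau,\tau')\geq \tau'-\tau$, and integrate $1/(\tau'-\tau)$ over $J_m^L\times J_m^R$. The only (welcome) refinements are that you make explicit the continuity of the first derivatives at the junction $\sup J_m^L=\inf J_m^R$ and evaluate the double integral exactly via the convexity of $x\mapsto x\log x$, where the paper simply invokes the analogous computation from Theorem \ref{T_decreasing}.
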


\begin{proof}
We first prove that for any $(\tau, \tau') \in J_m^L \times J_m^R$, 
\begin{equation}
\label{E_tilde_pi}
\big|\pi((i-1)\e, \tau, \tau') - \tilde \pi(\tau, \tau')\big| \leq \const \Atrans(i\e,m\e).
\end{equation}
As in \eqref{E_feff_der_sec}, choose $\feff_k((i-1)\e)$ such that 
\begin{equation*}
\frac{d\feff_k((i-1)\e)}{d\tau}\big(\inf J_m^L\big) =
\frac{d \tilde f_k'}{d \tau}\big(\inf J_m^L\big).
\end{equation*} 
For any $\bar \varsigma \in J_m$, it holds
\begin{equation}
\label{E_varsigma}
\begin{split}
\bigg|
\frac{d \feff_k((i-1)\e)}{d\tau}(\bar \varsigma)
 - \frac{d \tilde f}{d\tau}(\bar \varsigma)
 \bigg| &\leq 
\int_0^{\bar \varsigma} \bigg|\frac{d^2 \feff_k((i-1)\e)}{d\tau^2}(\varsigma) - \frac{d^2 \tilde f}{d\tau^2}(\varsigma)\bigg| d\varsigma 
\crcr
& \leq 
\bigg\|
\frac{d^2 \feff_k((i-1)\e)}{d\tau^2}
 - \frac{d^2 \tilde f}{d\tau^2}
 \bigg\|_{L^1(J_m)} \crcr	
\bigg(\text{by Lemma \ref{L_general_transversal}}&\text{ and since }\frac{d^2 \feff_k((i-1)\e)}{d\tau^2} = \frac{d^2  (f_k' \cup f_k'')}{d\tau^2} \text{ a.e. on } J_m\bigg) \crcr
& \leq \const \Atrans(i\e,m\e).
\end{split}
\end{equation}

\noindent For $(\tau, \tau') \in J^L_m \times J^R_m$, set $w := \Phi_k((i-1)\e)^{-1}(\tau)$, $w' := \Phi_k((i-1)\e)^{-1}(\tau')$; let $\mathcal J, \mathcal J' \in \mathcal P((i-1)\e,w,w')$, $w \in \mathcal J$, $w' \in \mathcal J'$ be the element of the partition containing $w,w'$ respectively; since
\begin{equation*}
\tint((i-1)\e,w,w') - \e = (i-1)\e,
\end{equation*}
using \eqref{E_element_of_part} we have 
\begin{equation*}
\pi_k((i-1)\e, \tau, \tau') = \Big[\sigmarh(\feff_k((i-1)\e), \mathcal J) - \sigmarh(\feff_k((i-1)\e), \mathcal J') \Big]^+.
\end{equation*}
Hence
\begin{equation*}
\begin{split}
\big|\pi_k((i-1)&\e, \tau, \tau') - \pi_k(\tau, \tau')\big| \\
& = \bigg|
\Big[\sigmarh(\feff_k((i-1)\e), \mathcal J) - \sigmarh(\feff_k((i-1)\e), \mathcal J') \Big]^+
- \Big[\sigmarh(\tilde f, \mathcal J) - \sigmarh(\tilde f, \mathcal J') \Big]^+ \bigg|\\
& \leq 
\Big|
 \sigmarh(\feff_k((i-1)\e), \mathcal J) - \sigmarh(\tilde f, \mathcal J)
\Big|
+ \Big|
\sigmarh(\feff_k((i-1)\e), \mathcal J') - \sigmarh(\tilde f, \mathcal J') 
\Big| \\
& \leq \bigg\|
\frac{d \feff_k((i-1)\e)}{d\tau}
 - \frac{d \tilde f}{d\tau}
 \bigg\|_{L^\infty(J_m)} \\
\text{(by \eqref{E_varsigma})} & 
\leq \const \Atrans(i\e,m\e),
\end{split}
\end{equation*}
thus proving \eqref{E_tilde_pi}. 

As an immediate consequence, we have that for any $(\tau, \tau') \in J_m^L \times J_m^R$, it holds
\begin{equation*}
\Big| \tilde {\mathfrak q}_k(\tau, \tau') - \mathfrak q_k((i-1)\e, \tau, \tau') \Big| \leq \const \frac{1}{\tau' - \tau} \Atrans(i\e,m\e).
\end{equation*}
We thus have as in the proof of Theorem \ref{T_decreasing}
\begin{equation*}
\begin{split}
\iint_{J_m^L \times J_m^R} \Big[- \mathfrak q_k((i-1)\e, \tau, \tau') + \tilde{\mathfrak q}(\tau, \tau') \Big] d\tau d\tau' 
& \leq  \const \Atrans(i\e,m\e) \iint_{J_m^L \times J_m^R}  \frac{1}{\tau' - \tau} d\tau d\tau' \\
& \leq \const \Atrans(i\e,m\e) \mathcal L^1(J_m), 
\end{split}
\end{equation*}
thus concluding the proof of the lemma, because $\mathcal L^1(J_m) \leq \const \TV(u(0))$ by \eqref{E_bd_on_TV}.
\end{proof}

Now, to conclude the proof of Theorem \ref{T_increasing} it is sufficient to prove the following proposition. 

\begin{proposition}
\label{P_increasing}
It holds
\begin{equation*}
\iint_{J_m^L \times J_m^R} \tilde{\mathfrak q}_k(\tau, \tau') d\tau d\tau'
\geq \Aquadr_k(i\e,m\e).
\end{equation*}
\end{proposition}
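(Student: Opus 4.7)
Assume without loss of generality that $s_k' > 0$ and $s_k'' > 0$; if the signs are opposite then $\Aquadr_k = 0$ and the claim is trivial, while the case both negative is symmetric. Similarly, assume $\Aquadr_k > 0$, since otherwise the non-negativity of $\tilde{\mathfrak q}_k$ settles the inequality. Writing $\tilde f := \tilde f_k' \cup \tilde f_k''$ on $[0, s_k' + s_k'']$, Definition \ref{D_aquadr} gives $\Aquadr_k = \tilde f(s_k') - \conv \tilde f(s_k')$, so there is a maximal open interval $(\alpha, \beta) \ni s_k'$ on which $\conv \tilde f$ lies strictly below $\tilde f$. On $[\alpha, \beta]$ the envelope coincides with the chord from $(\alpha, \tilde f(\alpha))$ to $(\beta, \tilde f(\beta))$.

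A preliminary structural step identifies $\alpha$ and $\beta$ as boundaries of partition elements of $\mathcal P((i-1)\e, w, w')$ for every pair $(w, w') \in J_m^L \times J_m^R$ entering the integration below. The partition on $J_m^L$ is entropic with respect to $\feff_k((i-1)\e)$, which agrees (up to an affine function) with $\tilde f_k' = \tilde f|_{J_m^L}$. From the sandwich $\conv \tilde f \leq \conv_{J_m^L} \tilde f_k' \leq \tilde f$ on $J_m^L$ together with $\conv \tilde f(\alpha) = \tilde f(\alpha)$, one deduces $\conv_{J_m^L} \tilde f_k'(\alpha) = \tilde f_k'(\alpha)$, placing $\alpha$ at the boundary of a shock interval of the restricted envelope and hence at a partition boundary. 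The argument for $\beta$ on $J_m^R$ is symmetric, and consistency of the partition across varying pairs on the relevant rectangle is ensured by Proposition \ref{P_stessa_part}.

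I would then estimate the integral restricted to $[\alpha, s_k'] \times [s_k', \beta] \subseteq J_m^L \times J_m^R$. For $(\tau, \tau')$ in this rectangle, the partition elements $\mathcal J_\tau \ni \Phi_k((i-1)\e)^{-1}(\tau)$ and $\mathcal J'_{\tau'}$ lie inside $[\alpha, s_k']$ and $[s_k', \beta]$ respectively, so the denominator satisfies $d_k((i-1)\e, \tau, \tau') \leq \beta - \alpha$. Setting $\bar\sigma^L(\tau) := \sigmarh(\tilde f, \mathcal J_\tau)$ and analogously $\bar\sigma^R(\tau')$, the telescoping identity $\int_{\mathcal J} \bar\sigma^L d\tau = \tilde f(\sup \mathcal J) - \tilde f(\inf \mathcal J)$ yields $\int_\alpha^{s_k'} \bar\sigma^L = \tilde f(s_k') - \tilde f(\alpha)$ and $\int_{s_k'}^\beta \bar\sigma^R = \tilde f(\beta) - \tilde f(s_k')$. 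Combining $[x]^+ \geq x$ with the upper bound on $d_k$ yields
\begin{equation*}
\iint_{[\alpha, s_k'] \times [s_k', \beta]} \tilde{\mathfrak q}_k \, d\tau d\tau' \geq \frac{(\beta - s_k')\bigl(\tilde f(s_k') - \tilde f(\alpha)\bigr) - (s_k' - \alpha)\bigl(\tilde f(\beta) - \tilde f(s_k')\bigr)}{\beta - \alpha},
\end{equation*}
and a short rearrangement shows that the right-hand side equals $\tilde f(s_k') - \conv \tilde f(s_k') = \Aquadr_k$, using the linearity of $\conv \tilde f$ on $[\alpha, \beta]$.

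The principal obstacle is the structural step: carefully relating the restricted convex envelopes of $\tilde f_k'$ and $\tilde f_k''$ on $J_m^L$ and $J_m^R$ to the global envelope of $\tilde f$ on $J_m$, and verifying that the partition $\mathcal P((i-1)\e, w, w')$ is constant on the sub-rectangle as the pair $(w,w')$ varies. Once these geometric compatibilities are in place, the remaining computation is a one-line algebraic identity.
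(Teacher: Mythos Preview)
Your reduction coincides with the paper's: both restrict to the sub-rectangle $(\tau_L,\tau_M]\times(\tau_M,\tau_R]$ (your $[\alpha,s_k']\times[s_k',\beta]$), bound $d_k((i-1)\e,\tau,\tau')\leq\tau_R-\tau_L$, and rewrite $\Aquadr_k$ via the same algebraic identity, so that what remains is the inequality
\[
\big[\sigmarh(\tilde f,(\tau_L,\tau_M])-\sigmarh(\tilde f,(\tau_M,\tau_R])\big]\,\mathcal L^2\big((\tau_L,\tau_M]\times(\tau_M,\tau_R]\big)\;\leq\;\iint_{(\tau_L,\tau_M]\times(\tau_M,\tau_R]}\tilde\pi(\tau,\tau')\,d\tau\,d\tau'.
\]

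The gap is exactly the step you label the ``principal obstacle'', and it is not a formality but the entire content of the proposition. The partition $\mathcal P((i-1)\e,w,w')$ depends on \emph{both} waves, so your $\bar\sigma^L(\tau)$ is not a function of $\tau$ alone and the telescoping identity $\int_\alpha^{s_k'}\bar\sigma^L\,d\tau=\tilde f(s_k')-\tilde f(\alpha)$ is not well-posed as written. Proposition~\ref{P_stessa_part} does \emph{not} make the partition constant across the full sub-rectangle: its Point~\eqref{Point_1_stessa_part} applies only to pairs $(z,z')$ lying in $\mathcal I((i-1)\e,\max\widehat{\mathcal L},\min\widehat{\mathcal R})$ with both creation times not exceeding the splitting time, and this carves out a strictly smaller rectangle. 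On each of the three remaining pieces one must pick a new reference pair and repeat; this recursion is precisely the tree construction (the maps $\Pi_0,\Pi_1,\Pi_2,\Pi_3$, the finite tree $D\subseteq\{0,1,2,3\}^{<\N}$, and Lemmas~\ref{L_incastro_2}--\ref{L_estim_phi_zero_psi_alpha}) that occupies the bulk of the paper's proof. Your convex-envelope argument for $\alpha$ being a partition boundary is also not sufficient: the partition is defined recursively through the \emph{history} of the solution (Section~\ref{Ss_partition}), and Proposition~\ref{P_divise_partizione_implica_divise_realta} only places each element $\mathcal J$ inside a shock of the actual Riemann problem at $((i-1)\e,(m-1)\e)$ with flux $\feff_k((i-1)\e)$, which is a different envelope from $\conv_{J_m}\tilde f$.
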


\begin{proof}
Set 
\begin{equation*}
\tau_M := \sup J_m^L = \inf J_m^R, 
\end{equation*}
and
\begin{equation*}
\begin{split}
\tau_L &:= \max \Big\{\tau \in J_m^L \ \big| \ \conv_{J_m^L} \tilde f(\tau) = \conv_{J_m^L \cup J_m^R} \tilde f(\tau) \Big\}, \\
\tau_R &:= \min \Big\{\tau \in J_m^R \ \big| \ \conv_{J_m^R} \tilde f(\tau) = \conv_{J_m^L \cup J_m^R} \tilde f(\tau) \Big\}.
\end{split}
\end{equation*}

\noindent W.l.o.g. we assume that $\tau_L < \tau_M < \tau_R$, otherwise there is nothing to prove. 

It is quite easy to see that
\begin{equation*}
\begin{split}
\Aquadr(i\e,m\e) & = \tilde f(\tau_M) - \conv_{[\inf J_m^L, \sup J_m^R]} \tilde f  \\
& = \tilde f(\tau_M) - \conv_{[\tau_L, \tau_R]} \tilde f  \\
& =  \frac{1}{\tau_R - \tau_L} 
\bigg[\bigg(\frac{\tilde f(\tau_M)- \tilde f(\tau_L)}{\tau_M - \tau_L} - \frac{\tilde f(\tau_R) - \tilde f (\tau_M)}{\tau_R - \tau_M} \bigg)
\big(\tau_M - \tau_L \big) \big(\tau_R - \tau_M \big)
 \bigg] \\
& = \frac{1}{\tau_R - \tau_L} 
\Big[ \sigmarh \big( \tilde f, (\tau_L, \tau_M] \big) - \sigmarh \big( \tilde f, (\tau_M, \tau_R] \big) \Big] \mathcal L^2 \big( (\tau_L, \tau_M] \times (\tau_M, \tau_R]  \big).
\end{split}
\end{equation*}

\noindent It is thus sufficient to prove that
\begin{equation}
\label{E_quadr_est}
\begin{split}
\frac{1}{\tau_R - \tau_L} 
\Big[\sigmarh \big( \tilde f, (\tau_L, \tau_M ] \big) - \sigmarh \big( \tilde f, (\tau_M, \tau_R] \big) \Big] \mathcal L^2 \big( (\tau_L, \tau_M] \times (\tau_M, \tau_R] \big)  \leq \int_{\tau_L}^{\tau_M} \int_{\tau_M}^{\tau_R} \tilde {\mathfrak q}_k(\tau, \tau') d\tau d\tau'.
\end{split} 
\end{equation}

\noindent Observe that, by Proposition \ref{P_divise_partizione_implica_divise_realta}, 
\begin{equation*}
d((i-1)\e, \tau, \tau') \leq \tau_R - \tau_L;
\end{equation*}
hence \eqref{E_quadr_est} will follow if we prove that
\begin{equation}
\label{E_delta_sigma}
\Big[\sigmarh \big( \tilde f,(\tau_L, \tau_M] \big) - \sigmarh \big( \tilde f, (\tau_M, \tau_R] \big) \Big] \mathcal L^2\big((\tau_L, \tau_M] \times (\tau_M, \tau_R] \big) 
\leq \int_{\tau_L}^{\tau_M} \int_{\tau_M}^{\tau_R} \tilde \pi(\tau, \tau') d\tau d\tau'.
\end{equation}


Let
\begin{equation*}
\mathcal L := \Phi_k((i-1)\e)^{-1} \big((\tau_L, \tau_M] \big), \qquad
\mathcal R := \Phi_k((i-1)\e)^{-1} \big(( \tau_M, \tau_R] \big).
\end{equation*}
We will identify waves through the equivalence relation $\bowtie$ introduced in \eqref{E_big_waves}: for any couple of waves $w,w' \in \mathcal J \cup \mathcal R$, set $w \bowtie w'$ if and only if 
\begin{equation*}
\tcr(w) = \tcr(w') \text{ and } 
\mathtt x (t,w) = \mathtt x(t,w') \quad \text{ for any }
t \in \Big[ \tcr(w), i\e \Big).
\end{equation*}
As observed in Lemma \ref{L_discrete_partition}, the sets
\begin{equation*}
\widehat{\mathcal L} := \mathcal L \ \big/ \bowtie, \qquad 
\widehat{\mathcal R} := \mathcal R \ \big/ \bowtie
\end{equation*}
are finite and totally ordered by the order $\leq$ on $\W_k^+((i-1)\e)$. Moreover for any $\xi \in \widehat{\mathcal L}$, $\xi' \in \widehat{\mathcal R}$, let $w \in \xi$, $w' \in \xi'$ and set
\begin{equation*}
\mathcal I((i-1)\e, \xi, \xi') := \mathcal I((i-1)\e, w, w'), \qquad
\mathcal P((i-1)\e, \xi, \xi') := \mathcal P((i-1)\e, w,w'),
\end{equation*}
and
\begin{equation*}
\widehat{\mathcal I}((i-1)\e, \xi, \xi') := \mathcal I((i-1)\e, \xi,\xi') \ \big/ \bowtie.
\end{equation*}
The above definitions are well posed thanks to Lemma \ref{L_big_waves} and Lemma \ref{L_iow_tecn}, Point \eqref{Pt_I_as_big_waves}. It is moreover quite easy to see that $\widehat{\mathcal I} \subseteq \widehat{\mathcal L} \cup \widehat{\mathcal R}$. 

\begin{figure}
\begin{tikzpicture}
\draw (0,0) rectangle (7,8);
\draw (1,2) rectangle (6,7);
\draw[dashed] (4,2) to (4,7);
\draw[dashed] (1,5) to (6,5);
\draw[<->] (0,-0.2) to (7,-0.2);
\draw[<->] (1,1.8) to (6,1.8);
\draw[<->] (0,-0.2) to (7,-0.2);
\draw[<->] (7.2,8) to (7.2,0);
\draw[<->] (6.2,2) to (6.2,7);
\node[below] at (3.5,-0.2) {\Large $\widehat{\mathcal L}$};
\node[below] at (3.5,1.8) {\Large $\widehat{\mathcal L}_{\mathcal C}$};
\node[right] at (7.2,4) {\Large  $\widehat{\mathcal R}$};
\node[right] at (6.2,4.5) {\Large  $\widehat{\mathcal R}_{\mathcal C}$};
\node at (5,3.5) {\Large  $\Pi_0(\mathcal{\widehat C})$};
\node at (5,6) {\Large $\Pi_1(\mathcal{\widehat C})$};
\node at (2.5,6) {\Large $\Pi_2(\mathcal{\widehat C})$};
\node at (2.5,3.5) {\Large $\Pi_3(\mathcal{\widehat C})$};
\end{tikzpicture}
\caption{Partition of $\widehat C := \widehat{\mathcal L}_{\mathcal C} \times \widehat{\mathcal R}_{\mathcal C}$.}
\label{F_partition_lxr}
\end{figure}

Now we partition the rectangle $\widehat{\mathcal L} \times \widehat{\mathcal R}$ in sub-rectangles, as follows. For any rectangle $\widehat{\mathcal C} := \widehat{\mathcal L}_{\mathcal C} \times \widehat{\mathcal R}_{\mathcal C} \subseteq \widehat{\mathcal L} \times \widehat{\mathcal R}$, define (see Figure \ref{F_partition_lxr})
\[
\Pi_0 (\widehat{\mathcal C}) := 
\begin{cases}
\emptyset, & \widehat{\mathcal C} = \emptyset, \\
\Big[\widehat{\mathcal L}_{\mathcal C} \cap \widehat{\mathcal I}((i-1)\e, \max \widehat{\mathcal L}_{\mathcal C}, \min \widehat{\mathcal R}_{\mathcal C}) \Big] \times \Big[\widehat{\mathcal R}_{\mathcal C} \cap \widehat{\mathcal I}((i-1)\e, \max \widehat{\mathcal L}_{\mathcal C}, \min \widehat{\mathcal R}_{\mathcal C})\Big], & \widehat{\mathcal C} \neq \emptyset,
\end{cases}
\]

\[
\Pi_1 (\widehat{\mathcal C}) := 
\begin{cases}
\emptyset, 
& \widehat{\mathcal C} = \emptyset, \\
\Big[\widehat{\mathcal L}_{\mathcal C} \cap \widehat{\mathcal I}((i-1)\e, \max \widehat{\mathcal L}_{\mathcal C}, \min \widehat{\mathcal R}_{\mathcal C}) \Big] \times \Big[\widehat{\mathcal R}_{\mathcal C} \setminus \widehat{\mathcal I}((i-1)\e, \max \widehat{\mathcal L}_{\mathcal C}, \min \widehat{\mathcal R}_{\mathcal C})\Big], &
\widehat{\mathcal C} \neq \emptyset,
\end{cases}
\]

\[
\Pi_2 (\widehat{\mathcal C}) := 
\begin{cases}
\emptyset, 
& \widehat{\mathcal C} = \emptyset, \\
\Big[\widehat{\mathcal L}_{\mathcal C} \setminus \widehat{\mathcal I}((i-1)\e, \max \widehat{\mathcal L}_{\mathcal C}, \min \widehat{\mathcal R}_{\mathcal C}) \Big] \times \Big[\widehat{\mathcal R}_{\mathcal C} \setminus \widehat{\mathcal I}((i-1)\e, \max \widehat{\mathcal L}_{\mathcal C}, \min \widehat{\mathcal R}_{\mathcal C})\Big], &
\widehat{\mathcal C} \neq \emptyset,
\end{cases}
\]

\[
\Pi_3 (\widehat{\mathcal C}) := 
\begin{cases}
\emptyset, 
& \widehat{\mathcal C} = \emptyset, \\
\Big[\widehat{\mathcal L}_{\mathcal C} \setminus \widehat{\mathcal I}((i-1)\e, \max \widehat{\mathcal L}_{\mathcal C}, \min \widehat{\mathcal R}_{\mathcal C}) \Big] \times \Big[\widehat{\mathcal R}_{\mathcal C} \cap \widehat{\mathcal I}((i-1)\e, \max \widehat{\mathcal L}_{\mathcal C}, \min \widehat{\mathcal R}_{\mathcal C})\Big], &
\widehat{\mathcal C} \neq \emptyset,
\end{cases}
\]
Clearly $\Big\{\Pi_0(\widehat{\mathcal C}), \Pi_1(\widehat{\mathcal C}), \Pi_2(\widehat{\mathcal C}), \Pi_3(\widehat{\mathcal C}) \Big\}$ is a disjoint partition of $\widehat{\mathcal C}$.

For any set $A$, denote by $A^{<\N}$ the set of all finite sequences taking values in $A$. We assume that $\emptyset \in A^{<\N}$, called the \emph{empty sequence}. There is a natural ordering $\unlhd$ on $A^{<\N}$: given $\alpha, \beta \in A^{<\N}$,
\[
\alpha \unlhd \beta \quad \Longleftrightarrow \quad \text{$\beta$ is obtained from $\alpha$ by adding a finite sequence.}
\]
A subset $D \subseteq A^{<\N}$ is called a \emph{tree} if for any $\alpha, \beta \in A^{<\N}$, $\alpha \unlhd \beta$, if $\beta \in D$, then $\alpha \in D$. 

Define a map $\widehat{\Psi}: \{0, 1,2,3\}^{<\N} \longrightarrow 2^{\widehat{\mathcal L} \times \widehat{\mathcal R}}$, by setting
\[
\begin{split}
\widehat{\Psi}_\alpha  = 
\begin{cases}
\widehat{\mathcal L} \times \widehat{\mathcal R}, & \text{ if } \alpha = \emptyset, \\
\Pi_{a_n} \circ \dots \circ \Pi_{a_1}(\widehat{\mathcal L} \times \widehat{\mathcal R}), & \text{ if }\alpha = (a_1, \dots, a_n) \in \{0,1,2,3\}^{<\N} \setminus \{\emptyset\}.
\end{cases}
\end{split}
\]
For $\alpha \in \{0,1,2,3\}^{<\N}$, let $\widehat{\mathcal L}_\alpha, \widehat{\mathcal R}_\alpha$ be defined by the relation $\widehat{\Psi}_\alpha = \widehat{\mathcal L}_\alpha \times \widehat{\mathcal R}_\alpha$. 
Define a tree $D$ in $\{0,1,2,3\}^{<\N}$ setting 
\begin{equation*}
D := \big\{\emptyset\big\} \cup \bigg\{ \alpha = (a_1, \dots, a_n) \in \{0,1,2,3\}^{<\N} \ \Big| \ n \in \N, \ \widehat{\Pi}_\alpha \neq \emptyset, \ a_{k} \neq 0 \ \text{for} \ k=1,\dots,n-1 \bigg\}.
\end{equation*}
See Figure \ref{F_tree}.

\begin{figure}
\begin{tikzpicture}
\draw (0,0) rectangle (7,8);
\draw (4,0) to (4,8);
\draw (0,3) to (7,3);
\draw[dashed] (5.6, 3) to (5.6, 8);
\draw[dashed] (4,5) to (7,5);
\draw[dashed] (1.6, 3) to (1.6, 8);
\draw[dashed] (0,4.6) to (4,4.6);
\draw[dashed] (0,1.8) to (4,1.8);
\node[below] at (3.5,-0.2) {\Large $\widehat{\mathcal L}$};
\node[right] at (7.2,4) {\Large  $\widehat{\mathcal R}$};
\node at (5.5,1.5) {\Large $\hat{\Pi}_0$};
\node at (6.3,4) {\Large $\hat \Pi_{10}$};
\node at (6.3,6.5) {\Large $\hat \Pi_{11}$};
\node at (4.8,6.5) {\Large $\hat \Pi_{12}$};
\node at (4.8,4) {\Large $\hat \Pi_{13}$};
\node at (2.8,3.8) {\Large $\hat \Pi_{20}$};
\node at (2.8,6.3) {\Large $\hat \Pi_{21}$};
\node at (0.8,6.3) {\Large $\hat \Pi_{22}$};
\node at (0.8,3.8) {\Large $\hat \Pi_{23}$};
\node at (2,0.8) {\Large $\hat \Pi_{30}$};
\node at (2,2.4) {\Large $\hat \Pi_{31}$};
\end{tikzpicture}
\caption{Partition of $\mathcal L \times \mathcal R$ using the tree $D$.}
\label{F_tree}
\end{figure}
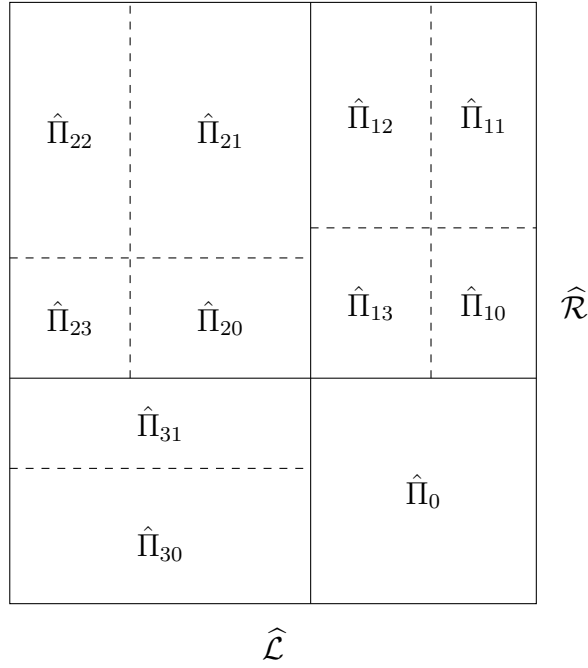

\noindent Since $\Pi_0(\Pi_0(\widehat{\mathcal C})) = \Pi_0(\widehat{\mathcal C})$ for any $\widehat{\mathcal C} \subseteq \widehat{\mathcal L} \times \widehat{\mathcal R}$,  this implies, together with the fact that $\widehat{\mathcal L} \times \widehat{\mathcal R}$ is a finite set, that $D$ is a finite tree.

For any $\alpha \in D$, set
\begin{align*}
\mathcal L_\alpha & := \bigcup_{\xi \in \widehat{\mathcal L}_\alpha} \xi, & 
\mathcal R_\alpha & := \bigcup_{\xi' \in \widehat{\mathcal R}_\alpha} \xi', \\
L_\alpha & := \Phi_k((i-1)\e) (\mathcal L_\alpha), &
R_\alpha & := \Phi_k((i-1)\e) (\mathcal R_\alpha).
\end{align*}
The idea of the proof is to show that, for each $\alpha \in D$, on the rectangle $L_\alpha \times  R_\alpha$ it holds
\begin{equation}
\label{E_fund_ineq}
\begin{split}
\big[ \sigmarh(\tilde f, L_\alpha) - \sigmarh(\tilde f, R_\alpha) \big] 
\mathcal L^2(L_\alpha \times R_\alpha)
\leq \int_{L_\alpha \times R_\alpha} \tilde \pi(\tau, \tau') d\tau d\tau'.
\end{split}
\end{equation}
The conclusion will follow just considering that $\emptyset \in D$ and $L_\emptyset = (\tau_L, \tau_M]$, $R_\emptyset = (\tau_M, \tau_R]$.


We now need the following two Lemmas. 

\begin{lemma}
\label{L_incastro_2}
For any $\beta \in D$, the partition $\mathcal P((i-1)\e, \max \widehat{\mathcal L}_\beta, \min \widehat{\mathcal R}_\beta)$ of the characteristic interval $\mathcal I((i-1)\e, \max \widehat{\mathcal L}_\beta, \min \widehat{\mathcal R}_\beta)$ can be restricted to 
\[
\mathcal L_\beta \cap \mathcal I((i-1)\e, \max \widehat{\mathcal L}_\beta, \min \widehat{\mathcal R}_\beta)
\]
and to
\[
\mathcal R_\beta \cap \mathcal I((i-1)\e, \max \widehat{\mathcal L}_\beta, \min \widehat{\mathcal R}_\beta).
\]
\end{lemma}

\begin{proof}
Let us prove only the first part of the statement, the second one being completely similar. We will show by induction the following stronger claim:
 
\smallskip
\noindent for each $\gamma \unlhd \beta$, the partition $\mathcal P((i-1)\e, \max \widehat{\mathcal L}_\beta, \min \widehat{\mathcal R}_\beta)$ of the interval $\mathcal I((i-1)\e, \max \widehat{\mathcal L}_\beta, \min \widehat{\mathcal R}_\beta)$ can be restricted to $\mathcal L_\gamma \cap \mathcal I((i-1)\e, \max \widehat{\mathcal L}_\beta, \min \widehat{\mathcal R}_\beta)$.
\smallskip

\noindent For $\gamma = \emptyset$, by definition $\mathcal L_{\emptyset} = \mathcal L$ and thus the proof is an easy consequence of Proposition \ref{P_divise_partizione_implica_divise_realta}. 
Thus assume the claim is true for some $\gamma \lhd \beta$ and let us prove it for $\gamma a $, with $a \in \{0,1,2,3\}$.

If $a = 0,1$, by definition it holds
\[
\mathcal L_{\gamma a} = \mathcal L_\gamma \cap \mathcal I((i-1)\e, \max \widehat{\mathcal L}_{\gamma}, \min \widehat{\mathcal R}_\gamma).
\]
Hence 
\[
\mathcal L_{\gamma a} \cap \mathcal I((i-1)\e, \max \widehat{\mathcal L}_{\beta}, \min \widehat{\mathcal R}_\beta) 
= \mathcal L_\gamma \cap \mathcal I((i-1)\e, \max \widehat{\mathcal L}_{\gamma}, \min \widehat{\mathcal R}_\gamma) \cap \mathcal I((i-1)\e, \max \widehat{\mathcal L}_{\beta}, \min \widehat{\mathcal R}_\beta).
\]
By inductive assumption, the partition $\mathcal P((i-1)\e, \max \widehat{\mathcal L}_\beta, \min \widehat{\mathcal R}_\beta)$ of $\mathcal I((i-1)\e, \max \widehat{\mathcal L}_\beta, \min \widehat{\mathcal R}_\beta)$ can be restricted to $\mathcal L_\gamma \cap \mathcal I((i-1)\e, \max \widehat{\mathcal L}_{\beta}, \min \widehat{\mathcal R}_\beta)$, while, since $\gamma \lhd \beta$,
\begin{equation*}
\max \widehat{\mathcal L}_\beta \leq \max \widehat{\mathcal L}_\gamma \leq \min \widehat{\mathcal R}_\gamma \leq \min \widehat{\mathcal R}_\beta
\end{equation*}
and therefore, by Proposition \ref{P_partition_restr}, the partition $\mathcal P((i-1)\e, \max \widehat{\mathcal L}_\beta, \min \widehat{\mathcal R}_\beta)$ can be restricted also to $\mathcal I((i-1)\e, \max \widehat{\mathcal L}_{\gamma}, \min \widehat{\mathcal R}_\gamma) \cap \mathcal I((i-1)\e, \max \widehat{\mathcal L}_{\beta}, \min \widehat{\mathcal R}_\beta)$, and thus we are done.

If $a =2,3$, by definition it holds
\[
\mathcal L_{\gamma a} = \mathcal L_\gamma \setminus \mathcal I((i-1)\e, \max \widehat{\mathcal L}_{\gamma}, \min \widehat{\mathcal R}_\gamma).
\]
Hence 
\begin{align*}
\mathcal L_{\gamma a} &\cap \mathcal I((i-1)\e, \max \widehat{\mathcal L}_{\beta}, \min \widehat{\mathcal R}_\beta) \\
&= \Big( \mathcal L_\gamma \setminus \mathcal I((i-1)\e, \max \widehat{\mathcal L}_{\gamma}, \min \widehat{\mathcal R}_\gamma) \Big) \cap \mathcal I((i-1)\e, \max \widehat{\mathcal L}_{\beta}, \min \widehat{\mathcal R}_\beta) \\
&= \Big( \mathcal L_\gamma \cap \mathcal I((i-1)\e, \max \widehat{\mathcal L}_{\beta}, \min \widehat{\mathcal R}_\beta)  \Big) 
\cap \Big(\mathcal I((i-1)\e, \max \widehat{\mathcal L}_{\beta}, \min \widehat{\mathcal R}_\beta) \setminus  \mathcal I((i-1)\e, \max \widehat{\mathcal L}_{\gamma}, \min \widehat{\mathcal R}_\gamma) \Big).
\end{align*}
As in the case $a =0,1$, by inductive assumption, the partition $\mathcal P((i-1)\e, \max \widehat{\mathcal L}_\beta, \min \widehat{\mathcal R}_\beta)$ of the interval $\mathcal I((i-1)\e, \max \widehat{\mathcal L}_\beta, \min \widehat{\mathcal R}_\beta)$ can be restricted to $\mathcal L_\gamma \cap \mathcal I((i-1)\e, \max \widehat{\mathcal L}_{\beta}, \min \widehat{\mathcal R}_\beta)$, while, as before, by Proposition \ref{P_partition_restr} using $\gamma \lhd \beta$, it can be restricted also to $\mathcal I((i-1)\e, \max \widehat{\mathcal L}_{\beta}, \min \widehat{\mathcal R}_\beta) \setminus \mathcal I((i-1)\e, \max \widehat{\mathcal L}_{\gamma}, \min \widehat{\mathcal R}_\beta)$, and thus we are done also in this case.
\end{proof}

\begin{lemma}
\label{L_estim_phi_zero_psi_alpha}
For each $\alpha =(a_1, \dots a_n) \in D$, if $a_n = 0$, then it holds
\[
\begin{split}
\big[ \sigmarh(\tilde f, L_\alpha) - \sigmarh(\tilde f, R_\alpha) \big] \mathcal L^2(L_\alpha \times R_\alpha)
\leq 
\iint_{L_\alpha \times R_\alpha} \tilde \pi(\tau, \tau')d\tau d\tau'.\end{split}
\]
\end{lemma}

\begin{proof}

Set $\beta := (a_1, \dots, a_{n-1})$. Since $a_n = 0$, then 
\[
\widehat{\Psi}_\alpha = \Pi_0(\widehat{\Psi}_\beta) = \Big(\widehat{\mathcal L}_\beta \cap \widehat{\mathcal I}((i-1)\e, \max \widehat{\mathcal L}_\beta, \min \widehat{\mathcal R}_\beta) \Big) \times \Big(\widehat{\mathcal R}_\beta \cap \widehat{\mathcal I}((i-1)\e, \max \widehat{\mathcal L}_\beta, \min \widehat{\mathcal R}_\beta) \Big),
\]
and thus
\begin{equation*}
\mathcal L_\alpha = \mathcal L_\beta \cap \mathcal I((i-1)\e, \max \widehat{\mathcal L}_\beta, \min \widehat{\mathcal R}_\beta), \qquad
\mathcal R_\alpha = \mathcal R_\beta \cap \mathcal I((i-1)\e, \max \widehat{\mathcal L}_\beta, \min \widehat{\mathcal R}_\beta).
\end{equation*}

Consider the partition $\mathcal P((i-1)\e, \max \widehat{\mathcal L}_\beta, \min \widehat{\mathcal R}_\beta)$ of the interval $\mathcal I((i-1)\e, \max \widehat{\mathcal L}_\beta, \min \widehat{\mathcal R}_\beta)$ and set
\begin{equation*}
\mathbf P := \Big\{ \Phi_k((i-1)\e) (\mathcal J) \ \big| \ \mathcal J \in \mathcal P((i-1)\e, \max \widehat{\mathcal L}_\beta, \min \widehat{\mathcal R}_\beta) \Big\}.
\end{equation*}
By definition of the partition in Section \ref{Ss_partition}, the elements of $\mathbf P$ are intervals in $\R$, possibly singletons. Clearly the non-singleton intervals in $\mathbf P$ are at most countable; moreover by Lemma \ref{L_incastro_2}, the partition $\mathcal P((i-1)\e, \max \widehat{\mathcal L}_\beta, \min \widehat{\mathcal R}_\beta)$  can be restricted both to $\mathcal L_\alpha$ and to $\mathcal R_\alpha$; hence, denoting by $\{U_r\}_{r \in \N}$ the non-singleton elements of $\mathbf P$ contained in $L_\alpha$ and by $\{V_{r'}\}_{r' \in \N}$ the non-singleton elements of $\mathbf P$ contained in $R_\alpha$, we can write $L_\alpha, R_\alpha$ as
\begin{equation*}
\begin{split}
L_\alpha & = \Phi_k((i-1)\e) (\mathcal L_\alpha) = \bigg( \bigcup_{r \in \N} U_r \bigg) \cup \bigg( L_\alpha \setminus \bigcup_{r \in \N} U_r \bigg), \\
R_\alpha & = \Phi_k((i-1)\e) (\mathcal R_\alpha) = \bigg( \bigcup_{r' \in \N} V_{r'} \bigg) \cup \bigg( R_\alpha \setminus \bigcup_{r' \in \N} V_{r'} \bigg);
\end{split}
\end{equation*}
%
set also, for shortness:
\begin{equation*}
U :=  \bigcup_{r \in \N} U_r, \qquad V:=  \bigcup_{r' \in \N} V_{r'}.
\end{equation*}

Now observe that for $(\tau, \tau') \in L_\alpha \times R_\alpha$, setting
\begin{equation*}
w:= \Phi_k((i-1)\e)^{-1}(\tau), \qquad w' := \Phi_k((i-1)\e)^{-1}(\tau'),
\end{equation*}
it holds:
\begin{equation}
\label{E_tilde_pi_su_phi_0}
\tilde \pi(\tau, \tau') = 
\left\{ \begin{array}{ll}
\big[ \sigmarh(\tilde f, U_r) - \sigmarh(\tilde f, V_{r'}) \big]^+ & \text{if } \tau \in U_r, \text{ and } \tau' \in V_{r'}, \\ [.2em]
{\displaystyle \bigg[ \sigmarh(\tilde f, U_r) - \frac{d \tilde f}{d\tau}(\tau') \bigg]^+} & \text{if } \tau \in U_r, \text{ and } \tau' \in R_\alpha \setminus V, \\ [.9em]
{\displaystyle \bigg[ \frac{d \tilde f}{d\tau}(\tau) - \sigmarh(\tilde f, V_{r'}) \bigg]^+} & \text{if } \tau \in L_\alpha \setminus U, \text{ and } \tau' \in  V_{r'}, \\ [.9em]
{\displaystyle \bigg[ \frac{d \tilde f}{d\tau}(\tau) - \frac{d \tilde f}{d\tau}(\tau') \bigg]^+} & \text{if } \tau \in L_\alpha \setminus U, \text{ and } \tau' \in R_\alpha \setminus V.
\end{array} \right.
\end{equation}
Indeed, if $\max \widehat{\mathcal L}_\beta, \min \widehat{\mathcal R}_\beta$ have never interacted at time $(i-1)\e$, then by Proposition \ref{P_stessa_part}, Point \eqref{Point_3_stessa_part}, $w,w'$ have never interacted at time $(i-1)\e$ and thus
\begin{equation*}
\tilde \pi(\tau, \tau') =  \bigg[ \frac{d \tilde f}{d\tau}(\tau) - \frac{d \tilde f}{d\tau}(\tau') \bigg]^+
\end{equation*}
(by \eqref{E_def_pi} and the fact that $\mathcal K,\mathcal K'$ in \eqref{E_gg'} are singletons). In particular \eqref{E_tilde_pi_su_phi_0} holds.
\\
On the other hand, if $\max \widehat{\mathcal L}_\beta, \min \widehat{\mathcal R}_\beta$ have already interacted at time $(i-1)\e$, distinguish two cases:
\begin{enumerate}
\item if $\tcr(w), \tcr(w') \leq \tsp((i-1)\e, \max \widehat{\mathcal L}_\beta, \min \widehat{\mathcal R}_\beta)$, then by Proposition \ref{P_stessa_part}, Point \eqref{Point_1_stessa_part}, $w,w'$ have already interacted at time $(i-1)\e$ and
\begin{equation*}
\mathcal I((i-1)\e,\max \widehat{\mathcal L}_\beta, \min \widehat{\mathcal R}_\beta) = \mathcal I((i-1)\e,w,w'), \ 
\mathcal P((i-1)\e,\max \widehat{\mathcal L}_\beta, \min \widehat{\mathcal R}_\beta) = \mathcal P((i-1)\e,w,w'),
\end{equation*}
which implies \eqref{E_tilde_pi_su_phi_0} (remember that
\begin{equation*}
\tsp \big( (i-1)\e,w,w' \big) = \tsp \big( (i-1)\e, \max \widehat{\mathcal L}_\beta, \min \widehat{\mathcal R}_\beta \big),
\end{equation*}
since the intervals are not further partitioned by $\Pi_0$);
\item if one or both among $w,w'$ is created after $\tsp((i-1)\e, \max \widehat{\mathcal L}_\beta, \min \widehat{\mathcal R}_\beta)$, then, by Proposition \ref{P_stessa_part}, Point \eqref{Point_2_stessa_part}, $w,w'$ have never interacted at time $(i-1)\e$ and thus
\begin{equation*}
\tilde \pi(\tau, \tau') =  \bigg[ \frac{d \tilde f}{d\tau}(\tau) - \frac{d \tilde f}{d\tau}(\tau') \bigg]^+
\end{equation*}
(by \eqref{E_def_pi} and the fact that $\mathcal K,\mathcal K'$ in \eqref{E_gg'} are singletons). In particular \eqref{E_tilde_pi_su_phi_0} holds also in this case.
\end{enumerate}

We are now able to conclude the proof of the lemma as follows:
\begin{equation*}
\begin{split}
\big[ \sigmarh&(\tilde f, L_\alpha) - \sigmarh(\tilde f, R_\alpha) \big] \mathcal L^2(L_\alpha \times R_\alpha) \\
& = \iint_{L_\alpha \times R_\alpha} \bigg[\frac{d \tilde f}{d\tau}(\tau) - \frac{d \tilde f}{d\tau}(\tau')\bigg] d\tau d\tau' \\
& = \sum_{r, r' \in \N} \iint_{U_r \times V_{r'}} \bigg[\frac{d \tilde f}{d\tau}(\tau) - \frac{d \tilde f}{d\tau}(\tau')\bigg] d\tau d\tau' 
+ \sum_{r \in \N} \iint_{U_r \times (R_\alpha \setminus V)} \bigg[\frac{d \tilde f}{d\tau}(\tau) - \frac{d \tilde f}{d\tau}(\tau')\bigg] d\tau d\tau' \\
& \quad  + \sum_{r' \in \N} \iint_{(L_\alpha \setminus U) \times V_{r'}} \bigg[\frac{d \tilde f}{d\tau}(\tau) - \frac{d \tilde f}{d\tau}(\tau')\bigg] d\tau d\tau' 
+ \iint_{(L_\alpha \setminus U) \times (R_\alpha \setminus V)} \bigg[\frac{d \tilde f}{d\tau}(\tau) - \frac{d \tilde f}{d\tau}(\tau')\bigg] d\tau d\tau' \\
& = \sum_{r, r' \in \N} \mathcal L^2(U_r \times V_{r'}) \Big[\sigmarh(\tilde f, U_r) - \sigmarh(\tilde f, V_{r'}) \Big] 
+ \sum_{r \in \N} \mathcal L^1(U_r) \int_{R_\alpha \setminus V} \bigg[\sigmarh(\tilde f, U_r) - \frac{d \tilde f}{d\tau}(\tau')\bigg] d\tau' \\
& \quad  + \sum_{r' \in \N} \mathcal L^1(V_{r'}) \int_{L_\alpha \setminus U} \bigg[\frac{d \tilde f}{d\tau}(\tau) - \sigmarh(\tilde f, V_{r'}) \bigg] d\tau 
+ \iint_{(L_\alpha \setminus U) \times (R_\alpha \setminus V)} \bigg[\frac{d \tilde f}{d\tau}(\tau) - \frac{d \tilde f}{d\tau}(\tau')\bigg] d\tau d\tau' \\
& \leq \sum_{r, r' \in \N} \mathcal L^2(U_r \times V_{r'}) \Big[\sigmarh(\tilde f, U_r) - \sigmarh(\tilde f, V_{r'}) \Big]^+ 
+ \sum_{r \in \N} \mathcal L^1(U_r) \int_{R_\alpha \setminus V} \bigg[\sigmarh(\tilde f, U_r) - \frac{d \tilde f}{d\tau}(\tau')\bigg]^+ d\tau' \\
& \quad  + \sum_{r' \in \N} \mathcal L^1(V_{r'}) \int_{L_\alpha \setminus U} \bigg[\frac{d \tilde f}{d\tau}(\tau) - \sigmarh(\tilde f, V_{r'}) \bigg]^+ d\tau 
+ \iint_{(L_\alpha \setminus U) \times (R_\alpha \setminus V)} \bigg[\frac{d \tilde f}{d\tau}(\tau) - \frac{d \tilde f}{d\tau}(\tau')\bigg]^+ d\tau d\tau' \\
& \!\!\! \overset{\eqref{E_tilde_pi_su_phi_0}}{\leq} \sum_{r, r' \in \N} \iint_{U_r \times V_{r'}} \tilde \pi(\tau, \tau') d\tau d\tau' 
+ \sum_{r \in \N} \iint_{U_r \times (R_\alpha \setminus V)} \tilde \pi(\tau, \tau') d\tau d\tau' \\
& \quad  + \sum_{r' \in \N} \iint_{(L_\alpha \setminus U) \times V_{r'}} \tilde \pi(\tau, \tau') d\tau d\tau' 
+ \iint_{(L_\alpha \setminus U) \times (R_\alpha \setminus V)} \tilde \pi(\tau, \tau') d\tau d\tau' \\
& = \iint_{L_\alpha \times R_\alpha} \tilde \pi(\tau, \tau') d\tau d\tau',
\end{split}
\end{equation*}
which is what we wanted to prove. 
\end{proof}

\noindent \textbf{Conclusion of the proof of Proposition \ref{P_increasing}.}
In the previous lemma we proved inequality \eqref{E_fund_ineq} for the elements $\alpha \in D$ of the tree whose last component is equal to $0$. Now we use this fact to prove \eqref{E_fund_ineq} for any $\alpha \in D$. We proceed by (inverse) induction on the tree.

If $\alpha$ is a leaf of the tree, then, by definition, the last component of $\alpha$ is equal to zero, and thus Lemma \ref{L_estim_phi_zero_psi_alpha} applies.

If $\alpha$ is not a leaf, then 
\begin{equation*}
\widehat{\Psi}_\alpha = \widehat{\Psi}_{\alpha 0} \cup \widehat{\Psi}_{\alpha 1}  \cup \widehat{\Psi}_{\alpha 2}  \cup \widehat{\Psi}_{\alpha 3}
\end{equation*}
and thus
\begin{equation*}
L_\alpha \times R_\alpha = 
\Big( L_{\alpha 0} \times R_{\alpha 0} \Big) \cup
\Big( L_{\alpha 1} \times R_{\alpha 1} \Big) \cup
\Big( L_{\alpha 2} \times R_{\alpha 2} \Big) \cup
\Big( L_{\alpha 3} \times R_{\alpha 3} \Big).
\end{equation*}
The estimate \eqref{E_fund_ineq} holds on $L_{\alpha 0} \times R_{\alpha 0}$ by Lemma \ref{L_estim_phi_zero_psi_alpha}, while it holds on $L_{\alpha a} \times R_{\alpha a}$, $a = 1,2,3$, by inductive assumption. Hence we can write
\begin{equation*}
\begin{split}
\big[ \sigmarh(\tilde f, L_\alpha) - \sigmarh(\tilde f, R_\alpha) \big] \mathcal L^2(L_\alpha \times R_\alpha) 
& = \iint_{L_\alpha \times R_\alpha} \bigg[\frac{d \tilde f}{d\tau}(\tau) - \frac{d \tilde f}{d\tau}(\tau')\bigg] d\tau d\tau' \\
& = \sum_{a = 0}^3 \iint_{L_{\alpha a} \times R_{\alpha a}} \bigg[\frac{d \tilde f}{d\tau}(\tau) - \frac{d \tilde f}{d\tau}(\tau')\bigg] d\tau d\tau' \\
& = \sum_{a = 0}^3  \big[ \sigmarh(\tilde f, L_{\alpha a}) - \sigmarh(\tilde f, R_{\alpha a}) \big] \mathcal L^2(L_{\alpha a} \times R_{\alpha a}) \\
& \leq \sum_{a = 0}^3 \iint_{L_{\alpha a} \times R_{\alpha a}} \tilde \pi(\tau, \tau') d\tau d\tau' \\
& = \iint_{L_{\alpha a} \times R_{\alpha a}} \tilde \pi(\tau, \tau') d\tau d\tau'. 
\end{split}
\end{equation*}
As already observed, for $\alpha = \emptyset$, we get inequality \eqref{E_delta_sigma}, thus concluding the proof of the proposition. 
\end{proof}


We can finally use Lemma \ref{L_tilde_q} and Proposition \ref{P_increasing} to prove Theorem \ref{T_increasing}.

\begin{proof}[Proof of Theorem \ref{T_increasing}]

It holds

\begin{equation*}
\begin{split}
- \sum_{m \in \Z} &\iint_{J_m^L \times J_m^R} \mathfrak q_k((i-1)\e) d\tau d\tau' \\
& \leq  \sum_{m \in \Z} \iint_{J_m^L \times J_m^R} \Big[- \mathfrak q_k((i-1)\e) + \tilde{\mathfrak q}_k(\tau, \tau') \Big]d\tau d\tau'
- \sum_{m \in \Z} \iint_{J_m^L \times J_m^R} \tilde{\mathfrak q}_k(\tau, \tau') d\tau d\tau' \\
& \leq 
\const \sum_{m \in \Z} \Atrans(i\e,m\e) \mathcal L^1(J_m) - \sum_{m \in \Z} \iint_{J_m^L \times J_m^R}  \tilde{\mathfrak q}_k(\tau, \tau') d\tau d\tau' \\
& \leq -\sum_{m \in \Z} \Aquadr_k(i\e,m\e) + \const \TV(u(0))  \sum_{m \in \Z} \Atrans(i\e,m\e) \\
& \leq -\sum_{m \in \Z} \Aquadr_k(i\e,m\e) + \const \TV(u(0))  \sum_{m \in \Z} \mathtt A(i\e,m\e). \qedhere
\end{split}
\end{equation*}
\end{proof}

\newpage

\appendix

\section{Real analysis results}
\label{S_appendix}

In this section collect some results about convex envelopes of continuous functions and slopes of secant lines; these results are frequently used in the paper. The statements related to convex envelopes are already proven in \cite{bia_03}, \cite{bia_mod_13}, \cite{bia_mod_14}, with some minimal variations, while the results regarding the slopes of the secant lines will be explicitly proved.
%

\subsection{Convex envelopes}
\label{Ss_convex_env_app}

We recall here the notion of \emph{convex envelope} of a continuous function $g: \R \to \R$ and we state some results about convex envelops.

\begin{definition}
\label{convex_fcn}
Let $g: \R \to \R$ be continuous and $[a,b] \subseteq \R$. We define \emph{the convex envelope of $g$ in the interval $[a,b]$} as
\[
\conv_{[a,b]} g (u) := \sup\bigg\{h(u) \ \Big| \ h: [a,b] \to \R \text{ is convex and } h \leq g\bigg\}.
\]
\end{definition}

A similar definition holds for \emph{the concave envelope of $g$ in the interval $[a,b]$} denoted by ${\displaystyle \conc_{[a,b]} g}$. All the results we present here for the convex envelope of a continuous function $g$ hold, with the necessary changes, for its concave envelope.

\begin{lemma}
In the same setting of Definition \ref{convex_fcn}, ${\displaystyle \conv_{[a,b]} g}$ is a convex function and ${\displaystyle \conv_{[a,b]} g(u) \leq g(u)}$ for each $u \in [a,b]$.  
\end{lemma}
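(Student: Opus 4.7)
The plan is to verify the two properties directly from Definition \ref{convex_fcn}, as this is a standard real-analysis fact. I will split the argument into three short steps: non-vacuity of the defining family, pointwise domination by $g$, and convexity of the pointwise supremum.

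First I would check that the set
\[
\mathcal{F} := \Big\{ h : [a,b] \to \R \ \Big| \ h \text{ is convex and } h \leq g \Big\}
\]
is non-empty and that the supremum in Definition \ref{convex_fcn} is finite at every $u \in [a,b]$. Since $g$ is continuous on the compact interval $[a,b]$, it attains a minimum $m := \min_{[a,b]} g \in \R$. The constant function $h_0 \equiv m$ is affine (hence convex) and satisfies $h_0 \leq g$, so $h_0 \in \mathcal{F}$. This shows the supremum is taken over a non-empty family and is bounded below by $m$.

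Next I would establish the inequality $\conv_{[a,b]} g(u) \leq g(u)$ for every $u \in [a,b]$. This is essentially tautological: by the very definition of $\mathcal{F}$, every $h \in \mathcal{F}$ satisfies $h(u) \leq g(u)$, so taking the supremum over $h \in \mathcal{F}$ yields
\[
\conv_{[a,b]} g(u) = \sup_{h \in \mathcal{F}} h(u) \leq g(u).
\]
In particular the supremum is finite, so $\conv_{[a,b]} g : [a,b] \to \R$ is a well defined real-valued function.

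Finally I would prove convexity, which is the standard argument that a pointwise supremum of convex functions is convex. Fix $u, v \in [a,b]$ and $\lambda \in [0,1]$. For each $h \in \mathcal{F}$, convexity of $h$ gives
\[
h\big(\lambda u + (1-\lambda) v\big) \leq \lambda h(u) + (1-\lambda) h(v) \leq \lambda \conv_{[a,b]} g(u) + (1-\lambda) \conv_{[a,b]} g(v).
\]
Taking the supremum over $h \in \mathcal{F}$ on the left-hand side yields
\[
\conv_{[a,b]} g\big(\lambda u + (1-\lambda) v\big) \leq \lambda \conv_{[a,b]} g(u) + (1-\lambda) \conv_{[a,b]} g(v),
\]
which is exactly convexity. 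There is no real obstacle here: all three steps are routine, and the only mild point to keep in mind is to invoke the continuity of $g$ on the compact interval $[a,b]$ to guarantee that $\mathcal{F}$ is non-empty and that the supremum is finite, thereby making the pointwise convexity computation meaningful.
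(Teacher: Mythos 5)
Your proof is correct and complete; the paper itself omits the argument, stating only that it is straightforward, and your three steps (non-emptiness of the family via the minimum of $g$ on the compact interval, pointwise domination by definition of the supremum, and convexity as a supremum of convex functions) are exactly the standard argument intended.
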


The proof is straightforward.

The following theorem is classical and provides a description of the regularity of the convex envelope of a given function $g$. For a self contained proof (of a bit less general result), see Theorem 2.5 of \cite{bia_mod_13}.
 
\begin{theorem}
\label{convex_fundamental_thm}
Let $g: [a,b] \to \R$ be a Lipschitz function. Then:
\begin{enumerate}
\item \label{convex_fundamental_thm_1} the convex envelope ${\displaystyle \conv_{[a,b]} g}$ of $g$ in the interval $[a,b]$ is Lipschitz on $[a,b]$ and 
\begin{equation*}
\Lip \Big( \conv_{[a,b]}g \Big) \leq \Lip(g);
\end{equation*} 
\item \label{convex_fundamental_thm_2} if $g \in C^1([a,b])$, then ${\displaystyle \conv_{[a,b]} g} \in C^1([a,b])$ and, for any point $u \in (a,b)$ such that $\displaystyle g(u) = \conv_{[a,b]}g(u)$, it holds
\[
\frac{d}{du}g(u) = \frac{d}{du}\conv_{[a,b]}g(u);
\]
\item \label{convex_fundamental_thm_3} if $g \in C^{1,1}([a,b])$, then $\displaystyle \conv_{[a.b]} g \in C^{1,1}([a,b])$ and 
\begin{equation*}
\Lip \bigg( \frac{d}{du}{\displaystyle \conv_{[a,b]} g} \bigg) \leq \Lip \bigg( \frac{dg}{du} \bigg).
\end{equation*}

\end{enumerate}
\end{theorem}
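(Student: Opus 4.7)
The plan is to prove the three regularity statements in sequence, exploiting the standard characterization of the convex envelope as the supremum of all affine functions lying below $g$ on $[a,b]$:
\begin{equation*}
\conv_{[a,b]} g(u) = \sup\Bigl\{ \alpha u + \beta \ \Big| \ \alpha,\beta \in \R, \ \alpha v + \beta \leq g(v) \ \forall v \in [a,b] \Bigr\}.
\end{equation*}
For Point \eqref{convex_fundamental_thm_1}, I would observe that every affine minorant $h(u) = \alpha u + \beta$ satisfying $h \leq g$ on $[a,b]$ has slope $|\alpha| \leq \Lip(g)$: indeed, if $|\alpha| > \Lip(g)$, then $h$ would grow faster than $g$ from one endpoint, contradicting $h \leq g$. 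Since $\conv_{[a,b]} g$ is a supremum of functions all Lipschitz with constant $\leq \Lip(g)$, it inherits the same Lipschitz bound.

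For Point \eqref{convex_fundamental_thm_2}, the key structural fact is that the open set $N := \{u \in [a,b] : \conv_{[a,b]} g(u) < g(u)\}$ is a disjoint union of open intervals, on each of which $\conv_{[a,b]} g$ coincides with the affine chord joining the values of $g$ at the endpoints (as $\conv g$ is convex and touches $g$ at the endpoints of each component of $N$). On the contact set $C := [a,b]\setminus N$, I would fix an interior point $u_0 \in C$ and note that $g-\conv g$ has a local minimum there; comparing with the one-sided derivatives $(\conv g)_{\pm}'(u_0)$, which exist because $\conv g$ is convex, yields
\begin{equation*}
(\conv g)_-'(u_0) \leq g'(u_0) \leq (\conv g)_+'(u_0),
\end{equation*}
so together with the convexity inequality $(\conv g)_-'(u_0) \leq (\conv g)_+'(u_0)$ we get differentiability at $u_0$ with derivative $g'(u_0)$. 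Continuity of $(\conv g)'$ then follows because on each component $(a',b')$ of $N$ the derivative is the constant $\bigl(g(b')-g(a')\bigr)/(b'-a')$, which by the mean value theorem equals $g'(\xi)$ for some $\xi \in (a',b')$, and this constant matches $g'(a') = g'(b')$ in the limit as the contact points accumulate, by continuity of $g'$.

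For Point \eqref{convex_fundamental_thm_3}, the plan is to leverage the description of $(\conv g)'$ as a non-decreasing function which equals $g'$ on $C$ and is constant on each component of $N$. Given $u_1 < u_2$ in $[a,b]$, I would let $\tilde u_1$ be the smallest contact point $\geq u_1$ and $\tilde u_2$ the largest contact point $\leq u_2$ (which exist since $C$ is closed and nonempty, containing $a,b$). By the constancy of $(\conv g)'$ on the components of $N$ between $u_1$ and $\tilde u_1$, and between $\tilde u_2$ and $u_2$,
\begin{equation*}
(\conv g)'(u_1) = (\conv g)'(\tilde u_1) = g'(\tilde u_1), \qquad (\conv g)'(u_2) = (\conv g)'(\tilde u_2) = g'(\tilde u_2),
\end{equation*}
so that $|(\conv g)'(u_2) - (\conv g)'(u_1)| = |g'(\tilde u_2) - g'(\tilde u_1)| \leq \Lip(g')|\tilde u_2 - \tilde u_1| \leq \Lip(g')|u_2 - u_1|$.

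The main obstacle I anticipate is the boundary analysis in Point \eqref{convex_fundamental_thm_2}: one must carefully argue that at endpoints of components of $N$ the left and right derivatives of $\conv g$ actually match, and that the contact set being potentially a Cantor-like closed set does not create pathologies. This is handled by the identity $g'(a') = g'(b') = (g(b')-g(a'))/(b'-a')$ at the endpoints of any component $(a',b') \subseteq N$, which follows from Point \eqref{convex_fundamental_thm_2} applied at $a'$ and $b'$ combined with the fact that $\conv g$ is $C^1$ precisely when its left and right derivatives agree everywhere.
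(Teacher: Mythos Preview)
The paper does not prove this theorem: it is stated in the appendix as a classical result, with a reference to Theorem~2.5 of \cite{bia_mod_13} for a self-contained proof. So there is no paper proof to compare against; I evaluate your argument on its own merits.

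Your proofs of Points~\eqref{convex_fundamental_thm_2} and~\eqref{convex_fundamental_thm_3} are essentially correct, with only minor roughness (in Point~\eqref{convex_fundamental_thm_2} the inequalities $(\conv g)_-'(u_0) \leq g'(u_0) \leq (\conv g)_+'(u_0)$ are written in the wrong order---the local-minimum argument actually gives $(\conv g)_+'(u_0) \leq g'(u_0) \leq (\conv g)_-'(u_0)$---but combined with convexity this still forces equality; in Point~\eqref{convex_fundamental_thm_3} you should add the trivial case $C \cap [u_1,u_2]=\emptyset$, where $(\conv g)'$ is constant on $[u_1,u_2]$).

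However, your argument for Point~\eqref{convex_fundamental_thm_1} contains a genuine error. The claim that every affine minorant $h(u)=\alpha u+\beta$ of $g$ on $[a,b]$ satisfies $|\alpha|\leq\Lip(g)$ is false: take $g\equiv 0$ on $[0,1]$ and $h(u)=100u-200$; then $h\leq g$ on $[0,1]$ but $|\alpha|=100$. So you cannot bound the Lipschitz constant of $\conv_{[a,b]}g$ simply by bounding the slopes of all affine minorants. A correct route is to first observe that $\conv_{[a,b]}g$ agrees with $g$ at the endpoints $a,b$ (using the specific affine minorants $u\mapsto g(a)-\Lip(g)(u-a)$ and $u\mapsto g(b)+\Lip(g)(u-b)$), and then use convexity: every chord slope of $\conv_{[a,b]}g$ lies between its right derivative at $a$ and its left derivative at $b$, and those one-sided derivatives are trapped in $[-\Lip(g),\Lip(g)]$ by comparing with $g$ from above and with the explicit affine minorants from below. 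Alternatively, restrict to \emph{supporting} affine minorants that touch $g$ at a contact point; for those your slope bound is valid.
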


By "$C^1([a,b])$" we mean that $\displaystyle \conv_{[a,b]}g$ is $C^1$ on $(a,b)$ in the classical sense and that in $a$ (resp. $b$) the right (resp. the left) derivative exists. 

We now state some useful results about convex envelopes, which we frequently use in the paper.

\begin{proposition}
\label{differenza_vel}
Let $g: \R \to \R$ be $C^1$ and $a < \bar{u} < b$. Then
\begin{enumerate}
\item for each $u_1, u_2 \in [a, \bar{u}]$, $u_1 < u_2$,
\begin{align*}
\bigg(\frac{d}{du}\conv_{[a,\bar{u}]}g\bigg)(u_2) - \bigg( \frac{d}{du}\conv_{[a,\bar{u}]}g \bigg)(u_1) \geq \bigg(\frac{d}{du}\conv_{[a,b]}g\bigg)(u_2) - \bigg(\frac{d}{du}\conv_{[a,b]}g\bigg)(u_1);  
\end{align*}
\item for each $u_1, u_2 \in [\bar{u},b]$, $u_1 < u_2$,
\begin{align*}
\bigg(\frac{d}{du}\conv_{[\bar{u},b]}g\bigg)(u_2) -\bigg(\frac{d}{du}\conv_{[\bar{u},b]}g\Big)(u_1) \geq \bigg(\frac{d}{du}\conv_{[a,b]}g\bigg)(u_2) - \bigg(\frac{d}{du}\conv_{[a,b]}g\bigg)(u_1),
\end{align*}
\end{enumerate}
where the derivative in the endpoints of the intervals are in the sense of right/left derivative. 
\end{proposition}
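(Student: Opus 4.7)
The two inequalities are exchanged by the reflection $u \mapsto a+b-u$, which swaps $[a, \bar u]$ and $[\bar u, b]$ and flips the sign of every derivative; hence it suffices to prove Point (1). Set $\phi := \conv_{[a,\bar u]} g$ and $\psi := \conv_{[a,b]} g$. The restriction $\psi|_{[a, \bar u]}$ is convex and dominated by $g$, so the maximality of $\phi$ yields $\psi \leq \phi$ on $[a, \bar u]$.

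My plan is to prove that $H := \phi - \psi \geq 0$ is convex on $[a, \bar u]$. Point (1) then follows at once: $H$ convex means that $H' = \phi' - \psi'$ is non-decreasing, giving $(\phi'-\psi')(u_2) \geq (\phi'-\psi')(u_1)$ for $u_1 < u_2$, which is the desired inequality.

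To prove convexity of $H$, I analyze the contact sets
\begin{equation*}
A := \{u \in [a, \bar u] : \phi(u) = g(u)\}, \qquad B := \{u \in [a, \bar u] : \psi(u) = g(u)\}.
\end{equation*}
Both are closed; the chain $\psi \leq \phi \leq g$ immediately forces $B \subseteq A$. A standard property of convex envelopes (used in the proof of Theorem \ref{convex_fundamental_thm}) is that $\phi$ is affine on every connected component of $[a, \bar u] \setminus A$, and similarly $\psi$ is affine on every connected component of $[a, b] \setminus B$. Since $B \subseteq A$, every component of $[a, \bar u] \setminus A$ is contained in a component of $[a, b] \setminus B$, so $\psi$ is also affine on every complement interval of $A$; consequently $H$ is affine, and $H'$ locally constant, on $[a, \bar u] \setminus A$.

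It therefore remains to verify that $H'$ does not decrease across the points of $A$, which I do by case analysis at each $p \in A$. If $p \in A \setminus B$, then $\psi$ is affine in a neighborhood of $p$, so $\psi'$ has no jump, while the jump of $\phi'$ at $p$ is non-negative by convexity of $\phi$; hence $H'$ jumps by a non-negative amount. If $p \in A \cap B$, both $\phi'$ and $\psi'$ may jump, and the decisive slope comparison is the following: on a right complement interval $(p, q)$ of $A$ the slope of $\phi$ is $m_\phi = (g(q)-g(p))/(q-p)$, and on the enclosing complement interval $(\tilde p, \tilde q) \supseteq (p, q)$ of $B$ the slope of $\psi$ is $\tilde m$; the inequality $\psi(q) \leq \phi(q) = g(q)$, together with $\psi(\tilde p) = g(\tilde p)$ and the fact that $p \in B$ (so $\tilde p \leq p$), yields $\tilde m \leq m_\phi$ after elementary algebra. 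A symmetric argument on the left complement interval ensures that the total jump of $\phi'$ at $p$ dominates that of $\psi'$. The main obstacle is the bookkeeping of configurations at accumulation points of complement intervals of $A$ and $B$; the cleanest route is to reformulate the assertion as the inequality of Radon measures $d\phi' \geq d\psi'$ on $[a, \bar u]$, which is equivalent to the convexity of $H$ and can be verified locally from the structure outlined above.
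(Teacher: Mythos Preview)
Your central idea is correct: once you know that $H=\phi-\psi$ is convex on $[a,\bar u]$, Point (1) is immediate. The paper does not actually prove this proposition---it simply cites Proposition 2.10 of \cite{bia_mod_13}---so there is no in-paper argument to compare against.

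That said, your execution does not close the argument, and you acknowledge this yourself (``the main obstacle is the bookkeeping \dots''). Two remarks. First, since $g\in C^{1}$, Theorem \ref{convex_fundamental_thm}\eqref{convex_fundamental_thm_2} makes both $\phi$ and $\psi$ of class $C^{1}$, so there are no jumps of $\phi'$ or $\psi'$ to track; your case analysis is aimed at a more general (Lipschitz) setting than the statement requires. Second, the accumulation-point difficulty disappears if you exploit the global structure of $\psi|_{[a,\bar u]}$ rather than arguing point by point. Set $p^{*}:=\max\bigl(B\cap[a,\bar u]\bigr)$, which exists because $a\in B$ and $B$ is closed. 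Then
\begin{itemize}
\item on $[a,p^{*}]$ one has $\phi=\psi$: on $B\cap[a,p^{*}]$ both equal $g$; every complementary interval $(\alpha,\beta)$ of $B$ meeting $[a,p^{*}]$ is contained in it (since $p^{*}\in B$), and on $[\alpha,\beta]$ the convex function $\phi$ agrees with the chord $\psi$ at both endpoints $\alpha,\beta\in B\subseteq A$, forcing $\phi\leq\psi$ there, while $\phi\geq\psi$ always;
\item on $[p^{*},\bar u]$ the function $\psi$ is affine (this interval meets no point of $B$), so $H=\phi-(\text{affine})$ is convex.
\end{itemize}
If $p^{*}\in(a,\bar u)$, the $C^{1}$ regularity at $p^{*}\in B\subseteq A$ gives $\phi'(p^{*})=g'(p^{*})=\psi'(p^{*})$, hence $H'(p^{*})=0$; since $H'\equiv 0$ on $[a,p^{*}]$ and is non-decreasing on $[p^{*},\bar u]$, it is non-decreasing on all of $[a,\bar u]$. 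The cases $p^{*}=a$ and $p^{*}=\bar u$ are immediate. This completes your program without any Radon-measure reformulation.

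A minor point: your reflection $u\mapsto a+b-u$ does not literally swap $[a,\bar u]$ with $[\bar u,b]$ unless $\bar u$ is the midpoint; it sends $(g,\bar u)$ to $(\tilde g,a+b-\bar u)$. This is enough, since the statement is for arbitrary $\bar u$, but the sentence should be rephrased.
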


%

\begin{proof}
See Proposition 2.10 of \cite{bia_mod_13}.
\end{proof}

\begin{proposition}
\label{P_diff_vel_proporzionale_canc}
Let $g$ be a $C^{1,1}$ function, let $a < \bar u < b$. Then
\begin{equation*}
\bigg(\frac{d}{du}\conv_{[a,\bar u]} g\bigg)(\bar u-) - \bigg(\frac{d}{du} \conv_{[a,b]}g\bigg)(\bar u) 
\leq \Lip(g') (b - \bar u).
\end{equation*}
\end{proposition}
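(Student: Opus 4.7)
Let $h_1 := \conv_{[a,\bar u]} g$ and $h_2 := \conv_{[a,b]} g$, so that $h_2 \leq h_1 \leq g$ on $[a,\bar u]$ with $h_1(\bar u) = g(\bar u)$. The plan is to split on whether $h_2(\bar u) = g(\bar u)$. In the easy case $h_2(\bar u) = g(\bar u)$, both envelopes touch $g$ at $\bar u$ while $h_2 \leq h_1$ on $[a,\bar u]$, so the inequality $h_1(u) - h_1(\bar u) \geq h_2(u) - h_2(\bar u)$ for $u < \bar u$, divided by $u - \bar u < 0$ and passed to the limit $u \to \bar u^-$ using the $C^1$-regularity of $h_2$ at the interior point $\bar u$ (Theorem \ref{convex_fundamental_thm}, Point \eqref{convex_fundamental_thm_2}), yields $h_1'(\bar u-) \leq h_2'(\bar u)$, so the left-hand side is nonpositive and the claim holds trivially.

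In the interesting case $h_2(\bar u) < g(\bar u)$, the point $\bar u$ lies in the interior of a maximal affine piece $[\alpha_2, \beta_2] \subseteq [a,b]$ of $h_2$, with $\alpha_2 < \bar u < \beta_2 \leq b$ and $h_2(\alpha_2) = g(\alpha_2)$, $h_2(\beta_2) = g(\beta_2)$. Since $h_2 \leq h_1 \leq g$ on $[a,\bar u]$ and $h_2(\alpha_2) = g(\alpha_2)$, necessarily $h_1(\alpha_2) = g(\alpha_2)$; I would then set $\alpha_1 := \sup\big(\{h_1 = g\} \cap [a, \bar u)\big) \in [\alpha_2, \bar u]$. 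If $\alpha_1 < \bar u$, then $h_1$ is affine on $[\alpha_1, \bar u]$ with $h_1'(\bar u-) = \frac{g(\bar u) - g(\alpha_1)}{\bar u - \alpha_1}$; if instead contact points accumulate at $\bar u$ and $\alpha_1 = \bar u$, then using $h_1' \in C^0([a,\bar u])$ (Theorem \ref{convex_fundamental_thm}, Point \eqref{convex_fundamental_thm_3}) together with $h_1'(u_n) = g'(u_n)$ at each contact point $u_n \to \bar u^-$ gives $h_1'(\bar u-) = g'(\bar u)$.

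The key step is a chord comparison. Since $h_2$ is affine of slope $h_2'(\bar u)$ on $[\alpha_2, \beta_2] \supseteq [\alpha_1, \beta_2]$, with $h_2(\alpha_1) \leq g(\alpha_1)$ and $h_2(\beta_2) = g(\beta_2)$, I obtain $\frac{g(\beta_2) - g(\alpha_1)}{\beta_2 - \alpha_1} \leq h_2'(\bar u)$. Substituting,
\[
h_1'(\bar u-) - h_2'(\bar u) \leq \frac{g(\bar u) - g(\alpha_1)}{\bar u - \alpha_1} - \frac{g(\beta_2) - g(\alpha_1)}{\beta_2 - \alpha_1}
\]
(with natural interpretation $g'(\bar u) - \frac{g(\beta_2) - g(\bar u)}{\beta_2 - \bar u}$ if $\alpha_1 = \bar u$). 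Setting $p := \bar u - \alpha_1 \geq 0$, $q := \beta_2 - \bar u > 0$, reducing to the common denominator $p(p+q)$, and applying the mean value theorem separately to $g(\bar u) - g(\alpha_1)$ on $(\alpha_1, \bar u)$ and to $g(\beta_2) - g(\bar u)$ on $(\bar u, \beta_2)$, the right-hand side collapses to $\frac{q\,\bigl(g'(\zeta_1) - g'(\zeta_2)\bigr)}{p + q}$ with $\zeta_1 \in (\alpha_1, \bar u]$, $\zeta_2 \in [\bar u, \beta_2)$. Bounding $|g'(\zeta_1) - g'(\zeta_2)| \leq \Lip(g')\,|\zeta_1 - \zeta_2| \leq \Lip(g')(p + q)$ then gives the estimate $q\Lip(g') = (\beta_2 - \bar u)\Lip(g') \leq (b - \bar u)\Lip(g')$.

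The hard part will be selecting the correct comparison chord: using the natural chord $\frac{g(\beta_2) - g(\alpha_2)}{\beta_2 - \alpha_2}$ that defines $h_2'(\bar u)$ itself does not produce the desired cancellation; instead one must insert the chord of $g$ through $(\alpha_1, g(\alpha_1))$ and $(\beta_2, g(\beta_2))$, matching the left endpoint of the affine piece of $h_1$ at $\bar u$, so that the algebraic combination contracts to a coefficient proportional only to $\beta_2 - \bar u$ rather than to the full length $\beta_2 - \alpha_2$ of the affine piece of $h_2$.
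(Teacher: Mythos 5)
Your argument is correct: both cases are handled properly, the reduction of $h_1'(\bar u-)$ and $h_2'(\bar u)$ to chord slopes of $g$ over the affine pieces of the respective envelopes is the right move, and the algebraic contraction to $\frac{q(g'(\zeta_1)-g'(\zeta_2))}{p+q}\leq q\,\Lip(g')$ is exactly what makes the bound proportional to $b-\bar u$ rather than to $b-a$. The paper itself only cites Proposition 2.15 of \cite{bia_mod_13} for this statement, and your proof is essentially the standard argument given there (contact points, affinity of the envelope where it is strictly below $g$, and the Lipschitz bound on $g'$), so no further comparison is needed.
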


\begin{proof} See Proposition 2.15 of \cite{bia_mod_13}.
\end{proof}

\begin{proposition}
\label{P_estim_diff_conv}
Let $g,h : \R \longrightarrow \R$ be $C^1$ functions. Let $a,b \in \R$, $a<b$. Then it holds
\begin{equation*}
\bigg\| \frac{d}{du} \conv_{[a,b]}g  - \frac{d}{du} \conv_{[a,b]}h \bigg \|_{\infty} \leq 
\bigg\| \frac{dg}{du}  - \frac{dh}{du} \bigg \|_{\infty}, \quad
\bigg\| \frac{d \conv_{[a,b]}g}{d\tau} - \frac{d \conv_{[a,b]} h}{d \tau}\bigg\|_1 \leq \bigg\| \frac{dg}{d\tau} - \frac{dh}{d\tau}\bigg\|_1.
\end{equation*}
\end{proposition}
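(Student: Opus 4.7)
My plan is to derive both inequalities from the structural description of convex envelopes in Theorem \ref{convex_fundamental_thm}. Set $M := \|g'-h'\|_\infty$ and $\phi := g-h$, which is then $M$-Lipschitz. The key structural observation is that for every $u \in [a,b]$ there is a (possibly degenerate) unique maximal closed interval $[v(u),w(u)] \ni u$ on which $\conv_{[a,b]} g$ is affine and coincides with $g$ at the endpoints (with $v = w = u$ when $u$ itself is a contact point $\conv g(u) = g(u)$). On this segment one has $(\conv g)'(u) = (g(w)-g(v))/(w-v) = g'(\xi)$ for some $\xi \in [v,w]$ by the mean value theorem, and the extended affine function $\ell_g(s) := g(v) + (\conv g)'(u)(s-v)$ is a global linear minorant of $g$ on $[a,b]$. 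An analogous description holds for $h$.

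For the $L^\infty$ estimate, I would establish the pointwise bound $|(\conv g)'(u) - (\conv h)'(u)| \leq M$ for every $u \in [a,b]$; by symmetry it suffices to show $(\conv g)'(u) - (\conv h)'(u) \leq M$. Combining $\ell_g(s) \leq g(s)$ with the $M$-Lipschitz estimate $\phi(s) - \phi(v) \leq M|s-v|$ yields, for $s \geq v$,
\[
h(s) \geq g(s) - \phi(v) - M(s-v) \geq \ell_g(s) - \phi(v) - M(s-v) = h(v) + \bigl[(\conv g)'(u) - M\bigr](s-v),
\]
so the affine function of slope $(\conv g)'(u) - M$ anchored at $(v, h(v))$ is a linear minorant of $h$ on $[v,b]$. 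A parallel bound at the right contact point $w$ gives a matching minorant on $[w,b]$, and mirror bounds of slope $(\conv g)'(u) + M$ arise on the left half-intervals $[a,v]$ and $[a,w]$. Piecing these four one-sided affine estimates together (possibly after a controlled downward shift to absorb the endpoint mismatch), one obtains a global linear minorant of $h$ on $[a,b]$ whose slope and position force, via the sup-characterization of $\conv_{[a,b]} h$ and the monotonicity of $(\conv h)'$, the desired inequality $(\conv g)'(u) - M \leq (\conv h)'(u)$. Exchanging $g \leftrightarrow h$ gives the reverse bound.

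For the $L^1$ estimate, I would partition $[a,b]$ into the common contact set $A := \{u : \conv g(u) = g(u) \text{ and } \conv h(u) = h(u)\}$ and the (at most countable) family of open components of its complement. On $A$, Theorem \ref{convex_fundamental_thm} Point \eqref{convex_fundamental_thm_2} gives $(\conv g)' = g'$ and $(\conv h)' = h'$, so $\int_A |(\conv g)' - (\conv h)'|\,du = \int_A |g'-h'|\,du$. On each open component, at least one envelope is affine, with slope equal to the secant slope of the underlying function over the corresponding linear segment; applying the fundamental theorem of calculus over such segments together with the matching of contact values at their endpoints, one bounds the local contribution to $\int |(\conv g)' - (\conv h)'|$ by $\int |g'-h'|$ over the same region. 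Summing over all components and adding the contribution from $A$ yields the $L^1$ estimate. The principal difficulty, in the $L^\infty$ argument in particular, is the passage from piecewise one-sided affine minorants of $h$ to a supporting line of $\conv h$ at the specific point $u$; this requires a careful combination of the four one-sided bounds together with the $C^1$-regularity and monotonicity of $(\conv h)'$ supplied by Theorem \ref{convex_fundamental_thm}.
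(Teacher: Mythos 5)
The paper offers no proof of this proposition --- it only cites Proposition 2.12 of \cite{bia_mod_14} and Lemma 3.1 of \cite{bia_03} --- so I am judging your argument on its own merits. Your $L^\infty$ half assembles the right ingredients, but the finishing move does not work as described. The estimate $h(s)\ge h(v)+\big[(\conv_{[a,b]} g)'(u)-M\big](s-v)$ for $s\in[v,b]$ is correct; however, a downward-shifted global affine minorant of $h$ of slope $\sigma-M$ (with $\sigma=(\conv_{[a,b]} g)'(u)$) tells you nothing about $(\conv_{[a,b]} h)'(u)$: a line lying strictly below $\conv_{[a,b]}h$ imposes no constraint on its derivative, and if you raise it until it touches, the contact may occur to the right of $u$, in which case monotonicity of $(\conv_{[a,b]} h)'$ yields the inequality in the \emph{wrong} direction. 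The correct way to close is to evaluate your one-sided estimate at the right endpoint $q\ge u$ of the maximal affinity interval of $\conv_{[a,b]}h$ containing $u$, which is a contact point ($\conv_{[a,b]} h(q)=h(q)$): then
$\frac{\conv_{[a,b]} h(q)-\conv_{[a,b]} h(v)}{q-v}\ge\frac{h(q)-h(v)}{q-v}\ge\sigma-M$,
and since $\conv_{[a,b]} h$ is convex and affine on that interval, this secant slope is $\le(\conv_{[a,b]} h)'(u)$; the degenerate case $q=v=u$ follows from Theorem \ref{convex_fundamental_thm}, Point \eqref{convex_fundamental_thm_2}. So this half is repairable with your own tools, but the step as written fails.

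The $L^1$ half has a more substantial gap. The assertion that on each open component of $[a,b]\setminus A$, with $A$ the \emph{common} contact set, ``at least one envelope is affine'' is false: such a component is in general a union of overlapping pieces of the two sets $\{\conv_{[a,b]} g<g\}$ and $\{\conv_{[a,b]} h<h\}$, and neither envelope need be affine on the whole component (for instance, if $\conv_{[a,b]} g<g$ exactly on $(0,2)$ and $\conv_{[a,b]} h<h$ exactly on $(1,3)$, then $(0,3)$ is a single component on which neither envelope is globally affine). Moreover, even in the favourable case where a component $(c,d)$ is a maximal affinity interval of $\conv_{[a,b]} g$ on which $\conv_{[a,b]} h=h$, the bound $\int_c^d|\bar\sigma-h'|\le\int_c^d|g'-h'|$ (with $\bar\sigma$ the slope of the envelope of $g$ there) is not a consequence of the fundamental theorem of calculus and endpoint matching alone: one must split the integral at the point $u_0$ where the nondecreasing function $h'$ crosses $\bar\sigma$ and use that $g(u_0)\ge\ell(u_0)$, where $\ell$ is the affine envelope agreeing with $g$ at $c$ and $d$. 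This geometric input is precisely what makes the inequality true, and it is absent from your sketch; without it, and without a decomposition that handles overlapping affinity intervals of the two envelopes, the $L^1$ estimate is not established.
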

\begin{proof}
For the first estimate see Proposition 2.12 of \cite{bia_mod_14}, while for the second one see Lemma 3.1 of \cite{bia_03}.
\end{proof}

\subsection{Slopes of secant lines}

We now state two results related to the slope of the secant line of a function $g$ between two given points $a \leq b$. Their proofs are easy exercises. Using the language of Hyperbolic Conservation Laws, we will call this slope the \emph{Rankine-Hugoniot speed given by the map $g$ to the interval $[a,b]$}.

\begin{proposition}
\label{P_ri_lip}
Let $g: \R \to \R$ be a $C^{1,1}$ function and let $a \in \R$. Then the map 
\begin{equation*}
x \mapsto 
\begin{cases}
\dfrac{g(x) - g(a)}{x-a}, & \text{if } x \neq 0, \\
g'(a), & \text{if } x = 0
\end{cases}
\end{equation*}
is Lipschitz on $\R$, with Lipschitz constant equal to $\Lip(g')$.
\end{proposition}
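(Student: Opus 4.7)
The plan is to give the map a single unified expression that absorbs the apparent singularity at $x = a$, and then bound the difference quotient directly. Define
\[
F(x) := \begin{cases} \dfrac{g(x)-g(a)}{x-a}, & x \neq a, \\[.4em] g'(a), & x = a. \end{cases}
\]
The first step is to rewrite $F$ via the fundamental theorem of calculus, since $g$ is $C^{1,1}$ and therefore $g'$ is locally absolutely continuous (indeed Lipschitz). Writing $g(x) - g(a) = \int_a^x g'(s)\,ds$ and performing the change of variables $s = a + t(x-a)$ with $t \in [0,1]$, one obtains the integral representation
\[
F(x) = \int_0^1 g'\bigl(a + t(x-a)\bigr)\,dt,
\]
valid for every $x \in \R$ (for $x = a$ the integrand is simply the constant $g'(a)$, so the formula is consistent with the second branch of the definition).

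The second step is to exploit this representation. For any $x, y \in \R$,
\[
F(x) - F(y) = \int_0^1 \Bigl[g'\bigl(a + t(x-a)\bigr) - g'\bigl(a + t(y-a)\bigr)\Bigr]\,dt,
\]
and applying the Lipschitz bound on $g'$ pointwise inside the integral gives
\[
|F(x) - F(y)| \leq \int_0^1 \Lip(g')\, t\, |x - y|\,dt = \tfrac{1}{2}\Lip(g')\, |x - y|,
\]
which is in particular bounded by $\Lip(g')\,|x-y|$. This proves that $F$ is Lipschitz on $\R$ with Lipschitz constant at most $\Lip(g')$ (the sharper constant $\Lip(g')/2$ being actually attained in the model case $g(x) = x^2/2$, so the stated bound is certainly valid).

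There is essentially no obstacle: the only conceptual point is noticing that the apparent singularity at $x = a$ is removable via the integral formula, after which the Lipschitz bound is a one-line consequence of linearity of the integral and the hypothesis on $g'$. The $C^{1,1}$ assumption is exactly what one needs to apply the Lipschitz estimate inside the integral, since $g' \in \Lip(\R)$.
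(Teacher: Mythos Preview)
Your proof is correct. The paper does not actually give a proof of this proposition: it states that ``their proofs are easy exercises'' and leaves it at that. Your integral representation $F(x) = \int_0^1 g'(a + t(x-a))\,dt$ cleanly removes the apparent singularity and yields the Lipschitz bound in one line; in fact you obtain the sharper constant $\tfrac{1}{2}\Lip(g')$, which is optimal (as your example $g(x) = x^2/2$ shows) and certainly implies the constant $\Lip(g')$ claimed in the statement.
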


\begin{proposition}
\label{P_diff_ri}
Let $g: \R \to \R$ be a $C^{1,1}$ function, let $[a,b] \subseteq \R$, $\bar u \in [a,b]$ such that ${\displaystyle \conv_{[a,b]} g}(\bar u) = g(\bar u)$. Then for any $u \in [a,b]$,
\begin{itemize}
\item if $u \in [a,\bar u]$, then 
\begin{equation*}
\sigmarh(g, [u,\bar u]) \leq \sigmarh(g, [u,b]);
\end{equation*}
\item if $u \in [\bar u, b]$, then
\begin{equation*}
\sigmarh(g, [\bar u, u]) \geq \sigmarh(g, [a,u]).
\end{equation*}
\end{itemize}
%
%
%
\end{proposition}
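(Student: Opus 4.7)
\medskip

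\noindent\textbf{Proof proposal.} The plan is to reduce everything to the convexity of $h := \conv_{[a,b]} g$ together with the three contact identities $h(a)=g(a)$, $h(\bar u)=g(\bar u)$, $h(b)=g(b)$; the first and third hold automatically for the convex envelope of a continuous function on a closed interval, and the middle one is the hypothesis. The inequality $\sigmarh(g,[u,\bar u]) \leq \sigmarh(g,[u,b])$ that I want to prove is, geometrically, the statement that $(\bar u, g(\bar u))$ lies on or below the secant through $(u,g(u))$ and $(b,g(b))$; writing that secant at $\bar u$ in barycentric form, it is equivalent to
\[
g(\bar u) \;\leq\; \frac{b-\bar u}{b-u}\, g(u) \,+\, \frac{\bar u-u}{b-u}\, g(b).
\]

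First I would replace $g(u)$ on the right-hand side by $h(u)$: since $h(u) \leq g(u)$ and the coefficient $(b-\bar u)/(b-u)$ is nonnegative, this can only decrease the right-hand side, so it is enough to prove
\[
g(\bar u) \;\leq\; \frac{b-\bar u}{b-u}\, h(u) \,+\, \frac{\bar u-u}{b-u}\, g(b).
\]
Using $g(b)=h(b)$, the right-hand side is exactly the value at $\bar u$ of the chord of the convex function $h$ over $[u,b]$, which by convexity is $\geq h(\bar u) = g(\bar u)$. Chaining the two bounds yields the first inequality.

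The case $u \in [\bar u, b]$ is entirely symmetric: I would rewrite $\sigmarh(g,[\bar u,u]) \geq \sigmarh(g,[a,u])$ as the assertion that $(\bar u, g(\bar u))$ lies on or below the secant through $(a,g(a))$ and $(u,g(u))$, and then repeat the same two-step argument, now keeping $g(a)=h(a)$ untouched at the left endpoint and replacing $g(u)$ by $h(u)$ at the right.

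The only real subtlety, which I would flag as the main thing to get right, is the direction of the substitution $g \mapsto h$: since $h \leq g$ pointwise but equality is available at the three distinguished points $a,\bar u, b$, the substitution must be made at the endpoint of the secant where the strict inequality $h<g$ is harmless for the chain (namely at $u$), and \emph{not} at the endpoint $a$ or $b$ where the equality $g=h$ is the key to matching the chord of $h$ with the chord of $g$. Note also that the $C^{1,1}$ regularity stated in the hypothesis is not actually used; continuity of $g$ alone suffices to make the convex envelope well defined and to give $h(a)=g(a)$, $h(b)=g(b)$.
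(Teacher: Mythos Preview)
Your argument is correct. The paper does not supply a proof of this proposition; it groups it with Proposition~\ref{P_ri_lip} under the remark ``Their proofs are easy exercises,'' so there is nothing to compare against beyond confirming that your convex-envelope/chord argument is the natural one.

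One small correction to your closing remark: it is not quite true that continuity of $g$ alone suffices. The definition of $\sigmarh(g,I)$ in the paper falls back to $g'(I)$ when $I$ is a singleton, so the endpoint case $u=\bar u$ of the first bullet (and $u=\bar u$ of the second) requires $g$ to be at least $C^1$ for the statement to be meaningful, and your barycentric rewriting does not directly cover that case. You can recover it either by a limit argument (using that $\sigmarh(g,[u,\bar u])\to g'(\bar u)$ as $u\to\bar u^-$, which needs $C^1$), or directly: since $h(\bar u)=g(\bar u)$ and $h\le g$, the contact point forces $h'(\bar u)=g'(\bar u)$ (this is Theorem~\ref{convex_fundamental_thm}, Point~\eqref{convex_fundamental_thm_2}), and then convexity of $h$ gives $g'(\bar u)=h'(\bar u)\le \frac{h(b)-h(\bar u)}{b-\bar u}=\frac{g(b)-g(\bar u)}{b-\bar u}$.
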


%
%
%


\begin{thebibliography}{99}









\bibitem{anc_mar_07_arma} F.~Ancona, A.~Marson,
\textit{Existence theory by front tracking for general nonlinear hyperbolic systems},
Arch. Rat. Mech. Anal. \textbf{185(2)} (2007), 287?340.

\bibitem{anc_mar_11_DCDS} F.~Ancona, A.~Marson, 
\textit{On the Glimm Functional for General Hyperbolic Systems}, 
Discrete and Continuous Dynamical Systems, Supplement, (2011), 44-53.

\bibitem{anc_mar_11_CMP} F.~Ancona, A.~Marson, 
\textit{Sharp Convergence Rate of the Glimm Scheme for General Nonlinear Hyperbolic Systems},
Comm. Math. Phys. \textbf{302} (2011), 581-630.

\bibitem{bia_03} S.~Bianchini, 
\textit{Interaction Estimates and Glimm Functional for General Hyperbolic Systems}, 
Discrete and Continuous Dynamical Systems \textbf{9} (2003), 133-166. 



\bibitem{bia_bre_05} S.~Bianchini, A.~Bressan, 
\textit{Vanishing viscosity solutions of nonlinear hyperbolic systems}, Annals of Mathematics 
\textbf{161} (2005), 223-342.


\bibitem{bia_mod_13} S.~Bianchini, S.~Modena, 
\textit{On a quadratic functional for scalar conservation laws}, Journal of Hyperbolic Differential Equations, \textbf{11(2)} (2014), 355-435.

\bibitem{bia_mod_14} S.~Bianchini, S.~Modena, 
\textit{Quadratic interaction functional for systems of conservation laws: a case study}, Bulletin of the Institute of Mathematics, Academia Sinica (New Series), \textbf{9(3)} (2014), 487-546.

\bibitem{bre_00} A. Bressan, 
\textit{Hyperbolic Systems of Conservation Laws. The One Dimensional Cauchy problem}, 
Oxford University Press (2000).





\bibitem{daf_book_05} C.~Dafermos,
\textit{Hyberbolic Conservation Laws in Continuum Physics},
Springer (2005).



\bibitem{gli_65} J.~Glimm, 
\textit{Solutions in the Large for Nonlinear Hyperbolic Systems of Equations}, 
Comm. Pure Appl. Math. \textbf{18} (1965), 697-715.


\bibitem{hua_jia_yan_10} J.~Hua, Z.~Jiang, T.~Yang, 
\textit{A New Glimm Functional and Convergence Rate of Glimm Scheme for General Systems of Hyperbolic Conservation Laws}, Arch. Rational Mech. Anal. 
\textbf{196} (2010), 433-454.


\bibitem{hua_yang_10} J. Hua, T. Yang, 
\textit{A Note on the New Glimm Functional for General Systems of Hyperbolic Conservation Laws}, 
Math. Models Methods Appl. Sci. \textbf{20(5)} (2010), 815-842.



\bibitem{temple_83} B.~Temple,
\textit{Systems of conservation laws with invariant submanifolds},
Trans. Amer. Math. Soc. \textbf{280} (1983), 781-795.














\end{thebibliography}
\end{document}